\newtheorem{theorem}{Theorem}
\newtheorem{proposition}[theorem]{Proposition}
\newtheorem{lemma}[theorem]{Lemma}
\newtheorem{corollary}[theorem]{Corollary}
\newtheorem{definition}[theorem]{Definition}
\newtheorem{claim}[theorem]{Claim}
\newtheorem{observation}[theorem]{Observation}
\newtheorem{conjecture}[theorem]{Conjecture}
\numberwithin{theorem}{section}
\def\TASK#1{}
\def\M{\mathcal M}
\def\F{\mathcal F}
\newcommand\Bbox{
{
\nobreak\hfil\penalty50
\hskip1em\hbox{}\nobreak\hfil{\lower .5pt \hbox{$\Box$}}
\parfillskip=0pt \finalhyphendemerits=0 \par}
}
\newcommand\eop{\ifmmode {\hbox{\Bbox}} \else \Bbox \fi}
\newcommand\dist{\mathrm{dist}}
\title{Embedding graphs having Ore-degree at most five}
\author{B\'ela Csaba}
\address{Bolyai Institute, University of Szeged, Hungary}
\email{bcsaba@math.u-szeged.hu}
\thanks{The first author was partially supported by ERC-AdG. 321104 and by the National Research, Development and Innovation Office - NKFIH Funds No.~SNN-117879 and KH\_18 129597. The second author was 
 partially supported by NKFIH Fund No.~KH\_18 129597.}
\author{Judit Nagy-Gy\"orgy }\address{Bolyai Institute, University of Szeged, Hungary}
\email{ngyj@math.u-szeged.hu}
\begin{document}

\maketitle

\begin{abstract}
Let $H$ and $G$ be graphs on $n$ vertices, where $n$ is sufficiently large. We prove that if $H$ has Ore-degree at most 5 and $G$ has minimum degree at least $2n/3$ then $H$ is a subgraph of $G.$
\end{abstract}




\section{Notations}

In this paper we will consider only simple graphs. We mostly use standard graph theory notation: $V(G)$ is the vertex set 
and $E(G)$ is the edge set of graph $G$, $v(G)=|V(G)|$, $e(G)=|E(G)|$, $\deg_G(x)$ (or $\deg(v)$ if it is unambigous) is 
the degree of vertex $x\in V(G)$, $\delta(G)$ the minimum and $\Delta(G)$ the maximum degree of $G$. 
We write $N_G(x)$ (or $N(x)$) for the neighborhood of the vertex $x\in V(G)$, and $N(X)=\bigcup_{x\in X}N(x)$. 
Let $N_G(x,A)=N_G(x)\cap A$ and denote $\deg_G(x,A)$ the number of neighbors of $x$ in the set $A$. 
Let $N_G(X,A)=\bigcup_{x\in X}N_G(x,A)$. 


We let $\overline{G}$ denote the complement of $G$ where $V(\overline{G})=V(G)$ and $E(\overline{G})=\binom{V(G)}{2}-E(G)$.
If $A\subset V(G)$ we write $G-A$ for the graph induced by the vertices of $V(G)-A$. Moreover $G[A]$ is a shorthand for $G-(V(G)-A)$. Given two disjoint sets, $A, B\subset V(G)$ we write $G[A, B]$ for the bipartite subgraph of $G$ which contains the edges
that connect a vertex of $A$ with a vertex of $B.$

If $G$ has a subgraph isomorphic to $H$ then we write $H\subset G$. In this case we call $G$ the
host graph. Given graphs $H$ and $G,$ the mapping $\phi: V(H)\to V(G)$ is a homomorphism, if $\phi(x)\phi(y)\in E(G)$ whenever 
$xy\in E(H).$ 

We say that $G$ has an $H$-factor if there are $\lfloor v(G)/v(H)\rfloor$ vertex-disjoint copies of $H$ in $G$ 
(this notion is somewhat different from the common one: we do not require that $v(G)$ is a multiple
of $v(H)$). Throughout the paper we will apply the relation ``$\ll$'': $a\ll b$ if $a$ is sufficiently smaller
than $b$.

\section{Introduction}

The fundamental question of extremal graph theory can be formulated as an embedding problem as follows: Given two graphs, $H$ and $G,$ under
what conditions is $H$ the subgraph of $G?$ Equivalently, one can consider embedding problems as packing problems as follows:
under what conditions can we find an edge disjoint copy of $H$ and $\overline{G}$ in $K_n?$ Sometimes it is more convenient to investigate
the equivalent packing version of an embedding question, we will use both notions in the paper.

For a fixed subgraph $H\subset G$ Tur\'an's theorem tells that $G$ has to be sufficiently dense in terms of the chromatic number of $H.$ If $G$ and
$H$ have the same order, the density of $G$ in general is not sufficient anymore, instead one needs e.g.~large minimum degree in $G.$ 
The most famous example of this type is Dirac's celebrated theorem on Hamilton cycles~\cite{D}: if $n\ge 3$, $G$ is a graph of order $n$ 
and $\delta(G)\ge n/2$ then $G$ has a Hamilton cycle. 

His theorem was generalized by Ore~\cite{Ore}: Assume that $n\ge 3,$ $G$ is a graph of order $n$ and for every $xy\not\in E(G)$ we have $deg(x)+deg(y)\ge n.$ Then
$G$ has a Hamilton cycle. This result motivates the notion of the {\it Ore-degree} of an edge~\cite{KKY, KY}: the Ore-degree of $xy$ is the 
sum $$\theta(x,y)=\deg(x)+\deg(y)$$ of the degrees of its endpoints. The Ore-degree of $G,$ denoted by $\theta(G),$ is 
the maximum Ore-degree in $E(G)$. The embedding problems that include minimum and maximum degree conditions 
are called {\it Dirac-type,} while those involving the Ore-degree are called {\it Ore-type} problems in the literature.

An excellent source of embedding results and conjectures is the survey~\cite{KKY} by Kierstead, Kostochka and Yu, in which several Ore-type questions are considered as well. One can easily formulate an Ore-type problem by replacing 
the maximum degree in a Dirac-type embedding problem by half of the Ore-degree of the graph, or if one considers a packing version, then one can replace even both maximum degrees. In some cases the resulting questions were solved.
For example, Kostochka and Yu proved \cite{KY2} that if $G_1$ and $G_2$ are graphs of order $n$ such that 
$\theta(G_1)\Delta(G_2)<n$ then $G_1$ and ${G_2}$ pack. This is in fact a {\it half-Ore} version of the famous packing result of 
Sauer and Spencer~\cite{SS}: if $\Delta(G_1)\cdot\Delta(G_2)<n/2$ then $G_1$ and $G_2$ pack. We remark that the {\it full-Ore} version
of the Sauer-Spencer theorem, when both maximum degrees are replaced by half of the corresponding Ore-degree, is open.
Another important Dirac-type theorem for which the half-Ore version was proved is by Aigner-Brandt~\cite{AB} and Alon-Fischer~\cite{AF} on embedding 2-factors. The half-Ore version was proved by Kostochka and Yu~\cite{KY3}.

One of the most important Dirac-type embedding problems was formulated by Bollob\'as and Eldridge~\cite{BE} and independently 
by Catlin~\cite{C}.

\begin{conjecture}[Bollob\'as, Eldridge; Catlin]
If $G_1$ and $G_2$ are graphs on $n$ vertices with maximum degree $\Delta_1$ and $\Delta_2,$ respectively, and 
$$(\Delta_1+1)(\Delta_2+1)\le n+1,$$ then $G_1$ and $G_2$ pack.  
\end{conjecture}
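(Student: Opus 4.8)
The plan is to work throughout with the equivalent embedding formulation: $G_1$ and $G_2$ pack in $K_n$ if and only if $G_2$ embeds into the complement $G:=\overline{G_1}$, and the hypothesis $(\Delta_1+1)(\Delta_2+1)\le n+1$ becomes, after writing $d:=\Delta_2$, the minimum-degree bound $\delta(G)=n-1-\Delta_1\ge\frac{nd-1}{d+1}=\bigl(1-\frac{1}{d+1}\bigr)n-O(1)$. So it suffices to prove the Dirac-type embedding statement: every $n$-vertex host $G$ with $\delta(G)\ge\bigl(1-\frac{1}{d+1}\bigr)n-O(1)$ contains, as a spanning subgraph, every $n$-vertex graph $H$ with $\Delta(H)\le d$. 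Since $\chi(H)\le d+1$ (Brooks's theorem, or the Hajnal--Szemer\'edi theorem, which even gives an equitable $(d+1)$-colouring with room to spare), if $H$ additionally had bandwidth $o(n)$ this would follow from the bandwidth theorem of B\"ottcher, Schacht and Taraz (which proves the Bollob\'as--Koml\'os conjecture). The whole difficulty is that a bounded-degree $H$ may have bandwidth linear in $n$, so the minimum-degree bound must be exploited in tandem with a delicate decomposition of $H$.

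I would split the proof according to the size of $d$, reducing small cases to known ones ($d\le 1$ trivial, $d=2$ by Aigner--Brandt and Alon--Fischer, $d=3$ by Csaba--Shokoufandeh--Szemer\'edi) and building the remaining ones by the absorbing method. For each fixed $d$: inside $H$ first locate a ``flexible'' set $A$ with $|A|=o(n)$ whose induced subgraph, together with its attachments, can be embedded on essentially any leftover set of the right size; then decompose $H-A$ by applying the Regularity Lemma to $H$ itself, splitting each cluster in half and discarding a small exceptional ``cut'' set $C$, so that $H-(A\cup C)$ becomes a bounded number of pieces of bandwidth $o(n)$. On the host side, apply the Regularity Lemma to $G$ and, via Hajnal--Szemer\'edi on the reduced graph, extract a spanning $(d+1)$-partite super-regular skeleton; reserve a small absorbing sub-skeleton, embed $A\cup C$ into it flexibly, embed the $o(n)$-bandwidth pieces of $H$ greedily into the remainder using the Blow-up Lemma, and absorb the constantly many vertices of $G$ missed so far. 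The tight point is that regularity wastes a proportion $\varepsilon$ of each degree, while the surplus over $\bigl(1-\frac{1}{d+1}\bigr)n$ is only $O(1)$; this forces the absorbing structure and the bandwidth decomposition to be engineered with error terms that beat $1/(d+1)$, which is believed feasible for every fixed $d$.

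For $d$ large — equivalently $\Delta_1\le(1+o(1))\sqrt n$, so that $G_1$ is sparse — I would abandon regularity entirely and embed $G_2$ into $G=\overline{G_1}$ by a randomized greedy procedure: process $V(G_2)$ in a random order, embedding each vertex while avoiding the at most $\Delta_1$ forbidden images created by its already-placed $G_1$-neighbours, and control the ``some vertex gets stuck'' events by a Lov\'asz Local Lemma / Azuma-type concentration argument. This is the Sauer--Spencer~\cite{SS} strategy pushed from $\Delta_1\Delta_2<n/2$ toward the conjectural product bound, and it is essentially available in the near-$\sqrt n$ window (as in the work of Kaul, Kostochka and Yu). One should also treat separately the structured cases where $G_1$ or $G_2$ is bipartite, degenerate, or a union of small components, for which packing is considerably more forgiving.

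The step I expect to be the genuine obstacle is the \emph{intermediate regime}, say $d_0\le d\le n^{1/2}$ with $d_0$ a large constant: here $d+1\to\infty$ destroys the additive slack the regularity argument needs (the $\varepsilon n$ error terms swamp the $n/(d+1)$ budget), while $\Delta_1\Delta_2$ sits too close to $n$ for the bare probabilistic embedding to close. Bridging this range would require a hybrid approach — a sparse-regularity (or structural) decomposition of the sparse graph $G_1$, combined with a rejection-sampling random embedding of $G_2$ and a final local-correction/absorption clean-up — and arranging that all error terms are simultaneously negligible across the whole interval $1\ll d\ll\sqrt n$ is exactly the point at which, to the best of my knowledge, the conjecture currently stands open.
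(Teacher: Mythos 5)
The statement you were asked about is not a theorem of the paper at all: it is the Bollob\'as--Eldridge--Catlin conjecture, which the paper quotes precisely because it is wide open, and the paper proves only a related half-Ore special case (graphs with Ore-degree at most $5$ packed with graphs of minimum co-degree $2n/3$, Theorem~\ref{main}). So there is no proof in the paper to compare against, and your text is not a proof either --- it is a research programme, and you concede as much in your final paragraph, where the ``intermediate regime'' $d_0\le d\le n^{1/2}$ is explicitly left open. A proposal that ends by stating that a range of parameters ``is exactly the point at which the conjecture currently stands open'' cannot be accepted as a proof of the conjecture.

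Beyond that global issue, the fixed-$d$ part of your plan has a concrete gap that you acknowledge only obliquely (``believed feasible''): the conjectured bound $(\Delta_1+1)(\Delta_2+1)\le n+1$ leaves only an \emph{additive $O(1)$} surplus over the minimum degree $(1-\frac{1}{d+1})n$ of $\overline{G_1}$, whereas every tool you invoke on that side --- the Regularity Lemma, the Blow-up Lemma, the bandwidth theorem of B\"ottcher--Schacht--Taraz, Hajnal--Szemer\'edi on the reduced graph, absorbing reservoirs --- consumes an error term of size $\varepsilon n$ (or $\gamma n$). The conjecture is tight (take $G_2$ a disjoint union of cliques $K_{d+1}$ and $G_1$ so that $\overline{G_1}$ is a balanced complete multipartite graph at the exact threshold), so no linear slack exists to pay for regularity losses; this is precisely why even the fixed cases $\Delta_1\ge 4$ of the conjecture remain unproved, the only settled bounded-degree cases being $\Delta_1\le 2$ (Aigner--Brandt, Alon--Fischer) and $\Delta_1=3$ for large $n$ (Csaba--Shokoufandeh--Szemer\'edi), as the paper's introduction records. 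Your reduction of small cases to these known results is fine, but the step ``for each fixed $d$: \dots engineered with error terms that beat $1/(d+1)$'' is exactly the missing mathematics, not a detail; similarly, the large-$d$ randomized greedy/Local Lemma argument is only known to work well below the product threshold (Sauer--Spencer gives $\Delta_1\Delta_2<n/2$), and pushing it to $(\Delta_1+1)(\Delta_2+1)\le n+1$ is again an open problem rather than a routine concentration argument. If your goal was the paper's actual results, the correct target is Theorem~\ref{main} (equivalently Conjecture~\ref{sejtes} for $\theta(G_1)\le 5$), whose proof proceeds via a decomposition of $H$ around a large independent set, a balanced homomorphism into the reduced graph built from proportional matchings, the Modified Blow-up Lemma, and a separate extremal analysis --- none of which appears in your proposal.
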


There are some resolved cases of the above conjecture, see e.g.,~\cite{AB, AF, Cs2, CsSSz}, but in general it is wide open. Kostochka and Yu~\cite{KY, KY3} conjectured a half-Ore version of it, in which $\Delta_1$ is
replaced by $\theta(G_1)/2.$ In this paper we suggest a slightly stronger conjecture\footnote{That is, instead of  $\theta(G_1)/2$
we have $\lfloor {\theta(G_1)}/{2}\rfloor.$ Hence, the two conjectures differ in case $\theta(G_1)$ is odd.}:

\begin{conjecture}\label{sejtes}
If $G_1$ and $G_2$ are graphs on $n$ vertices such that  
$$\left(\left\lfloor \frac{\theta(G_1)}{2}\right\rfloor +1\right)(\Delta_2+1)\le n+1,$$ then $G_1$ and $G_2$ pack.  
\end{conjecture}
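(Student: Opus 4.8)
\section{Proof proposal for Conjecture~\ref{sejtes}}

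I work with the packing--embedding dictionary: $G_1$ and $G_2$ pack if and only if $G_1$ embeds into the host $G:=\overline{G_2}$, and then $\delta(G)=n-1-\Delta_2$, so the hypothesis reads $(\lfloor\theta(G_1)/2\rfloor+1)(n-\delta(G))\le n+1$. Since $\lfloor\theta(G_1)/2\rfloor\le\Delta(G_1)$, the hypothesis is implied by $(\Delta(G_1)+1)(\Delta_2+1)\le n+1$, so Conjecture~\ref{sejtes} contains the Bollob\'as--Eldridge--Catlin conjecture \cite{BE,C}. The bound is tight, so the plan is a stability argument: classify the near-extremal instances and dispose of them directly, then exploit the slack in the remaining instances. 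The near-extremal configurations are, roughly, either $G_1$ a disjoint union of minimum-order graphs of Ore-degree $\theta(G_1)$ (an $H$-factor obstruction) or $G$ close to a disjoint union of cliques or to a complete multipartite graph. I split according to whether $\Delta_2\ge\varepsilon n$ or $\Delta_2<\varepsilon n$ for a fixed small $\varepsilon$.

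\textbf{Bounded Ore-degree ($\Delta_2\ge\varepsilon n$).} The product condition forces $\theta(G_1)\le d$ for a constant $d=d(\varepsilon)$, and $G_1$ has the usual Ore structure: the set $L=\{v:\deg_{G_1}(v)>\lfloor\theta(G_1)/2\rfloor\}$ is independent, its neighbours all lie in $S:=V(G_1)\setminus L$, and $G_1[S]$ has maximum degree at most $\lfloor\theta(G_1)/2\rfloor$. Now $(\lfloor\theta(G_1)/2\rfloor+1)(\Delta_2+1)\le n+1$ puts the pair $(G_1[S],G_2)$ essentially at the Bollob\'as--Eldridge--Catlin threshold, so a robust strengthening of the known bounded-degree cases of that conjecture \cite{AB,AF,Cs2,CsSSz} --- regularity lemma plus blow-up lemma, with near-extremal hosts handled separately --- should embed $G_1[S]$ into $G$ while leaving roughly $|L|$ vertices of $G$ unused. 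It remains to \emph{route} $L$: place each $v\in L$ of degree $a_v$ at an unused host vertex $w$ adjacent in $G$ to all $a_v$ already-embedded neighbours of $v$. Because those neighbours have degree at most $\lceil\theta(G_1)/2\rceil-1$ and $L$ is independent, the blocked host vertices are few, so a deficiency version of Hall's theorem --- or an absorbing set of host vertices reserved in advance --- should complete the embedding; the half-Ore versions of the Sauer--Spencer theorem and of the $2$-factor theorem due to Kostochka and Yu \cite{KY2,KY3} give the local estimates needed here.

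\textbf{Unbounded Ore-degree ($\Delta_2<\varepsilon n$).} Here $\delta(G)>(1-\varepsilon)n$ but $\theta(G_1)$ may be as large as $n-1$, and the product condition only yields $\Delta_2\lesssim 2n/\theta(G_1)$, a factor of $2$ weaker than the hypothesis $\theta(G_1)\Delta(G_2)<n$ under which Kostochka and Yu \cite{KY2} proved the half-Ore Sauer--Spencer theorem. So a purely vertex-by-vertex greedy embedding --- order $V(G_1)$ so that each vertex has only boundedly many already-embedded neighbours, and track the blocked host vertices --- reaches a factor of $2$ but no further. To cross the gap I would reserve a linear-sized set $R\subset V(G)$ with $\delta(G-R)$ still large, greedily embed all of $G_1$ except a constant fraction of its high-degree vertices into $V(G)\setminus R$, and absorb the rest into $R$ at the end; the delicate point is to choose $R$ and the embedding order compatibly with the Ore decomposition of $G_1$ while keeping all estimates valid at the tight product bound.

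\textbf{The main obstacle.} Crossing this factor-$2$ gap for \emph{unbounded} $\theta(G_1)$ is the whole difficulty, and no current technique does it: this is exactly what keeps the Bollob\'as--Eldridge--Catlin conjecture open for unbounded maximum degree, and Conjecture~\ref{sejtes} implies that conjecture. I therefore expect the realistic reach of the scheme above to be all $G_1$ with $\theta(G_1)\le C$ for an absolute constant $C$ and $n$ large, with the case $\theta(G_1)\le 5$ --- the main theorem of this paper --- as the base case; a proof of the full conjecture would, in effect, have to settle the Bollob\'as--Eldridge--Catlin conjecture as well.
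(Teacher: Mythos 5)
You have not proved Conjecture~\ref{sejtes}, and neither does the paper: the statement you were given is a \emph{conjecture}, which the authors only verify for $2\le\theta(G_1)\le 5$ (the $\theta\le 4$ cases via Ore's theorem, $P_n^2$, Corr\'adi--Hajnal and Kostochka--Yu, and the $\theta=5$ case via Theorems~\ref{main} and~\ref{stab}). Your text is explicitly a research programme rather than a proof, and its two pillars are both unsupported at the decisive points. In the bounded-Ore-degree case you invoke ``a robust strengthening of the known bounded-degree cases'' of the Bollob\'as--Eldridge--Catlin conjecture that would embed $G_1[S]$ while leaving $|L|$ host vertices free; no such result exists (BEC itself is known only in very restricted cases such as $\Delta\le 2$, $\Delta\le 3$, or bipartite settings \cite{AB,AF,CsSSz,Cs2}), and the subsequent Hall-type routing of $L$ is exactly where the argument must confront the tight extremal configurations --- the product hypothesis has no slack, so ``the blocked host vertices are few'' is not something a deficiency Hall argument gives you for free; it is the heart of the matter. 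In the unbounded case you concede yourself that the factor-$2$ gap beyond Kostochka--Yu \cite{KY2} cannot be crossed by current techniques and that the full conjecture is at least as hard as BEC. So the honest summary of your proposal is a correct reduction of the packing formulation to an embedding problem, a correct observation that the conjecture strengthens BEC, and an accurate diagnosis of why it is open --- but no proof of any case beyond what is already cited.

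It is also worth noting that even for the single case $\theta(H)\le 5$ the paper's actual argument looks nothing like ``embed the low-degree part, then route the high-degree vertices greedily.'' It rests on a structural decomposition of $H$ (Lemma~\ref{strH}: a dominating independent set $I$ with $|I|\ge n/3$ such that $H-I$ consists of paths of length at most $2$), a balanced homomorphism of $H$ into the reduced graph built from a triangle factor of $G_r$ via proportional and strong proportional matchings, the Modified Blow-up Lemma (Theorem~\ref{modblow-up}) with its conditions C1--C9 verified one by one, and then a separate, lengthy analysis when $H$ is triangular extreme and/or $G$ is $\eta$-extremal, using Corr\'adi--Hajnal and the stability theorem of Hajnal--Herdade--Jamshed--Szemer\'edi (Theorem~\ref{Endre}). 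If you want to push your scheme even to a fixed constant $C>5$, the place to start is to understand why this machinery is needed already at $\theta=5$ --- in particular why the exceptional set $L_0$ and the extremal host structures force the proportional-matching and switching arguments --- rather than assuming a generic regularity-plus-absorption framework will absorb those difficulties.
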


In this paper we confirm the above conjecture for the cases $2\le \theta(G_1)\le 5$ (note that the Ore-degree is either 
zero or at least 2). The cases $\theta(G_1)=2, 3, 4$ are relatively easy. The case $\theta(G_1)=5$ is considerably harder. 
Note that there are expander graphs $H$ with $\theta(H)=5.$ We present our main result as an embedding question as follows.

\begin{theorem}\label{main}
There exists an $n_0$ such that if $n > n_0$, $\theta(H)\le 5$ for a graph $H$ of order $n$ and $\delta(G)\ge 2n/3$ for a 
graph $G$ of order $n$ then $H$ is a subgraph of $G.$
\end{theorem}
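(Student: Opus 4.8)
The overall plan is to combine a structural decomposition of $H$ with the Regularity and Blow-up Lemmas applied to $G$, together with an absorbing step for the leftover vertices. Note first that $2n/3$ is exactly the degree forced by taking $H$ to be a $K_3$-factor (here $\theta(K_3)=4$), so any proof must in effect contain the triangle case of the Corr\'adi--Hajnal/Hajnal--Szemer\'edi theorem, which I would simply invoke as a known result rather than reprove.

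\emph{Structure of $H$.} From $\theta(H)\le 5$ one reads off that $\Delta(H)\le 4$; that the set $D$ of vertices of degree at least $3$ is independent; that every neighbour of a degree-$4$ vertex has degree $1$, so each degree-$4$ vertex spans a $K_{1,4}$ component; and that every neighbour of a degree-$3$ vertex has degree at most $2$. After deleting the $K_{1,4}$ components --- which embed into any $G$ with $\delta(G)\ge 2n/3$ without effort --- what remains satisfies $V(H)=D\cup P$ with $D$ independent, $H[P]$ a disjoint union of paths and cycles, every $P$-vertex of degree at most $2$, and $|D|\le 2n/5$; equivalently, up to the $K_{1,4}$'s and some isolated paths and cycles, $H$ is a subdivision of a subcubic multigraph, and in particular $\chi(H)\le 3$. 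The decisive point of this reduction is that, even when $H$ is an expander, all of its expansion lives in the bipartite incidence graph $H[D,P]$ between the degree-$3$ ``hubs'' and their ``strands,'' while $H[P]$ itself is essentially one-dimensional; this is what neutralises the high-bandwidth obstruction that would otherwise block a Bandwidth-Theorem-type argument.

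\emph{Reducing the host and distributing $H$.} Apply the degree form of the Szemer\'edi Regularity Lemma to $G$, obtaining clusters whose reduced graph $R$ has $\delta(R)\ge (2/3-o(1))|V(R)|$, and extract from $R$ an almost-spanning collection of vertex-disjoint triangles (standard at this minimum degree), sweeping the few uncovered clusters into a small exceptional set. The core combinatorial step is then to assign the vertices of $H$ to the clusters so that every edge of $H$ lies inside a cluster's designated block or across a regular pair, and so that each cluster receives exactly as many vertices as its size. Here the independence of $D$ is what makes this feasible: images of hubs may be scattered freely among clusters --- even many hubs into one --- and each hub has a large choice of clusters for its three strands, so the task reduces to cutting the paths and cycles of $H[P]$ into short pieces and routing them along triangles and regular pairs of $R$ while balancing a flow/matching that pits the at most $2n/5$ hubs against the cluster sizes. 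Once the distribution is fixed, embed the exceptional vertices and the ``connecting'' vertices greedily one at a time --- each vertex of $G$ has at least $2n/3$ neighbours, and the $H$-vertices involved can be chosen pairwise far apart in $H$ so their requirements do not collide --- and finish by applying the Blow-up Lemma, one cluster triple at a time.

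\emph{Main obstacle.} The hard part is twofold. First, the distribution step itself: one must route the strand structure of $P$ through $R$ while hitting the cluster sizes on the nose, and this is where most of the bookkeeping lies. Second, and more seriously, $2n/3$ is tight, so there is no slack in the regularity argument; I expect this to force a stability dichotomy --- if $G$ is far from the extremal configuration (essentially a blow-up of a triangle with the part sizes perturbed and a divisibility twist) the regularity-plus-blow-up scheme above goes through with room to spare, whereas if $G$ is close to extremal one must instead embed $H$ almost by hand, matching a proper $3$-colouring of $H$ to the three near-cliques of $G$ and repairing the $o(n)$ discrepancies directly.
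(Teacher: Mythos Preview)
Your high-level architecture---Regularity plus triangle factor in the reduced graph, a distribution/homomorphism step, Blow-up, and a stability dichotomy for $G$---matches the paper's. But two things differ, and one of them is a genuine gap.

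\textbf{Different decomposition of $H$.} You take $D=\{v:\deg_H(v)\ge 3\}$ and work with the long paths and cycles of $H[P]$. The paper instead proves (Lemma~\ref{strH}) that there is an independent set $I\subset\{v:\deg_H(v)\le 2\}$ with $|I|\ge n/3$ such that the components of $H-I$ are paths of length \emph{at most two}. This is the workhorse: every component of $H-I$ fits into a single triangle of the reduced-graph triangle factor, and then each $x\in I$ has only two neighbours, so placing $x$ only asks for a common $G_r$-neighbour of two clusters. That two-cluster problem is solved exactly by a proportional matching in an auxiliary bipartite graph $\Lambda_1$ (Lemma~\ref{bipart}) and its ``strong'' refinement (Lemma~\ref{strong-bipart}). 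Your route---scatter hubs, then thread their three strands---forces you to find common neighbours of \emph{three} clusters, which can be empty at minimum degree $\approx 2\ell/3$; you would need to control simultaneously where all three strands of a hub land, and your sketch does not say how. The paper's decomposition sidesteps this entirely.

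\textbf{The missing dichotomy on $H$.} The more serious issue is your absorbing step. You write that the exceptional and connecting vertices of $G$ can be covered greedily because ``the $H$-vertices involved can be chosen pairwise far apart in $H$.'' This is exactly what fails when $H$ is close to a $K_3$-factor: if $H$ consists of $(1-\nu)n/3$ disjoint triangles plus a small remainder, then the set $\widehat I$ of pairwise-distance-$\ge 5$ vertices in $I\setminus I_1$ is tiny (Claim~\ref{fgetlen2} gives $|\widehat I|\ge \nu n/40$ only when $H$ is \emph{not} $\nu$-triangular extreme), and you simply do not have enough flexible $H$-vertices to absorb an exceptional set of size $\Theta(dn)$. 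The paper therefore splits on $H$ first: for non-triangular-extreme $H$ it runs the Modified Blow-up Lemma (Theorem~\ref{modblow-up}) with the $\widehat I$-based buffer; for triangular-extreme $H$ it embeds the non-triangle part $H'$ first (Proposition~\ref{harmadik}) and then finds a triangle factor in the remaining host, invoking Theorem~\ref{Endre} when $G$ is non-extremal and doing the three-case extremal analysis of Section~\ref{extrem} when $G$ is extremal. Your proposal has the $G$-dichotomy but not the $H$-dichotomy, and without it the greedy/absorbing step breaks precisely at the tight instances.
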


Let us call a graph $G$ with minimum degree $2n/3$ an {\it $\eta$-extremal} graph, if there exists $A\subset V(G)$ with 
$|A|=\lfloor n/3\rfloor$
such that $e(G[A])\le \eta {n/3 \choose 2}<\eta  n^2/18.$ If $G$ has no such subset we call it {\it $\eta$-non-extremal.}
We also prove the following {\it stability} version of Theorem~\ref{main}.

\begin{theorem}\label{stab}
There exist positive numbers  $\gamma_0, \eta_0, n_0$ such that if $n > n_0$, $\eta> \eta_0,$ $\gamma <\gamma_0$ and $\theta(H)\le 5$ for a 
graph $H$ of order $n$ and $\delta(G)\ge (2/3-\gamma)n$ for any $\eta$-non-extremal  
graph $G$ of order $n$ then $H$ is a subgraph of $G.$ 
\end{theorem}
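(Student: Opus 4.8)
The plan is to treat Theorem~\ref{stab} as the non-extremal core behind Theorem~\ref{main} and to prove it by combining Szemer\'edi's Regularity Lemma with the Blow-up Lemma of Koml\'os, S\'ark\"ozy and Szemer\'edi, using the non-extremality of $G$ precisely to guarantee a sufficiently rich spanning structure in the reduced graph. Throughout I would think in terms of the embedding formulation, so that $H$ must be placed spanningly into $G$.

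First I would analyse the structure of $H$. From $\theta(H)\le 5$ one reads off that $\Delta(H)\le 4$ and, crucially, that the set $W$ of vertices of degree at least $3$ is independent, so that $H-W$ is a disjoint union of paths and cycles. From this I would extract two facts. First, $H$ has a proper $3$-colouring (colour $W$ with one colour and $2$-colour the path/cycle forest $H-W$, handling the one case where $H$ is an odd cycle directly), and in fact a \emph{locally balanced} one: every union of whole components, and every sufficiently long subpath, gets roughly equally many vertices of each colour (colour long paths periodically $1,2,3,1,2,3,\dots$, and distribute the colours of the vertices of $W$ across components). Second, $H$ can be cut, along single edges lying on degree-$2$ stretches and hence well away from $W$, into pieces each of order about $3m$ (where $m$ is the cluster size produced below), so that every piece is $3$-partite with nearly balanced parts and every cut edge joins two consecutive pieces.

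Next I would apply the Regularity Lemma to $G$, obtaining an equitable partition into an exceptional set $V_0$ and clusters $V_1,\dots,V_k$ of size $m$, with reduced graph $R$ satisfying $\delta(R)\ge (2/3-\gamma)k-o(k)$; a routine averaging argument transfers $\eta$-non-extremality of $G$ to $R$, i.e.\ no union of about $k/3$ clusters induces few edges in $R$, for otherwise its pre-image would be an $\eta$-extremal set in $G$. The central step is to find in $R$ a spanning, suitably linked family of triangles. Since $\delta(R)$ lies only slightly below the Corr\'adi--Hajnal threshold $2k/3$, I would invoke and adapt a stability form of the Corr\'adi--Hajnal / Hajnal--Szemer\'edi theorem: near this threshold the only obstructions to a perfect $K_3$-tiling are the extremal configurations, each of which contains a sparse set of size $\sim k/3$, and these are excluded by the non-extremality of $R$; hence $R$ has a $K_3$-factor. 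Using the high minimum degree and resulting connectivity of $R$ I would then arrange the factor triangles into a single cyclic sequence in which consecutive triangles are joined by an edge of $R$. After the standard cleaning (moving a few clusters to $V_0$, equalising cluster sizes, and passing to $\epsilon$-super-regular pairs inside each factor triangle and along each linking edge) this yields a spanning ``cycle of blown-up triangles'' in $G$, with the vertices of $V_0$ absorbed into a few triangles using their large neighbourhoods.

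Finally I would match $H$ to this structure: assign the consecutive pieces of $H$ from the structural step to consecutive blown-up triangles, the three colour classes of each piece to the three clusters of its triangle, and choose the colourings so that each cut edge of $H$ maps onto a linking super-regular pair. Each piece thereby becomes a spanning $3$-partite subgraph of the complete $3$-partite graph on its triangle's clusters, and since $\Delta(H)\le 4$ is bounded while all relevant pairs are super-regular, the Blow-up Lemma — with image restrictions for the bounded number of vertices sitting at piece junctions — produces the embedding $H\subset G$. I expect the main obstacle to be the third step together with the bookkeeping of the fourth: extracting a genuine $K_3$-factor (not merely an almost-factor) from a minimum degree below $2k/3$ forces a stability version of Corr\'adi--Hajnal and a verification that $\eta$-non-extremality kills every bad case, while at the same time one must cut the possibly few but very large components of $H$ — in particular place all degree-$3$ and degree-$4$ vertices, which is exactly what a \emph{blown-up triangle} (rather than a blown-up edge, i.e.\ a Hamilton cycle) makes possible and which is what pins the threshold at $2n/3$ — so that colour classes stay balanced and the cut edges land on the limited supply of linking pairs.
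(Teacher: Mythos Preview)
Your plan is a bandwidth-theorem style argument: linearly order the pieces of $H$, linearly order the triangles of the reduced graph, and thread one through the other with the Blow-up Lemma.  The gap is in your second structural claim about $H$.  You assert that $H$ can be cut, along single edges in degree-$2$ stretches, into pieces of size about $3m$ so that ``every cut edge joins two consecutive pieces''.  This is a low-bandwidth hypothesis, and it is false in general: as the paper remarks in the introduction, there are expander graphs $H$ with $\theta(H)=5$ (take a random bipartite graph that is $3$-regular on one side and $2$-regular on the other).  For such $H$ every balanced cut has $\Omega(n)$ crossing edges, so no decomposition into $\ell$ pieces of size $3m$ with $O(\ell)$ cut edges exists, and the whole piece-to-triangle assignment collapses.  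The same obstruction already appears locally: in your caterpillar-like examples the spine has no edge with both endpoints of degree $2$, so even finding one good cut edge is not automatic.

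The paper circumvents this by never trying to linearise $H$.  Its key structural step (Lemma~\ref{strH}) produces an independent dominating set $I\subset V(H)$ with $|I|\ge n/3$ and $\deg_H(x)\le 2$ for $x\in I$, such that the components of $H-I$ are paths of length at most~$2$.  Those tiny components are then assigned \emph{randomly and independently} to the triangles of the $K_3$-factor in $G_r$, while the vertices of $I$ are placed via proportional and strong-proportional matchings in an auxiliary bipartite graph $\Lambda_1(G_r)$; this yields a balanced homomorphism $f:V(H)\to V(G_r)$ that works even when $H$ is an expander.  The embedding is then finished with the Modified Blow-up Lemma (Theorem~\ref{modblow-up}) rather than the standard one, precisely because one must accommodate the exceptional cluster $V_0$ without super-regularity on every pair.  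Finally, this whole machinery needs a set $\widehat{I}\subset I$ of linear size whose vertices are pairwise at distance $\ge 5$; when $H$ is $\nu$-triangular extreme (almost a triangle factor) this fails, and the paper handles that case separately by first embedding the non-triangle part $H'$ via the above and then invoking the stability version of the P\'osa--Seymour theorem (Theorem~\ref{Endre}) on the remainder --- this is the only place where $\eta$-non-extremality of $G$ is actually used.  Your outline does not contain either of these two ingredients.
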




The structure of the paper is as follows. First we consider the cases $\theta(H)=2, 3, 4$ in a separate section. Then we provide a 
list of further notions and tools necessary for proving Theorem~\ref{main} and Theorem~\ref{stab}. Finally we turn to the proof of our main results.

\section{Graphs having Ore-degree at most four}

Let us begin with some simple observations. The proof of the claim below is left for the reader. 

\begin{claim}
Let $H$ be a non-empty graph. Then $\theta(H)\ge 2.$ If $\theta(H)=2$ then the connected components
of $H$ are isolated vertices and edges, and $H$ has at least one edge. 
If $\theta(H)=3$ then the components of $H$ are paths having length at most 2, and $H$ has at least one length-2 path.
\end{claim}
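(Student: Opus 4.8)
The whole statement is elementary, and my plan is to argue everything by reading off the two degrees at the endpoints of a single edge, using that $\theta(H)$ is the \emph{maximum} Ore-degree over $E(H)$. For the first assertion, since $H$ is non-empty it has an edge $xy$, and each of $x,y$ has degree at least $1$, so $\theta(x,y)=\deg(x)+\deg(y)\ge 2$; maximising over edges gives $\theta(H)\ge 2$.

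For the case $\theta(H)=2$: every edge has Ore-degree at least $2$ by the above and at most $2$ by hypothesis, hence \emph{exactly} $2$, so every edge $xy$ satisfies $\deg(x)=\deg(y)=1$. Thus any vertex incident to an edge has degree $1$, which forces its component to be a single edge $K_2$, while all other vertices are isolated. Non-emptiness supplies at least one $K_2$, i.e.\ at least one edge.

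For the case $\theta(H)=3$: first I would observe that $\Delta(H)\le 2$, since a vertex $v$ with $\deg(v)\ge 3$ would lie on an edge $vw$ with $\theta(v,w)\ge 3+1=4$, contradicting the hypothesis. Hence $H$ is a disjoint union of paths and cycles. A cycle would contain an edge whose two endpoints both have degree $2$, giving Ore-degree $4$; so would a path of length at least $3$; both are impossible, so every component is a path of length at most $2$. Finally, the edges of a $K_2$ have Ore-degree $2$ and isolated vertices carry no edge, so an edge of Ore-degree $3$ — which exists because $\theta(H)=3$ — can only sit inside a path of length exactly $2$; hence $H$ contains such a path. I do not anticipate any genuine obstacle: the only points needing a little care are the standard decomposition of a graph of maximum degree at most $2$ into paths and cycles, and keeping in mind that it is the maximum, not the minimum, over edges that is prescribed, which is exactly what pins down the endpoint degrees in the $\theta(H)=2$ case.
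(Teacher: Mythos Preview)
Your argument is correct and complete; the paper itself leaves this claim to the reader, so there is no proof to compare against, and what you have written is exactly the sort of elementary verification intended.
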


Assume that $H$ is a graph on $n$ vertices with $\theta(H)\le 3.$ It is easy to see that $H\subseteq C_n.$ 
Let $\delta_2(G)=\min_{xy\not\in E(G)} \{\deg(x)+\deg(y)\}.$ Applying the celebrated theorems of Dirac and Ore we have the following.

\begin{theorem}
Suppose that $G$ is a simple graph on $n\ge 3$ vertices and $H$ is a simple graph on $n$ vertices 
with $\theta(H)\le 3.$ If $\delta(G)\ge n/2$ or if $\delta_2(G)\ge n$ then $H\subseteq G$  
\end{theorem}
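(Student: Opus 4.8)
The plan is to reduce the claim directly to the two classical theorems quoted in the introduction. By the preceding Claim, a graph $H$ on $n$ vertices with $\theta(H)\le 3$ has all components equal to isolated vertices, single edges, or paths of length at most $2$ (and if $\theta(H)=2$ it is a union of edges and isolated vertices, while $\theta(H)\le 1$ means $H$ is edgeless). In every such case $H$ is a union of vertex-disjoint paths, so $H$ is a linear forest on $n$ vertices; hence $H\subseteq C_n$, since a Hamilton cycle on $n$ labelled vertices contains any spanning linear forest (just delete, for each path, the edges of $C_n$ not used by that path — concretely, arrange the paths consecutively around the cycle and drop one cycle-edge between consecutive paths). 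This observation is already asserted in the text right before the statement.

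Next I would invoke the hypotheses on $G$. If $\delta(G)\ge n/2$ and $n\ge 3$, Dirac's theorem gives a Hamilton cycle in $G$, i.e. $C_n\subseteq G$. If instead $\delta_2(G)\ge n$, that is $\deg(x)+\deg(y)\ge n$ for every non-edge $xy$, Ore's theorem again yields a Hamilton cycle $C_n\subseteq G$. Either way $C_n\subseteq G$, and combining with $H\subseteq C_n$ from the previous paragraph gives $H\subseteq G$, as desired.

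There is essentially no obstacle here: the only point needing a word of care is the embedding $H\subseteq C_n$ when $H$ has components of different types and possibly isolated vertices — one must check that the total vertex count matches (it does, both graphs have $n$ vertices) and that a single spanning cycle can simultaneously host all the path-components. This is immediate by laying the components out in order around the cycle; no numerics are involved. So the proof is just: apply the Claim to see $H$ is a spanning linear forest, note $H\subseteq C_n$, apply Dirac or Ore to get $C_n\subseteq G$, and conclude $H\subseteq C_n\subseteq G$.
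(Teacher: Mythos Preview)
Your proposal is correct and follows essentially the same approach as the paper: observe that any $H$ with $\theta(H)\le 3$ is a spanning linear forest and hence embeds in $C_n$, then apply Dirac's or Ore's theorem to find $C_n\subseteq G$. The paper states this argument in a single sentence without elaboration, so your added detail on arranging the path-components around the cycle is a welcome clarification rather than a departure.
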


Of course, other sufficient conditions for the existence of Hamilton cycles (e.g.~by P\'osa or by Chv\'atal) also imply 
analogous results. The above theorem is tight as the following example shows.
Let $n=2k.$ Let $H$ be the vertex disjoint union of $k$ edges, and let $G$ be a complete bipartite graph with vertex 
class sizes $k+1$ and $k-1.$ Then $v(G)=v(H)=n,$ $\theta(H)=2,$ $\delta(G)=k-1=n/2-1,$ and 
$H\not\subset G.$

\medskip

Let us consider the case $\theta(H)=4$. The proof of the claim below is left for the reader.

\begin{claim}\label{theta4}
Assume that $H$ is a graph with $\theta(H)=4.$ Then the connected components of $H$ are paths, cycles
or claws (a claw is a $K_{1,3}$).
\end{claim}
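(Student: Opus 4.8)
The plan is to first pin down the maximum degree of $H$ and then classify each connected component according to whether or not it contains a vertex of degree $3$. I would start by noting that $\Delta(H)\le 3$: if some vertex $v$ had $\deg_H(v)\ge 4$, then choosing any neighbor $u$ of $v$ would give $\theta(u,v)=\deg_H(u)+\deg_H(v)\ge 1+4=5$, contradicting $\theta(H)=4$.

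Next, fix a connected component $C$ of $H$. If $C$ contains a vertex $v$ of degree $3$, then for every neighbor $u$ of $v$ we have $\deg_H(u)=\theta(u,v)-\deg_H(v)\le 4-3=1$, so $\deg_H(u)=1$ since $u$ is adjacent to $v$. Thus all three neighbors of $v$ are leaves, and since a leaf has no further neighbors, the component $C$ consists precisely of $v$ together with these three leaves; hence $C\cong K_{1,3}$ is a claw. If instead $C$ has no vertex of degree $3$, then by the first step $\Delta(C)\le 2$, and a connected graph of maximum degree at most $2$ is a path (allowing the degenerate cases of a single vertex or a single edge) or a cycle. Since every component falls into one of these two cases, the claim follows.

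There is no serious obstacle in this argument; the only mildly delicate point is the assertion that, once the three neighbors of a degree-$3$ vertex $v$ are known to be leaves, the whole component is exhausted by $v$ and those leaves — but this is immediate, as a degree-$1$ vertex contributes nothing new when one grows the component outward from $v$.
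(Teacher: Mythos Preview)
Your proof is correct. The paper does not actually give a proof of this claim; it explicitly states ``The proof of the claim below is left for the reader.'' Your argument --- bounding $\Delta(H)\le 3$, then splitting components by whether they contain a degree-$3$ vertex --- is exactly the natural route and fills this gap cleanly.
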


One of the most important case is when $H$ contains vertex disjoint triangles. The following is a celebrated result of K.~Corr\'adi and A.~Hajnal:

\begin{theorem}[Corr\'adi--Hajnal, \cite{HC}]\label{CH}
If $G$ is a graph of order $n$ and $\delta(G)\ge 2n/3$ then $G$ has a $K_3$-factor.
\end{theorem}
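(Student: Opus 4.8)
The plan is to argue by contradiction via the standard ``maximum packing plus augmentation'' method. Suppose $G$ has no $K_3$-factor and let $\mathcal F=\{T_1,\dots,T_t\}$ be a family of pairwise vertex-disjoint triangles of maximum size; since a factor consists of $\lfloor n/3\rfloor$ triangles, $t\le\lfloor n/3\rfloor-1$, so the set $L=V(G)\setminus\bigcup_i V(T_i)$ of uncovered vertices has $|L|=n-3t\ge 3$. Among all maximum families I would fix one that is also extremal for a suitable secondary potential --- the natural candidates being $e(G[L])$, or $\sum_i e_G(L,V(T_i))$, or a weighted combination --- chosen precisely so that the local swap moves below yield contradictions. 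The aim is to show that then $\delta(G)\ge 2n/3$ lets one re-cut a small sub-collection of the $T_i$'s together with three vertices of $L$ into strictly more triangles, contradicting maximality; hence $t=\lfloor n/3\rfloor$.

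For the structure: maximality already makes $G[L]$ triangle-free, and as any two adjacent vertices of $L$ have at least $2\cdot(2n/3)-n=n/3\ge 1$ common neighbours, none of which can lie in $L$, one has a first foothold. Using the secondary extremality I would sharpen this to rigid constraints on the bipartite graphs $G[L,V(T_i)]$: for instance, if $x,y\in L$ with $xy\in E(G)$ are both joined to two vertices of some $T_i=\{a,b,c\}$, then replacing $T_i$ by the triangle $xya$ gives another maximum family whose leftover set has gained the edge $bc$ and the outside edges at $b,c$ --- contradicting the choice of potential; analogous swaps bound how many edges a leftover vertex, or a leftover edge, can send into one triangle, and how many triangles a leftover vertex can touch. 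Once $L$ is forced to be independent, an augmenting re-cut must have a very rigid shape: a new triangle contains at most one vertex of $L$, so to create one extra triangle one must discard at least two old triangles and use all their $\ge 6$ vertices together with exactly $3$ vertices of $L$, each new triangle being one leftover vertex plus two old-triangle vertices. Plugging $\deg(u_1)+\deg(u_2)+\deg(u_3)\ge 2n$ for a fixed triple $u_1,u_2,u_3\in L$ into these constraints shows the triple is heavily joined to the $T_i$'s, so by averaging some pair $T_i,T_j$ (or a slightly larger sub-collection) absorbs almost all edges available from $\{u_1,u_2,u_3\}$, while the minimum-degree condition at the vertices of $T_i\cup T_j$ keeps $G[V(T_i)\cup V(T_j)]$ from being too sparse.

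The main obstacle is precisely this last conversion: turning ``a triple of leftover vertices is densely joined to two (or a few) triangles'' into an explicit re-triangulation producing one more triangle. This amounts to a finite case analysis over the possible bipartite patterns between $\{u_1,u_2,u_3\}$ and $V(T_i)\cup V(T_j)\cup\cdots$, but a delicate one: as easy examples show, dense joining alone is not sufficient --- the edges inside and between the old triangles must be exploited, and the edge count has to be organized so that every surviving configuration really does admit the extra triangle. Choosing the secondary potential and the repertoire of swap moves so that no bad configuration survives is where essentially all the difficulty lies; the divisibility bookkeeping (for $n$ not a multiple of $3$) and the global averaging are routine by comparison. This is the theorem of Corr\'adi and Hajnal~\cite{HC}, and their original argument follows this scheme.
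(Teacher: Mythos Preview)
The paper does not prove this statement; Theorem~\ref{CH} is quoted from~\cite{HC} as a known result and used as a black box (specifically in Step~2 of Section~\ref{5.1}, to find a triangle factor in the reduced graph). There is therefore no ``paper's own proof'' to compare your proposal against.

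Your outline follows the classical extremal scheme of Corr\'adi and Hajnal, and the overall architecture is right: take a maximum triangle packing, choose it extremal with respect to a secondary invariant, and show via local swaps that the leftover set forces an augmentation. You are also honest about where the work lies --- the case analysis on the bipartite pattern between three leftover vertices and a small collection of packed triangles. As written, however, this is a plan rather than a proof: you have not committed to a specific secondary potential, nor carried out the swap arguments that force $L$ independent and bound the $L$--$T_i$ edge counts, nor executed the final augmentation. Each of these is genuinely delicate (as you note), and the proof is not complete until they are done. If you want a self-contained argument, you should either reproduce the original~\cite{HC} proof in full or simply cite it, as the paper does.
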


Let us consider a complete 3-partite graph $G$ with vertex classes having cardinalities $k, k+1$ and $k-1,$ respectively. Clearly, $G$
does not contain $k$ vertex disjoint triangles, showing that the minimum degree bound of the Corr\'adi-Hajnal theorem is
tight. It also shows that $\delta(G)$ has to be at least $2n/3$ in order to guarantee 
that every $H$ on $n=3k$
vertices with $\theta(H)=4$ is a subgraph of $G.$ 

In~\cite{KY3} Kostochka and Yu proved the following:

\begin{theorem}
Each $n$-vertex graph $G$ such that $$\theta(G)\le \frac{2n}{3}-1$$ packs with every $n$-vertex graph $H$ such that
$\theta(H)\le 4.$
\end{theorem}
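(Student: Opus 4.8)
\emph{Setup.} The plan is to reformulate the packing statement as an embedding $H\subseteq\overline{G}$ and to exploit the structure that $\theta(G)\le 2n/3-1$ forces on $\overline{G}$. We may assume $v(H)=n$ (add isolated vertices), and also $\theta(H)=4$: if $\theta(H)\le 3$ then $H\subseteq C_n$, while $\overline{G}$ is Hamiltonian because for every non-edge $uv$ of $\overline{G}$, i.e.\ every edge of $G$,
\[
\deg_{\overline{G}}(u)+\deg_{\overline{G}}(v)=2(n-1)-\bigl(\deg_G(u)+\deg_G(v)\bigr)\ \ge\ 2(n-1)-\Bigl(\tfrac{2n}{3}-1\Bigr)\ =\ \tfrac{4n}{3}-1\ \ge\ n,
\]
so Ore's theorem applies. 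By Claim~\ref{theta4}, when $\theta(H)=4$ the components of $H$ are paths, cycles and claws, and it remains to embed such an $H$ into $\overline{G}$.

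\emph{Structure of $\overline{G}$.} Put $B=\{v\in V(G):\deg_G(v)>\tfrac n3-\tfrac12\}$ and $A=V(G)\setminus B$. If $u,v\in B$ and $uv\in E(G)$ then $\deg_G(u)+\deg_G(v)>\tfrac{2n}{3}-1\ge\theta(G)$, which is impossible; hence $B$ spans no edge of $G$, so $B$ is a clique in $\overline{G}$. Every $v\in A$ has $\deg_G(v)\le\tfrac n3-\tfrac12$, whence $\deg_{\overline{G}}(v)\ge\bigl\lceil\tfrac{2n-1}{3}\bigr\rceil$ --- exactly the minimum degree required by the Aigner--Brandt theorem. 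Moreover every $v\in B$ has a $G$-neighbour of degree at least $1$, so $\deg_G(v)\le\tfrac{2n}{3}-2$ and $\deg_{\overline{G}}(v)\ge\tfrac n3+1$; and if some $v\in B$ has all its $\overline{G}$-neighbours inside $B$ then $\deg_G(v)\ge|A|$, which forces every vertex of $A$ to have $\overline{G}$-degree at least $\tfrac{4n}{3}-|B|$. In short, $\overline{G}$ is a clique $B$ glued onto a set $A$ of vertices of $\overline{G}$-degree at least $\bigl\lceil\tfrac{2n-1}{3}\bigr\rceil$, and the clique is the only place where the minimum degree of $\overline{G}$ can drop below that threshold, and only at the price of $A$ becoming correspondingly dense.

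\emph{Embedding $H$.} I would embed $H$ one component at a time, treating the three types separately and splitting the analysis according to $|B|$. The cycle and path components together form a spanning subgraph of maximum degree at most $2$; as long as the part of $\overline{G}$ still available has minimum degree at least $(2n'-1)/3$ on its $n'$ vertices, the Aigner--Brandt / Alon--Fischer theorem places all of them simultaneously, and the extreme case in which $H$ is a triangle factor is handled, after a bounded number of triangles are placed inside $B$, by Corr\'adi--Hajnal (Theorem~\ref{CH}). The claws are the awkward type: a claw requires one vertex with three private neighbours, and there are at most $n/4$ of them, so when $B$ is large I would place almost all claws inside the clique $B$, whereas when $B$ is small the set $A$ is nearly all of $V(G)$ and has minimum degree comfortably above $n/2$, so the claws can be packed by a greedy/absorbing argument with the few vertices of $B$ inserted by hand as leaves. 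In either extreme, choosing the claws so as to spend as little of $B$ as possible leaves a path-and-cycle remainder whose minimum degree still meets the Aigner--Brandt bound, which finishes the embedding.

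\emph{Main obstacle.} The real difficulty is the middle range, where $|B|$ is linear in $n$ but $\overline{G}$ is neither essentially complete nor of large uniform minimum degree: it is genuinely a clique $B$ welded to a set $A$ whose vertices have $\overline{G}$-degree only about $\tfrac{2n}{3}$, i.e.\ with essentially no slack above the Aigner--Brandt and Corr\'adi--Hajnal thresholds. Every placed component consumes degree, so the claws and any cycles forced through $A$ must be embedded with almost no budget, using the clique $B$ exactly where it is needed and nowhere else, while keeping enough of $B$ for the components that genuinely require a clique. Making this precise amounts to an extremal-case analysis separating ``$\overline{G}$ is close to a disjoint union of three cliques or to a complete multipartite graph'' from the generic situation; once that dichotomy is in place, each individual embedding step reduces to a routine application of Ore's theorem, the Aigner--Brandt theorem, Corr\'adi--Hajnal, or a greedy $K_{1,3}$-packing.
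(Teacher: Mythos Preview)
The paper does not prove this theorem at all: it is quoted as a result of Kostochka and Yu~\cite{KY3} and used as a black box. The only proof-like remark the paper offers is an alternative route to the \emph{weaker} Dirac-type consequence (Conjecture~\ref{sejtes} for $\theta(H)\le 4$): Proposition~\ref{propi} shows that every $H$ with $\theta(H)\le 4$ embeds into $P_n^2$, so it suffices to find $P_n^2$ in $\overline{G}$, which the Fan--Kierstead theorem gives when $\delta(\overline{G})\ge (2n-1)/3$; the full Ore-type version (for large $n$) would follow the same way from Chau's theorem. So there is nothing in the paper to compare your argument to, and the approach the paper implicitly suggests --- reduce everything to the single structured graph $P_n^2$ --- is quite different from, and much shorter than, your component-by-component embedding.

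Your proposal itself is not a proof but a plan, and it has a real gap precisely where you say it does. You openly leave the ``middle range'' of $|B|$ unresolved, and the sketched steps around it are also not justified. The key difficulty is degree maintenance: suppose you place $m$ claws using $4m$ vertices and then try to apply Aigner--Brandt to the remaining $n'=n-4m$ vertices. The threshold drops by $8m/3$, but a vertex $v\in A$ that was tight, $\deg_{\overline{G}}(v)=\lceil(2n-1)/3\rceil$, may lose up to $4m$ neighbours, so the remaining graph need not satisfy $\delta\ge(2n'-1)/3$. The same problem arises when you peel off a bounded number of triangles before invoking Corr\'adi--Hajnal. Making your scheme work would require showing that the claws (and balancing triangles) can always be placed so that every surviving vertex keeps enough degree --- and that, not the high-level case split, is the substance of the theorem. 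By contrast, the $P_n^2$ route sidesteps all of this bookkeeping by embedding a single universal target once.
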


Let $G_1=\overline{G},$ then $\delta(G_1)\ge 2n/3.$ Hence, Conjecture~\ref{sejtes} is a corollary of the above 
theorem for the case $\theta(H)=4.$ Let us briefly mention another way of proving Conjecture~\ref{sejtes} for this case. 

For $\theta(H)\le 3$ we could use the existence of Hamilton cycles in the host graph in order to embed $H.$ 
For the case $\theta(H)=4$ one can first find the square of a Hamilton path, and then find $H$ in this special
structure. Denote $P_n^2$ the  square of a Hamilton path on $n$ vertices.
The following fairly simple result holds, we omit the proof. 

\begin{proposition}\label{propi}
Suppose that $H$ is a simple graph on $n$ vertices with $\theta(H)\le 4$. Then $H\subseteq P_n^2$.
\end{proposition}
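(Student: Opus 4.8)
### Proof plan for Proposition~\ref{propi}

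The plan is to first understand the structure of graphs $H$ with $\theta(H)\le 4$ via Claim~\ref{theta4}: every connected component of $H$ is a path, a cycle, or a claw $K_{1,3}$. Since $P_n^2$ contains a Hamilton path and is ``monotone'' in the sense that $P_m^2$ is an induced-on-an-interval subgraph of $P_n^2$ for $m\le n$, it suffices to show that the components can be laid out one after another along the path, each component occupying a consecutive block of vertices, with a small constant-size gap between consecutive blocks so that no unwanted edges are forced between components. Concretely, I would order the components $C_1,C_2,\dots,C_k$ arbitrarily and try to embed each $C_i$ into a consecutive interval $I_i$ of $\{1,\dots,n\}$, leaving (say) two unused vertices between $I_i$ and $I_{i+1}$; two vertices at distance $\ge 3$ along the spine are non-adjacent in $P_n^2$, so this spacing guarantees the embedding of distinct components never creates a spurious edge. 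The only thing to check is that each individual component embeds into an interval of the appropriate length.

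The key step is therefore the single-component embedding. A path $P_\ell$ embeds trivially into an interval of $\ell$ consecutive spine-vertices, since consecutive spine vertices are adjacent. A cycle $C_\ell$ with $\ell\ge 3$ embeds into an interval of $\ell$ vertices as well: map the cycle $v_1v_2\cdots v_\ell v_1$ to spine positions $1,3,5,\dots$ going out and $\dots,6,4,2$ coming back, i.e.\ place the ``odd'' vertices of the cycle on the odd positions in increasing order and the ``even'' vertices on the even positions in decreasing order; then each cycle-edge joins spine-vertices at distance $1$ or $2$, which are adjacent in $P_n^2$, and the edge closing the cycle joins positions $1$ and $2$. (A small parity case distinction handles $\ell$ odd versus even, but in both cases an interval of length $\ell$, or at worst $\ell+1$, suffices; absorbing an extra $+1$ into the inter-component gap is harmless.) Finally a claw $K_{1,3}$ embeds into $4$ consecutive vertices: put the center at position $2$, which is adjacent to positions $1,3,4$ in $P_n^2$.

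Having established that each component $C_i$ uses at most $v(C_i)+O(1)$ consecutive spine positions and that gaps of size $2$ suffice between components, the total number of spine positions used is at most $\sum_i v(C_i) + O(k) = n + O(k)$. This is not quite $\le n$, so to get the clean statement $H\subseteq P_n^2$ I would tighten the bookkeeping: reuse the gap vertices more cleverly, or note that the ``$+O(1)$'' slack for cycles and claws can in fact be taken to be $0$ with a slightly smarter embedding (for a claw, positions $1,2,3,4$ with center at $2$ uses exactly $4$; for a cycle of length $\ell$ one checks directly that an interval of exactly $\ell$ vertices works in all parity cases, and between two path-components no gap is needed at all since one can simply not use the adjacency, but between components generally one needs to break edges). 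The honest fix is: components that are paths can be concatenated end-to-end with \emph{no} gap (merging them into one longer path that is still embeddable, or inserting a single unused vertex), and for the remaining components one verifies the exact interval lengths; summing gives at most $n$ spine positions.

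The main obstacle I expect is precisely this edge-counting at the seams: making sure that (a) each non-path component genuinely fits in an interval of exactly its own size, and (b) the separators between components cost zero net vertices, so that the grand total is $n$ rather than $n+O(k)$. Once the per-component interval lengths are pinned down exactly and one observes that distance-$\ge 3$ on the spine means non-adjacency in $P_n^2$, the rest is routine. This is why the authors describe the result as ``fairly simple'' and omit the proof.
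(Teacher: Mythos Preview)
The paper explicitly omits the proof of this proposition, so there is nothing to compare your argument against line by line; your per-component embeddings (path along the spine, cycle via the odd-up/even-down zigzag, claw with centre at the second position) are exactly the right ingredients and are correct.

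There is, however, a genuine confusion in your write-up that makes the argument look harder than it is. You worry about leaving gaps between consecutive blocks ``so that no unwanted edges are forced between components,'' and then spend the last paragraph trying to recover the lost vertices. This concern is unfounded: the statement is $H\subseteq P_n^2$, i.e.\ $H$ is a (not necessarily induced) subgraph. You only need that every edge of $H$ maps to an edge of $P_n^2$; if two vertices of $H$ in different components happen to land on adjacent spine positions, that is perfectly fine, because there is no edge of $H$ between them to falsify. Consequently no separators are needed at all: lay the components down back to back, each occupying exactly $v(C_i)$ consecutive spine positions, and the total is $\sum_i v(C_i)=n$ on the nose. Once you drop the gaps, your ``main obstacle'' disappears and the proof is the two-line argument the authors allude to.
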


So if we find the square of a Hamiltonian path in a graph $G$ with order $n$ then we can find an arbitrary subgraph with Ore-degree at most 4 in $G$. The following theorem was proved by Fan and Kierstead in~\cite{FK}. 

\begin{theorem}
If $G$ is a simple graph on $n$ vertices such that  $\delta(G)\ge\frac{2n-1}{3}$ then 
$P_n^2\subseteq G.$
\end{theorem}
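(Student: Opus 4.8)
The plan is an extremal, longest-square-path argument in the spirit of the classical Hamiltonicity proofs of Dirac, Ore and P\'osa, but carrying the extra bookkeeping forced by the ``square'' condition; since the statement carries no lower bound on $n$, I would avoid regularity/blow-up machinery and argue directly. Suppose $\delta(G)\ge(2n-1)/3$ but $G$ has no spanning square path. First note that $G$ is connected (indeed highly connected), since $\delta(G)>n/3$. Among all square paths in $G$ pick one of maximum length, $P=p_1p_2\cdots p_m$ with $m\le n-1$, and put $R=V(G)\setminus V(P)\neq\emptyset$. The basic observation is that $P$ is non-extendable at both ends: if some $y\in R$ were adjacent to both $p_1$ and $p_2$, then $y\,p_1p_2\cdots p_m$ would be a longer square path, and symmetrically at $p_m$. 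Hence $N(p_1)\cap R$ and $N(p_2)\cap R$ are disjoint, so $\deg(p_1,R)+\deg(p_2,R)\le|R|$, and likewise $\deg(p_{m-1},R)+\deg(p_m,R)\le|R|$; combined with $\delta(G)\ge(2n-1)/3$ this bounds $|R|$ and forces each terminal vertex to have many neighbours on $P$.

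To finish I would enlarge the supply of terminal pairs by \emph{rotations} and enlarge the ways of absorbing $R$ by \emph{interior insertion}. A rotation reverses a terminal segment $p_i\cdots p_m$ of the square path; for the result to again be a square path one needs a small fixed set of edges (essentially $p_{i-1}p_m$, $p_{i-2}p_m$ and $p_{i-1}p_{m-1}$) to be present, and the large value of $\deg(p_m,P)$ guarantees many choices of $i$, yielding a sizeable family of maximum square paths and hence many candidate terminal pairs, to \emph{each} of which the disjointness bound of the previous paragraph applies. For insertion: if some $y\in R$ has four consecutive neighbours $p_{i-1},p_i,p_{i+1},p_{i+2}$ on $P$, it can be spliced between $p_i$ and $p_{i+1}$, contradicting maximality; by pigeonhole this occurs once $\deg(y,P)$ is large compared with $m$, which is exactly the regime in which $|R|$ is not tiny. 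The two mechanisms are complementary: when $R$ is comparatively large the insertion argument applies directly, and when $R$ is small one combines the degree bounds at the many reachable terminal pairs, using that these pairs overlap $R$ and the interior of $P$ only in a controlled way, to reach a numerical contradiction with $\delta(G)\ge(2n-1)/3$; the cases $|R|\in\{1,2\}$ are handled by hand.

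The main obstacle is the rotation calculus for square paths. Unlike an ordinary Hamilton path, where one crossing edge produces a rotation, reversing a segment of a square path requires several edges simultaneously, so one must organise the rotations carefully, keep track of exactly which terminal pairs are reachable, and then verify case by case that the non-adjacencies forced by non-extendability and non-insertability cannot coexist with $\delta(G)\ge(2n-1)/3$. This is also where the bound $(2n-1)/3$ is tight, so the final count must be carried out with essentially no slack, and it naturally splits into several cases according to how $R$ and the reachable terminal pairs are distributed along $P$.
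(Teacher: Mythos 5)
This statement is not proved in the paper at all: it is the Fan--Kierstead theorem, quoted from their 1996 paper (cited as \cite{FK}) precisely because its proof is a long and delicate piece of work. So your proposal would have to constitute an independent proof, and as written it does not: it is a plan whose decisive steps are explicitly deferred. You yourself flag that the ``rotation calculus for square paths'' and the final count ``with essentially no slack'' are the main obstacles and leave them undone; but for a bound that is exactly tight, these are not routine bookkeeping --- they are the entire content of the theorem, and nothing in the sketch shows that the reachable terminal pairs, the non-extendability constraints and the non-insertability constraints can actually be combined into a contradiction.

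Moreover, one of the two mechanisms you rely on is quantitatively unavailable. To force four consecutive neighbours of a vertex $y\in R$ on the path $P$ (with $m=|V(P)|=n-|R|$) by pigeonhole you need $\deg(y,P)>\tfrac{3}{4}m$, since neighbours can be arranged in blocks of three. But the degree condition only gives $\deg(y,P)\ge \tfrac{2n-1}{3}-(|R|-1)$, and the inequality $\tfrac{2n-1}{3}-|R|+1>\tfrac{3}{4}(n-|R|)$ rearranges to $|R|<\tfrac{8-n}{3}$, which is impossible for all relevant $n$ --- and this only gets worse if $y$ has neighbours inside $R$. So the claim that ``when $R$ is comparatively large the insertion argument applies directly'' is false: insertion can never be forced by pigeonhole from $\delta(G)\ge(2n-1)/3$ alone, for any size of $R$. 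With that pillar gone, the whole endgame collapses onto the unperformed rotation analysis, and the rotation step itself needs the joint condition $p_{i-1}p_m,\,p_{i-2}p_m,\,p_{i-1}p_{m-1}\in E(G)$, whose abundance you assert but do not establish. In short, there is a genuine gap: the proposal neither reproduces the (external) Fan--Kierstead argument nor supplies a working substitute, and at least one of its stated mechanisms fails numerically.
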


Hence we have found an alternative proof of  Conjecture~\ref{sejtes} for the case $\theta(H)\le 4.$ 

An even stronger theorem was proved by Chau~\cite{Chau} (for very large graphs, the proof uses the Regularity lemma). He proved that the square of a Hamilton path packs with a graph $G$ on $n$ vertices whenever $\theta(G)\le2n/3 -1.$ 
Let us mention that recently DeBiasio, Faizullah and Khan~\cite{DFK} proved a similar result for packing the square of a Hamilton cycle without using the Regularity lemma. 

Finally, let us briefly mention the case when $H$ contains only vertex disjoint $K_{1,r}$s for some fixed $r\in \mathbb{N}.$ 
It turns out that regardless of the value of $r$ (as soon as it is a constant), it is sufficient if the minimum degree of $G$ is slightly larger than $n/2.$ Note that the Ore-degree of $H$ is 
$r+1,$ so it can be arbitrarily large.\footnote{A more sophisticated case is analyzed in~\cite{Va} for embedding a collection of stars, each star having at most $o(n/\log n)$ leaves.}

\begin{proposition}
\label{karom}
Let $r\in \mathbb{N}$ and $\epsilon>0.$ Then there exists an $n_0=n_0(r, \epsilon)$ such that if $G$ is a graph on $n$ vertices with 
$\delta(G)\ge (1/2+\epsilon)n,$ $n\ge n_0(r, \epsilon),$ and $(r+1)|n$ then $G$ contains a $K_{1, r}$-factor.
\end{proposition}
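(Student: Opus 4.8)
The plan is to find the $K_{1,r}$-factor greedily in two stages: first partition almost all of $V(G)$ into stars, leaving only a tiny set of "leftover" vertices, and then absorb those leftover vertices by surgically modifying a bounded number of already-formed stars. Since $\delta(G)\ge(1/2+\epsilon)n$ is far above what is needed to build one star, the only real issue is controlling the last few vertices so that the counts work out modulo $r+1$; this is where some care is required, and I expect it to be the main obstacle.

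First I would iterate the following step as long as at least $\epsilon n$ vertices remain unused. Pick any unused vertex $v$; since $\deg_G(v)\ge(1/2+\epsilon)n$ and fewer than $n$ vertices have been used so far, $v$ has at least $(1/2+\epsilon)n-(1-\epsilon)n > 0$ (in fact $\ge(\epsilon) n$, say, once we are careful with constants — more simply, $\ge r$ once the number of used vertices is below $(1/2+\epsilon)n - r$) unused neighbors, so we can form a fresh $K_{1,r}$ with center $v$ and $r$ unused leaves, and remove these $r+1$ vertices from the pool. Continuing, we stop with a set $L$ of unused vertices, $|L|<\epsilon n + r$. Actually, to make the stopping condition clean I would instead run the greedy process until strictly fewer than $r+1$ vertices remain \emph{or} until continuing would be impossible; a trivial degree count shows the process only gets stuck once at most $(1-\epsilon)n + r$ vertices are used, hence it does not get stuck before the pool is tiny, and we may assume $|L|\le r$ at the end by a slightly more careful bookkeeping. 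Either way, we reach a collection $\mathcal{S}$ of vertex-disjoint $r$-stars covering $V(G)\setminus L$ with $|L|$ bounded by a constant (at most, say, $2r$).

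Next I would eliminate $L$. Because $(r+1)\mid n$, we have $(r+1)\mid |L|$, so $|L| = (r+1)t$ for some bounded $t$. For each $v\in L$, its neighborhood has size $\ge(1/2+\epsilon)n$, so it meets the leaf-sets of at least $\epsilon n$ distinct stars in $\mathcal{S}$ (each star has only $r+1$ vertices). Process the vertices of $L$ one at a time. When handling $v\in L$, choose a star $S\in\mathcal{S}$, not previously touched in this phase, such that $v$ is adjacent to the center $c$ of $S$ (possible: at least $\epsilon n - 2r > r$ stars have their center adjacent to $v$, by the same count applied to centers). Then $c$ together with the $r$ leaves of $S$ and the vertex $v$ spans $K_{1,r}\cup K_{1,0}$—more usefully, split them: we now have $r+1$ vertices $\{v\}\cup V(S)$ that we want to recombine with $r$ further vertices from $L$ into new stars. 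Doing this for a whole block: take $r+1$ vertices $v_0,\dots,v_r$ of $L$ and $r+1$ untouched stars $S_0,\dots,S_r$ whose centers $c_0,\dots,c_r$ are adjacent to $v_0,\dots,v_r$ respectively (greedily choosable since only a constant number of stars get used). The vertex set $W=\{v_0,\dots,v_r\}\cup V(S_0)\cup\dots\cup V(S_r)$ has $(r+1)+(r+1)^2 = (r+1)(r+2)$ vertices; within $G[W]$ we want an $r$-star factor. This inner problem has constant size, and with the edges $v_ic_i$ guaranteed plus the freedom to pick which old stars to use, one arranges $r+2$ new stars on $W$: keep each $S_i$ minus one leaf as a star of size $r-1$... — rather than belabor the exact combinatorial rearrangement, I would prove the clean sub-claim that any vertex $v$ adjacent to the center of some star $S\in\mathcal S$ lets us replace $S$ by a star on $V(S)\cup\{v\}$ minus one leaf $\ell$, returning a single displaced vertex $\ell$; iterating $|L|$ times turns the problem of absorbing $L$ into the problem of absorbing an equally-sized set, so instead I reduce differently.

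Because the bookkeeping in the previous paragraph is fiddly, the cleanest route — and the one I would actually write up — is to avoid leftover vertices entirely by building stars with a reservoir. Set aside at the start a reservoir $R$ of exactly $(r+1)\lceil 1/\epsilon\rceil$ (a constant number of) vertices, run the greedy star-packing on $V(G)\setminus R$; it covers all but at most $r$ vertices of $V(G)\setminus R$, and since $\delta(G)$ is large we may in fact insist the process leaves $V(G)\setminus R$ fully covered by choosing the last few stars to also use the right residue — here the point is that $|V(G)\setminus R| \equiv -|R| \equiv 0 \pmod{r+1}$ since $(r+1)\mid n$ and $(r+1)\mid |R|$. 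Then $R$ itself, being a set of size divisible by $r+1$ with every vertex of huge degree into the already-used part, is absorbed by the replacement move above, one block of $r+1$ vertices at a time, each block consuming $O_r(1)$ existing stars. The total number of existing stars disturbed is $O_r(1) \ll n$, so the move never runs out of fresh stars. This yields the desired $K_{1,r}$-factor of $G$. The main obstacle throughout is purely the modular/leftover bookkeeping — the degree hypothesis $(1/2+\epsilon)n$ is so generous that existence of each individual star, and of each replacement, is immediate; the only thing one must be disciplined about is ensuring the final vertex count lands on a multiple of $r+1$, which the hypothesis $(r+1)\mid n$ is exactly designed to provide.
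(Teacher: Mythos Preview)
Your greedy argument has a genuine gap in the degree count. When $m$ vertices remain unused, an unused vertex $v$ is guaranteed only $\deg_G(v)-(n-m)\ge m-(1/2-\epsilon)n$ unused neighbours; this is $\ge r$ only while $m\ge (1/2-\epsilon)n+r$, so the greedy process can stall with roughly $(1/2-\epsilon)n$ vertices left, not with $O(1)$ or $O(\epsilon n)$ as you assert. (You yourself write that it works ``once the number of used vertices is below $(1/2+\epsilon)n-r$'', and then immediately claim $|L|<\epsilon n+r$; these two statements are inconsistent.) This is not a bookkeeping slip: let $G$ consist of an independent set $I$ of size $(1/2-\epsilon)n$ joined completely to a clique on the remaining $(1/2+\epsilon)n$ vertices. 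Then $\delta(G)=(1/2+\epsilon)n$, and if greedy always selects its centres and leaves from the clique side it exhausts $V(G)\setminus I$ and is left with the independent set $I$, in which no star can be built. Your absorption phase is designed to reattach a \emph{bounded} number of leftover vertices and cannot absorb a linear-sized set; the reservoir variant does not help, since the same stalling occurs inside $V(G)\setminus R$.

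The paper's proof avoids greed altogether. It takes a uniformly random partition $V(G)=X\cup Y$ with $|Y|=r|X|$; by a concentration bound (Azuma--Hoeffding), for large $n$ every vertex of $X$ has at least $(1/2+\epsilon/2)|Y|$ neighbours in $Y$ and every vertex of $Y$ has at least $(1/2+\epsilon/2)|X|$ neighbours in $X$. Replacing each $x\in X$ by $r$ clones yields a balanced bipartite graph with minimum degree exceeding half of each side, so Hall's condition gives a perfect matching, which is precisely a $K_{1,r}$-factor of $G[X,Y]$ with centres in $X$. The idea you are missing is that one must control \emph{globally} where centres sit relative to leaves; a single random bipartition does this in one stroke, whereas uncontrolled greedy star-building can paint itself into an independent corner.
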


\begin{proof}
Randomly divide the vertex set of $G$ into the disjoint sets $X$ and $Y$ such that $r|X|=|Y|.$ We will embed the $K_{1,r}$-factor
into $G[X, Y].$ By the Azuma-Hoeffding bound (see Lemma~\ref{AzumaHoeffding} later) we get that if $n$ is sufficiently large 
then with positive probability 
for every $v\in X$ we have $\deg(v, Y)\ge (1/2+\epsilon/2)|Y|$ and for every $u\in Y$ we have that $\deg(u, X)\ge (1/2+\epsilon/2)|X|.$ 

Let us construct a bipartite graph $G'$ with vertex classes $X'$ and $Y.$ We obtain $X'$ 
by blowing up the set $X$ as follows: for every $v\in X$ we will have $r$ copies $v_1, \ldots, v_r\in X'.$ If $vu\in E(G)$ then
we will have all the $v_iu$ ($1\le i\le r$) edges in $G'.$ There are no other edges in $G'.$ 

Clearly, $G'$ has a perfect matching if and only if $G[X, Y]$ has a $K_{1, r}$-factor. The existence of
a perfect matching in $G'$ follows easily by verifying the K\"onig-Hall conditions, hence we proved what was desired.   
\end{proof}

Let us remark that the above result can also be proved by a routine application of the Regularity Lemma -- Blow-up Lemma method,
however, the proof presented here works for much smaller values of $n.$

\section{A review of tools for the proof}\label{Review}

First we take a short review of the tools we use.
The proofs of Theorem~\ref{main} and Theorem~\ref{stab} use the Regularity Lemma of Szemer\'edi~\cite{SzRL}. While below we
provide a brief introduction to the subject, the reader may also want to consult with the survey paper by Koml\'os and Simonovits~\cite{KS}.

If $A$ and $B$ are disjoint subsets of $V(G)$ then we denote by $e(A,B)$ the number of edges with one endpoint in
$A$ and the other in $B$. The density between disjoint sets $X$ and $Y$ is defined as
$$d(X,Y)=\frac{e(X,Y)}{|X|\cdot|Y|}.$$

We need the following definition to state the Regularity Lemma.

\begin{definition}[Regularity condition] Let $\varepsilon>0$. A pair $(A,B)$ of disjoint vertex sets of $G$ is $\varepsilon$-regular 
if for every $X\subset A$ and $Y\subset B$ satisfying
$$|X|>\varepsilon |A|,\quad |Y|>\varepsilon |B|$$
we have
$$|d(X,Y)-d(A,B)|<\varepsilon.$$
\end{definition}

We will employ the fact that if $(A,B)$ is an $\varepsilon$-regular pair as above, and we place at most $\varepsilon|A|$ new vertices 
into $A$, the resulting pair will remain $\varepsilon'$-regular, with $\varepsilon' \le \sqrt{\varepsilon}.$

An important property of regular pairs is the following:
\begin{lemma}\label{regfokok}
Let $(A,B)$ be an $\varepsilon$-regular pair with density $d$. Then for any $Y\subset B$ with $|Y|>\varepsilon|A|$ we have
$$|\{x\in A : \deg(x,Y)\le (d-\varepsilon)|Y|\}|\le \varepsilon|A|.$$
\end{lemma}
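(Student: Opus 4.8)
The plan is a short proof by contradiction straight from the definition of an $\varepsilon$-regular pair. Set $X=\{x\in A:\deg(x,Y)\le(d-\varepsilon)|Y|\}$, the set of vertices of $A$ with few neighbours in $Y$, and suppose for contradiction that $|X|>\varepsilon|A|$.

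First I would estimate the number of edges between $X$ and $Y$ from above by summing the defining degree bound over all $x\in X$:
$$e(X,Y)=\sum_{x\in X}\deg(x,Y)\le |X|\cdot(d-\varepsilon)|Y|,$$
and hence $d(X,Y)=e(X,Y)/(|X|\cdot|Y|)\le d-\varepsilon$. On the other hand, since $|X|>\varepsilon|A|$ and $|Y|>\varepsilon|A|\ge\varepsilon|B|$ (in our applications the two sides of a regular pair have the same size, so $|Y|>\varepsilon|A|$ indeed gives $|Y|>\varepsilon|B|$), the test sets $X$ and $Y$ are admissible in the $\varepsilon$-regularity condition for $(A,B)$, which yields $|d(X,Y)-d(A,B)|<\varepsilon$, that is, $d(X,Y)>d-\varepsilon$. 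These two inequalities are contradictory, so in fact $|X|\le\varepsilon|A|$, which is exactly the claim.

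There is essentially no obstacle here: the only point worth a moment's attention is checking that $(X,Y)$ is an eligible pair of test sets for the regularity condition, i.e.\ that both sets are large enough, which is precisely where the hypotheses $|X|>\varepsilon|A|$ (from the contradiction assumption) and $|Y|>\varepsilon|A|$ are used.
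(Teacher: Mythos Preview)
Your proof is correct and is the standard textbook argument. The paper itself does not supply a proof of this lemma; it is merely stated as ``an important property of regular pairs'' and left as a well-known fact, so there is nothing to compare against beyond confirming that your argument is the canonical one. Your remark about the hypothesis $|Y|>\varepsilon|A|$ (rather than $|Y|>\varepsilon|B|$) is apt: in the paper all non-exceptional clusters have common size $m$, so $|A|=|B|$ and the discrepancy is immaterial.
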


We will use the following form of the Regularity Lemma~\cite{SzRL,KS}:
\begin{lemma}[Degree form]\label{reglemma}
For every $\varepsilon>0$ there is an $M=M(\varepsilon)$ such that if $G=(V,E)$ is any graph and $d\in [0,1]$
is any real number, then there is a partition of the vertex set $V$ into $\ell+1$ clusters $V_0,V_1,\ldots,V_\ell$, 
and there is a subgraph $G'$ of $G$ with the following properties:
\begin{itemize}
\item $\ell\le M$,
\item $|V_0|<\varepsilon|V|$,
\item all clusters $V_i, i\ge 1$, are of the same size $m$ (and therefore $m\le\lfloor\frac{|V|}{\ell}\rfloor<\varepsilon|V|$),
\item $\deg_{G'}(v)>\deg_{G}(v)-(d+\varepsilon)|V|$ for all $v\in V$,
\item $V_i$ is an independent set in $G'$ for all $i\ge 1$,
\item all pairs $(V_i,V_j), 1\le i<j\le\ell$ are $\varepsilon$-regular, each with density either 0 or at least
$d$ in $G'$.
\end{itemize}
\end{lemma}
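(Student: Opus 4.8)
The plan is to deduce the degree form from the classical form of Szemer\'edi's Regularity Lemma~\cite{SzRL} --- equitable $\varepsilon$-regular partitions in which all but an $\varepsilon$-fraction of the cluster pairs are $\varepsilon$-regular --- by a standard cleaning argument; the one delicate point is that the degree inequality is demanded for \emph{every} vertex. First I would apply the classical lemma with a much smaller regularity parameter $\varepsilon'\ll\varepsilon^2$ and with the requested lower bound on the number of parts exceeding $1/\varepsilon'$, getting an equitable partition $V'_0,V'_1,\dots,V'_\ell$ with $|V'_0|<\varepsilon'|V|$, clusters of a common size $m<\varepsilon'|V|$, and at most $\varepsilon'\binom{\ell}{2}$ irregular pairs. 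Throughout I use the slightly inflated density threshold $d_1=d+5\sqrt{\varepsilon'}>d$.

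Next I would absorb the ``bad'' part of this partition into the exceptional set in three rounds. Call a cluster \emph{heavy} if it lies in at least $\sqrt{\varepsilon'}\,\ell$ irregular pairs; since $\sum_i(\#\text{ irregular pairs at }V'_i)\le\varepsilon'\ell^2$, at most $\sqrt{\varepsilon'}\,\ell$ clusters are heavy, and I move them into $V'_0$. Then, for each $\varepsilon'$-regular pair $(V'_i,V'_j)$ of density below $d_1$, the form of Lemma~\ref{regfokok} with the inequality reversed shows that at most $\varepsilon'm$ vertices $v\in V'_i$ satisfy $\deg_G(v,V'_j)\ge(d_1+\varepsilon')m$; calling such a $v$ \emph{$j$-bad}, each surviving cluster then has at most $\varepsilon'm\ell$ bad incidences, hence at most $\sqrt{\varepsilon'}\,m$ vertices that are $j$-bad for $\sqrt{\varepsilon'}\,\ell$ or more indices $j$, and I move these \emph{bad vertices} into $V'_0$ as well. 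Finally I trim each surviving cluster down to the common size $m''=m-\lceil\sqrt{\varepsilon'}\,m\rceil$, moving the excess into $V'_0$. Writing $V_0,V_1,\dots$ for the resulting partition, each round enlarged the exceptional set by at most $\sqrt{\varepsilon'}|V|$, so $|V_0|<4\sqrt{\varepsilon'}|V|<\varepsilon|V|$; and since every surviving cluster lost at most a $2\sqrt{\varepsilon'}$-fraction of its vertices, each pair that was $\varepsilon'$-regular of density $\ge d_1$ in $G$ remains $\varepsilon$-regular of density $\ge d$, using the robustness of $\varepsilon$-regularity and of density under deletion of a small fraction of the vertices from each side (the analogue of the remark preceding Lemma~\ref{regfokok}).

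Now let $G'$ be obtained from $G$ by deleting every edge lying inside a cluster $V_i$ ($i\ge1$) and every edge lying inside a pair $(V_i,V_j)$ that is irregular or has density $<d_1$, while keeping all edges incident to $V_0$. Then each $V_i$ ($i\ge1$) is independent in $G'$, each surviving pair is $\varepsilon$-regular with density $0$ or $\ge d$, and no edge at a vertex of $V_0$ is deleted, so the degree inequality holds trivially on $V_0$. For $v$ in a surviving cluster $V_i$ one has
\[
\deg_G(v)-\deg_{G'}(v)\ \le\ m\ +\ \sum_{j:\ (V_i,V_j)\text{ irregular}}\deg_G(v,V_j)\ +\ \sum_{j:\ (V_i,V_j)\text{ reg., dens.}<d_1}\deg_G(v,V_j).
\]
Since $V_i$ is not heavy, the first sum ranges over fewer than $\sqrt{\varepsilon'}\,\ell$ clusters and is $<\sqrt{\varepsilon'}|V|$; in the second sum, the at most $\sqrt{\varepsilon'}\,\ell$ clusters for which $v$ is $j$-bad contribute $<\sqrt{\varepsilon'}|V|$ (as $v$ is not bad), and each of the remaining clusters contributes $<(d_1+\varepsilon')m$, hence $<(d_1+\varepsilon')|V|$ in total. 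Summing, $\deg_G(v)-\deg_{G'}(v)<m+(d_1+\varepsilon'+2\sqrt{\varepsilon'})|V|<(d+\varepsilon)|V|$ for $\varepsilon'\ll\varepsilon^2$, which is the desired inequality; the bounds $\ell\le M$, $|V_0|<\varepsilon|V|$, and $m''<\varepsilon|V|$ are immediate from the construction.

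I expect the main obstacle to be exactly the passage from ``for all but $o(n)$ vertices'' --- which the bare edge-deletion of the previous paragraph already gives --- to ``for every vertex''. This is what forces one to isolate the few heavy clusters and the few bad vertices, dump them into $V_0$, and re-equalise the cluster sizes, while checking simultaneously that $V_0$ stays below $\varepsilon|V|$ and that the kept pairs survive as $\varepsilon$-regular pairs of density at least $d$; choosing $\varepsilon'$ small enough in terms of $\varepsilon$ (and forcing large clusters through the lower bound on $\ell$) is what makes all these estimates close with room to spare.
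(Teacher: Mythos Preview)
The paper does not actually prove this lemma: it is stated with citation to~\cite{SzRL,KS} as a standard variant of Szemer\'edi's Regularity Lemma, so there is no proof in the paper to compare against. Your derivation from the classical (equitable) form via the usual cleaning argument --- discarding heavy clusters, then the few vertices with atypically large degree into too many low-density regular pairs, re-equalising cluster sizes, and finally deleting edges inside clusters and inside irregular or sparse pairs --- is correct and is essentially the standard proof one finds in the survey~\cite{KS}. The key point you correctly identify and handle is that passing from an average (or ``almost all vertices'') degree-loss bound to a pointwise one requires absorbing the exceptional vertices into $V_0$ \emph{before} forming $G'$, and your bookkeeping (choice of $\varepsilon'\ll\varepsilon^2$, inflated threshold $d_1=d+5\sqrt{\varepsilon'}$, and the three rounds each costing $O(\sqrt{\varepsilon'})|V|$ exceptional vertices) closes with room to spare.
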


Often we call $V_0$ the exceptional cluster. In the rest of the paper we assume that $0<\varepsilon\ll d\ll 1$.

\begin{definition}[Reduced graph] Apply Lemma~\ref{reglemma} to the graph $G=(V,E)$ with parameters $\varepsilon$ and $d$ 
and denote the clusters of the resulting partition by $V_0,V_1,\ldots,V_\ell$, $V_0$ being the exceptional cluster. We construct 
a new graph $G_r$, the reduced graph of $G'$ in the following way: 
The non-exceptional clusters of $G'$ are the vertices of the reduced graph, hence $|V(G_r)|=\ell$. We connect two vertices 
of $G_r$ by an edge if the corresponding two clusters form an $\varepsilon$-regular pair with density at least $d$.
\end{definition}

The following corollary is immediate:
\begin{corollary}\label{mindeg}
Let $G=(V,E)$ be a graph of order $n$ and $\delta(G)\ge cn$ for some $c>0$, and let $G_r$ be the reduced graph of
$G'$ after applying Lemma \ref{reglemma} with parameters $\varepsilon$ and $d$. Then $\delta(G_r)\ge(c-2\varepsilon-d)\ell$.
\end{corollary}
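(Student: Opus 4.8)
The plan is to fix an arbitrary non-exceptional cluster $V_i$ and an arbitrary vertex $v\in V_i$, and to bound $\deg_{G_r}(V_i)$ from below by a double-counting of the $G'$-neighbors of $v$. First I would invoke the degree condition supplied by Lemma~\ref{reglemma}: since $\delta(G)\ge cn$, we have $\deg_{G'}(v)>\deg_G(v)-(d+\varepsilon)n\ge (c-d-\varepsilon)n$.

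Next I would localize these neighbors. The $G'$-neighbors of $v$ avoid $V_i$ itself, since each $V_i$ is independent in $G'$, and at most $|V_0|<\varepsilon n$ of them lie in the exceptional cluster. Every remaining neighbor lies in some cluster $V_j$ with $j\ge 1$ and $j\ne i$; but by Lemma~\ref{reglemma} the pair $(V_i,V_j)$ has density either $0$ or at least $d$ in $G'$, and in the former case there are no edges between $V_i$ and $V_j$ in $G'$ at all. Hence $v$ can have neighbors only in those clusters $V_j$ that are joined to $V_i$ in $G_r$, of which there are $\deg_{G_r}(V_i)$, each of size $m$. Using $\ell m\le n$, i.e. $m\le n/\ell$, the number of $G'$-neighbors of $v$ lying outside $V_0$ is at most $\deg_{G_r}(V_i)\cdot (n/\ell)$.

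Combining the two estimates yields $(c-d-\varepsilon)n-\varepsilon n<\deg_{G_r}(V_i)\cdot(n/\ell)$, hence $\deg_{G_r}(V_i)>(c-2\varepsilon-d)\ell$; since $V_i$ was arbitrary this gives $\delta(G_r)\ge(c-2\varepsilon-d)\ell$. There is no real obstacle in this argument — the one point requiring a moment's care is the use of the ``density $0$ or at least $d$'' dichotomy, which is what confines the neighbors of $v$ to clusters that genuinely contribute edges to $G_r$; the rest is routine bookkeeping with the parameters of the degree form of the Regularity Lemma.
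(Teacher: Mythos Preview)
Your argument is correct and is precisely the standard double-counting that the paper has in mind when it calls the corollary ``immediate'': pick a vertex in a non-exceptional cluster, use the degree bound from Lemma~\ref{reglemma}, and observe that its $G'$-neighbors outside $V_0$ must lie in clusters forming dense regular pairs with $V_i$. There is nothing to add.
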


\begin{definition}[Super-Regularity condition]
Given a graph $G$ and two disjoint subsets $A$ and $B$ of its vertices, the pair $(A,B)$
is $(\varepsilon,\delta)$-super-regular if it is $\varepsilon$-regular and furthermore
$$\deg(a)\ge\delta|B| \textrm{ for all } a\in A$$
and
$$\deg(b)\ge\delta|A| \textrm{ for all } b\in B.$$
\end{definition}

 Using Lemma~\ref{regfokok} it is easy to show that every regular pair contains an  ``almost spanning" super-regular pair, we leave the proof for the reader.

\begin{lemma}\label{szupreg}
Assume that $(A, B)$ is an $\varepsilon$-regular pair. 
Then there exists $A'\subset A$ and $B'\subset B$ such that 
$|A'|\ge (1-\varepsilon)|A|,$ $|B'|\ge (1-\varepsilon)|B|,$ and the $(A', B')$ pair is 
$(\varepsilon (1+\varepsilon), d(A, B)-2\varepsilon)$-super-regular.  
\end{lemma}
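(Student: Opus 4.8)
The plan is to pass to subsets $A'\subseteq A$ and $B'\subseteq B$ by deleting from each side the few vertices whose degree into the other side is atypically small, and then to invoke the standard robustness of regularity under the deletion of a small proportion of vertices. (As always in our applications $(A,B)$ will be a pair of equal-sized clusters, so we may freely assume $|A|,|B|>\varepsilon\max\{|A|,|B|\}$, which is all that is needed below in order to apply Lemma~\ref{regfokok} with $Y=B$ or $Y=A$.)

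Write $d=d(A,B)$ and put
$$A_0=\bigl\{a\in A:\deg(a,B)\le (d-\varepsilon)|B|\bigr\},\qquad B_0=\bigl\{b\in B:\deg(b,A)\le (d-\varepsilon)|A|\bigr\}.$$
By Lemma~\ref{regfokok} with $Y=B$ we get $|A_0|\le\varepsilon|A|$, and by the same lemma with the roles of $A$ and $B$ interchanged and $Y=A$ we get $|B_0|\le\varepsilon|B|$. Set $A'=A\setminus A_0$ and $B'=B\setminus B_0$. Then $|A'|\ge(1-\varepsilon)|A|$ and $|B'|\ge(1-\varepsilon)|B|$, which is the first assertion.

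For the degree requirement of super-regularity, every $a\in A'$ satisfies
$$\deg(a,B')\ \ge\ \deg(a,B)-|B_0|\ >\ (d-\varepsilon)|B|-\varepsilon|B|\ =\ (d-2\varepsilon)|B|\ \ge\ (d-2\varepsilon)|B'|,$$
and symmetrically $\deg(b,A')>(d-2\varepsilon)|A'|$ for every $b\in B'$ (both inequalities being vacuous when $d<2\varepsilon$); hence the degree conditions hold with $\delta=d(A,B)-2\varepsilon$.

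It remains to verify that $(A',B')$ is $\varepsilon(1+\varepsilon)$-regular, and this is the routine quantitative fact that keeping at least a $(1-\varepsilon)$-fraction of the vertices on each side of an $\varepsilon$-regular pair produces a pair regular with parameter scaled by at most $1+\varepsilon$: given $X\subseteq A'$ and $Y\subseteq B'$ with $|X|>\varepsilon(1+\varepsilon)|A'|$ and $|Y|>\varepsilon(1+\varepsilon)|B'|$, the lower bounds on $|A'|$ and $|B'|$ give $|X|>\varepsilon|A|$ and $|Y|>\varepsilon|B|$, so $\varepsilon$-regularity of $(A,B)$ yields $|d(X,Y)-d|<\varepsilon$; taking in particular $X=A'$, $Y=B'$ gives $|d(A',B')-d|<\varepsilon$, and the triangle inequality then bounds $|d(X,Y)-d(A',B')|$ as required. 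The one place that calls for any attention is this final bookkeeping of the regularity constant — there is no conceptual difficulty here, and in fact any parameter of order $\varepsilon$ in place of $\varepsilon(1+\varepsilon)$ would serve equally well throughout the paper.
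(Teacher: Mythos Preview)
Your approach is exactly the one the paper intends: it explicitly says ``Using Lemma~\ref{regfokok} it is easy to show \ldots we leave the proof for the reader,'' and your deletion of the low-degree vertices via Lemma~\ref{regfokok} followed by the standard robustness argument is precisely that. The degree verification is clean and correct.

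One arithmetic point in the regularity bookkeeping deserves a flag, though you yourself hedge on it. From $|X|>\varepsilon(1+\varepsilon)|A'|$ and $|A'|\ge(1-\varepsilon)|A|$ you only get $|X|>\varepsilon(1-\varepsilon^2)|A|$, which is \emph{not} $>\varepsilon|A|$ since $(1+\varepsilon)(1-\varepsilon)=1-\varepsilon^2<1$; and even granting that step, the triangle inequality yields $|d(X,Y)-d(A',B')|<2\varepsilon$, not $<\varepsilon(1+\varepsilon)$. So the literal constant $\varepsilon(1+\varepsilon)$ in the statement is not what your argument delivers (one naturally obtains $\varepsilon/(1-\varepsilon)$ for the size threshold and $2\varepsilon$ for the density deviation). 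As you correctly observe, any $O(\varepsilon)$ parameter suffices everywhere the lemma is invoked, so this is a cosmetic discrepancy with the stated constant rather than a gap in the method.
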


The Blow-up Lemma of Koml\'os, S\'ark\"ozy and Szemer\'edi~\cite{KSSz,KSSz2} asserts that dense super-regular pairs 
behave like complete bipartite graphs with respect 
to containing bounded degree subgraphs.  

\begin{theorem}[Blow-up Lemma]\label{blow-up}
Given a graph $R$ of order $r$ and positive parameters $\delta,\Delta$, there exists 
$\varepsilon=\varepsilon(\delta,\Delta,r)$ such that the following holds: 
Let $n_i$ for $i=1, \ldots, r$ be arbitrary positive integers and let us replace the vertices
$v_1,v_2,\ldots,v_r$ of $R$ with pairwise disjoint sets $V_1,V_2,\ldots,V_r$ of sizes
$n_1,n_2,\ldots,n_r$ (blowing up). We construct two graphs on the same vertex set 
$V=\bigcup_i V_i$. The first graph $F$ is obtained by replacing each edge $\{v_i,v_j\}$
of $R$ with the complete bipartite graph between $V_i$ and $V_j$. A sparser graph $G$
is constructed by replacing each edge $\{v_i,v_j\}$ arbitrarily with an 
$(\varepsilon,\delta)$-super-regular pair between $V_i$ and $V_j$. If a graph $H$ with  
$\Delta(H)\le\Delta$ is embeddable into $F$ then it is already embeddable into $G$.
\end{theorem}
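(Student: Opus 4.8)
The plan is to prove the Blow-up Lemma following the randomized greedy embedding scheme of Koml\'os, S\'ark\"ozy and Szemer\'edi~\cite{KSSz}. Fix the given embedding $\phi_F\colon V(H)\to V(F)$; since $F$ is the blow-up of the simple graph $R$, every class $U_i:=\phi_F^{-1}(V_i)$ is an independent set of $H$, and $\Delta(H)\le\Delta$. I will build an injective homomorphism $\phi\colon V(H)\to V(G)$ by embedding the vertices of $H$ one by one, each $v\in U_i$ into the cluster $V_i$ prescribed for it, while keeping for every not-yet-embedded $y\in U_i$ its \emph{candidate set} $C(y)$: the vertices of $V_i$ that are still unused and are adjacent in $G$ to $\phi(x)$ for every already-embedded neighbour $x$ of $y$. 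Throughout, $\varepsilon\ll\delta,1/\Delta,1/r$. As a first step I would fix, inside each $U_i$, a small \emph{buffer} $B_i\subseteq U_i$ with $|B_i|\le\varepsilon^{1/4}|V_i|$, chosen greedily so that $B:=\bigcup_i B_i$ is independent in $H$ (possible since each $U_i$ is independent and $\Delta(H)\le\Delta$); the vertices outside $B$ are embedded in Phase~1 and the buffer vertices in Phase~2.

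Phase~1 processes the non-buffer vertices in an arbitrary order and embeds each into a vertex of its current candidate set chosen \emph{at random}, with probabilities biased away from vertices that currently lie in many other candidate sets. This bias serves two ends: no single vertex of $G$ becomes the unique surviving candidate of several vertices of $H$, and the set of vertices used up inside each $V_i$ stays roughly equidistributed. Accordingly I would maintain two invariants for every unembedded $y\in U_i$ with $d_y$ neighbours already embedded: first, ignoring used-ness, $|\widetilde C(y)|\ge(\delta-\varepsilon)^{d_y}|V_i|$, where $\widetilde C(y)=\bigcap_x\bigl(N_G(\phi(x))\cap V_i\bigr)$, the intersection over embedded neighbours $x$ of $y$ --- in particular $|\widetilde C(y)|>\varepsilon|V_i|$, so regularity keeps applying; second, because the used vertices stay equidistributed, $C(y)=\widetilde C(y)\setminus(\text{used})$ still has size at least $c\,|L_i|$, where $L_i$ is the set of unused vertices in $V_i$ and $c=c(\delta,\Delta)>0$. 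The first invariant is stable because, when a neighbour of $y$ is placed at a vertex $w$ of $V_j$, Lemma~\ref{regfokok} applied to the $\varepsilon$-regular pair $(V_j,V_i)$ shows that all but $\varepsilon|V_j|$ choices of $w$ leave $|\widetilde C(y)\cap N_G(w)|>(\delta-\varepsilon)|\widetilde C(y)|$, and the biased choice hits such a ``good'' $w$ with overwhelming probability; the second invariant is a concentration statement for the number of used vertices falling into $\widetilde C(y)$, handled by the Azuma--Hoeffding inequality (Lemma~\ref{AzumaHoeffding}), followed by a union bound over all vertices and all time steps. Here super-regularity (its degree clause) is what prevents a candidate set from starting out --- or ever becoming --- empty, since every vertex of $V_j$ has $\ge\delta|V_i|$ neighbours in $V_i$.

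In Phase~2 every buffer vertex has all of its neighbours embedded, so its candidate set is frozen, and by the second invariant $|C(y)|\ge c|L_i|$ for each $y\in B_i$, while $|L_i|=|V_i|-|U_i|+|B_i|\ge|B_i|$. Embedding the buffer reduces, for each $i$ independently, to finding a $B_i$-saturating matching in the bipartite graph $\Gamma_i$ on $B_i\cup L_i$ in which $y$ is joined to each vertex of $C(y)$; gluing these matchings to $\phi$ finishes the embedding (note $H$ need not span $F$, so no perfect matching is asked for). Hall's condition for $\Gamma_i$ is verified in the usual way: for a small $S\subseteq B_i$ a single candidate set already exceeds $|S|$ in size, and for a large $S$ one shows that $\bigcup_{y\in S}C(y)$ cannot miss a set $T\subseteq L_i$ with $|T|>\varepsilon|V_i|$, by applying Lemma~\ref{regfokok} to the regular pairs through which the vertices of $S$ see their embedded neighbours.

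The main obstacle is Phase~1: making the biased embedding rule precise and then proving that \emph{both} invariants hold for \emph{all} unembedded vertices \emph{simultaneously and for the entire run}, even though the random choices are strongly dependent; keeping the used vertices equidistributed in each cluster --- the part that makes Phase~2 work --- is especially delicate, and a single $G$-vertex acting as a bottleneck for one cluster would be fatal. Stopping Phase~1 while a buffer of linear size remains is exactly what keeps the process out of the brittle endgame where the concentration estimates would break down. This bookkeeping, together with the choice of $\varepsilon$ small against $\delta,\Delta,r$, is where the real work lies; the buffer selection and the Phase~2 Hall argument are routine by comparison.
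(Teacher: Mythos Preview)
The paper does not prove Theorem~\ref{blow-up}; it is quoted from Koml\'os, S\'ark\"ozy and Szemer\'edi~\cite{KSSz,KSSz2} as an external tool and used as a black box, so there is no in-paper proof to compare against. Your outline is a recognisable sketch of the original randomized-greedy argument of~\cite{KSSz} (buffer vertices, candidate sets shrinking by a factor $(\delta-\varepsilon)$ per embedded neighbour via Lemma~\ref{regfokok}, concentration to keep used vertices equidistributed, and a K\"onig--Hall finish), which is the right provenance.

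One caution if you intend to flesh this out: your Phase~2 Hall verification, as written, does not close in the spanning case $|U_i|=|V_i|$. There $|L_i|=|B_i|$, so for $S=B_i$ you need $\bigl|\bigcup_{y\in S}C(y)\bigr|=|L_i|$ exactly, whereas ``misses no $T$ with $|T|>\varepsilon|V_i|$'' only gives $|L_i|-\varepsilon|V_i|$. The standard fix (and what~\cite{KSSz} actually do) is to argue from the $L_i$ side as well: the equidistribution in Phase~1, together with super-regularity, forces every vertex of $L_i$ to lie in $C(y)$ for a positive fraction of the buffer vertices $y\in B_i$, and then Hall follows by double counting. Your sentence ``no single vertex of $G$ becomes the unique surviving candidate of several vertices of $H$'' hints at this, but the Hall paragraph itself should invoke it explicitly.
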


The Blow-up Lemma is among the most important tools in many graph embedding algorithms. However, for proving our main result
we also need a somewhat different, technically more involved version, introduced in~\cite{Cs2}. In order to state this lemma, 
we need some preparations.

Let $G'$ and $H$ be graphs\footnote{In fact $G'$ is the graph we obtain from $G$ by applying the Regularity Lemma and doing 
further preparations, 
while $H$ is the graph we want to embed into $G.$} 
of order $n.$ Assume that $V(G')=V_0\cup V_1\cup\ldots\cup V_\ell$ and $V(H)=L_0\cup L_1\cup\ldots\cup L_\ell$ are partitions such that 
there is a bijective mapping $\psi_0: L_0\to V_0,$ furthermore, $|V_i|=|L_i|=m$ for every $1\le i\le\ell.$ 

\begin{definition}[$(d,\varepsilon)$-goodness]
Let $x\in L_i$; a vertex $v\in V_i$ is called $(d,\varepsilon)$-good for $x$ if $y\in N(x)\cap L_j$ implies 
$\deg_{G'}(v,V_j)\ge(d-\varepsilon)m$ for every $1\le j\le\ell.$
\end{definition}

Assume that $D=\Delta(H)\ge 1,$ and let $\hat{I}\subset V(H)$ be a maximal set the elements of which are of distance at least 5 
from each other. 
Using the above assumptions and notations, the modified version of the Blow-up Lemma is as follows.

\begin{theorem}[Modified Blow-up Lemma \cite{Cs2}]\label{modblow-up}
For every positive integer $D,$ $K_1,$ $K_2,$ $K_3$ and every positive constant $c$ there exist $n_0$ such that if 
$\varepsilon, \varepsilon', \delta, d$ are positive constants with 
$$0<\varepsilon\ll\varepsilon'\ll\delta\ll d\ll 1/D, 1/K_1, 1/K_2, 1/K_3, c$$ then the following holds.

Suppose that $G'$ and $H$ are graphs of order $n$ with partitions defined as above such that $n\ge n_0.$ 
Suppose further 
that for every $1\le i<j\le\ell$ the pair $(V_i,V_j)$ is 
$\varepsilon$-regular with density 0 or $d.$  Furthermore, suppose that the following conditions hold. 

\begin{itemize}
\item[{$\mathrm{\mathbf{C1}}$:}] $|L_0|=|V_0|\le K_1dn$;
\item[{$\mathrm{\mathbf{C2}}$:}] $L_0\subset \hat{I}$;
\item[{$\mathrm{\mathbf{C3}}$:}] $L_i$ is independent for every $1\le i\le\ell;$ 
\item[{$\mathrm{\mathbf{C4}}$:}] $|N_H(L_0)\cap L_i|\le K_2dm$ for every $1\le i\le\ell$;
\item[{$\mathrm{\mathbf{C5}}$:}] for every $1\le i\le\ell$ there is $B_i\subset \hat{I}\cap L_i$ with $|B_i|=\delta m,$ 
such that for $B=\bigcup_iB_i$ and every $1\le i,j\le\ell$
$$|\,|N_H(B)\cap L_i|-|N_H(B)\cap L_j|\,|\le\varepsilon m;$$ 
\item[{$\mathrm{\mathbf{C6}}$:}] if $(x,y)\in E(H)$ and $x\in L_i$, $y\in L_j$ for $1\le i, j\le \ell,$ then  $(V_i,V_j)$ is 
an $\varepsilon$-regular pair with density $d;$
\item[{$\mathrm{\mathbf{C7}}$:}] if $(x,y)\in E(H)$ and $x\in L_0$ then $y\in L_j$ ($j>0$) implies $\deg_{G'}(\psi_0(x),V_j)\ge c m;$
\item[{$\mathrm{\mathbf{C8}}$:}] for every $1\le i\le\ell$, given any $E_i\subset V_i$ such that $|E_i|\le\varepsilon'm$ there 
exists a set 
$F_i\subset (L_i\cap (\hat{I}-B))$ and a bijection
$\psi_i:E_i\to F_i$ such that for every $v\in E_i$, $v$ is $(d,\varepsilon)$-good for $\psi_i(v);$
\item[{$\mathrm{\mathbf{C9}}$:}] for $F=\bigcup_iF_i$ we have that
$$|N_H(F)\cap L_i|\le K_3\varepsilon'm.$$
\end{itemize}

\noindent Then $H$ could be embedded into $G'$ such that the image of $L_i$ is $V_i$ for every $1\le i\le \ell,$ and the image of each 
$x\in L_0$ is $\psi_0(x)\in V_0.$
\end{theorem}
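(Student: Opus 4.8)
The plan is to follow the architecture of the Koml\'os--S\'ark\"ozy--Szemer\'edi proof of the Blow-up Lemma (Theorem~\ref{blow-up})---a randomized greedy embedding with a final correction stage---modified to accommodate the two genuinely new features of the statement: the pre-assigned exceptional set $V_0$ with its fixed bijection $\psi_0$, and the fact that the pairs $(V_i,V_j)$ are only $\varepsilon$-regular of density $d$ rather than super-regular, so that while the ``bad'' vertices of a cluster must be avoided during the greedy part, every vertex of $G'$ still has to be covered at the end. Conditions \textbf{C7} and \textbf{C8} are precisely what compensate for the missing super-regularity, and \textbf{C5} together with \textbf{C3} is what makes the concluding matching step work. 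For the bookkeeping, Lemma~\ref{regfokok} applied to the density-$d$ pairs shows that, calling $v\in V_i$ \emph{typical} when $\deg_{G'}(v,V_j)\ge(d-\varepsilon)m$ for every $j$ with an $H$-edge between $L_i$ and $L_j$, all but at most $D\varepsilon m$ vertices of each $V_i$ are typical (there being at most $D$ such indices $j$ because $\Delta(H)\le D$).

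First I would embed the exceptional vertices and their neighbourhoods. The images of $L_0$ are fixed by $\psi_0$; for $x\in L_0$ and $y\in N_H(x)\cap L_j$ condition \textbf{C7} furnishes at least $cm$ candidate images of $y$ inside $N_{G'}(\psi_0(x))\cap V_j$, and after deleting the at most $D\varepsilon m$ atypical candidates and the at most $K_2dm$ vertices already used as images of other neighbours of the same $x$ (condition \textbf{C4}), more than half of $cm$ admissible typical choices survive, since $\varepsilon\ll d\ll 1/K_2,c$. Because $L_0\subset\hat I$ and the elements of $\hat I$ are pairwise at distance at least $5$ (condition \textbf{C2}), the sets $N_H(x)$, $x\in L_0$, are pairwise disjoint, pairwise non-adjacent, and share no common neighbour outside $L_0$, so these greedy choices never conflict and, moreover, the partial embedding of $N_H(L_0)$ leaves at least one unembedded endpoint on every $H$-edge for the next stage. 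I then order $V(H)\setminus\bigl(L_0\cup N_H(L_0)\cup B\bigr)$ and embed its vertices one at a time, each to a uniformly random \emph{candidate}---a typical, still-unused vertex of the prescribed cluster adjacent in $G'$ to all already-embedded neighbours of the current vertex---while holding back the buffer $B=\bigcup_i B_i$ of the hypothesis. Using condition \textbf{C6} (the pairs carrying an $H$-edge have density $d$) and condition \textbf{C3}, a martingale concentration estimate of Azuma--Hoeffding type shows that with high probability the candidate set of every unembedded vertex stays within a constant factor of its expectation $\approx d^{\,t}m$ throughout, $t\le D$ being its number of already-embedded neighbours.

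The correction stage uses the buffer and the swapping hypothesis. Whenever the set $E_i\subset V_i$ of vertices in danger of never again becoming a candidate (this set contains, in particular, the atypical vertices of $V_i$) would exceed $\varepsilon'm$, I apply condition \textbf{C8}: it produces $F_i\subset\hat I\cap(L_i\setminus B)$ and a bijection $\psi_i\colon E_i\to F_i$ with every $v\in E_i$ being $(d,\varepsilon)$-good for $\psi_i(v)$. Setting $\psi_i(v)\mapsto v$ covers the dangerous vertices, and goodness is exactly the property guaranteeing that the neighbours of the newly introduced $H$-vertices can still be placed; I then embed $N_H(F)$, $F=\bigcup_i F_i$, in the same greedy fashion as $N_H(L_0)$, which works because $F\subset\hat I$ is spread out and condition \textbf{C9} bounds $|N_H(F)\cap L_i|$ by $K_3\varepsilon'm$. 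After the greedy part and these swaps the unembedded $H$-vertices are exactly the buffer $B$, and since every $H$-vertex of $L_i$ is sent into $V_i$, the unused portion $U_i$ of each cluster has size $|B_i|=\delta m$, matching cluster by cluster. Each $B_i\subset\hat I$ is an independent set whose vertices have pairwise disjoint neighbourhoods, so the remaining task splits into, for each $i$, finding a perfect matching in the bipartite candidate graph between $B_i$ and $U_i$; condition \textbf{C5} (the images of $N_H(B)$ are evenly distributed) together with the randomness of the greedy choices makes this candidate graph regular with density bounded below by a positive constant and with linear minimum degree on both sides, whence it has a perfect matching---completing the embedding.

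The main obstacle is orchestrating the correction stage. One must show that the endangered sets $E_i$ genuinely remain below the threshold $\varepsilon'm$ at which \textbf{C8} can be invoked, that a swap never cascades (the vertices of $N_H(F)$ are easy to place only because $F\subset\hat I$ is spread out and \textbf{C9} holds, and this has to be verified against the actual constants), and that the concentration estimates of the greedy phase survive the occasional re-routing and the removal of the buffer. A second, more routine but still delicate point is checking that every step is consistent with the hierarchy $\varepsilon\ll\varepsilon'\ll\delta\ll d\ll 1/D,1/K_1,1/K_2,1/K_3,c$, and that repeatedly inserting batches of $O(\varepsilon m)$ images into a cluster leaves every pair regular---adding up to $\varepsilon m$ new vertices to one side of an $\varepsilon$-regular pair only degrades the parameter to $\sqrt{\varepsilon}$, as recorded after the regularity condition.
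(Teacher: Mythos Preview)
The paper does not prove this theorem itself; it is imported from \cite{Cs2}. The paragraph following the statement, however, sketches the argument of \cite{Cs2}, and your proposal deviates from that sketch in one place that creates a genuine gap.

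In the sketch the order of the phases is: fix $L_0$ via $\psi_0$; embed $N_H(L_0)$; then embed $N_H(B)$, the neighbours of the buffer; \emph{at this point} identify the exceptional sets $E_i$ (a vertex $v\in V_i$ is exceptional if, after the images of $N_H(B)$ are fixed, $v$ fails to lie in the candidate set of sufficiently many buffer vertices); invoke \textbf{C8} to cover $\bigcup_i E_i$ by $F$; embed $N_H(F)$; and only then run the main randomized greedy phase on the remaining bulk of $H$, finishing with the matching for $B$. The reason this order matters is that when \textbf{C8} is applied, the only $H$-vertices already embedded are those of $L_0\cup N_H(L_0)\cup N_H(B)$, and since $F_i\subset\hat I$ while every vertex of $N_H(L_0)\cup N_H(B)$ is at distance $1$ from some element of $\hat I$ (hence is not itself in $\hat I$), the $F_i$ are guaranteed to be unembedded and available for the assignment $\psi_i(v)\mapsto v$.

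Your ordering runs the main greedy phase first and treats the correction as reactive (``whenever $E_i$ would exceed $\varepsilon'm$''). But \textbf{C8} is a static structural hypothesis: for any small $E_i$ it promises a set $F_i\subset L_i\cap(\hat I\setminus B)$ with a good bijection, with no reference to which vertices of $H$ have already been placed. You held back only $B$, not all of $\hat I$, so by the time you invoke \textbf{C8} in mid-greedy, the vertices of $F_i$ may well have been embedded already, and the step ``setting $\psi_i(v)\mapsto v$'' is illegal. The related omission---not embedding $N_H(B)$ before the bulk---also means your notion of $E_i$ (``vertices in danger of never again becoming a candidate'') is not the one the hypotheses are tailored to; in the actual proof $E_i$ is computed once, immediately after $N_H(B)$ is placed, precisely so that the buffer's candidate graph is known before the greedy phase can damage it.
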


Let us give some remarks on the lemma. First, we need this version since it does not demand super-regularity between cluster pairs, 
we have conditions
C6 and C7 instead. It enables us to directly work with the graph we obtain by applying the Regularity Lemma, even though the number 
of clusters depends on 
the regularity constant $\varepsilon.$ 

Condition C1 requires that the number of ``exceptional'' vertices is small. By condition C2 we make sure that embedding the vertices 
of $L_0$ can be done without 
affecting the neighborhood of other vertices of $L_0.$ It is clear that we need $C3$ since every cluster of $G'$ is an independent set. 
Condition C4 ensures 
that embedding $L_0$ will not significantly affect the embedding of other $L_i$ sets. By condition C5 we can have sufficiently large 
sets of buffer vertices (the vertices of $B$) 
such that the vast majority of them is embedded in the end. The role of C8 and C9 is to eliminate a possible objection during the 
embedding. Finally, C7 ensures that the neighbors 
of the $L_0$ vertices have 
sufficiently large space. 

The reader may notice that we have not defined
which vertices would belong to $E_i$ (conditions C8 and C9), we only made assumption on the size of these sets -- the $E_i$ sets are determined during 
the execution of the algorithm as follows.
The embedding of $H$ is done sequentially: first  we map the vertices of $N_H(L_0),$ then $N_H(B),$ the neighbors of the buffer vertices. 
These are small sets, hence, 
without any difficulty we can map them. At this point we look at $G$ and identify $\cup E_i,$ the exceptional or ``risky'' 
vertices\footnote{Roughly speaking, 
a vertex of $G$ is exceptional, if after the mapping of $N_H(L_0)$ and $N_H(B),$ it is not contained in the vacant neighborhood of many 
buffer vertices.}, and cover them right away 
with buffer vertices. The set of buffer vertices we use for this purpose is denoted by $F.$ The vast majority of the vertices of $H$ is 
mapped after taken care of these exceptional vertices.

\medskip

P.~Hajnal, S.~Herdade, A.~Jamshed and E.~Szemer\'edi in~\cite{HHJSz} proved the following stability version of the P\'osa-Seymour conjecture.
Before stating it, we need the notion of {\it $(\eta, k)$-extremal} graphs. Recall that we have already defined
$\eta$-extremality of a graph, which is in fact equivalent to $(\eta, 3)$-extremality, as one can see immediately from the 
definition below. 

\begin{definition}
Given some integer $k\ge 3$ and real $\eta>0$ we say that graph 
$G$ of order $n$ is 
{\em $(\eta, k)$-extremal} if there exists $A\subset V(G)$ with $|A|=\lfloor n/k\rfloor$ such that 
$e(G[A])\le \eta {|A| \choose 2}< \eta n^2/(2k^2).$ If 
such a subset $A$
does not exist, we say that $G$ is {\em $(\eta, k)$-non-extremal}.
\end{definition}

\begin{theorem}\label{Endre}
Let $k\ge 3$ be an integer and $\eta>0$ be a real number.
  There exists an integer $n_0(\eta,k)$, and positive real number $\gamma(\eta,k)$ such
  that any $(\eta, k)$-non extremal graph $G$, with
  $v(G)=n>n_0(\eta,k)$, and having minimum degree 
  $\delta(G)\geq (\frac{k-1}{k}-\gamma(\eta,k))n$, 
  contains a $(k-1)^{th}$ power of a Hamiltonian cycle.
\end{theorem}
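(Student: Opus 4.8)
\medskip
\noindent\textit{Proof proposal for Theorem~\ref{Endre}.}
The plan is to run the Regularity--Blow-up method in its version tailored to exceptional vertices, so that the embedding is produced by a single application of the Modified Blow-up Lemma (Theorem~\ref{modblow-up}) with $H=C_n^{k-1}$, the $(k-1)$st power of the $n$-cycle (so $\Delta(H)=2(k-1)$), the non-extremality hypothesis being used only to build a suitable ``$K_k$-skeleton'' in the reduced graph. Fix parameters $0<\varepsilon\ll\varepsilon'\ll\delta\ll d\ll\gamma$, chosen so that Theorem~\ref{modblow-up} applies with these $\varepsilon,\varepsilon',\delta,d$ and $D=2(k-1)$ and with a small constant $c$, and with $\gamma$ also small compared with $1/k$ and $\eta$. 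Apply the degree form of the Regularity Lemma (Lemma~\ref{reglemma}) to $G$ with $\varepsilon,d$, and thin edges routinely so every non-trivial cluster pair has density exactly $d$; this gives $G'$, clusters $V_0,\dots,V_\ell$, and the reduced graph $G_r$ with $\delta(G_r)\ge(\tfrac{k-1}{k}-\gamma-2\varepsilon-d)\ell$ (Corollary~\ref{mindeg}). First I transfer non-extremality: if some $\lfloor\ell/k\rfloor$-set $A$ of clusters had $e(G_r[A])\le(\eta/4)\binom{|A|}{2}$, then $\bigcup_{W\in A}W$, padded with at most $2\varepsilon n$ vertices, would be an $\lfloor n/k\rfloor$-set spanning fewer than $\eta\binom{\lfloor n/k\rfloor}{2}$ edges of $G$ --- using that clusters are independent in $G'$, that cluster pairs have density $0$ or $d$, that $|E(G)\setminus E(G')|\le(d+\varepsilon)n^2/2$, and that $d,\varepsilon\ll\eta/k^2$ --- contradicting $(\eta,k)$-non-extremality of $G$. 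Hence $G_r$ is $(\eta/4,k)$-non-extremal.

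The substantive step --- and, I expect, the main obstacle --- is to extract from $G_r$ the skeleton on which $C_n^{k-1}$ is blown up: a cyclic sequence $C_1,C_2,\dots,C_t$ of $k$-cliques of $G_r$, covering all but $o(\ell)$ of the clusters, such that for each $i$ the pair $(C_i,C_{i+1})$ admits a \emph{slide} --- a walk of $k$-cliques from $C_i$ to $C_{i+1}$ using only the clusters of $C_i\cup C_{i+1}$ and possibly a few as-yet-uncovered clusters, consecutive cliques of the walk sharing $k-1$ clusters. When $\delta(G_r)\ge\frac{k-1}{k}\ell$ this is essentially the structure underlying the Koml\'os--S\'ark\"ozy--Szemer\'edi proof of the P\'osa--Seymour theorem: Hajnal--Szemer\'edi --- Corr\'adi--Hajnal (Theorem~\ref{CH}) when $k=3$ --- supplies an almost-perfect $K_k$-tiling, and the high connectivity of $G_r$ lets one string the tiles into a cycle, the leftover clusters serving as connecting material. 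Here the degree is short of $\frac{k-1}{k}\ell$ by $(\gamma+2\varepsilon+d)\ell$, and this deficit has to be paid for by the $(\eta/4,k)$-non-extremality of $G_r$, through the stability versions of ``almost-perfect $K_k$-tiling'' and of ``$K_k$-connectivity'' --- this is where the word ``stability'' in the theorem is earned. Taking $\gamma:=\tfrac12\gamma'$ for the threshold $\gamma'=\gamma'(\eta,k)$ that argument provides makes it applicable to $G_r$; I then move the $o(\ell)$ uncovered clusters into $V_0$, so that $|V_0|\le K_1dn$.

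With the skeleton in hand I build the blueprint for Theorem~\ref{modblow-up}: the partition $V(H)=L_0\cup L_1\cup\dots\cup L_\ell$ matched with $V_0,\dots,V_\ell$ (each $|L_i|=m$) and the bijection $\psi_0\colon L_0\to V_0$. Lay the cyclic order of $V(H)$ along the skeleton --- a long stretch of cluster-bound vertices per clique $C_i$, visiting its $k$ clusters in any order in which vertices at cyclic distance $\le k-1$ fall in distinct clusters and each cluster receives exactly $m$ cluster-bound visits, and at each junction a short transition realizing the slide for $(C_i,C_{i+1})$. This gives condition C3 (two vertices of one cluster are $\ge k$ apart in the cycle, hence non-adjacent in $C_n^{k-1}$) and condition C6 (every $H$-edge between distinct clusters lies inside a clique of some slide-walk, hence between a density-$d$ regular pair). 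Then I place the exceptional vertices --- prospective members of $L_0$, plus the vertices of the discarded clusters: such a vertex $v$ has more than $(\tfrac{k-1}{k}-2\gamma)n$ neighbours among $V_1\cup\dots\cup V_\ell$, hence fewer than $cm$ neighbours in fewer than $2\ell/k$ clusters, hence it is ``bad'' to at most one cluster of at least $t/3$ of the cliques; a Hall/flow argument assigns each $v$ to such a good clique $C(v)$, no clique receiving more than $O(dm)$ of them. The $H$-preimage $x=\psi_0^{-1}(v)$ is put deep in $C(v)$'s stretch and the $2(k-1)$ neighbours of $x$ are distributed among the $k-1$ good clusters of $C(v)$ by the proper colouring $v_{p-a}\mapsto W_a$, $v_{p+b}\mapsto W_{k-b}$ (valid since $a+b\le k-1$ forces $a\neq k-b$); condition C7 then reads $\deg_{G'}(v,W_i)\ge cm$ for exactly those clusters and holds by construction, while the $\pm O(dm)$ discrepancies among stretch lengths are absorbed so that $|L_i|=m$.

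It remains to pick the distance-$5$-spread set $\hat I\supset L_0$ (possible as $|L_0|\ll n$; this is C2, while C1 is the bound $|V_0|\le K_1dn$), the buffer sets $B_i\subset\hat I\cap L_i$ with $|B_i|=\delta m$ spread evenly along the cycle so that $|N_H(B)\cap L_i|$ varies by at most $\varepsilon m$ in $i$ (C5), and to check the remaining conditions: C4, $|N_H(L_0)\cap L_i|\le K_2dm$, from $|L_0|\le K_1dn$, the per-clique capacity and $\Delta(H)=2(k-1)$; C8, because being $(d,\varepsilon)$-good for an $H$-vertex $x$ fails for at most $\varepsilon m$ vertices of $V_i$ per neighbour-cluster of $x$ (Lemma~\ref{regfokok}), so any $E_i$ with $|E_i|\le\varepsilon'm$ is greedily matched into $L_i\cap(\hat I-B)$; and C9, since $|F_i|\le\varepsilon'm$ and $\Delta(H)$ is bounded. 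Theorem~\ref{modblow-up} now embeds $H=C_n^{k-1}$ into $G'\subseteq G$, the sought $(k-1)$st power of a Hamilton cycle. To summarise: the only genuinely hard step is producing the $K_k$-skeleton in the reduced graph from the slightly deficient minimum degree together with non-extremality; the placement of the exceptional vertices and the verification of C1--C9 are bookkeeping, routine once one notes that every exceptional vertex is bad to fewer than $2\ell/k$ clusters and hence has many good cliques available.
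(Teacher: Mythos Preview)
The paper does not prove Theorem~\ref{Endre}; it is quoted as an external result of Hajnal, Herdade, Jamshed and Szemer\'edi~\cite{HHJSz}, so there is no proof in the paper to compare your proposal against.

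As for your sketch itself: the overall architecture --- apply the Regularity Lemma, transfer non-extremality to the reduced graph, build a cyclic $K_k$-skeleton there, then apply a Blow-up--type lemma to embed $C_n^{k-1}$ --- is indeed the standard shape such a proof would take. But the substantive content of the theorem lies entirely in the step you yourself flag as ``the main obstacle'': producing the connected $K_k$-skeleton in $G_r$ when the minimum degree is genuinely \emph{below} $\tfrac{k-1}{k}\ell$, using only $(\eta/4,k)$-non-extremality. You do not carry this out. You write that the deficit ``has to be paid for by the $(\eta/4,k)$-non-extremality of $G_r$, through the stability versions of `almost-perfect $K_k$-tiling' and of `$K_k$-connectivity'\,'', and then set $\gamma:=\tfrac12\gamma'$ for ``the threshold $\gamma'=\gamma'(\eta,k)$ that argument provides'' --- but no argument has been provided. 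This is circular: the existence of such a $\gamma'$ is exactly what Theorem~\ref{Endre} asserts. The Hajnal--Szemer\'edi theorem, Corr\'adi--Hajnal, and the Koml\'os--S\'ark\"ozy--Szemer\'edi connecting lemmas all require $\delta\ge\tfrac{k-1}{k}\ell$ on the nose; pushing them into the sub-threshold regime under a non-extremality hypothesis is the entire content of~\cite{HHJSz}, not a bookkeeping step.

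The remaining parts of your sketch (placing exceptional vertices, checking C1--C9 for Theorem~\ref{modblow-up}) are plausible in outline, though several verifications (C5 and C8 in particular) are asserted rather than checked, and using the Modified Blow-up Lemma here rather than the ordinary one plus standard connecting/absorbing is a somewhat unusual choice. These are secondary issues: without an actual argument for the skeleton step, the proposal restates the theorem's difficulty rather than resolving it.
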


The authors of~\cite{HHJSz} remark that Theorem~\ref{Endre} works when $\gamma=\eta^{1000}$ although a much smaller exponent would suffice. 
When applying their theorem, we will assume that $\eta$ and $\gamma$ are related according to this equation. 

We mostly consider $(\eta, 3)$-extremal and
$(\eta, 3)$-non-extremal host graphs, and, as we indicated above, we may refer to them as $\eta$-extremal or $\eta$-non-extremal graphs. 

Depending on the structure of $G$ and $H$ the embedding algorithm has several cases. Hence,
we will also need another notion of extremality, for the graph $H$ to be embedded.    


\begin{definition}[$\nu$-triangular extremality]
We say that a graph $H$ on $n$ vertices is $\nu$-\emph{triangular extreme} if it contains at least $(1-\nu)n/3$ vertex disjoint triangles, 
here $0<\varepsilon\ll d\ll \nu\ll 1.$
\end{definition}

A proper vertex coloring of a graph is \emph{equitable} if the sizes of its color classes differ by at most one. Kierstead and Kostochka proved the following.

\begin{theorem}[Kierstead, Kostochka, \cite{KK}]\label{equicol}
If $H$ is a graph having $\theta(H)\le 2k+1$ then it has an equitable $(k+1)$-coloring.
\end{theorem}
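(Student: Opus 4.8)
The plan is to combine the Hajnal--Szemer\'edi theorem (every graph of maximum degree at most $k$ has an equitable $(k+1)$-coloring) with a local recolouring argument that absorbs the high-degree vertices. The key elementary observation concerns the Ore-degree: if $\deg_H(b)\ge k+1$ then every neighbour $w$ of $b$ satisfies $\deg_H(w)\le 2k+1-\deg_H(b)\le k$. Hence the set $B:=\{v\in V(H):\deg_H(v)\ge k+1\}$ of ``big'' vertices is independent, every vertex of $B$ has only ``small'' neighbours, and $\Delta(H)\le 2k$ (each vertex has a neighbour of degree at least one). In particular $H-B$ has maximum degree at most $k$.

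If $B=\emptyset$ we are done by Hajnal--Szemer\'edi. In general I would not induct on $|B|$ directly: deleting a vertex $b\in B$ leaves an equitable colouring of $H-b$ with exactly one class of the wrong size, and $b$ may be adjacent to a vertex of that class, so one would need a stronger inductive hypothesis permitting arbitrary near-balanced target sizes. Instead I would argue extremally, in the spirit of the Kierstead--Kostochka short proof of Hajnal--Szemer\'edi: fix a proper $(k+1)$-colouring of $H$ with colour classes $V_1,\dots,V_{k+1}$ whose sizes are as balanced as possible; if it is equitable we are done, so assume $|V_a|\ge |V_b|+2$ for some $a,b$. The aim is to produce a sequence of Kempe-type recolourings that effects a net transfer of one vertex from $V_a$ to $V_b$ without destroying properness, contradicting the choice of colouring. (A tempting shortcut --- split each $b\in B$ into copies $b',b''$ dividing $N(b)$ so that the resulting graph has maximum degree at most $k$, then apply Hajnal--Szemer\'edi --- fails, because nothing forces $b'$ and $b''$ to receive the same colour.)

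The main obstacle is carrying out this exchange in the presence of $B$: a big vertex may meet all $k+1$ colour classes, so it cannot be freely recoloured, and it can block a recolouring chain that runs through one of its neighbours. Here the structure above should be used decisively: since $B$ is independent and $N(b)$ consists only of small-degree vertices, a chain entering $N(b)$ has several free colours available at each step, so the relevant ``obstruction digraph'' on the $k+1$ classes cannot be too dense; a discharging argument --- with the weight of a vertex chosen to separate $B$ from $V(H)\setminus B$ --- should then show that either a legal transfer exists or one exposes a configuration incompatible with $\theta(H)\le 2k+1$, such as a degree-$(k{+}1)$ vertex adjacent to a vertex of degree greater than $k$, or a dense bipartite-like cluster of small vertices that forces two adjacent big vertices. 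Pinning down the precise weights and the finite list of forbidden configurations is where the genuine difficulty lies; once $B=\emptyset$ the statement reduces to Hajnal--Szemer\'edi, and the rest is a controlled perturbation of that case.
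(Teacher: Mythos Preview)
The paper does not prove this theorem; it is quoted as an external result of Kierstead and Kostochka~\cite{KK} and invoked as a black box in Step~3 of the Embedding Algorithm for Case~3. So there is no proof in the paper to compare your attempt against.

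Your outline is in the spirit of the actual Kierstead--Kostochka argument. The structural observations --- that $B=\{v:\deg_H(v)\ge k+1\}$ is independent, that $\Delta(H)\le 2k$, and that every neighbour of a big vertex has degree at most $k$ --- are correct and are exactly what drives their proof. The extremal framework (take a most balanced proper $(k{+}1)$-colouring, and if it is not equitable, engineer a transfer from a largest class to a smallest one) is precisely their method, inherited from their short proof of the Hajnal--Szemer\'edi theorem. You are also right to discard the induction-on-$|B|$ and vertex-splitting shortcuts, for the reasons you give.

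That said, what you have written is a plan, not a proof. You say the obstruction digraph ``cannot be too dense'' and that ``a discharging argument \dots should then show'' the existence of a legal transfer, and you concede that ``pinning down the precise weights and the finite list of forbidden configurations is where the genuine difficulty lies.'' That difficulty \emph{is} the content of the theorem. In~\cite{KK} the authors build a specific auxiliary structure on the colour classes, carry over and sharpen the ``solo vertex''/``terminal class'' analysis from their Hajnal--Szemer\'edi proof, and show that the Ore-degree bound forces either a direct move or a cascade that frees one up; none of this machinery appears in your proposal. As it stands, the argument amounts to ``use the method of~\cite{KK}'', which is exactly what the present paper does by citation.
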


As our embedding method is in fact a randomized algorithm, we are going to use large deviation bounds, for example the well-known Chernoff inequality, see~\cite{AS}. 
We will also use another, less common inequality, a corollary of the Azuma inequality. We will refer to it as the Azuma-Hoeffding bound (sometimes it is 
called Hoeffding's bound~\cite{Chaz}):

\begin{lemma}\label{AzumaHoeffding}
Let us assume that we are given an urn with $r$ red and $b$ blue balls. Let $N = r + b.$ We
conduct the following experiment: randomly, uniformly draw $m$ balls (where $1\le m \le N$) without
replacement. Denote the number of chosen red balls by $X.$ Then for every $0\le q\le N$ we have 
$$\mathbb{P}\left(|X-\mathbb{E}X| \ge q \right)\le 2e^{-\frac{q^2}{2m}}.$$
\end{lemma}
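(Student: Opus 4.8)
The plan is to build a Doob martingale along the sequence of draws and invoke the classical Azuma inequality. I would think of the $m$ balls as drawn one at a time (the value of $X$ depends only on the resulting set of $m$ balls, but the sequential viewpoint is what makes a martingale available), and for $0\le i\le m$ set $Z_i=\mathbb{E}[X\mid\mathcal{F}_i]$, where $\mathcal{F}_i$ records the colours of the first $i$ balls drawn. Then $(Z_i)_{i=0}^m$ is a martingale with $Z_0=\mathbb{E}X$ and $Z_m=X$, the latter because $X$ is determined once all $m$ draws are revealed. Azuma's inequality asserts that for a martingale with $|Z_i-Z_{i-1}|\le c_i$ one has $\mathbb{P}(|Z_m-Z_0|\ge q)\le 2\exp\bigl(-q^2/(2\sum_{i=1}^m c_i^2)\bigr)$, so it is enough to show $|Z_i-Z_{i-1}|\le 1$ for every $i$, since then $\sum_{i=1}^m c_i^2=m$ and we obtain exactly the stated bound.

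The one computation to carry out is the bounded-difference estimate. Conditioning on $\mathcal{F}_{i-1}$, suppose that at that moment $r'$ red and $N'-r'$ blue balls remain, with $s=m-(i-1)$ further draws still to be made, so $1\le s\le N'$; write $a$ for the number of red balls already drawn. Then $Z_{i-1}=a+s\,r'/N'$, while $Z_i$ equals $a+1+(s-1)(r'-1)/(N'-1)$ if the $i$-th ball turns out red and $a+(s-1)r'/(N'-1)$ if it turns out blue. Subtracting, the two possible values of $Z_i$ differ by $1-(s-1)/(N'-1)$, which lies in $[0,1]$ because $0\le s-1\le N'-1$. Since, conditionally on $\mathcal{F}_{i-1}$, $Z_{i-1}$ is a convex combination of these two values and $Z_i$ is one of them, $|Z_i-Z_{i-1}|$ is at most the gap between them, hence at most $1$.

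There is no genuine obstacle: the only content is this bounded-difference bound, i.e.\ the observation that revealing one more draw cannot shift the conditional expectation of the total red count by more than one, because a freshly uncovered red ball raises the count already seen but simultaneously lowers the expected number of reds among the draws still to come, and these two effects nearly cancel. As an alternative I could instead invoke Hoeffding's classical comparison inequality — for every convex function the expectation under sampling without replacement is at most that under sampling with replacement — and then apply the usual Chernoff bound to the binomial variable produced by sampling with replacement; that route even yields the sharper exponent $2q^2/m$, but the weaker estimate stated in the lemma is all that is needed in the applications.
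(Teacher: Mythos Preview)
Your argument is correct: the Doob martingale $Z_i=\mathbb{E}[X\mid\mathcal{F}_i]$ has increments bounded by $1$, and Azuma's inequality then gives precisely the stated bound $2e^{-q^2/(2m)}$. The bounded-difference computation is clean and the edge case $N'=1$ (forcing $s=1$) causes no trouble.

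As for comparison with the paper: the paper does not actually prove this lemma. It is stated as a known concentration bound for the hypergeometric distribution, with a reference to Chazelle's book, and the surrounding text merely notes that it is ``a corollary of the Azuma inequality''. Your proof is exactly the standard derivation the paper is alluding to, so there is nothing to contrast.
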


It is easy to see, that $\mathbb{E}X = ma/N.$

\medskip


\section{Proof of the main results when $H$ is not triangular extreme}\label{non-extrem}

The goal of this section is to prove Theorem~\ref{main} and Theorem~\ref{stab} for the case when $H$ is not triangular extreme. We will achieve this goal through several subsections. The triangular extreme case will be handled in Section~\ref{extrem}.

Here is a sketch of the main ideas of this section. In subsection~\ref{5.1} we apply the Regularity lemma for $G$ and obtain the reduced graph $G_r.$ We cover the vast majority of $G_r$ with disjoint $K_3$s. In subsection~\ref{5.2} we decompose $H,$ as a result we will obtain a large independent set $I$ such that the components of $H-I$ are paths of length at most 2. In subsection~\ref{homomorfizmus}, using random methods and matchings, we find a homomorphic mapping $f:V(H) \longrightarrow V(G_r)$ such that $f$ assigns almost the same number of vertices of $H$ to every clusters of $G_r.$ Finally, in subsection~\ref{5.4} after somewhat involved preparations based on $f$  we apply the Modified Blow-up lemma in order to finish the embedding.

\subsection{Preprocessing the host graph}\label{5.1}
Recall that in Theorem~\ref{stab} (the stability version), the minimum degree of the host graph is allowed to be slightly smaller than $2n/3.$
Our strategy for proving Theorem~\ref{main} is to first prove the stability version, and then consider only extremal host graphs
in order to finish the proof, since the stability version implies Theorem~\ref{main} whenever $G$ is non-extremal.

We will only use the Regularity Lemma for the stability version, hence, we will apply it for a host graph $G$ having minimum degree
$\delta(G)\ge (2/3-\gamma)n$ for some\footnote{The value of $\gamma$ is determined by the parameters of the Regularity Lemma, as we will see soon.} $\gamma>0,$  however, throughout the section we do not assume that $G$ is non-extremal\footnote{Hence the results of this section hold for an extremal host graph as well.}. 

\begin{itemize}
\item[\textbf{Step 1}] Given $0<\varepsilon \ll d\ll 1,$ let us set $\gamma=d-\varepsilon,$ here $\gamma$ is the parameter of the stability version and $\varepsilon$ and $d$ are the parameters of the Degree Form of the Regularity Lemma.
Applying the Degree Form of the Regularity Lemma for $G$ with the above parameters we obtain $\ell'+1$ clusters, $V_0,V_1,\ldots,V_{\ell'}$ where $V_0$ is the exceptional cluster. Next we construct the reduced graph $G_r$. Recall that $V(G_r)=\{V_1,\ldots,V_{\ell'}\}$ and $V_iV_j\in E(G_r)$ if $(V_i,V_j)$ form an $\varepsilon$-regular pair in $G$ with density at least $d$.
Observe that by Corollary~\ref{mindeg} the minimum degree
of the reduced graph $G_r$ is $\delta(G_r)\ge (2/3-2d)\ell'.$

\item[\textbf{Step 2}] Let us now add $6d\ell'$ fictive vertices to the vertex set of $G_r.$ Connect each of the fictive
vertices to each of the original vertices of the reduced graph. The resulting new graph has $(1+6d)\ell'$ vertices and its
minimum degree is at least $(2/3-2d)\ell' + 6d\ell'=(2/3+4d)\ell'={2 \over 3}(1+6d)\ell',$ hence it satisfies the degree condition of the
Corr\'adi-Hajnal theorem, and therefore it contains a triangle factor.

\item[\textbf{Step 3}] Next we delete all those triangles from this factor that contain any of
the fictive vertices. The non-fictive vertices of
deleted triangles will be put into the exceptional cluster $V_0,$ this will increase its size by at most $12d\ell'm\le 12dn$ vertices of $G,$ so
we have $|V_0|\le 13dn.$ The minimum degree of $G_r$ may also decrease by at most $12d\ell'.$
Then the reduced graph on the remaining vertices (that are in fact non-exceptional clusters) has a triangle factor, which we denote by
$\mathcal{T}.$ For simpler notation we will still denote this modified reduced graph by
$G_r,$ and the clusters of it will be denoted by $V_1, \ldots, V_{\ell},$ where $\ell\ge (1-12d)\ell'.$

\item[\textbf{Step 4}] Recall, that every vertex of the reduced graph is a cluster on $m$ vertices. Using Lemma~\ref{szupreg} repeatedly, by deleting $4\varepsilon m$ vertices from each
cluster in the triangles of $\mathcal{T}$
we can achieve that every cluster-edge in every triangle is a $(2\varepsilon, d-4\varepsilon)$-super-regular pair. For simpler notation we will denote the new cluster sizes by $m.$ Observe, that
when doing so we increased the size of the exceptional cluster
by at most $4\varepsilon n$ vertices, hence, we have that $|V_0|\le 14dn.$ The vertex set of the reduced graph (the non-exceptional clusters) and its triangle factor $\mathcal{T}$ have not changed.
\end{itemize}

Summarizing, we have the following.
\begin{lemma}\label{hszogfaktor}
After preprocessing $G,$ we obtain a reduced graph $G_r$ with vertices $V_1, \ldots, V_{\ell}$ (these are non-exceptional clusters in $G$) so that $|V_i|=m,$ and every edge in $G_r$ represents a $2\varepsilon$-regular pair. The minimum degree is
$\delta(G_r)\ge (2/3-14d)\ell.$
The reduced graph has a triangle factor $\mathcal T$ such that every edge of $\mathcal T$ represents a $(2\varepsilon, d-4\varepsilon)$-super-regular pair. Furthermore, the exceptional cluster $V_0$ has at
most $14 dn$ vertices.
\end{lemma}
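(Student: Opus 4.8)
The plan is to simply carry out Steps 1--4 as described and keep careful track of the three quantities that matter: the current number of clusters, the minimum degree of the reduced graph, and the size of the exceptional cluster. First I would fix constants $0<\varepsilon\ll d\ll 1$, set $\gamma=d-\varepsilon$ so that the hypothesis $\delta(G)\ge(2/3-\gamma)n$ of the stability statement is exactly what is available, and apply the Degree Form of the Regularity Lemma (Lemma~\ref{reglemma}) to $G$ with parameters $\varepsilon,d$. This produces clusters $V_0,V_1,\dots,V_{\ell'}$ and a subgraph $G'$; forming the reduced graph $G_r$ and invoking Corollary~\ref{mindeg} gives $\delta(G_r)\ge(2/3-2\varepsilon-d)\ell'\ge(2/3-2d)\ell'$, while $|V_0|<\varepsilon n$ at this stage.

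Next, to manufacture a near-spanning triangle factor I would artificially raise the minimum degree: add $\lceil 6d\ell'\rceil$ fictive vertices joined completely to $V(G_r)$, obtaining a graph on $(1+6d)\ell'$ vertices of minimum degree at least $(2/3-2d)\ell'+6d\ell'=(2/3+4d)\ell'\ge\tfrac23(1+6d)\ell'$, which clears the threshold of Corr\'adi--Hajnal (Theorem~\ref{CH}) with room to spare, so it has a triangle factor. Deleting every triangle of that factor using a fictive vertex removes at most $2\cdot6d\ell'$ clusters; the at most $12d\ell'm\le 12dn$ vertices of $G$ in those clusters go into $V_0$, so now $|V_0|<\varepsilon n+12dn\le 13dn$, the surviving $\ell\ge(1-12d)\ell'$ clusters carry a genuine triangle factor $\mathcal T$ of the induced reduced graph (which I keep calling $G_r$), and each remaining vertex of $G_r$ lost at most $12d\ell'$ neighbours, leaving $\delta(G_r)\ge(2/3-2d)\ell'-12d\ell'=(2/3-14d)\ell'\ge(2/3-14d)\ell$. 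Finally, applying Lemma~\ref{szupreg} to each of the three cluster-pairs in every triangle of $\mathcal T$ and deleting at most $4\varepsilon m$ vertices per cluster makes each such pair $(2\varepsilon,d-4\varepsilon)$-super-regular (the factor $1+\varepsilon$ is absorbed since $\varepsilon(1+\varepsilon)\le 2\varepsilon$), while every pair that was merely $\varepsilon$-regular survives the shrinking as a $2\varepsilon$-regular pair; the at most $4\varepsilon m\ell\le 4\varepsilon n$ discarded vertices are placed in $V_0$, giving $|V_0|\le 14dn$. Relabelling the common cluster size back to $m$ and noting that Step~4 removes no clusters and hence does not change $\delta(G_r)$, all assertions of the lemma are in place.

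There is really no substantive obstacle here: the lemma is a bookkeeping summary of the preprocessing, and the only points requiring care are that each of the three dumps into $V_0$ (the original exceptional cluster, the clusters of fictive triangles, the vertices shaved off for super-regularity) is bounded so their sum stays below $14dn$, and that the inflated minimum degree genuinely exceeds $\tfrac23$ of the new order before applying Corr\'adi--Hajnal. Both are immediate from $\varepsilon\ll d\ll1$. The one thing I would double-check in detail is the standard fact that passing to subsets of size at least $(1-4\varepsilon)m$ of an $\varepsilon$-regular pair yields a $2\varepsilon$-regular pair, and that the shrinking in Step~4 is compatible with keeping the non-$\mathcal T$ edges of $G_r$ regular as claimed.
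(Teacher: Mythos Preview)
Your proposal is correct and follows essentially the same approach as the paper: the lemma is indeed just a summary of the preprocessing Steps~1--4, and you track the three quantities (cluster count, minimum degree, size of $V_0$) exactly as the paper does, invoking Corollary~\ref{mindeg}, the fictive-vertex trick with Theorem~\ref{CH}, and Lemma~\ref{szupreg} in the same places. There is no substantive difference.
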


\subsection{Structural properties of $H$}\label{5.2}
Observe that $\Delta(H)\le 4$. In fact the only vertices that can have 4 neighbors are centers of $K_{1,4}$s. The presentation of the proof will be
somewhat simpler later if we assume that the subgraph $H_1 \subset H$ that we obtain by deleting every $K_{1,4}$ from $H$ is {\it saturated,}
that is, we add edges to $H_1$ one-by-one until we obtain a subgraph with Ore-degree being equal to 5, but adding any new edge 
would make its Ore-degree larger (of course, we do not add parallel or loop edges). 

It is clear that we cannot add any edge that connects a vertex
of a $K_{1,4}$ to any other vertex without increasing the Ore-degree to 6. Similarly, if $T_1$ and $T_2$ are arbitrary vertex disjoint triangles in $H,$ then we 
cannot add any edge that connects $T_1$ and $T_2.$ 
 
Let $D_i=\{x\in V(H) : \deg_H(x)= i\}$ for $i=0, \ldots, 4.$ It is easy to see that at most two vertices
can have degree at most 1 in the saturated part of $H,$ therefore, we have that $|V(H_1)\cap (D_0\cup D_1)|\le 2.$


An important ingredient for proving Theorem~\ref{main} is the following decomposition of $H:$

\begin{lemma}\label{strH}
Let $H$ be a graph of order $n$ with $\theta(H)=5$. Then there exists $I\subset V(H)$ such that the following conditions hold:
\begin{itemize}
\item[{(1)}] $I$ is an independent dominating set in $H$ with $|I|\ge n/3$.
\item[{(2)}] $\deg_H(x)\le 2$ for each $x\in I$.
\item[{(3)}] The connected components of $H-I$ are paths with length at most 2.
\item[{(4)}]  If $x\in I$ with $N_H(x)=\{y_1,y_2\}$ then either $y_1y_2\in E(H)$ or $y_1$ and $y_2$ belong to different components of $H-I$.
\end{itemize}
\end{lemma}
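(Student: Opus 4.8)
The plan is to construct $I$ one connected component of $H$ at a time. Since $H$ has no edges between distinct components, the independence of $I$, the fact that $I$ is dominating, the component structure of $H-I$, and condition~(4) are all properties that can be verified separately on each component of $H$. Moreover $\sum_{K}v(K)=n$, so it suffices to produce, for each component $K$, a set $I_K\subseteq V(K)$ satisfying the restrictions of (1)--(4) to $K$ together with $|I_K|\ge v(K)/3$; then $I=\bigcup_K I_K$ works. Recall from the discussion preceding the lemma that, after saturating, every component of $H$ is either a copy of $K_{1,4}$ or lies in the subgraph $H_1$ with $\Delta(H_1)\le 3$, in which the degree-$3$ vertices form an independent set each of whose neighbours has degree at most $2$, and $H_1$ has at most two vertices of degree at most $1$.

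For a component $K\cong K_{1,4}$ we let $I_K$ consist of exactly two of the four leaves: then $|I_K|=2\ge 5/3$, $K-I_K\cong P_3$, the centre is dominated, and (4) is vacuous since every vertex of $I_K$ has degree $1$. For a component $K$ that is a path or a cycle we choose $I_K$ by taking roughly every third vertex, arranged so that the gaps between consecutive chosen vertices have length $1$ or $2$; one checks that $|I_K|\ge v(K)/3$ is attainable for every length, that $K-I_K$ is then a disjoint union of copies of $P_1$ and $P_2$, that $I_K$ dominates (all gaps have length at most $2$), and that (4) holds, a chosen degree-$2$ vertex having its two neighbours in the two gaps flanking it, which are distinct components of $K-I_K$ unless $|I_K|=1$, which happens only for $K\cong C_3$, where the two neighbours are adjacent.

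The substantial case is a component $K$ of $H_1$ containing at least one degree-$3$ vertex. Let $v_1,\dots,v_t$ be these vertices; they form an independent set, so none of them may enter $I$, and $K-\{v_1,\dots,v_t\}$ has maximum degree at most $2$ and no cycle (a cycle avoiding all $v_i$ would be a whole component of $K$), hence is a disjoint union of paths $Q_1,\dots,Q_s$ whose endpoints are exactly the neighbours of the $v_i$. We build $I_K$ by first choosing, for each hub $v_i$, two of its three neighbours to place in $I_K$; two of the three are always non-adjacent, since three pairwise adjacent neighbours of $v_i$ would force a neighbour of degree $\ge 3$. This forces $\deg_{K-I_K}(v_i)\le 1$ and dominates $v_i$. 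If two chosen vertices happen to be adjacent — which can occur only through a length-$1$ or length-$2$ piece $Q_j$ joining two hubs — we keep one of them and, if needed, redirect the affected hub to its third neighbour or to an interior vertex of one of its pieces; as each hub has at most three incident pieces, these conflicts are local and can be resolved by a short greedy or matching argument. We then extend $I_K$ inside each remaining stretch of the $Q_j$'s exactly as in the path case, taking care not to isolate a single chosen vertex between two vertices that remain in one component of $K-I_K$ (this is exactly what (4) demands for interior chosen vertices) and to leave every $v_i$, with its at most one surviving pendant edge, at the end of a path of length at most $2$. A short count gives $|I_K|\ge v(K)/3$: writing $v(K)=t+\sum_j q_j$ with $q_j=v(Q_j)$ and $s\approx\tfrac32 t$, the two-neighbours-per-hub step contributes about $2t$ vertices and we harvest at least a third of the $\sum_j(q_j-2)$ interior vertices, so $|I_K|\gtrsim 2t+\tfrac13\bigl(\sum_j q_j-2s\bigr)\ge\tfrac13\bigl(t+\sum_j q_j\bigr)$, with comfortable slack when short pieces predominate as well.

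The main obstacle is precisely this last case: within a component carrying many degree-$3$ vertices one must secure the four requirements simultaneously — in particular keeping the chosen hub-neighbours independent and enforcing condition~(4) for interior chosen vertices — without dropping below the bound $|I_K|\ge v(K)/3$. All the conflicts are local, since each hub interacts only with its three incident path-pieces (at most of them being short), so they can be dispatched by a finite case analysis or a small matching argument; this bookkeeping is where essentially all of the work lies.
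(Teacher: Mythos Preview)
Your proposal contains one concrete error and one genuine gap.

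\textbf{The error.} For a component $K\cong K_{1,4}$ you take exactly two of the four leaves into $I_K$. Then the two leaves \emph{not} chosen lie in $K-I_K$, and their only neighbour is the centre, which is also in $K-I_K$. Hence those two leaves are not dominated by $I_K$, and condition~(1) fails. (The fix is easy---put all four leaves into $I_K$---but as written the argument is wrong.)

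\textbf{The gap.} In the substantial case of a component with degree-$3$ hubs you choose two of the three neighbours of each hub and then write that the resulting adjacency conflicts ``are local and can be resolved by a short greedy or matching argument'', and that ``this bookkeeping is where essentially all of the work lies''. But you do not carry out this work. The difficulty is real: redirecting one hub to its third neighbour can create a new conflict at a neighbouring hub, and the $\approx 2t$ count for the initial selection ignores coincidences when a single-vertex piece $Q_j$ is picked from both sides. Without an explicit resolution procedure and a verified lower bound for $|I_K|$ after resolution, you do not have a proof of $|I_K|\ge v(K)/3$, nor of~(4) for the interior chosen vertices.

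\textbf{How the paper sidesteps this.} The paper does not work component by component and does not try to pick two neighbours per hub. Instead it assigns to every degree-$3$ vertex exactly \emph{one} designated neighbour, globally and conflict-free, via the K\"onig--Hall theorem applied to an auxiliary bipartite graph between $D_3$ and a family $\mathcal{P}$ of maximal degree-$2$ paths. This yields an independent set $I'\subset D_1\cup D_2$ with $|I'|=|D_3|$ that already dominates $D_3$. One then takes $I$ to be a \emph{maximum} independent set containing $I'$; the bound $|I|\ge n/3$ drops out of the count $|I|\ge |I'|+\tfrac13(n-3|I'|)$, and conditions~(3) and~(4) follow directly from the maximality of $I$ by short exchange arguments. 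Choosing one neighbour per hub rather than two is precisely what makes the conflicts disappear.
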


\begin{proof}
Observe that if $x\in D_4$ then $x$ is the center vertex of a star in $H.$ Moreover, $D_3\cup D_4$ is 
an independent vertex set by the condition on the Ore degree.
Let
\begin{eqnarray*}
\mathcal{P}&=&\{xz_1\ldots z_r y  \textrm{ is a path of } H : r\ge 0, x\ne y, x,y\in D_1\cup D_3, z_1,\ldots,z_r\in D_2\}\\
&&\cup\ \{xz_1\ldots z_r x  \textrm{ is a cycle of } H : r\ge 2, x\in D_3, z_1,\ldots,z_r\in D_2\}.
\end{eqnarray*}

Therefore $H-(D_3\cup D_4)$ consists of disjoint cycles and paths (note that a path with length 0 is a vertex). We will consider the class of paths in $H-(D_3\cup D_4)$ and connect their endpoints with their neighbors in $D_3$.
Notice the $r=0$ case in the above definition. It e.g.~takes care of $K_{1,3}$s, as $xy_1, xy_2, xy_3$ are all paths that belong to 
$\mathcal{P}$ (here $x$ is the center vertex
of some $K_{1,3},$ the $y_i$s are the leaves). 

\begin{claim}\label{P-fok1}
If $x\in D_3$ and $xy\in E(H),$ then $y$ belongs to the vertex set of some $P\in \mathcal{P}$ such that $xy$ is an edge of $P.$
\end{claim}

\begin{proof} (of the Claim)
If $y\in D_1$ then by the definition of $\mathcal P$ we have $P=xy.$ Assume that $y\in D_2$ and $xy$ does not belong to any $P \in \mathcal{P}.$ Then we 
can build a path or cycle $P=xyz_1\ldots z_t$ greedily
such that $yz_1, z_iz_{i+1}\in E(H)$ for all $1\le i\le t-1,$ and $z_t\in D_1\cup D_3$
(if $z_t=x,$ then we found a cycle). Observe that there does not exist any $P'\in \mathcal{P}$ with $y\in P'$ such that $x\not\in P',$ 
since both neighbors of $y$ must belong to $P'.$ Hence, $y$ is
available when we build $P.$
\end{proof}

Let us consider the following auxiliary bipartite graph $B$: the color classes of $B$ are $D_3$ and $\mathcal{P}$, and $xP\in E(B)$ if 
and only if $x\in D_3$, $P\in \mathcal{P}$ and 
$x\in P.$ 

\begin{claim}\label{P-fok2}
If $x\in D_3$ then $2\le \deg_{B}(x)\le 3$,  moreover, if $P\in \mathcal{P}$ then $\deg_{B}(P)\le 2$.
\end{claim}

\begin{proof} (of the Claim)
First, observe that no $x\in D_3$ can have three neighbors in some $P\in \mathcal{P}:$ otherwise among the three neighbors one could 
find a vertex $y$ that itself has two neighbors in $P,$
implying that $y\in D_3,$ contradicting with the definition of $\mathcal{P}.$ Since by Claim~\ref{P-fok1} every neighbor of $x$ belong 
to some $P\in \mathcal{P},$ we get the 
lower bound for $\deg_B(x).$ 
It is clear, that $\deg_B(x)\le 3.$ Finally, $\deg_B(P)\le 2,$ since at most two vertices of $P$ do not belong to $D_2$ by definition. 
\end{proof}

Counting the number of edges between $X$ and 
$\mathcal{P}$ in $B$ shows that for every $X\subset D_3$ we have $|X|\ge |N_{B}(X)|$, just  Hence, by the K\"onig-Hall marriage theorem 
$B$ has a $D_3$-saturating matching $M.$
Let $$I'=\{y : (x,P)\in M, y\in P, x\in N_H(y)\}.$$

\noindent The following are immediate:
\begin{itemize}

\item[($i$)] $|I'|=|D_3|,$

\item[($ii$)] every $x\in D_3$ has a neighbor in $I',$

\item[($iii$)] every $y\in I'$ has a neighbor in $D_3,$

\item[($iv$)] $I' \subseteq D_1\cup D_2$ is an independent set.

\end{itemize}

If $x\not\in I'$ then $\deg_{H-I'}(x)=4$ or $\deg_{H-I'}(x)\le 2,$ moreover, if  $\deg_{H-I'}(x)=4$ then $x$ is the center vertex of
some $K_{1,4}.$ Therefore the components of $H-I'$ are paths, cycles and stars, implying that
the chromatic number of $H-I'$ is at most 3. 

Let $I$ be an independent set with maximal size such that $I'\subseteq I.$ 
Then we have $$|I-I'|\ge \frac{1}{3}(|V(H-I')|-2|I'|)$$ since $I-I'$ is a maximal independent set which does not contain any vertex 
of $I'\cup N_H(I'),$ $|N_H(I')|\le 2|I'|$ by ($iv$) above and we also use that $\chi(H-I'-N_H(I'))\le 3.$ 
 Therefore $$|I|\ge\frac{1}{3}(|V(H-I')|-2|I'|)+|I'|=
\frac{n}{3}.$$ Moreover if $x\in V(H-I)$ then $x$ has a neighbour in $I$ by ($ii$) and the maximality of $I$ so condition (1) holds.

Clearly, $D_3\cap I=\emptyset$ by ($ii$). If $x\in D_4$ then $x\not\in I,$ furthermore, $N_H(x)\subseteq I-I'$ by the construction 
of $I'$ and the maximality of $I$. 
Therefore condition (2) holds, implying that the components of $H-I$ are paths and cycles.

Suppose that $C\subset H-I$ is a cycle. By the condition on the Ore-degree there exists an $x\in V(C)$ with $\deg_H(x)=2,$ therefore 
$I\cup x$ is an independent set with larger size which contradicts to the maximality of $I$.

Similarly, suppose that $P\subset H-I$ is a path $x_0\ldots x_k$ with $k\ge 3$. Then by the condition on the Ore-degree there exists an $0<i<k$ 
with $\deg_H(x_i)=2$ therefore  $I\cup x_i$ is an independent set with larger size which contradicts to the maximality of $I$, therefore 
condition (3) holds.

Suppose that $x\in I$ with $N_H(x)=\{y_1,y_2\}$. Suppose further that $y_1y_2\not\in E(H)$ and $y_1$ and $y_2$ belong to the same component 
of $H-I.$ Using condition (3) this component have to be a path $y_1x'y_2.$ If $x'\in D_2$ then $I\cup x'$ is an independent set that contains $I'$ and larger than $I,$ this contradicts to the maximality of $I$. 
If $\deg_H(x')>2$ then $y_1,y_2\in D_2$ since $\theta(H)\le 5$ and $xy_i, x'y_i\in E(H)$ for $i=1,2.$ This implies that $x\not\in I'$ using ($iii$).
This allows us to find a new independent set $(I-\{x\})\cup\{y_1,y_2\}$ which is clearly larger than $I$ and also contains $I'$, contradicting to the maximality of $I.$ Hence, condition (4) holds.
\end{proof}

\medskip

Let 
$$I_1=\{x\in I\cap D_2 : y_1,y_2\in N_H(x), P(y_1)=P(y_2)\}$$ and 
$$I_2=\{x\in I\cap D_2 : y_1,y_2\in N_H(x), P(y_1)\neq P(y_2)\}.$$

Observe, that $I_1$ contains exactly one vertex of each triangle of $H$ by Lemma~\ref{strH}, moreover, every vertex of $I_1$ belongs to some triangle. Clearly, $I_1\cup I_2=I\cap D_2$.

%

In order to apply the Modified Blow-up Lemma, we need a subset of $I-I_1-D_0$ that contains vertices which are at distance at least 5 from each other. Denote $\widehat{I}$ the maximum sized subset of $I-I_1-D_0$ having this property.

\begin{claim}\label{fgetlen2}
If $H$ is not $\nu$-triangular extreme, then
$|\widehat{I}|\ge \frac{\nu n}{40}.$
\end{claim}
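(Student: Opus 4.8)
The plan is to convert the hypothesis ``$H$ is not $\nu$-triangular extreme'' into a lower bound on $|I-I_1-D_0|$, and then to extract $\widehat{I}$ from this set by a greedy argument whose only loss is a bounded multiplicative constant, controlled by the bounded degrees of $H$ and the size-at-most-$3$ components of $H-I$.

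\smallskip
\noindent\textbf{Step 1: the triangles of $H$ are vertex-disjoint, and $|I_1|$ is their number.} Summing the bound $\deg x+\deg y\le\theta(H)=5$ over the three edges of a triangle of $H$ shows it has total degree at most $7$, hence a vertex of degree $2$; and a short case check (a shared vertex of two triangles would have degree $\ge 4$, or the two triangles would share an edge, and in either case some triangle edge would have Ore-degree $\ge 6$) shows no two triangles of $H$ share a vertex. Together with the observation stated just before the claim --- every triangle meets $I_1$ in exactly one vertex, and every vertex of $I_1$ lies on a triangle --- it follows that $|I_1|$ is exactly the number of triangles of $H$, and hence also the maximum size of a vertex-disjoint triangle packing in $H$. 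Therefore ``$H$ is not $\nu$-triangular extreme'' is equivalent to $|I_1|<(1-\nu)n/3$.

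\smallskip
\noindent\textbf{Step 2: $|I-I_1-D_0|$ is linear.} By part (1) of Lemma~\ref{strH} we have $|I|\ge n/3$, and $|D_0|\le 2$ since isolated vertices lie in the saturated subgraph $H_1$ and $|V(H_1)\cap(D_0\cup D_1)|\le 2$. Writing $J=I-I_1-D_0$ and using Step~1,
$$|J|\ \ge\ |I|-|I_1|-|D_0|\ >\ \frac n3-(1-\nu)\frac n3-2\ =\ \frac{\nu n}{3}-2\ \ge\ \frac{\nu n}{4}$$
once $n$ is large in terms of $\nu$; moreover $|J|$ exceeds $\nu n/3-2$ by $|I|-n/3$, so $|J|$ is even larger precisely when $I$ is large.

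\smallskip
\noindent\textbf{Step 3: pulling out a spread-out subset.} Construct $\widehat I$ greedily: repeatedly pick $v\in J$, add it to $\widehat I$, and delete from $J$ all vertices at distance $\le 4$ from $v$ in $H$. A maximum-size distance-$\ge5$ family is in particular maximal, so this process empties $J$ and gives $|J|\le C\,|\widehat I|$, where $C$ bounds the number of vertices of $J$ that can lie within distance $4$ of a fixed vertex. Since each $v\in J\subseteq I$ has $\deg_H(v)\le 2$ (part (2) of Lemma~\ref{strH}), $\Delta(H)\le 4$, and every component of $H-I$ has at most $3$ vertices, $C$ is an absolute constant; the count is best organised by contracting each component of $H-I$ to a point, which turns the vertices of $I_2$ into the edges of a bounded-degree auxiliary graph $\Gamma$ and a distance-$\ge5$ family of such vertices into an induced-matching-type family in $\Gamma$, greedily of size at least a fixed fraction of $|E(\Gamma)|=|I_2|$. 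Combining this (and treating the remaining vertices $I\cap D_1$ of $J$, at most one usable per component) with the lower bound on $|J|$ from Step~2 yields $|\widehat I|\ge\nu n/40$.

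\smallskip
The conceptual input is Step~1, which reduces the extremality hypothesis to the clean inequality $|I_1|<(1-\nu)n/3$ via the rigidity of triangles under $\theta(H)=5$; Step~2 is then immediate. I expect the only genuinely tedious point to be the bookkeeping of the constant in Step~3 --- keeping the radius-$4$ local count (equivalently the degree of $\Gamma$) small enough to reach the stated $\nu n/40$ --- but this is routine given the bounded degrees of $H$ and the tiny components of $H-I$.
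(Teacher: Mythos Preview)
Your approach is essentially the paper's: use $|I_1|<(1-\nu)n/3$ and $|I|\ge n/3$ to get $|I-I_1-D_0|\gtrsim \nu n/3$, then greedily extract a distance-$\ge 5$ subset losing only a bounded factor. The paper's Step~3 is much lighter than yours---it simply observes that any $x\in I$ has at most $12$ other vertices of $I$ within distance $4$ (so the greedy constant is $C=13$), which directly gives $|\widehat I|\ge |I-I_1|/13-1\ge \nu n/40$; your auxiliary-graph/induced-matching detour is unnecessary, and you should just do this count to pin down the constant you leave open.
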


\begin{proof}
For every $x\in I$ there are at most 12 vertices in $I$ at distance at most 4 from $x,$
therefore, using a simple greedy algorithm one can see that
 $$|\widehat{I}|\ge\frac{|I-I_1|}{13}-1\ge \frac{\nu n/3}{13}-1\ge \frac{\nu n}{40}$$
by Lemma~\ref{strH}, using also that $|I_1|\le (1-\nu)n/3,$ since $H$ is not triangular extreme. We subtracted 1 since $|D_0|\le 1.$ 
\end{proof}

Since $\widehat{I}\subset I-I_1,$ depending on the structure of $H,$ it is possible that $\widehat{I} \cap I_2$ is small. This 
happens only when $H$ mainly contains $K_{1,4}$s (recall, that $H_1$ is saturated). 

The set $\widehat{I}$ plays an important role in the embedding procedure of $H,$ when applying the Modified Blow-up Lemma. Since we will use it for three different purposes, 
it is useful to divide it randomly into
three disjoint parts, $\widehat{I}_1, \widehat{I}_2$ and $\widehat{I}_3,$ each having either $\lfloor|\widehat{I}|/3\rfloor$ or $\lceil|\widehat{I}|/3\rceil$ vertices, so  $\widehat{I}=\widehat{I}_1\cup \widehat{I}_2 \cup \widehat{I}_3.$ 

If $\widehat{I}$ were too small, our embedding algorithm would not work. In order to have that $\widehat{I}$ is sufficiently large, we set $\nu=\sqrt[4]{\gamma} \ (=\sqrt[4]{d-\varepsilon}).$  

\subsection{Homomorphism from $H$ to $G_r$}\label{homomorfizmus}

We say that a homomorphism $f: V(H)\longrightarrow V(G_r)$ is \emph{balanced} if $$\left|\,|f^{-1}(V_i)|-|f^{-1}(V_j)|\,\right|\le\varepsilon^2 m$$ for every $V_i,V_j\in V(G_r),$
here $\varepsilon, \ell$ are the parameters of the Regularity Lemma.  
Such a balanced homomorphism from $H$ to $G_r$ plays a key role in our embedding procedure. We find it in two steps. First we determine $f: V(H)-I \longrightarrow V(G_r),$
and then $f:I\longrightarrow V(G_r).$ 

We will use the Modified Blow-up lemma (Lemma~\ref{modblow-up}) for embedding $H$ into $G.$ The $V_i$ clusters will be ``almost'' the partition
sets of the Modified Blow-up Lemma. We also need to partition the vertex set of $H$ into $L_0, L_1, \ldots, L_{\ell}.$ In this  section we are going to discuss in detail how to find this partition. When $f$ is at hand, the sets $f^{-1}(V_i)$ for $1\le i\le \ell$ will be ``first approximations" of the $L_i$ sets, but for obtaining the final $L_i$ sets we may need to redistribute a small proportion of the vertices\footnote{For example, $f(V(H))\cap V_0$ is empty at the moment, while $V_0$ and therefore $L_0$ are non-empty sets in general.}. 

It is notationally convenient to introduce another function, $h,$ which maps $V(H)$ to the set $\{0, 1, \ldots, \ell\}$ such that $h(x)=i$ if $x\in L_i,$ that is, $h(x)$ is the {\it index} of the set containing $x$ in the partition of $V(H).$  Up to a certain point in the distribution of $V(H)$ the two functions, $f$ and $h,$ are in the following relation: $f(x)=V_i$ if and only if $h(x)=i.$

\subsubsection{Assigning the paths in $H-I$ and the vertices of $I-I_2 -\widehat{I}$}

Roughly speaking, in order to find $f$ restricted to $V(H)-I$ we will randomly assign the vertices 
of $H-I$ to the vertices of $G_r,$ so that components of $H-I$ are assigned to clusters of some  triangle of the triangle factor $\mathcal{T}.$

More precisely, let $P$ be a path in $H-I$ containing vertices $x_i$ for $1\le i\le k=|P|+1$, here $|P|\le 2$ by Lemma~\ref{strH}. 
Pick uniformly at random a triangle $T$ from $\mathcal{T}.$ Denote $V_{s_1}, V_{s_2}, V_{s_3}$ the vertices of $T.$ 
Then pick uniformly at random a permutation $\pi$ on $\{1, 2, 3\},$ and let $f(x_i)=V_{s_{\pi(i)}}$ for every $1\le i\le k.$ 
Set $f(x_i)=V_{s_{\pi(i)}}$ for $1\le i\le k$ and let $h(x_i)=s_{\pi(i)}.$



Suppose now that $x\in I-I_2-\widehat{I}.$ Then we have already assigned each vertex of $N_H(x)$  to the same 
triangle $T\in {\mathcal T}$ randomly, if $x\not\in D_0.$ Since $\deg_H(x)\le 2$ there will be at least one vertex of $T$ that has no 
neighbor of $x$ assigned to it. If there are two such vertices, pick one of them randomly. In both cases denote $V_s$ the chosen vertex, 
and let $f(x)=V_s$ and $h(x)=s.$ If $x\in D_0$ then choose $T$ and then $V_s$ from $T$ randomly.

Later we will need the following.

\begin{lemma}\label{varhato}
 Let $i\in \{1, 2, \ldots, \ell\}$ be arbitrary. Assume that $x\in V(H)-I_2-\widehat{I}.$ Then $$\Pr(h(x)=i)=\frac{1}{\ell}.$$ Assume further that $x', x''\in V(H)-I$ such that
$x'$ and $x''$ belong to different components of $H-I.$ Then $$\Pr(h(x')=h(x'')=i)=\frac{1}{\ell^2},$$ and $$\Pr(h(x')=h(x''))=\frac{1}{\ell}.$$
Similarly, if $y, y'\in V(H)-I_2-\widehat{I}$ 
such that the distance of $y$ and $y'$ is at least five, then $$\Pr(h(y)=h(y')=i)=\frac{1}{\ell^2},$$ and $$\Pr(h(y)=h(y'))=\sum_i \Pr(h(y)=h(y')=i) =\frac{1}{\ell}.$$ 
\end{lemma}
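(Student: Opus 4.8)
The plan is to verify each probabilistic identity directly from the random assignment procedure described in the subsection, using the uniform choice of triangle $T\in\mathcal T$ and the uniform choice of permutation $\pi$ on $\{1,2,3\}$ (and, for the vertices of $I-I_2-\widehat I$, the subsequent uniform choice among the available clusters of $T$). First I would record the basic symmetry fact: for a single path component $P$ of $H-I$, the triangle $T$ is chosen uniformly among the $\ell/3$ triangles of $\mathcal T$, and the permutation $\pi$ distributes the (at most three) vertices of $P$ onto the three clusters of $T$; hence for each vertex $x_i\in P$ and each fixed cluster $V_i$, we have $\Pr(f(x_i)=V_i)=\Pr(T\ni V_i)\cdot\Pr(\pi \text{ sends }x_i\text{ to }V_i\mid T\ni V_i)=\frac{3}{\ell}\cdot\frac13=\frac1\ell$. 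The same computation, now conditioning on the triangle containing $N_H(x)$ and then choosing uniformly among the one or two free clusters, gives $\Pr(h(x)=i)=\frac1\ell$ for $x\in (I-I_2-\widehat I)\setminus D_0$: by the uniformity of $\pi$, each of the three clusters of $T$ is equally likely to be ``the free one'' (or, when two are free, equally likely to be picked), so the marginal is $\frac1\ell$; for $x\in D_0$ the statement is immediate since then $T$ and $V_s$ are both chosen uniformly. This handles the first identity.

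Next I would treat the pairs $x',x''$ in different components of $H-I$. The key observation is that distinct components receive independently chosen triangles and independently chosen permutations, so the pair of random variables $(h(x'),h(x''))$ has a product distribution; combined with the marginals just established, $\Pr(h(x')=h(x'')=i)=\frac1\ell\cdot\frac1\ell=\frac1{\ell^2}$, and summing over $i=1,\dots,\ell$ gives $\Pr(h(x')=h(x''))=\frac1\ell$. For the pair $y,y'\in V(H)-I_2-\widehat I$ at distance at least five, the same independence is what I need to establish carefully: I would argue that the randomness determining $h(y)$ depends only on the triangle/permutation assigned to the component of $H-I$ containing $y$ (if $y\notin I$) or to the component containing $N_H(y)$ (if $y\in I-I_2-\widehat I$, so $y$ has both neighbours in one component), together with an independent fair coin among free clusters; and similarly for $y'$. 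Because $\dist(y,y')\ge 5$, the neighbourhoods $N_H(y)\cup\{y\}$ and $N_H(y')\cup\{y'\}$ lie in vertex-disjoint components of $H-I$ (indeed: if they shared a component $C$ of $H-I$, then since $C$ is a path of length $\le 2$ by Lemma~\ref{strH}(3), a path in $H$ through $C$ would connect $y$ to $y'$ in at most $4$ steps), so the defining randomness for $h(y)$ and for $h(y')$ is independent. Hence again $\Pr(h(y)=h(y')=i)=\frac1{\ell^2}$ and summing over $i$ yields $\frac1\ell$.

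The one point requiring genuine care — and the main obstacle — is the claim of exact independence and exact uniformity for the $I$-vertices, rather than merely ``approximate''. Two subtleties arise. (a) When $x\in I-I_2-\widehat I$ and $\deg_H(x)=2$ with both neighbours already placed into the same triangle $T$, there may be one or two free clusters of $T$; I must check that integrating over the (uniform) position of $N_H(x)$ within $T$ still yields the uniform marginal $\frac1\ell$ for $h(x)$, which follows from the symmetry of $\pi$ under the full symmetric group on the three cluster-slots. (b) I must make sure that the events used in the second and third identities really are functions of disjoint blocks of the underlying independent random choices — this is exactly where ``different components of $H-I$'' and ``distance at least five'' are used, and where Lemma~\ref{strH}(3),(4) (paths of length $\le2$, two-neighbour $I$-vertices either forming a triangle or straddling two components) guarantee that an $I$-vertex's assignment is tied to a single component. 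Once these structural facts are in place, every identity is a one-line computation; I would present part (a) as a short explicit case check and part (b) as the disjointness argument sketched above.
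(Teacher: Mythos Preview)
Your proposal is correct and follows essentially the same approach as the paper: verify uniformity directly from the symmetry of the random triangle/permutation choice, then obtain the joint identities from independence of the choices made for distinct components of $H-I$. In fact you are more careful than the paper's own proof, which is quite terse (e.g.\ it simply asserts that vertices of $I_1$, $D_0$ and $K_{1,4}$-leaves ``must also be uniformly distributed'' and that distance $\ge 5$ makes $h(y),h(y')$ independent); your explicit check that the relevant components are disjoint when $\dist(y,y')\ge 5$, via the bound $1+2+1=4$, is exactly the argument the paper leaves implicit. One minor remark: in your subtlety (a), when $x\in I_1$ has $\deg_H(x)=2$, Lemma~\ref{strH}(4) forces $y_1y_2\in E(H)$, so $y_1,y_2$ always go to distinct clusters of $T$ and there is exactly one free cluster---the ``two free clusters'' case arises only for the degree-one leaves---but this does not affect your argument.
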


\begin{proof}
It is clear that the vertices of $H-I$ are distributed uniformly at random, moreover, if $x', x''$ belong to different components then $h(x')$ and $h(x'')$ are independent. From this the second and third equations follow immediately. For the first equation
we need to consider vertices of $I_1, D_0$ and leaves of $K_{1,4}$s as well. Since such vertices have at most one neighbor which is uniformly distributed among the vertices
of $G_r$ (as we have just seen), these must also be uniformly distributed, using the assigning method of this section. Finally, if $y$ and $y'$ are at distance at least five then $h(y)$ and $h(y')$ define independent, uniformly distributed random variables, so we obtain what was desired.  
\end{proof}

\subsubsection{Assigning the vertices of $I_2\cup \widehat{I}$}\label{propmatching}

We have not taken care of all the vertices of $D_1,$ since $\widehat{I}\cap D_1$ may not be empty.
Let $\widehat{I}^s$ denote the subset of 
$\widehat{I}$ that consists of leaves belonging to $K_{1,4}$s of $H.$ Note that every $K_{1,4}$ contributes to 
$\widehat{I}^s$ with exactly one leaf.

Set $I_2'=\{x\in I_2:|f(N(x))|=1\}$. 
In order to simplify the discussion regarding the assignment of the vertices of $I_2'\cup \widehat{I}^s,$ we are going to 
introduce {\it fictive} neighbors\footnote{We are not going to embed the fictive vertices, these are used only for assigning
their neighbors in $I_2'\cup \widehat{I}^s.$} for them. 

This goes as follows. Let $F$ denote the set of fictive neighbors of the vertices of $I_2'\cup \widehat{I}^s.$
There is a bijection $g: I_2'\cup \widehat{I}^s \longrightarrow {F},$ every $x\in I_2'\cup \widehat{I}^s$ has exactly one fictive neighbor
$g(x)\in {F},$ and every $y\in {F}$ is the fictive neighbor of exactly one $x=g^{-1}(y)\in I_2'\cup \widehat{I}^s.$ Denote this expanded graph by $H^+.$  We also let $I_F=  I_2'\cup \widehat{I}^s.$

We are going to distribute the vertices of ${F}$ among the $L_i$ sets randomly: for every $y\in {F}$ we randomly, uniformly, independently from the other choices pick an index $j\in \{1, \ldots, h(N(g^{-1}(y)))-1, h(N(g^{-1}(y)))+1, \ldots, \ell\},$ assign $y$ to $V_j,$ and also let $h(y)=j.$  
With introducing fictive neighbors and distributing them randomly we have in fact achieved that every vertex of 
$I_F$ behaves as it were from $I_2-I_2'.$

After these preparations we are ready to discuss the distribution of the vertices of $I_2\cup I_F$ to vertices of the reduced graph in a balanced way.
This is a significantly harder task than the assignment of $I-I_2-I_F,$ which was considered before. For that we will use proportional (or many-to-one or star) and strong proportional matchings 
(this method was perhaps used first 
in~\cite{CsSSz}, and also played important role in~\cite{Cs2}) in appropriately defined auxiliary graphs. 

\begin{definition}[proportional matching \cite{CsSSz}]
Let $\F(R, S)$ be a bipartite graph with $|S|=q|R|$ for some positive integer $q.$ We say that $\M\subset E(\F(R,S))$ is a
\emph{proportional matching} if every $v\in R$ is adjacent to exactly $q$ vertices in $S$ and every $u\in S$ is adjacent to exactly one $v\in R$ in $\M$.
\end{definition} 

The following claim is a simple consequence of the K\"onig-Hall marriage theorem.

\begin{claim}[Proportional K\"onig-Hall]\label{propKH}
We have a proportional matching in the graph $\F(R, S)$ defined above if and only if $|N_{\F}(A)|\ge q|A|$ for every $A\subset R.$
\end{claim}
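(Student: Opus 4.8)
The plan is to reduce the existence of a proportional matching to the existence of an ordinary perfect matching in an auxiliary bipartite graph obtained by ``splitting'' every vertex of $R$ into $q$ identical copies, and then invoke the K\"onig-Hall marriage theorem directly.

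The forward direction is immediate: if $\M$ is a proportional matching, then for any $A\subseteq R$ the $q$-element subsets of $S$ that $\M$ assigns to the vertices of $A$ are pairwise disjoint, since each $u\in S$ is $\M$-adjacent to exactly one vertex of $R.$ Hence their union is contained in $N_{\F}(A)$ and has size exactly $q|A|,$ so $|N_{\F}(A)|\ge q|A|.$

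For the converse I would form the bipartite graph $\F'$ with color classes $R'$ and $S,$ where $R'$ consists of $q$ copies $v^{(1)},\dots,v^{(q)}$ of each $v\in R,$ and $v^{(i)}u\in E(\F')$ if and only if $vu\in E(\F).$ Then $|R'|=q|R|=|S|.$ To verify the K\"onig-Hall condition on the $R'$-side, take $A'\subseteq R'$ and let $A\subseteq R$ be the set of vertices having at least one copy in $A';$ then $N_{\F'}(A')=N_{\F}(A)$ and $|A'|\le q|A|\le|N_{\F}(A)|$ by hypothesis. Hence $\F'$ has a matching saturating $R',$ and since $|R'|=|S|$ this matching is in fact perfect. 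Translating it back: the $q$ vertices of $S$ matched to $v^{(1)},\dots,v^{(q)}$ are distinct and $\F$-adjacent to $v,$ so $v$ receives exactly $q$ edges, while each $u\in S$ is matched to a single copy and hence to a single vertex of $R.$ The resulting edge set of $\F$ is therefore a proportional matching.

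There is no real obstacle here; one only needs to be careful with the bookkeeping of the blow-up --- that the $\F'$-neighbourhood of a set of copies equals the $\F$-neighbourhood of the underlying vertices --- and to note that a matching saturating the side $R'$ is automatically perfect because the two color classes have equal size.
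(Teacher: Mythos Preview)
Your proof is correct and follows essentially the same approach as the paper: construct the blown-up bipartite graph $\F'$ by replacing each $v\in R$ with $q$ copies, observe that a perfect matching in $\F'$ corresponds to a proportional matching in $\F$, and check that the hypothesis $|N_{\F}(A)|\ge q|A|$ is exactly the K\"onig--Hall condition for $\F'$. You have simply supplied more of the bookkeeping (the forward direction and the explicit verification of Hall's condition on $R'$) that the paper leaves implicit.
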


\begin{proof}
The proof is very similar to a part of the proof of Proposition~\ref{karom}. We construct a new bipartite graph $\F',$ one vertex class of it is $S,$ the other is $R',$ the latter is the {\it blown-up} $R.$ That is, for every $v\in R$ we will have $q$ copies, $v_1, \ldots, v_q\in R'.$ We have an edge $v_iu\in E(\F')$ for $v_i\in R'$ and $u\in S$ if and only if $vu\in E(\F),$ where $v_i$ is
a copy of $v.$ It is clear that a perfect matching in $\F'$ is a proportional matching in $\F$ and vice versa, and the K\"onig-Hall conditions for $\F'$ translate to the proportional K\"onig-Hall conditions of the claim. 
\end{proof}

We are going to need an auxiliary bipartite graph.

\begin{definition}[$\Lambda_1$-graph]
The $\Lambda_1=\Lambda_1(G_r)$ graph is a bipartite graph having vertex classes $V(G_r)$  
and $\mathcal{S}=\binom{V(G_r)}{2},$ that is, $\mathcal S$ is the set of all unordered 2-element subsets of $V(G_r).$  We have an edge $WS \in E(\Lambda_1)$ for $W\in V(G_r)$ and $S\in \mathcal{S}$ if and only if 
$W$ is adjacent to both vertices of $S$ in $G_r.$ 
\end{definition}

The lemma below is a special case of Lemma 20 in~\cite{Cs2}.
\begin{lemma}\label{bipart}
There is a proportional matching $\M_1$ in $\Lambda_1.$
\end{lemma}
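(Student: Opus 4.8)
The plan is to verify the proportional K\"onig--Hall condition from Claim~\ref{propKH} for the graph $\Lambda_1$, with $R=V(G_r)$, $S=\mathcal S=\binom{V(G_r)}{2}$, and $q=\ell-1$ (note $|\mathcal S|=\binom{\ell}{2}=\frac{\ell-1}{2}\cdot\ell = (\ell-1)|V(G_r)|/2$; in fact one should re-examine the exact value of $q$ coming from Lemma~20 of~\cite{Cs2}, since $|\mathcal S|=\binom{\ell}{2}$ forces $q=(\ell-1)/2$, so one works with the blown-up version or with the appropriate fractional relaxation as in~\cite{Cs2}). So first I would recall that, by Corollary~\ref{mindeg} and the preprocessing in Lemma~\ref{hszogfaktor}, the reduced graph $G_r$ has $\delta(G_r)\ge(2/3-14d)\ell$, hence every vertex of $G_r$ is adjacent to all but at most $(1/3+14d)\ell$ other vertices. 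Consequently each $W\in V(G_r)$ is \emph{non-adjacent} in $\Lambda_1$ only to those pairs $S\in\mathcal S$ that contain at least one non-neighbor of $W$; the number of such bad pairs is at most $(1/3+14d)\ell\cdot\ell$, so $\deg_{\Lambda_1}(W)\ge\binom{\ell}{2}-(1/3+14d)\ell^2 \ge (1/2-1/3-15d)\ell^2 = (1/6-15d)\ell^2$, which is a positive fraction of $|\mathcal S|$.

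Next I would check the expansion condition $|N_{\Lambda_1}(A)|\ge q|A|$ for every $A\subset V(G_r)$. For small $A$ this is immediate from the high degree just computed: even a single $W\in A$ already has $\Omega(\ell^2)$ neighbors, which dwarfs $q|A|$ as long as $|A|$ is below some constant fraction of $\ell$. For large $A$ (say $|A|\ge\alpha\ell$ for a suitable small constant $\alpha$), I would argue instead that $N_{\Lambda_1}(A)$ misses only those pairs $S$ \emph{all} of whose $\Lambda_1$-neighbors lie outside $A$, i.e.\ pairs $S=\{U_1,U_2\}$ such that every common neighbor of $U_1$ and $U_2$ in $G_r$ is in $V(G_r)\setminus A$. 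Using $\delta(G_r)\ge(2/3-14d)\ell$, any two clusters $U_1,U_2$ have at least $(1/3-28d)\ell$ common neighbors in $G_r$ (inclusion--exclusion: $|N(U_1)\cap N(U_2)|\ge |N(U_1)|+|N(U_2)|-\ell \ge (1/3-28d)\ell$), so if $|V(G_r)\setminus A|<(1/3-28d)\ell$ then every pair $S$ has a common neighbor inside $A$, giving $N_{\Lambda_1}(A)=\mathcal S$ and the bound is trivial. The intermediate range $\alpha\ell\le|A|\le(2/3+28d)\ell$ is handled by a counting argument: the number of pairs with no common neighbor in $A$ is small because each cluster not in $A$ can be the unique "witness" for only few such pairs, or more cleanly, one counts edges of $\Lambda_1$ between $A$ and $\mathcal S$ from the $\mathcal S$-side and shows the average $\Lambda_1$-degree into $A$ of a pair $S$ is large, then removes the few low-degree pairs.

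Since this is precisely the special case of~\cite[Lemma~20]{Cs2} already invoked, the cleanest writeup simply states that the expansion hypothesis of the Proportional K\"onig--Hall claim follows verbatim from $\delta(G_r)\ge(2/3-14d)\ell$ by the computation in~\cite{Cs2}, and then applies Claim~\ref{propKH} to conclude the existence of $\M_1$. I would therefore structure the proof as: (i) quote the minimum degree bound $\delta(G_r)\ge(2/3-14d)\ell$ from Lemma~\ref{hszogfaktor}; (ii) observe that any pair of clusters has $\ge(1/3-28d)\ell$ common neighbors in $G_r$; (iii) deduce the proportional K\"onig--Hall expansion condition $|N_{\Lambda_1}(A)|\ge q|A|$ for all $A$ by the small/large $A$ dichotomy above (with the intermediate case absorbed by a short double-counting estimate); (iv) invoke Claim~\ref{propKH}. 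The main obstacle will be pinning down the exact value of $q$ and hence the exact form of $\mathcal S$ that makes the cardinalities match (i.e.\ whether one matches each pair to $q$ clusters or each cluster to $q$ pairs, and reconciling $|\mathcal S|=\binom\ell2$ with $q|V(G_r)|$), together with the intermediate-range verification of expansion — but both are essentially bookkeeping, and the substantive input is entirely the minimum degree of $G_r$, which the preprocessing has already delivered.
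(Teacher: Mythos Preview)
Your overall strategy---verify the proportional K\"onig--Hall condition of Claim~\ref{propKH} by a case analysis on $|A|$---is exactly what the paper does. The confusion about $q$ is harmless: indeed $q=(\ell-1)/2$, and one may take $\ell$ odd (or, equivalently, argue with the blown-up bipartite graph as in the proof of Claim~\ref{propKH}). Your single-vertex lower bound is correct, though weaker than necessary: in fact $W$ is adjacent in $\Lambda_1$ precisely to the $\binom{\deg_{G_r}(W)}{2}\ge\bigl(\tfrac{2}{3}(1-14d)\bigr)^2|\mathcal S|-o(|\mathcal S|)>0.4|\mathcal S|$ pairs of its neighbors, which already handles $|A|\le 0.4\ell$.

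Your treatment of the intermediate range, however, is a genuine gap. The suggested double-count of $\Lambda_1$-edges between $A$ and $\mathcal S$ gives only $|N_{\Lambda_1}(A)|\ge \tfrac{1}{\ell}\sum_{W\in A}\deg_{\Lambda_1}(W)\ge (4/9)\tfrac{|A|}{\ell}|\mathcal S|$, which falls short of the required $\tfrac{|A|}{\ell}|\mathcal S|$ by a constant factor; the ``unique witness'' idea does not close this either. The paper instead inserts two further thresholds with structural (not merely averaging) arguments: at $|A|=0.4\ell$, the minimum degree forces \emph{every} cluster $U$ to have a neighbor $W\in A$, and then every pair $\{U,U'\}$ with $U'\in N_{G_r}(W)$ lies in $N_{\Lambda_1}(A)$, yielding $|N_{\Lambda_1}(A)|\ge (2/3-14d)|\mathcal S|$; at $|A|=(2/3-14d)\ell$, an edge-count in $G_r[A,V(G_r)]$ shows that more than $30\%$ of clusters have over $(1/3+100d)\ell$ neighbors in $A$, and any pair containing such a cluster lies in $N_{\Lambda_1}(A)$, giving $|N_{\Lambda_1}(A)|\ge 0.7|\mathcal S|$; finally at $|A|=0.7\ell$ your common-neighbor bound kicks in. These two missing observations---that once $A$ is large every vertex has a neighbor in $A$, and that once $A$ is very large many vertices have more than $\ell/3$ neighbors in $A$---are what makes the chain of thresholds close.
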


\begin{proof}
We are going to check the proportional K\"{o}nig-Hall conditions given in Claim~\ref{propKH}.

\begin{itemize}

\item Let $W\in V(G_r)$ be an arbitrary vertex. By the minimum degree
condition on $G_r$ we have that $W$ is adjacent to at least 
$$\left(\frac{2}{3}(1-14d)\right)^2|\mathcal{S}|-o(|\mathcal{S}|)>0.4 |\mathcal{S}|$$ pairs in $\mathcal S$ (count the number of pairs in $N_{G_r}(W)$).

\item Next we take an arbitrary set $A\subset V(G_r)$ with $|A|=0.4 \ell.$ Then any $U\in V(G_r)$ will have a neighbor in $A.$ Say, $W\in A$ such that $UW\in E(G_r).$ 
Then for  every $U'\in N_{G_r}(W)$ the $(U,U')$ pair is adjacent to $W,$ hence, 
$$|N_{\Lambda_1}(A)| \ge \frac{2}{3}(1-14d)|\mathcal{S}|.$$

\item Assume that $A\subset V(G_r)$ with $|A|=\frac{2}{3}(1-14d)\ell.$ Consider an auxiliary graph $F$ with vertex classes $A$ and $V(G_r),$ where for a $W\in A$ 
and a $U\in V(G_r)$ we have $WU\in E(F)$ if and only if $WU\in E(G_r).$ We get that $e(F)\ge |A|\cdot \delta(G_r)\ge (\frac{2}{3}(1-14d))^2\ell^2.$ Using this lower bound for $e(F)$ simple calculation shows
that more than 30\% of the vertices of $V(G_r)$ must have more than $(1/3+100d)\ell$ neighbors in $A.$ 

It is easy to see that if a vertex $W\in V(G_r)$ has more than $(1/3+100d)\ell$ neighbors in $A,$ then for every $U\in V(G_r)$ the $(W,U)\in \mathcal{S}$ pair is adjacent to some 
vertex of $A.$
This implies that $|N_{\Lambda_1}(A)|\ge 0.7 |\mathcal{S}|.$ 

\item Finally, let $A\subset V(G_r)$ with $|A|=0.7 \ell.$ Then every $(W, W')\in \mathcal{S}$ is a neighbor of some $U \in A,$ therefore, $N_{\Lambda_1}(A)=\mathcal{S}.$

\end{itemize} 
\end{proof}

We need another kind of matching about which we demand that it ``distributes" the vertices at least slightly more evenly.

\begin{definition}[strong proportional matching]
Let $\mu$ be a real such that $0<\mu< 1.$
We say that $\Lambda_1$ allows a strong proportional matching with respect to 
$\mu$ if there is a proportional matching in the following bipartite graph $\Lambda_2=\Lambda_2(\Lambda_1)$. Its color classes are $V(G_r)$ and $\mathcal{S}'$, where we obtain 
$\mathcal{S}'$ from $\mathcal{S}$ in the following way. For every element
$U\in \mathcal{S}$ we add $\frac{\ell}{\mu}$ copies $U_1,\ldots,U_{\frac{\ell}{\mu}}$ to $\mathcal{S}',$ hence, 
$|\mathcal{S}'|=|\mathcal{S}|\cdot \ell/\mu.$ If $N_{\Lambda_1}(U)=\{W_1,\ldots, W_t\}$ then we will have the following edges: 
$(U_i,W_i)$ for $1\le i\le t$, and $(U_j,W_i)$ for $1\le i\le t$ and $t< j\le \frac{\ell}{\mu}$. In other words, the first $t$ copies of $U$ have degree 1, while the others have the same degree, $t$.
We will refer to $\Lambda_2$ as the $\Lambda_2$-graph of $G_r.$
\end{definition}


\begin{claim}\label{atteres}
Let $\Lambda_1$ and $\Lambda_2$ be graphs as above. 
If $A\subset V(G_r)$ then $$\frac{|N_{\Lambda_1}(A)|(1-\mu)}{|\mathcal{S}|}\le \frac{|N_{\Lambda_2}(A)|}{|\mathcal{S}'|}.$$ 
\end{claim}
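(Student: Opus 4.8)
The plan is to prove the inequality by a direct count, unwinding the definition of $\Lambda_2$ to identify exactly which copies of each pair survive into $N_{\Lambda_2}(A)$. Fix $A\subset V(G_r)$. For a pair $U\in\mathcal S$, write $t_U=\deg_{\Lambda_1}(U)=|N_{\Lambda_1}(U)|$, and recall that $t_U\le\ell$ since $N_{\Lambda_1}(U)\subset V(G_r)$ and $|V(G_r)|=\ell$; in particular $t_U\le\ell<\ell/\mu$ because $0<\mu<1$, so the quantities below make sense. Among the $\ell/\mu$ copies of $U$ in $\mathcal S'$, the first $t_U$ copies $U_1,\dots,U_{t_U}$ have degree $1$ in $\Lambda_2$, with $U_i$ joined only to $W_i$, while each of the remaining $\ell/\mu-t_U$ copies is joined to all of $W_1,\dots,W_{t_U}$.

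The key observation is the following dichotomy. If $U\notin N_{\Lambda_1}(A)$, then $A\cap N_{\Lambda_1}(U)=\emptyset$, so no $W_i$ lies in $A$; consequently none of the low-index copies $U_i$ and none of the high-index copies of $U$ have a neighbour in $A$, i.e.\ $U$ contributes nothing to $N_{\Lambda_2}(A)$. If instead $U\in N_{\Lambda_1}(A)$, then $A\cap N_{\Lambda_1}(U)\neq\emptyset$, so every one of the $\ell/\mu-t_U$ high-index copies of $U$ has a neighbour in $A$ and therefore lies in $N_{\Lambda_2}(A)$ (any low-index copies $U_i$ with $W_i\in A$ contribute further elements, which only helps). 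Hence
$$|N_{\Lambda_2}(A)|\ \ge\ \sum_{U\in N_{\Lambda_1}(A)}\left(\frac{\ell}{\mu}-t_U\right)\ \ge\ \sum_{U\in N_{\Lambda_1}(A)}\left(\frac{\ell}{\mu}-\ell\right)\ =\ |N_{\Lambda_1}(A)|\cdot\frac{\ell}{\mu}(1-\mu).$$

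Finally, dividing both sides by $|\mathcal S'|=|\mathcal S|\cdot\ell/\mu$ gives
$$\frac{|N_{\Lambda_2}(A)|}{|\mathcal S'|}\ \ge\ \frac{|N_{\Lambda_1}(A)|\,(1-\mu)}{|\mathcal S|},$$
which is exactly the assertion of the claim. There is essentially no hard step here; the only points requiring care are parsing the definition of $\Lambda_2$ correctly in order to see which copies of a given pair $U$ survive into $N_{\Lambda_2}(A)$, and noticing that it is precisely the bound $t_U\le\ell$ (rather than some larger quantity) that yields the factor $1-\mu$.
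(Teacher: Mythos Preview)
Your proof is correct and follows essentially the same approach as the paper: both arguments observe that whenever $U\in N_{\Lambda_1}(A)$, all of the high-index copies $U_{t_U+1},\ldots,U_{\ell/\mu}$ lie in $N_{\Lambda_2}(A)$, and then use $t_U\le\ell$ to obtain the factor $\ell/\mu-\ell=(\ell/\mu)(1-\mu)$ before dividing by $|\mathcal S'|=|\mathcal S|\cdot\ell/\mu$. Your write-up is simply more explicit than the paper's two-line version, spelling out the dichotomy and the role of the bound $t_U\le\ell$.
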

\begin{proof}
If $U\in N_{\Lambda_1}(A)$ then $U_{\ell+1},\ldots, U_{\frac{\ell}{\mu}}\in N_{\Lambda_2}(A),$
therefore $$|N_{\Lambda_2}(A)|\ge \ell\left(\frac{1}{\mu}-1\right)|N_{\Lambda_1}(A)|.$$
Recall that $|\mathcal{S}'|=|\mathcal{S}|\cdot \ell/\mu,$ so the claim follows.  
\end{proof}

Using this fact we can prove the existence of a strong proportional matching relatively easily, using the existence of a proportional matching in $\Lambda_1.$
We set the parameter $\mu$: let $\mu=\nu \ (=\sqrt[4]{d-\varepsilon}),$ then $0<\varepsilon, d \ll \mu \ll 1.$
We have the following.

\begin{lemma}\label{strong-bipart}
 The $\Lambda_2$-graph of $G_r$ has a proportional matching, hence, $\Lambda_1$ allows a strong proportional matching $\M_2.$
\end{lemma}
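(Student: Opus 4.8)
The plan is to verify the proportional K\"onig–Hall conditions of Claim~\ref{propKH} directly for the graph $\Lambda_2$, using the estimates already established in the proof of Lemma~\ref{bipart} together with Claim~\ref{atteres}. Recall that $\Lambda_2$ has color classes $V(G_r)$ (of size $\ell$) and $\mathcal{S}'$ (of size $|\mathcal{S}|\cdot\ell/\mu$), so the parameter $q$ in the proportional matching is $q=|\mathcal{S}'|/\ell=|\mathcal{S}|/\mu$. Thus I must show that for every $A\subset V(G_r)$ we have $|N_{\Lambda_2}(A)|\ge (|\mathcal{S}|/\mu)\,|A|$. Equivalently, dividing by $|\mathcal{S}'|$, it suffices to show
$$\frac{|N_{\Lambda_2}(A)|}{|\mathcal{S}'|}\ge \frac{|A|}{\ell}.$$

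First I would treat the "generic" case where $|A|$ is not too large, say $|A|\le 0.7\ell$. Here the bottleneck estimates from Lemma~\ref{bipart} give strong lower bounds on $|N_{\Lambda_1}(A)|/|\mathcal{S}|$: for $|A|=0.4\ell$ we get at least $\tfrac23(1-14d)$, for $|A|=\tfrac23(1-14d)\ell$ we get at least $0.7$, and for $|A|=0.7\ell$ we get $N_{\Lambda_1}(A)=\mathcal{S}$; and of course a single vertex already has more than $0.4|\mathcal{S}|$ neighbors. By monotonicity of $N_{\Lambda_1}$ in $A$, in each of these ranges the ratio $|N_{\Lambda_1}(A)|/|\mathcal{S}|$ comfortably exceeds $|A|/\ell$ with room to spare (the worst case being around $|A|\approx 0.7\ell$, where $0.7\le 0.7$, so I would actually run the case breakdown so that at $|A|=0.7\ell$ one already has $N_{\Lambda_1}(A)=\mathcal{S}$, i.e. ratio $1>0.7$). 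Plugging into Claim~\ref{atteres},
$$\frac{|N_{\Lambda_2}(A)|}{|\mathcal{S}'|}\ge (1-\mu)\frac{|N_{\Lambda_1}(A)|}{|\mathcal{S}|}\ge (1-\mu)\frac{|A|}{\ell}+\text{(slack)},$$
and since $\mu=\nu=\sqrt[4]{d-\varepsilon}$ is small while the slack in each range is a fixed positive constant (like $0.2$ or $0.3$), the factor $(1-\mu)$ loss is absorbed. This handles all $A$ with $|A|\le 0.7\ell$.

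The remaining case is $|A|>0.7\ell$, and this is the one that needs the extra strength of $\Lambda_2$ over $\Lambda_1$: for such large $A$ we already have $N_{\Lambda_1}(A)=\mathcal{S}$, so Claim~\ref{atteres} only yields $|N_{\Lambda_2}(A)|/|\mathcal{S}'|\ge 1-\mu$, which is \emph{not} obviously at least $|A|/\ell$ when $|A|$ is close to $\ell$. Here I would instead exploit the structure of $\Lambda_2$ directly: since $A$ is so large, every element $U\in\mathcal{S}$ has \emph{many} neighbors in $A$ — indeed at least $|A|-(\ell-\deg_{G_r}(W))-(\ell-\deg_{G_r}(W'))\ge |A|-2(\tfrac13+14d)\ell>0$ for $U=\{W,W'\}$, so $t=|N_{\Lambda_1}(U)\cap V(G_r)|$ restricted to $A$ is at least, say, $(0.7-2/3-o(1))\ell$; more to the point, for the copies $U_1,\dots,U_{\ell/\mu}$, all but the first $t_U$ (where $t_U=\deg_{\Lambda_1}(U)$, the full degree, which is itself $\ge 0.4\ell$ or so) are adjacent to \emph{every} element of $N_{\Lambda_1}(U)$, hence to some element of $A$ provided $A\cap N_{\Lambda_1}(U)\ne\emptyset$ — which holds because $A$ is large. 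So essentially all $\ell/\mu - O(\ell)$ high-degree copies of each $U\in\mathcal{S}$ lie in $N_{\Lambda_2}(A)$, giving
$$\frac{|N_{\Lambda_2}(A)|}{|\mathcal{S}'|}\ge \frac{(\ell/\mu - O(\ell))|\mathcal{S}|}{(\ell/\mu)|\mathcal{S}|}=1-O(\mu)\ge \frac{|A|}{\ell},$$
since $|A|/\ell\le 1$ and the error is $O(\mu)\ll$ any fixed gap; one should just be slightly careful that $A$ genuinely meets $N_{\Lambda_1}(U)$ for every $U$, which follows from $|A|>0.7\ell$ and each $U$ having $\ge 0.4\ell$ neighbors in $V(G_r)$, so $|A|+\deg_{\Lambda_1}(U)>\ell$.

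The main obstacle is precisely this large-$A$ regime: the naive route through Claim~\ref{atteres} loses exactly the constant $\mu$ one cannot afford when $|A|$ is within $\mu\ell$ of $\ell$, so one is forced to go back to the definition of $\Lambda_2$ and observe that the multiplicity-$\ell/\mu$ blow-up of each pair, combined with the fact that the high-multiplicity copies are adjacent to the \emph{entire} $\Lambda_1$-neighborhood of the pair, makes the neighborhood of a large set nearly all of $\mathcal{S}'$. Once both regimes are dispatched, the proportional K\"onig–Hall condition holds for all $A$, so by Claim~\ref{propKH} the graph $\Lambda_2$ has a proportional matching, which by definition is exactly a strong proportional matching $\M_2$ in $\Lambda_1$; this is what was claimed.
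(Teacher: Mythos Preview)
Your approach is essentially the paper's: verify the proportional K\"onig--Hall condition for $\Lambda_2$, splitting into a ``small $A$'' regime handled via Claim~\ref{atteres} and the slack from Lemma~\ref{bipart}, and a ``large $A$'' regime handled directly from the structure of $\Lambda_2$. The paper makes the cut at $|A|\le(1-\mu)\ell$ rather than $0.7\ell$, but this is inessential.

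There is, however, a genuine gap in your large-$A$ computation. You only count the \emph{high-degree} copies $U_j$ with $j>t_U$, obtaining
\[
\frac{|N_{\Lambda_2}(A)|}{|\mathcal{S}'|}\ge \frac{(\ell/\mu-\ell)|\mathcal{S}|}{(\ell/\mu)|\mathcal{S}|}=1-\mu,
\]
and then assert $1-\mu\ge|A|/\ell$ ``since the error is $O(\mu)\ll$ any fixed gap''. But in this regime $|A|/\ell$ ranges all the way up to $1$, so $1-|A|/\ell$ is \emph{not} a fixed gap; for $|A|>(1-\mu)\ell$ your inequality simply fails. The repair is to also count the degree-$1$ copies that land in $A$: for each $U\in\mathcal{S}$, the copies $U_i$ with $W_i\in A$ (there are $|N_{\Lambda_1}(U)\cap A|$ of them) are also in $N_{\Lambda_2}(A)$. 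Together with the high-degree copies this gives, for each $U$, exactly $\ell/\mu-|N_{\Lambda_1}(U)\setminus A|\ge \ell/\mu-(\ell-|A|)$ copies in $N_{\Lambda_2}(A)$, hence
\[
\frac{|N_{\Lambda_2}(A)|}{|\mathcal{S}'|}\ge 1-\mu\Bigl(1-\frac{|A|}{\ell}\Bigr),
\]
and $1-\mu(1-|A|/\ell)\ge|A|/\ell$ is equivalent to $(1-\mu)(1-|A|/\ell)\ge0$, which always holds. This is precisely the computation the paper carries out; once you add this missing term the argument closes.
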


\begin{proof}
Note that $q=|\mathcal{S}'|/\ell=|\mathcal{S}|/\mu.$ 
Let $A\subset V(G_r)$ be arbitrary. We are going to show that 
$$\frac{|N_{\Lambda_2}(A)|}{|\mathcal{S}'|}\ge \frac{|A|}{\ell},$$ which implies the existence of the desired strong proportional matching in $\Lambda_2.$ 

\begin{itemize}

\item Suppose first that $|A|\le (1-\mu)\ell.$ One can see from the proof of Lemma~\ref{bipart} that 
$$\frac{|N_{\Lambda_1}(A)|}{|\mathcal{S}|}\ge (1+2\mu)\frac{|A|}{\ell}$$ since $\mu$ is small. 
Then we can use Claim~\ref{atteres} and get that
$$\frac{|N_{\Lambda_2}(A)|}{|\mathcal{S}'|}\ge (1-\mu) (1+2\mu)\frac{|A|}{\ell}>\frac{|A|}{\ell}.$$ 

\item Assume now that $|A|>(1-\mu)\ell.$ Observe first, that every $S\in \mathcal{S}$ that has degree larger than
1 is adjacent to some $V_i\in A.$ Let $W\in V(G_r)-A.$ Then $\mathcal{S}'$ contains at most $|\mathcal{S}|$ elements that are adjacent to 
$W$ in $\Lambda_2$ and have degree 1.
Hence, overall $\mathcal{S}'$ has at most $(|V(G_r)-A|)\cdot |\mathcal{S}|$ elements that are not adjacent to some vertex of $A$ in $\Lambda_2.$ Hence,
$$\frac{|N_{\Lambda_2}(A)|}{|\mathcal{S}'|}\ge 1-\mu\left(1-\frac{|A|}{\ell}\right)\ge 1-\mu\ge \frac{|A|}{\ell},$$ since 
$$|\mathcal{S}|\cdot \ell/\mu -|\mathcal{S}|(\ell-|A|)=|\mathcal{S}'|(1-\mu+\mu |A|/\ell).$$
\end{itemize}
\end{proof}

\medskip


Let us first discuss the assignment of the vertices of $(I_2\cup I_F)-\widehat{I}_1.$ Consider the $\Lambda_1$-graph of $G_r$ and apply Lemma~\ref{bipart}.
Recall that $\M_1$ denotes the proportional matching of $\Lambda_1$ and $\M_2$ denotes the strong proportional matching. 
Assume that $x\in (I_2\cup I_F)-\widehat{I}_1$ and $h(N_{H^+}(x))=\{i, j\}.$ 
Denote $V_k\in V(G_r)$ the cluster to which the pair $(V_{i}, V_{j})$ is matched in $\M_1.$ Then we let $f(x)=V_k$ and $h(x)=k.$ 

It is more complicated to assign vertices of $\widehat{I}_1$ to the $L_i$ sets. Let $i, j$ be fixed indices such that 
$1\le i<j\le \ell.$ Define the following subset of $\widehat{I}_1:$
$$\widehat{I}_1(i, j)=\left\{x\in \widehat{I}_1: h(N_{H^+}(x))=\{i, j\}\right\}.$$
Notice that $\widehat{I}_1(i, j)$ contains those vertices of $\widehat{I}_1$ that have their neighbors in $L_i$ and $L_j.$
Next take a {\it random} equipartition of $\widehat{I}_1(i, j)$ into the disjoint sets $S_1, \ldots, S_{\ell/\mu}.$ That is,
$$\widehat{I}_1(i, j)=S_1\cup \ldots \cup S_{\ell/\mu},$$ and $||S_t|-|S_r||\le 1$ for every $1\le t, r \le \ell/\mu,$ where the $S_t$
sets are random subsets. For example, one can find this random partition in the following way: take uniformly at 
random a permutation $\pi$ on $\widehat{I}_1(i, j).$ Next take consecutive segments of lengths 
$\lfloor \mu |\widehat{I}_1(i, j)|/\ell\rfloor$ or $\lceil \mu |\widehat{I}_1(i, j)|/\ell\rceil,$
starting from the first element of $\widehat{I}_1(i, j)$ according to $\pi.$ The elements of the $t$-th segment will be the set $S_t.$
 
If the $t$-th copy of $(V_i, V_j)$ is matched to $V_k$ in $\mathcal{M}_2$ then we let $f(x)=V_k$ and $h(x)=k$ for 
every $x\in S_t.$
We repeat the above for every $i, j$ pair. 

\smallskip

Since adjacent vertices of $H^+$ are assigned to adjacent vertices of $G_r,$ with this we have found a homomorphism $f: V(H)\longrightarrow V(G_r).$ 
What is left is to prove that $f$ is balanced.


\begin{lemma}\label{balansz}
Let $A\subseteq V(H)$ and $k \in \{1, \ldots, \ell\}$ be arbitrary. Then $$\Pr\left(\left||A\cap L_k|-\frac{|A|}{\ell}\right|\ge n^{3/4}\right)\le \frac{1}{\sqrt{n}}.$$ 
\end{lemma}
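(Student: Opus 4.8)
The plan is to prove the concentration bound in Lemma~\ref{balansz} by analyzing the randomness in the assignment procedure of Subsection~\ref{homomorfizmus} via a vertex-exposure martingale, and then applying the Azuma–Hoeffding bound (Lemma~\ref{AzumaHoeffding}) — or more precisely, the Azuma inequality it is a corollary of. First I would fix $A\subseteq V(H)$ and the index $k$, and write $X=|A\cap L_k|$, the quantity to be controlled. I would show $\mathbb{E}X = |A|/\ell$ using Lemma~\ref{varhato} for the part of $A$ lying in $V(H)-I_2-\widehat I$, and the fact that the proportional/strong proportional matchings together with the random equipartitions of the sets $\widehat I_1(i,j)$ and the random choices for the fictive vertices $F$ distribute the remaining vertices of $I_2\cup I_F$ uniformly among the $\ell$ clusters. (For a vertex $x\in I_2$ with $h(N_{H^+}(x))=\{i,j\}$, the pair $\{V_i,V_j\}$ is a uniformly random $2$-subset because its two neighbours were assigned to a uniformly random triangle of $\mathcal T$ with a uniformly random permutation; and $\M_1$ sends each pair to a fixed cluster, so after averaging over $\{i,j\}$ the image is uniform. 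For $x\in\widehat I_1$ the extra layer of randomness is the random equipartition into $S_1,\dots,S_{\ell/\mu}$.)

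Next I would set up the martingale. The total number of independent random choices made in the procedure is bounded: one choice of triangle plus one permutation per component of $H-I$, one choice per vertex of $I-I_2-\widehat I$, one choice per fictive vertex in $F$, and one random permutation per nonempty set $\widehat I_1(i,j)$ (there are at most $\binom{\ell}{2}=O(1)$ of these, each a permutation of a set of size $O(n)$, which I would treat as $O(n)$ elementary transposition-type choices, or simply as $O(n)$ coordinates). So $X$ is a function of $N=O(n)$ independent random variables. The crucial point is the bounded-difference (Lipschitz) property: changing any single one of these random choices changes $X$ by at most an absolute constant $C$. Indeed, rerouting one component of $H-I$ moves at most $3$ vertices of $H$ between clusters (the component has at most $3$ vertices), and it moves at most $|N_H(\text{component})\cap (I-I_2-\widehat I)|$ associated degree-$\le 2$ vertices — but each such vertex was assigned to a cluster of the \emph{same} triangle, so it too moves by one slot: still only a constant number of vertices of $A$ can change cluster membership, because the components and their $I$-neighbourhoods have bounded size (here one uses $\Delta(H)\le 4$ and Lemma~\ref{strH}). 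Similarly, changing one fictive-vertex choice moves one vertex of $I_F$; and transposing two elements in the permutation defining the equipartition of some $\widehat I_1(i,j)$ moves at most two vertices of $\widehat I_1$ between the sets $S_t$, hence between clusters. In every case the per-coordinate change in $X$ is $O(1)$.

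With the Lipschitz constant $C=O(1)$ and $N=O(n)$ exposures, the Azuma inequality gives
$$\Pr\big(|X-\mathbb{E}X|\ge q\big)\le 2\exp\!\left(-\frac{q^2}{2C^2 N}\right)=2\exp\!\left(-\Theta\!\left(\frac{q^2}{n}\right)\right).$$
Taking $q=n^{3/4}$ makes the exponent $\Theta(\sqrt n)$, so the right-hand side is $2e^{-\Theta(\sqrt n)}\le 1/\sqrt n$ once $n$ is large enough, which is exactly the claimed bound (and since $\mathbb{E}X=|A|/\ell$, the event is $\big||A\cap L_k|-|A|/\ell\big|\ge n^{3/4}$). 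The main obstacle — and the only place requiring genuine care rather than routine estimation — is verifying the bounded-difference property uniformly over \emph{all} the different types of random choices, in particular checking that a single re-randomization never cascades: one must confirm that moving a few vertices of $H-I$ only forces a constant number of further reassignments among $I-I_2-\widehat I$, $I_F$, and the $S_t$-partitions, rather than triggering a chain of changes. This follows because the assignment of each vertex of $I\setminus(\text{neighbours tracked above})$ depends only on its own (bounded-size) neighbourhood's images, and the bounded degree of $H$ caps how many vertices of $A$ any single coordinate can influence; I would spell this out case by case but would not belabour the arithmetic.
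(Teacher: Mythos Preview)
Your approach is correct and genuinely different from the paper's. The paper does not set up a martingale; instead it splits $A$ into $A\setminus\widehat I_1$ and $A\cap\widehat I_1$ and treats the two pieces separately. For $A\setminus\widehat I_1$ it observes that the indicators $X_i=\mathbf 1_{h(x_i)=k}$ have bounded dependence (each $X_i$ fails to be independent of at most $3^4$ others, since dependence requires distance $\le 4$), bounds the variance by $O(|A|/\ell)$, and applies Chebyshev with $\lambda=2n^{1/4}$. For $A\cap\widehat I_1$ it uses that the vertices of $\widehat I_1$ are pairwise at distance $\ge 5$, so the $\widehat I_1(s,t)$ membership indicators are genuinely independent and a Chernoff bound applies directly. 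Your Azuma/McDiarmid route is more unified and yields an exponentially small tail (far stronger than the stated $1/\sqrt n$), at the cost of having to verify a Lipschitz condition across several heterogeneous random sources; the paper's second-moment approach is more elementary but requires the case split.

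One point where you are a bit too quick: the random equipartitions of the sets $\widehat I_1(i,j)$ are not, as literally described, independent of the earlier triangle choices, since the very \emph{domain} of each permutation depends on which vertices landed in $\widehat I_1(i,j)$. To make McDiarmid applicable you must first couple, e.g.\ by pre-generating for each pair $(i,j)$ a uniform ordering on all of $\widehat I_1$ and using its restriction. Under such a coupling, resampling the triangle of one component $P$ moves at most one vertex $x\in\widehat I_1$ (since two such vertices in $N_H(P)$ would be at distance $\le 4$) between two $\widehat I_1(i,j)$ sets; this insertion/deletion shifts the segment boundaries of those two equipartitions, and up to $\ell/\mu$ other vertices can change their $S_t$-label as a result. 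Since $\ell/\mu$ is an absolute constant this is harmless, but it is a larger cascade than your write-up suggests and does not follow from bounded degree alone. Once this is made explicit, your argument goes through cleanly.
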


\begin{proof} Let $\{x_1,\ldots, x_a\}$ denote the vertices of $A$ (hence $|A|=a$). We define {\it  indicator random variables} $X_1, \ldots, X_a$ such that for every 
$i\in \{1, \ldots, a\}$ we have $X_i=1$ if and only if
$h(x_i)=k.$ Set $X=\sum_i X_i.$ It is easy to see that $X=|A\cap L_k|.$ Our first goal is to show the following.

\begin{claim}\label{varhato2} 
The expected number of vertices of $A$ assigned to $L_k$ is $\mathbb{E}X=\frac{|A|}{\ell}.$
\end{claim}

\begin{proof} (of the Claim)
We show that $\mathbb{E}X_i=1/\ell$ for every $i.$ Whenever $x_i\in A-I_2-\widehat{I}$ this is an immediate consequence of Lemma~\ref{varhato}. 

Assume now that $x_i\in A\cap (I_2-I_2'-\widehat{I}),$ and denote the neighbors of $x_i$ 
by $y$ and $y'.$ By Lemma~\ref{varhato} we have that the $(h(y), h(y'))$ pair is uniformly distributed in the set of all possible two-element subsets of $\{1, \ldots, \ell\}.$ For finding $h(x_i)$ we use the proportional matching ${\mathcal M}_1.$ Since
every vertex of $G_r$ has the same degree, $(\ell-1)/2$ in ${\mathcal M}_1$ and every pair has degree 1, we get that $\mathbb{E}X_i=1/\ell$ in this case, too. 
The case when $x_i \in A\cap (I_2'-\widehat{I})$ is very similar. Since the non-fictive neighbors of $x_i$ are assigned to a randomly, uniformly chosen vertex of $G_r$ by Lemma~\ref{varhato}, the fictive neighbor must also be uniformly distributed. Hence, the argument we used for vertices from $I_2-I_2'-\widehat{I}$ works here as well. 

Assume that $x_i\in A\cap (\widehat{I}-\widehat{I}_1).$ For assigning such an $x_i$ we use ${\mathcal M}_1,$ and since
the neighbors (fictive or non-fictive) of $x_i$ are uniformly distributed in $\binom{V(G_r)}{2},$ similarly to previous cases we conclude that $h(x_i)$ is uniformly distributed.  

Finally, assume that $x_i\in A\cap \widehat{I}_1.$ As before, we know that its neighbors are uniformly distributed in $\binom{V(G_r)}{2}.$ Recall the definition of the $\widehat{I}_1(i, j)$ sets. When distributing the vertices of $\widehat{I}_1$ using the strong proportional matching
we first divide the $\widehat{I}_1(i, j)$ sets randomly into $\ell/\mu$ subsets, this procedure is independent from the random distribution of the vertices in $V(H)-I.$ Since the probability that 
$x_i \in \widehat{I}_1(i, j)$ is exactly $1/\binom{\ell}{2},$ we get that $h(x_i)=k$ with probability $1/\ell$ when using ${\mathcal M}_2$ for finding the function $h.$ Hence, $h(x_i)$ is uniformly distributed in this case, too.

Since $\mathbb{E}X_i =\frac{1}{\ell}$ for every $i,$ using linearity of expectation finishes the proof of the claim.
\end{proof}

In order to finish the proof of the lemma   
we need a simple claim, a direct consequence of Chebyshev's inequality.

\begin{claim}\label{Csebisev}
Let $k, s\in \mathbb{N}$ be fixed such that $s\ge k.$ Assume that $Z_1, \ldots, Z_s$ are indicator random variables such that 
$\Pr(Z_i=1)=p$ for every $i.$ Assume further that every $Z_i$ is independent from at least $s-k$ other indicator variables.
Set $Z=\sum_i Z_i.$
Then $$\Pr(|Z-\mathbb{E}[Z]|\ge \lambda \sqrt{(k+1)s p})\le \frac{1}{\lambda^2}$$ for every $\lambda>0.$ 
\end{claim}

\begin{proof} (of the Claim)
We estimate $\mathrm{Var}[Z]$ as follows: $$\mathrm{Var}[Z]=\sum_i \mathrm{Var}[Z_i]+\sum_{i\neq j}(\mathbb{E}[Z_iZ_j]-\mathbb{E}[Z_i]\mathbb{E}[Z_j]).$$
Let's consider the terms of the first sum: $$\mathrm{Var}[Z_i]=\mathbb{E}[Z_i^2]-\mathbb{E}^2[Z_i]=p-p^2\le p$$ holds for every $i.$
Next we consider the covariances. Whenever $Z_i$ and $Z_j$ are independent, we have that $\mathbb{E}[Z_{i}Z_{j}]-\mathbb{E}[Z_{i}]\mathbb{E}[Z_{j}]=0.$
When they are not independent, we will use the following bound: 
$$\mathbb{E}[Z_{i}Z_{j}]-\mathbb{E}[Z_{i}]\mathbb{E}[Z_{j}]\le \mathbb{E}[Z_i]-\mathbb{E}[Z_{i}]\mathbb{E}[Z_{j}]=p-p^2\le p$$ which holds for every $i$ and $j.$ Putting these together we obtain
that $\mathrm{Var}[Z]\le (k+1)s p.$ Applying Chebyshev's inequality finishes the proof. 
\end{proof}

Now let $x_i$ and $x_j$ be two vertices of $A-\widehat{I}_1.$ Observe that if $X_i$ and $X_j$ are not independent random variables then there exists a component $P$ of $H-I$ such that  $x_i, x_j\in V(P)\cup N_H(V(P)).$ 
By Lemma~\ref{strH} we have that if $x ,x' \in V(P)\cup N_H(V(P))$ then $\dist(x, x')\le 4,$ hence, for each $x\in V(H)$ there are less than $3^4$ vertices with distance at most 4.
Set $\lambda=2\sqrt[4]{n}$ then Claim~\ref{Csebisev} implies that  
$$\Pr\left(\left||(A-\widehat{I}_1)\cap L_k|-\frac{|A-\widehat{I}_1|}{\ell}\right|\ge\frac{n^{3/4}}{2}\right)\le \frac{1}{2\sqrt{n}}.$$

Finally, we discuss the vertices in $A\cap \widehat{I}_1,$ which is a slightly more complicated case. Consider a pair of indices  
$(s, t)$ such that $V_sV_k, V_tV_k\in E(G_r)$ (therefore, a copy of the $(V_s, V_t)$ pair is matched to $V_k$ in the strong proportional matching $\M_2$). The random variables that correspond to $x_i\in A\cap \widehat{I}_1$ are independent, since
$\widehat{I}_1$ contains only such vertices that are at distance at least 5 from each other. We get that 
$$\Pr\left(\left||A\cap \widehat{I}_1(s, t)|-\frac{|A\cap \widehat{I}_1|}{{\ell \choose 2}}\right|>\sqrt{n}\log n \right)<\frac{1}{n},$$ here we used the Chernoff bound (see e.g.~\cite{AS}) for 
the case $|A\cap \widehat{I}_1|\ge \sqrt{n}\log n,$ and the trivial bound $\sqrt{n}\log n$ if $A\cap \widehat{I}_1$ is too small. This implies that after distributing the vertices of 
$\widehat{I}_1$ using $\M_2,$ the following will hold with probability at least $1/(2\sqrt{n})$: the number of vertices
of $A\cap \widehat{I}_1$ assigned to $L_k$ differs from its expectation by at most ${n}^{3/4}/2.$ This proves the lemma.
\end{proof}

We also get the following:
 
\begin{corollary}
For every $W\in V(G_r)$ we have $|\,|f^{-1}(W)-F|-n/\ell|=O(n^{3/4})$ with probability at least $1-\ell/n^{1/4}.$
\end{corollary}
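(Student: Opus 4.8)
The plan is to derive the corollary directly from Lemma~\ref{balansz} by applying it to a single carefully chosen set $A$ and then bounding the contribution of $F$ by hand. First I would set $A = V(H) \setminus F$ (recall $F$ is the set of fictive neighbors, which are not vertices of $H$ in the original sense, so in fact $V(H)$ already excludes them; the quantity of interest is the genuine $H$-vertices landing in each cluster). More precisely, the object $|f^{-1}(W) - F|$ counts the number of non-fictive vertices of $H^+$ mapped to $W$, i.e.\ $|A \cap L_i|$ where $W = V_i$ and $A = V(H)$. So I would apply Lemma~\ref{balansz} with this $A$ and note $|A| = n$, giving
$$\Pr\left(\left||V(H)\cap L_i|-\frac{n}{\ell}\right|\ge n^{3/4}\right)\le \frac{1}{\sqrt{n}}$$
for each fixed $i$.

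Next I would take a union bound over the $\ell$ clusters. Since $\ell \le M(\varepsilon)$ is a constant (or in any case $\ell = O(1)$ relative to $n$, being bounded by the Regularity Lemma constant), the union bound gives failure probability at most $\ell/\sqrt{n}$. To match the stated bound $\ell/n^{1/4}$ one simply observes $\ell/\sqrt{n} \le \ell/n^{1/4}$ for $n \ge 1$, so the weaker conclusion certainly holds; in fact the paper states $\ell/n^{1/4}$ presumably to absorb an additional small loss, and any bound of the form $\ell \cdot n^{-c}$ for fixed $c>0$ would do equally well for the later arguments. The deviation $n^{3/4} = O(n^{3/4})$ is exactly the error term claimed.

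The only genuinely substantive point — and the step I would flag as the place to be slightly careful rather than a real obstacle — is matching up the quantity $|f^{-1}(W) - F|$ in the corollary with the quantity $|A \cap L_k|$ controlled by Lemma~\ref{balansz}. One must check that the $L_i$ sets are, at this stage of the construction, precisely the preimages $f^{-1}(V_i)$ restricted to genuine vertices of $H$: by the running convention that $f(x) = V_i$ iff $h(x) = i$ iff $x \in L_i$, together with the fact that the fictive vertices of $F$ were given $h$-values but are not vertices of $H$, we indeed have $f^{-1}(V_i) \setminus F = L_i \cap V(H)$. Once this bookkeeping identity is in place, the corollary is immediate from Lemma~\ref{balansz} and a union bound, with no further calculation required.
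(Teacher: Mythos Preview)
Your proposal is correct and matches the paper's intended approach: the paper presents this corollary immediately after Lemma~\ref{balansz} with no proof, treating it as a direct consequence, and your argument (apply the lemma with $A=V(H)$, then take a union bound over the $\ell$ clusters) is exactly that consequence spelled out. Your bookkeeping remark about $f^{-1}(V_i)\setminus F = L_i\cap V(H)$ and your observation that $\ell/\sqrt{n}\le \ell/n^{1/4}$ are both accurate and close the argument.
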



\subsection{Finishing the embedding}\label{5.4}

In this section we will apply the Modified Blow-up lemma (Theorem~\ref{modblow-up} in this paper) in order to embed a non-triangular extreme graph $H.$
Recall, that $H$ is non-$\nu$-triangular extreme, if it contains less than $(1-\nu)n/3$ vertex disjoint triangles, where $d\ll \nu \ll 1.$
The Modified Blow-up lemma has several conditions, we will go through each of them to verify that they 
are satisfied. 

Whenever it is possible, we will refer to lemmas, claims of~\cite{Cs2}, since the Modified Blow-up lemma was introduced in that paper,
moreover, proportional and strong proportional matchings were used for distributing vertices among clusters of the host graph, exactly like in the present paper.  

An ``almost final" partition of $V(G),$ required by the Modified Blow-up lemma, is naturally given by applying the Regularity Lemma and then preprocessing of $G_r$ in Lemma \ref{hszogfaktor}. The exceptional set is $V_0,$ the $V_i$ sets for $1\le i\le \ell$ are the non-exceptional clusters. It is possible that we need to move around a small proportion of the vertices of $G$ among the 
clusters.

A {\it provisional} partition $V(H)=L_1\cup \ldots \cup L_{\ell}$ is provided by the balanced homomorphism 
$f: V(H)\longrightarrow V(G_r)$ of Section~\ref{homomorfizmus}. Recall, that 
$L_i=f^{-1}(V_i).$
It is clearly temporary, since $L_0$ is empty at this point.

\subsubsection{Bad vertices in $G$ -- preparation for conditions C8 and C9}

By definiton of the reduced graph, if $V_iV_j\in E(G_r)$ then the $(V_i, V_j)$ pair is $\varepsilon$-regular. However, 
$V_i$  may have up to 
$\varepsilon m$ vertices that has only a small number of neighbors in $V_j,$ or even no neighbor at all (of course, similar holds for $V_j$). 
In order to avoid problems that may be caused by this, we will discard a few vertices from the non-exceptional clusters, and place them into $V_0.$ 

The procedure we use for determining the vertices to be placed to $V_0$ is based on the following notion.
Given the proportional matching $\M_1$ provided by Lemma~\ref{bipart}, for every $1\le i\le \ell$ let $S_i$ denote the set of pairs matched to the non-exceptional cluster $V_i$ in $\M_1.$ We say that a vertex\footnote{An {\it ordinary} vertex of $G,$ not a cluster.} $v\in V(G)-V_0$ has {\it $\alpha$-small degree to a pair} $S\in \binom{V(G_r)}{2},$ if $v$ has less than $(d-\alpha)m$ neighbors in at least one
of the clusters of $S.$ A vertex $v\in V_i$ is called {\it $\alpha$-bad,} if $v$ has $\alpha$-small degree to at least $|S_i|/2$ pairs in $S_i.$
The lemma below is from~\cite{Cs2} (can be found as Lemma 4.7), we omit the proof.

\begin{lemma}\label{BadVertices}
By removing \emph{exactly} $4\varepsilon m$ appropriately chosen vertices from every non-exceptional cluster of $G'$ we can achieve that no $6\varepsilon$-bad vertices will remain in them.
\end{lemma}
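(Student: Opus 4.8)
The plan is to control, for each non-exceptional cluster $V_i$, the number of $6\varepsilon$-bad vertices it contains, and then show this number is at most $4\varepsilon m$, so that removing exactly $4\varepsilon m$ vertices (the bad ones together with, if necessary, arbitrary extra vertices to reach the exact count) eliminates all of them. First I would fix $i$ and analyze the pairs in $S_i$. Recall $S_i$ is the set of pairs of clusters matched to $V_i$ in the proportional matching $\M_1$; by the definition of a proportional matching in $\Lambda_1$, $|S_i|=q$ where $q=|\mathcal{S}|/\ell=(\ell-1)/2$, and for each $S=\{V_a,V_b\}\in S_i$ the vertex $V_i$ is adjacent in $G_r$ to both $V_a$ and $V_b$, i.e.\ $(V_i,V_a)$ and $(V_i,V_b)$ are $\varepsilon$-regular pairs with density at least $d$.

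Next I would bound, for a single pair $S=\{V_a,V_b\}\in S_i$, the number of vertices $v\in V_i$ that have $6\varepsilon$-small degree to $S$. A vertex $v$ has $6\varepsilon$-small degree to $S$ iff $\deg(v,V_a)<(d-6\varepsilon)m$ or $\deg(v,V_b)<(d-6\varepsilon)m$. Since $(V_i,V_a)$ is $\varepsilon$-regular with density $d'\ge d$ (and $d-6\varepsilon < d'-\varepsilon$ because the density is at least $d$ — here one uses $\varepsilon \ll d$ so that $d-6\varepsilon \le d'-\varepsilon$), Lemma~\ref{regfokok} applied with $Y=V_a$ (note $|V_a|=m>\varepsilon m = \varepsilon|V_i|$) gives that at most $\varepsilon m$ vertices $v\in V_i$ satisfy $\deg(v,V_a)\le (d'-\varepsilon)m$, hence at most $\varepsilon m$ satisfy $\deg(v,V_a)<(d-6\varepsilon)m$. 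The same holds for $V_b$. So the number of vertices of $V_i$ with $6\varepsilon$-small degree to the fixed pair $S$ is at most $2\varepsilon m$.

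Now I would do a double count over $S_i$. Summing over all $|S_i|=q$ pairs, the number of incidences between vertices $v\in V_i$ and pairs $S\in S_i$ such that $v$ has $6\varepsilon$-small degree to $S$ is at most $2\varepsilon m\cdot q$. If $v\in V_i$ is $6\varepsilon$-bad it contributes at least $|S_i|/2 = q/2$ such incidences, so the number of $6\varepsilon$-bad vertices in $V_i$ is at most $(2\varepsilon m \cdot q)/(q/2) = 4\varepsilon m$. Therefore by removing exactly $4\varepsilon m$ vertices from $V_i$ — removing all the $6\varepsilon$-bad ones and, if there are fewer than $4\varepsilon m$ of them, padding with arbitrarily chosen further vertices — we remove every $6\varepsilon$-bad vertex. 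Doing this simultaneously for every $1\le i\le\ell$ proves the lemma. The main subtlety, rather than obstacle, is the bookkeeping with the regularity threshold: one must check that $d-6\varepsilon \le d'-\varepsilon$ for the actual density $d'$ of the pair, which is guaranteed because $d'\ge d$ and $\varepsilon\ll d$; and one should note that removing vertices from $V_i$ only helps (it does not create new bad vertices relative to the pairs in $S_i$, since we are bounding a quantity that is monotone under deletion of candidate bad vertices, and the pairs $S_i$ themselves are clusters that also shrink uniformly, keeping all the relevant pairs $\sqrt{\varepsilon}$-regular by the remark following the regularity condition). Since this is precisely Lemma 4.7 of~\cite{Cs2}, I would cite that argument for the details.
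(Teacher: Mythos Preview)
Your argument is correct and is exactly the standard double-counting proof: bound the vertices of small degree toward each cluster via Lemma~\ref{regfokok}, sum over the two clusters in each pair and then over all pairs in $S_i$, and divide by the threshold $|S_i|/2$ defining badness. The paper itself omits the proof entirely and simply cites Lemma~4.7 of~\cite{Cs2}, which is precisely the argument you have reproduced, so your approach coincides with the intended one.
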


\begin{corollary} \label{bad} After performing the above procedure the edges of $G_r$ will represent $2\varepsilon$-regular pairs with density at least $d-4\varepsilon,$ moreover, $$3\varepsilon n< 4\varepsilon \ell m\le |V_0|\le 4\varepsilon m\ell+14d n < 15d n.$$ 
\end{corollary}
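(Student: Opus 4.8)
The plan is to derive both assertions of Corollary~\ref{bad} from Lemma~\ref{BadVertices} together with the bookkeeping already carried out in subsection~\ref{5.1}: the statement about $G_r$ follows from the standard restriction (slicing) property of regular pairs, and the bounds on $|V_0|$ are a short computation.

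First I would treat the regularity and density claims. The point to be careful about is that one must \emph{not} argue from the $2\varepsilon$-regular pairs supplied by Lemma~\ref{hszogfaktor}: restricting a $2\varepsilon$-regular pair to a $(1-4\varepsilon)$-fraction of each side would only yield a $4\varepsilon$-regular pair. Instead, recall that every pair $(V_i,V_j)$ with $V_iV_j\in E(G_r)$ descends from a pair of the Degree Form partition (Lemma~\ref{reglemma}) that was $\varepsilon$-regular with density at least $d$. Between that partition and the present moment each surviving cluster $V_i$ has lost vertices exactly twice: a $4\varepsilon$-fraction of its then-current size in Step~4 of subsection~\ref{5.1}, and a $4\varepsilon$-fraction of its then-current size in Lemma~\ref{BadVertices} (Step~3 only discards whole clusters, so it does not shrink the surviving ones). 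Hence each current cluster is a subset of the corresponding original one of relative size at least $(1-4\varepsilon)^2>1-8\varepsilon>\varepsilon$. By the restriction property of $\varepsilon$-regular pairs, the current pair is $\varepsilon'$-regular with $\varepsilon'=\max\{2\varepsilon,\,\varepsilon/(1-8\varepsilon)\}=2\varepsilon$, using $\varepsilon\ll1$ (in particular $\varepsilon<1/16$); and, taking $X$ and $Y$ to be the two current clusters in the regularity inequality for the original pair, its density differs from the original density by less than $\varepsilon$, hence is at least $d-\varepsilon>d-4\varepsilon$. In particular no edge of $G_r$ is destroyed and $G_r$ is unchanged as a graph.

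Next I would bound $|V_0|$. Lemma~\ref{BadVertices} moves \emph{exactly} $4\varepsilon m$ vertices out of each of the $\ell$ non-exceptional clusters into $V_0$, so the new exceptional set exceeds the previous one, call it $V_0'$, by exactly $4\varepsilon m\ell$. By Lemma~\ref{hszogfaktor} we had $|V_0'|\le14dn$, and since $V_0',V_1,\dots,V_\ell$ partitioned $V(G)$ we have $m\ell=n-|V_0'|\ge(1-14d)n$. Therefore
\[
4\varepsilon m\ell\ \ge\ 4\varepsilon(1-14d)n\ >\ 3\varepsilon n,
\]
because $4(1-14d)>3$ for $d\ll1$, which gives the lower bounds $3\varepsilon n<4\varepsilon m\ell\le|V_0|$. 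For the upper bound, $m\ell\le n$ yields
\[
|V_0|\ =\ |V_0'|+4\varepsilon m\ell\ \le\ 14dn+4\varepsilon m\ell\ \le\ 14dn+4\varepsilon n\ <\ 15dn,
\]
the last step holding since $\varepsilon\ll d$, in particular $4\varepsilon<d$.

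The only step that requires any attention — and thus the ``main obstacle'', though a minor one — is the remark in the second paragraph that the regularity parameter must be traced back to the Degree Form partition rather than to the already-weakened $2\varepsilon$-regular pairs of Lemma~\ref{hszogfaktor}; the rest is routine arithmetic and a single application of the restriction property.
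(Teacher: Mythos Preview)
Your proof is correct. The paper states this corollary without proof, treating it as immediate from Lemma~\ref{BadVertices} and the preceding bookkeeping; your write-up supplies exactly the natural details the paper omits, and your extra care in tracing the regularity parameter back to the original Degree Form partition (rather than to the already-weakened $2\varepsilon$-regular pairs of Lemma~\ref{hszogfaktor}) is a genuine point the paper glosses over.
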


\subsubsection{Forming $L_0$ and satisfying conditions C1, C2, C3, C4, C6 and C7} 

By Lemma~\ref{balansz} we have that $||\widehat{I}_j\cap L_i|-|\widehat{I}_j|/\ell|=o(n)$ for every $1 \le i \le \ell$ and $j=1, 2, 3$ with high probability. The sets $\widehat{I}_1,$ $\widehat{I}_2$ and $\widehat{I}_3$ will be used for different purposes. In this section we show how to fill up $L_0$ with vertices of $\widehat{I}_1.$
 
First, take a random subset $R_i\subset \widehat{I}_1\cap f^{-1}(V_i)$ of size\footnote{Recall that we chose $\varepsilon, d$ and $\nu$ such that $\varepsilon\ll d\ll \nu.$ One can see that 
$|\widehat{I}_1\cap f^{-1}(V_i)|> |L_i|-|V_i|$ is always satisfied by Claim~\ref{fgetlen2} and Lemma~\ref{BadVertices}. } $|L_i|-|V_i|$ for every 
$1\le i \le \ell,$ and let $L_0=R_1\cup \ldots \cup R_{\ell}.$ Clearly, we have achieved that $|L_0|=|V_0|, |L_1|=|V_1|, \ldots, |L_{\ell}|=|V_{\ell}|$ since $v(H)=v(G).$ 

At this point C1 is satisfied by Remark~\ref{bad}, C2 and C3 hold by definition and C6 is satisfied since the components of $G-I$ are distributed randomly among $L_i$ sets so that if $xy\in E(H)$ for $x, y\in V(G)-I,$ then  $V_{h(x)}V_{h(y)}\in E(G_r),$ and the vertices of $I$ are assigned to $L_i$ sets using proportional and strong proportional matchings. 

The following lemma proves that C4 holds with high probability.

\begin{lemma}\label{L_0szomszed1}
For every $1\le i \le \ell$ we have $|N_H(L_0) \cap L_i|\le 3|L_0|/\ell=3|V_0|/\ell<50dm$ with high probability.
\end{lemma}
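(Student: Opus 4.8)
The plan is to peel off the randomness in two stages — first condition on the balanced homomorphism $f$ (equivalently on $h$), then exploit the fact that each $R_i$ is a uniformly random subset — and in each stage dominate the relevant quantity by its expectation.

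First I would record the structural reductions. Since $L_0\subseteq\widehat I_1\subseteq\widehat I\subseteq I$, every $x\in L_0$ has $\deg_H(x)\le 2$ by Lemma~\ref{strH}(2), and $N_H(x)\subseteq V(H)\setminus I\subseteq V(H)-I_2-\widehat I$ because $I$ is independent. Moreover the vertices of $\widehat I$ are pairwise at distance at least $5$ in $H$, so for distinct $x,x'\in\widehat I_1$ the neighbourhoods $N_H(x),N_H(x')$ are disjoint; and since every component of $H-I$ is a path of length at most $2$ (Lemma~\ref{strH}(3)) and hence has diameter at most $2<3$, these neighbourhoods even lie in different components of $H-I$. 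Consequently $|N_H(L_0)\cap L_i|=\sum_{x\in L_0}|N_H(x)\cap L_i|$ with each summand in $\{0,1,2\}$, and by the first equation of Lemma~\ref{varhato} (applicable since $N_H(x)$ consists of vertices of $V(H)-I_2-\widehat I$) we have $\mathbb E\big[|N_H(x)\cap L_i|\big]=|N_H(x)|/\ell$.

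Next I would fix $i$, condition on $h$ (and on the independent random partition $\widehat I=\widehat I_1\cup\widehat I_2\cup\widehat I_3$), and set $A_j=\widehat I_1\cap f^{-1}(V_j)$ and $r_j=|f^{-1}(V_j)|-|V_j|$, so that $L_0=\bigsqcup_j R_j$ with $R_j$ a uniformly random $r_j$-subset of $A_j$, chosen independently over $j$. Using $\sum_j|V_j|=n-|V_0|$ together with balancedness of $f$ on $V(H)$ (Lemma~\ref{balansz}), one gets $r_j=|V_0|/\ell+O(n^{3/4})$ for every $j$; applying Lemma~\ref{balansz} to $A=\widehat I_1$ gives $|A_j|=|\widehat I_1|/\ell+O(n^{3/4})$ for every $j$. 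Since $|V_0|\ge 3\varepsilon n$ (Corollary~\ref{bad}) and $|\widehat I_1|=\Omega(\nu n)$ (Claim~\ref{fgetlen2} and the near-equal split of $\widehat I$) are both $\gg n^{3/4}$, this yields $\max_j r_j/|A_j|=(1+o(1))|V_0|/|\widehat I_1|$. Hence
$$\mathbb E\big[\,|N_H(L_0)\cap L_i|\;\big|\;h\,\big]=\sum_j\frac{r_j}{|A_j|}\sum_{x\in A_j}|N_H(x)\cap L_i|\le\Big(\max_j\frac{r_j}{|A_j|}\Big)\,|N_H(\widehat I_1)\cap L_i|,$$
and applying Lemma~\ref{balansz} once more, now to the (once the $\widehat I$-partition is fixed, deterministic) set $A=N_H(\widehat I_1)$ and using $|N_H(\widehat I_1)|\le 2|\widehat I_1|$, gives $|N_H(\widehat I_1)\cap L_i|\le 2|\widehat I_1|/\ell+O(n^{3/4})=(1+o(1))2|\widehat I_1|/\ell$. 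So the conditional expectation is at most $(1+o(1))\,2|V_0|/\ell$.

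Finally I would add concentration. Given $h$, write $|N_H(L_0)\cap L_i|=\sum_j S_j$ with $S_j=\sum_{x\in R_j}|N_H(x)\cap L_i|$; the $S_j$ are independent across $j$, and each is a sum of $\{0,1,2\}$-valued weights over a uniformly random $r_j$-subset of $A_j$. Splitting weight-$2$ terms into two weight-$1$ terms and invoking the Azuma–Hoeffding bound (Lemma~\ref{AzumaHoeffding}) with $q=2\sqrt{r_j\log n}$ gives $|S_j-\mathbb E[S_j\mid h]|\le 4\sqrt{r_j\log n}$ except with probability $O(n^{-2})$; a union bound over the $\ell$ indices $j$ together with Cauchy–Schwarz ($\sum_j\sqrt{r_j}\le\sqrt{\ell\sum_j r_j}=\sqrt{\ell|V_0|}$) yields a total deviation of at most $4\sqrt{\ell|V_0|\log n}=o(|V_0|/\ell)$, using $\ell=O(1)$ and $|V_0|\ge 3\varepsilon n$. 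Combining with the previous paragraph, $|N_H(L_0)\cap L_i|\le(1+o(1))2|V_0|/\ell+o(|V_0|/\ell)\le 3|V_0|/\ell$ for $n$ large, and $3|V_0|/\ell<50dm$ by Corollary~\ref{bad}. A union bound over the at most $\ell$ choices of $i$, the $O(\ell)$ invocations of Lemma~\ref{balansz}, and the concentration events makes all failure probabilities sum to $o(1)$, which is the assertion. The main obstacle is precisely the two-level dependence: $L_0$ is built from the sets $\widehat I_1\cap f^{-1}(V_j)$, which depend on the same random choices that place the neighbours of $\widehat I_1$, so $L_0$ cannot be treated as a fixed set; conditioning on $h$ decouples the two levels, but one must then keep the ratios $r_j/|A_j|$ genuinely within a $(1+o(1))$ factor of $|V_0|/|\widehat I_1|$ (and use balancedness for $N_H(\widehat I_1)$ rather than the cruder $N_H(\widehat I)$), since the target $3|V_0|/\ell$ leaves only a factor-$3/2$ margin over the conditional mean.
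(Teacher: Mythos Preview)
Your proof is correct and follows the same architecture as the paper's: bound the expectation of $|N_H(L_0)\cap L_i|$ by roughly $2|V_0|/\ell$ using balancedness (Lemma~\ref{balansz}) together with $|N_H(\widehat I_1)|\le 2|\widehat I_1|$, and then concentrate. The only difference is in the concentration step --- the paper defines $Z_x=|N_H(x)\cap L_i|\cdot\mathbb{1}[x\in L_0]$ for $x\in\widehat I_1$, argues that the covariances are nonpositive (negative correlation under sampling without replacement), bounds $\mathrm{Var}(Z)<n$, and applies Chebyshev with $\lambda=n^{1/4}$, whereas you condition fully on $h$, apply Azuma--Hoeffding within each block $R_j$, and combine via Cauchy--Schwarz --- but this is a matter of taste, and your explicit two-stage conditioning is arguably cleaner about why the dependencies between $h$ and the choice of $L_0$ cause no trouble.
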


\begin{proof}
Fix an arbitrary index $ i\in \{1, \ldots, \ell\}.$ We have $$|R_j|=|L_j|-|V_j|\le\ell^2 n^{3/4}+\frac{|V_0|}{\ell}$$ for each $1\le j\le\ell$ by Lemma~\ref{balansz}. Therefore
 if $x\in \widehat{I}_1\cap L_j$ then $$\Pr(x\in R_j)=\frac{|R_j|}{|\widehat{I}_1\cap L_j|}\le \frac{\frac{|V_0|}{\ell}+\ell^2n^{3/4}}{\frac{|\widehat{I}_1|}{\ell}-\ell^2n^{3/4}}\le\frac{|V_0|}{|\widehat{I}_1|}(1+\varepsilon),$$
the first inequality follows from Lemma~\ref{balansz}, the second inequality holds because $|V_0|\ge 3\varepsilon n$ by 
 Lemma~\ref{BadVertices} and Corollary~\ref{bad} and $|\widehat{I}_1|\ge\nu n/40$ by Claim~\ref{fgetlen2}.
So if $x\in\widehat{I}_1$ then $p_x=\Pr(x\in L_0)\le \frac{|V_0|}{|\widehat{I}_1|}(1+\varepsilon).$

For each $x\in\widehat{I}_1$ we define a random variable: $Z_x=|N_H(x)\cap L_i|$ if $x$ is chosen for $L_0$ and 0 otherwise. Note that $0\le Z_x\le 2$. Clearly $Z=\sum_{x\in\widehat{I}_1} Z_x=|N_H(L_0)\cap L_i|$. Let us estimate the expectation of $Z.$

\begin{eqnarray*}
\mathbb{E}(Z)&=&\sum_{x\in\widehat{I}_1}\mathbb{E}(Z_x) = \mathop{\sum_{
x\in\widehat{I}_1}}_{|N_H(x)\cap L_i|=1} p_x + \mathop{\sum_{x\in\widehat{I}_1}}_{|N_H(x)\cap L_i|=2} 2p_x\\
&\le& |N_H(L_0)\cap L_i|\cdot\frac{|V_0|}{|\widehat{I}_1|}(1+\varepsilon)\\
&\le& \frac{|N_H(\widehat{I}_1)|}{\ell}(1+\varepsilon)\cdot\frac{|V_0|}{|\widehat{I}_1|}(1+\varepsilon)\le \frac{2|V_0|}{\ell}(1+\varepsilon)^2,
\end{eqnarray*}
where the last two inequalities come from Lemma~\ref{balansz} and the fact that $|N_H(\widehat{I}_1)|\le 2|\widehat{I}_1|$.
Observe that if $x, y\in \widehat{I}_1$ then $\mathbb{E}(Z_xZ_y)\le\mathbb{E}(Z_x)\mathbb{E}(Z_y).$ 
It is also clear that $Var(Z_x)\le 4p_x(1-p_x)$ for every $x\in \widehat{I}_1$ using the definition of variance.
Therefore 
$$Var(Z)= \sum_{x\in \widehat{I}_1}Var(Z_x)+\sum_{x, y\in \widehat{I}_1, \ x\neq y} \left(\mathbb{E}(Z_xZ_y)-\mathbb{E}(Z_x)\mathbb{E}(Z_y)\right)$$ 
$$\le \sum_{x\in \widehat{I}_1}Var(Z_x)\le\sum_{x\in \widehat{I}_1}4p_x(1-p_x)<n.$$
Applying the Chebyshev inequality we get that
$$\Pr(|\,Z-\mathbb{E}(Z)|\ge\lambda \sqrt{n})\le \frac{1}{\lambda^2}.$$ Substituting 
$\lambda=\sqrt[4]{n}$ proves the lemma.\hfill
\end{proof}

Since we may not have condition C7 at this point, further work is required.
Recall the {\it index function} $h.$
Let us fix an arbitrary bijective mapping $\psi_0: L_0 \longrightarrow V_0.$ For all $x\in L_0$ we have to check whether condition C7 of the Blow-up Lemma holds,
that is, we need that $\deg_{G'}(v, V_{h(y_1)}), \deg_{G'}(v, V_{h(y_2)}) \ge c m,$ where $y_1, y_2$ are the two neighbors of  $x$ and $v=\psi_0(x).$ 

If this condition does not hold for some $x$ then a {\it switching} will be performed. Switching goes as follows. First, uniformly at random we pick a cluster
$V_i$ from the common neighborhood of $V_{h(y_1)}$ and $V_{h(y_2)}.$ Note that this common neighborhood contains more than $\ell/4$ clusters (in fact almost $\ell/3$
clusters even in the worst case).

Then locate a vertex $x'\in L_i\cap \widehat{I}_1$ such that $\deg_{G'}(v, V_{h(y'_1)}), \deg_{G'}(v, V_{h(y'_2)})\ge c m$, where $x'y'_1, x'y'_2 \in E(H).$ 
This is done randomly: we pick uniformly at random a pair of vertices  from the neighborhood of $V_i$ among those into which $v$ has at least $c m$ neighbors. Denote the set
of these vertices of $G_r$ by $R.$ It is easy to see
that $\deg(v, V_j)\ge c m$ for at least $5\ell/9$ vertices $V_j$ of $G_r,$  since $c$ is small (we can choose $c=100 \nu,$ say), and out of this many vertices only 
slightly more than $\ell/3$ are ruled out that are not neighbors of $V_i.$ Hence, we have more than $\ell/5$ vertices in $R$ from which we can choose one {\it available} pair randomly. 
 
As soon as we have the pair of vertices, say $V_j$ and $V_k,$ the strong proportional matching we used to distribute $\widehat{I}_1$ allows us to find an $x'$ 
with the property required above. Here is a brief calculation: $|\widehat{I}_1|\ge \nu n/150,$ hence, $|\widehat{I}_1(j, k)|\ge \nu n/(75 \ell^2)$ (we divided by ${\ell \choose 2}$),
one copy of a pair in $R$ therefore ``sends" at least $\nu \mu n/(75\ell^3)$ vertices of $\widehat{I}_1,$ since the number of copies is $\ell/\mu$ in the definition of the $\Lambda_2$ graph.

The number of copies of available pairs in $R$ is at least ${\ell/5 \choose 2}\approx \ell^2/50.$ Hence, the number of candidates for switching is larger than $\nu \mu n/(4000\ell)$ for every $x\in L_0$ 
for a given $V_i.$ Since we chose $V_i$ from the common neighborhood of $V_{h(y_1)}$ and $V_{h(y_2)},$ overall we have at least about $\nu \mu n/12000$ candidates for the vertices of $L_0.$ Hence if we choose $\nu, \mu$ and $d$ such that $d\ll \nu\mu \ll 1,$ then we can perform the switching. Since we have $\nu=\mu \approx \sqrt[4]{d},$ we do not get stuck when switching.    

Once we have $x'$, we will switch the roles of $x$ and $x',$
that is, we let $L_i=L_i+x-x',$ $L_0=L_0-x+x'$ and $\psi_0(x') =v.$
It is clear that we have  condition C7 for $x',$ and that we reassigned $x$ to $L_i$ so that we did not violate other conditions of the Blow-up Lemma.

The above calculation shows that we do not get stuck during switching even if every vertex of $L_0$ has to be switched. The randomness in the procedure also guarantees that condition
C4 will hold at the end. The lemma below is proved (in two separate statements, Lemma 4.8 and Lemma 4.9) in~\cite{Cs2}:

\begin{lemma}
Using the above switching procedure, for every $x\in L_0$ one can find an $x'\in \widehat{I}_1-L_0$ so that at the end of switching for every $1\le k\le \ell$ with high probability we 
have that
$$|L_k \cap N_H(L_0)|\le Kdm$$ where $K\le 15000.$
\end{lemma}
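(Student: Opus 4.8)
The plan is to analyze the switching procedure as a two-stage random experiment and bound $|L_k \cap N_H(L_0)|$ by combining the initial estimate of Lemma~\ref{L_0szomszed1} with a concentration bound on the net effect of switching. Let $L_0^{(0)}$ denote the set $L_0=R_1\cup\ldots\cup R_\ell$ formed before switching, and let $L_0^{(1)}$ be the set obtained after processing every $x\in L_0^{(0)}$ that violates C7. Observe that $L_0^{(1)}$ is obtained from $L_0^{(0)}$ by, for each bad $x$, removing $x$ (putting it into some $L_i$) and adding a vertex $x'\in\widehat I_1(j,k)$ for a pair $(V_j,V_k)$ chosen uniformly among the $\ge \ell^2/50$ copies of available pairs in $R$, and then $x'$ itself chosen uniformly among the $\ge \nu\mu n/(75\ell^3)$ candidates sent by that copy. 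The key point, already established in the text, is that the total pool of candidate $x'$ vertices for a fixed $x$ has size $\gg dn/\ell$ per cluster and $\gg dn$ overall, so the process never gets stuck; what remains is to control the neighborhood sizes.

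First I would handle the vertices that stay: since $N_H(x')\subseteq I$ has size at most $2$ for each $x'$ and each $x$ removed also had $\le 2$ neighbors, $|L_k\cap N_H(L_0^{(1)})|$ differs from $|L_k\cap N_H(L_0^{(0)})|$ by at most $2\cdot(\text{number of switched vertices whose new or old neighbor lands in }L_k)$. By Lemma~\ref{L_0szomszed1} we already have $|L_k\cap N_H(L_0^{(0)})|\le 3|V_0|/\ell<50dm$ with high probability, so it suffices to show that the switched-in vertices contribute $O(dm)$ additional neighbors to each $L_k$ with high probability. Second, for the switched-in part I would set up indicator variables: for each bad $x$ and each candidate $x'$ in its pool, let $Y_{x,x'}=|N_H(x')\cap L_k|$ if $x'$ is the vertex chosen for $x$, and $0$ otherwise. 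Because the candidate for distinct bad vertices $x$ is drawn from disjoint-enough parts of $\widehat I_1$ (the $\widehat I_1$ vertices are pairwise at distance $\ge 5$, so their $L_i$-memberships and their contributions to a fixed $L_k$ are independent, exactly as in the proof of Lemma~\ref{balansz}), the sum $Y=\sum_{x} Y_{x,x'}$ is a sum of essentially independent bounded random variables. Computing $\mathbb E Y$: each $x'$ chosen lies in $L_k\cap N_H(\cdot)$-counting sense with probability $O(1/\ell)$ by the uniformity of the strong proportional matching $\M_2$ and the random equipartition of the $\widehat I_1(j,k)$ sets (this is the same calculation as Claim~\ref{varhato2}), and there are at most $|V_0|=O(dn)$ bad vertices, so $\mathbb E Y=O(d|V_0|/\ell)=O(d^2 m\ell)=O(dm)$ after absorbing constants. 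Third, I would apply Chebyshev's inequality (or the Chernoff bound, as in the last display of the proof of Lemma~\ref{balansz}) with $\mathrm{Var}(Y)=O(n)$ and deviation parameter $\lambda=n^{1/4}$ to conclude $|Y-\mathbb E Y|=o(m)$ with probability $\ge 1-o(1)$, hence $|L_k\cap N_H(L_0^{(1)})|\le 50dm+O(dm)+o(m)\le Kdm$ for a suitable constant $K\le 15000$; a union bound over the $\ell\le M$ values of $k$ keeps the failure probability $o(1)$.

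The main obstacle I expect is \emph{dependence bookkeeping}: the bad vertices of $L_0^{(0)}$ are themselves random (they depend on the earlier random choices producing $L_0^{(0)}$, on $\psi_0$, and on which vertices of $G$ were discarded into $V_0$), and the cluster $V_i$ from which $x'$ is drawn depends on $h(y_1),h(y_2)$ which are random. Making the independence claim for the $Y_{x,x'}$ rigorous therefore requires conditioning on the outcome of the first stage (the formation of $L_0^{(0)}$, the set of bad $x$, and the targets $V_{h(y_1)},V_{h(y_2)}$) and then arguing that, conditionally, the second-stage choices are independent across distinct bad $x$ because they hit disjoint distance-$\ge 5$ portions of $\widehat I_1$; one must also check that conditioning does not distort $\mathbb E Y$ by more than lower-order terms, which follows because the first-stage high-probability event from Lemma~\ref{L_0szomszed1} and Lemma~\ref{balansz} already pins down all the relevant cardinalities up to $o(m)$. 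Since the paper explicitly defers this to Lemma 4.8 and Lemma 4.9 of~\cite{Cs2}, where exactly this two-stage analysis is carried out for the analogous strong-proportional-matching distribution, I would cite those and only sketch the adaptation.
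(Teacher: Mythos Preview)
The paper does not prove this lemma; it merely states it and refers to Lemmas~4.8 and~4.9 of~\cite{Cs2}. Your proposal explicitly acknowledges this and supplies a plausible sketch of the two-stage concentration argument those cited lemmas contain, so your treatment is aligned with (and in fact more detailed than) the paper's own.
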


It is easy to see that conditions C1, C2, C3 and C6 are kept during the switching procedure.

\subsubsection{Condition C5}

In order to satisfy condition C5 we find a set $B \subset \widehat{I}_2.$ Recall that in $\widehat{I}_2$ vertices have their neighbors in two different $L_i$-sets (one of these neighbors could be a fictive one). In the beginning let $B=\emptyset.$

Let $(V_i, V_j),$ $i\neq j$ be a pair of clusters and consider the set $$S_{i, j}= \widehat{I}_2\cap N_{H^+}(L_i)\cap N_{H^+}(L_j).$$ The vertices of $S_{i, j}$ were assigned to clusters
using the proportional matching $\Lambda_1.$
Pick $\delta m \ell/\binom{\ell}{2}$ vertices from $S_{i, j}$ arbitrarily 
and place them into $B.$ Repeat the above step for every possible cluster pair. Finally, let $B_i=B\cap L_i$ for all $1\le i\le \ell.$
It is easy to see that the following lemma holds, we omit the proof:

\begin{lemma}
We have $|B_i|=\delta m$ and $|N_H(B)\cap L_i|=2\delta m.$
\end{lemma}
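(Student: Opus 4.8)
The plan is to obtain both equalities by a short double count, once one checks that the prescribed selection is feasible. First record the structure. By construction every $x\in\widehat{I}_2$ has its two neighbours in $H^+$ in two \emph{distinct} clusters, so $x$ lies in exactly one set $S_{i,j}=\widehat{I}_2\cap N_{H^+}(L_i)\cap N_{H^+}(L_j)$, and it is then assigned by the proportional matching $\M_1$ to the unique cluster $V_k$ to which the pair $\{V_i,V_j\}$ is matched. Hence $B\cap L_k$ is a disjoint union, over those pairs $\{V_i,V_j\}$ that $\M_1$ matches to $V_k$, of the $\delta m\ell/\binom{\ell}{2}$ vertices picked from $S_{i,j}$. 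Now $\M_1$ is a proportional matching in $\Lambda_1(G_r)$, whose colour classes have sizes $\ell$ and $|\mathcal{S}|=\binom{\ell}{2}$; so its ratio is $q=|\mathcal{S}|/\ell=(\ell-1)/2$, i.e.\ exactly $(\ell-1)/2$ pairs of $\mathcal{S}$ are matched to each cluster. Since the relevant $S_{i,j}$ are pairwise disjoint, $|B_k|=|B\cap L_k|=\frac{\ell-1}{2}\cdot\frac{\delta m\ell}{\binom{\ell}{2}}=\delta m$ for every $k$, which is the first identity.

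For the second identity I would use that $B\subset\widehat{I}$, so its members are pairwise at distance at least $5$ in $H$; in particular their $H$-neighbourhoods are pairwise disjoint and $|N_H(B)\cap L_i|=\sum_{x\in B}|N_H(x)\cap L_i|$. A vertex selected from $S_{i,j}$ has exactly one of its two $H^+$-neighbours in $L_i$ and one in $L_j$; the cluster $V_i$ occurs in $\ell-1$ of the pairs, each contributing $\delta m\ell/\binom{\ell}{2}$ such vertices, and summing gives $(\ell-1)\cdot\frac{\delta m\ell}{\binom{\ell}{2}}=2\delta m$. The point that needs care is the distinction between $H$- and $H^+$-neighbours: a member of $\widehat{I}^s$ or of $I_2'$ has a fictive neighbour, so it contributes a real neighbour to only one of its two clusters. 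I would handle this by taking, whenever possible, the selected vertices of each $S_{i,j}$ from $\widehat{I}_2\cap(I_2-I_2')$, where the count is exact; and in any case Condition C5 only asks that the numbers $|N_H(B)\cap L_i|$ agree across $i$ up to $\varepsilon m$, which the uniformly random choice of the fictive neighbours' clusters already secures.

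The only substantive step is feasibility: each $S_{i,j}$ must contain at least $\delta m\ell/\binom{\ell}{2}$ vertices. Here I would use $|\widehat{I}_2|\ge\lfloor|\widehat{I}|/3\rfloor$ with $|\widehat{I}|\ge\nu n/40$ from Claim~\ref{fgetlen2}, that $\ell\le M$ is an absolute constant, and $\delta\ll\nu$; as in the proof of Lemma~\ref{varhato}, for $x\in\widehat{I}_2$ the two $H^+$-neighbours lie in different components of $H-I$ and hence receive independent uniform clusters, so the pair $h(N_{H^+}(x))$ is (essentially) uniform over the $\binom{\ell}{2}$ cluster pairs. A Chernoff bound together with a union bound over the $O(1)$ pairs then shows that with high probability $|S_{i,j}|=(1+o(1))|\widehat{I}_2|/\binom{\ell}{2}\gg\delta m\ell/\binom{\ell}{2}$ for all $i\ne j$, exactly as in Lemma~\ref{balansz}. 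Everything else is arithmetic; I expect the main obstacle to be precisely this feasibility estimate, together with the fictive-neighbour bookkeeping needed to pin down the constant $2$ in the second identity.
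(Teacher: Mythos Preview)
Your argument is correct and, in fact, more careful than the paper, which simply writes ``It is easy to see that the following lemma holds, we omit the proof.'' The double count for $|B_k|$ is exactly right: $\mathcal{M}_1$ assigns $(\ell-1)/2$ pairs to each cluster, and $(\ell-1)/2\cdot\delta m\ell/\binom{\ell}{2}=\delta m$. Likewise the count for $|N_H(B)\cap L_i|$ via the $\ell-1$ pairs containing $V_i$ is the intended one, and your use of the distance-$5$ property of $\widehat{I}$ to guarantee disjoint neighbourhoods is the right justification.

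You are also right to flag the fictive-neighbour issue: as literally stated, the second identity is an equality for $N_{H^+}$ rather than $N_H$, and for vertices of $B\cap\widehat{I}^s$ one of the two ``neighbours'' is fictive. The paper glosses over this. Your resolution---prefer selections from $\widehat{I}_2\cap(I_2\setminus I_2')$ where possible, and in any case note that Condition~C5 only requires the quantities $|N_H(B)\cap L_i|$ to agree up to $\varepsilon m$---is sound; indeed the uniform random placement of fictive neighbours and the randomness in partitioning $\widehat{I}$ into $\widehat{I}_1,\widehat{I}_2,\widehat{I}_3$ make the shortfall negligible. One small correction: $\ell$ is not an absolute constant but is bounded by $M(\varepsilon)$; this is harmless for your union bound since all parameters are fixed before $n\to\infty$. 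Your feasibility estimate $|S_{i,j}|\gg\delta m\ell/\binom{\ell}{2}$ follows from $|\widehat{I}_2|\ge\nu n/120$, the hierarchy $\delta\ll d\ll\nu$, and a concentration argument parallel to Lemma~\ref{balansz}; that is exactly what is needed.
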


Hence if we determine $B$ and the $B_i$ sets using the above procedure we can satisfy condition C5.

\subsubsection{Conditions C8 and C9}

We briefly explained the role of conditions C8 and C9 right after the Modified Blow-up lemma. As above, we will refer to a 
lemma\footnote{ Only straightforward modifications were made in the notation, that is, we use $\varepsilon'$ instead of $\varepsilon''$ and $\widehat{I}_3$ instead of $B_2'.$}, Lemma 4.11 in~\cite{Cs2} which proves
that conditions C8 and C9 are satisfied: 

\begin{lemma}
Given arbitrary sets $E_i\subset V_i$ such that $|E_i|\le \varepsilon' m,$ we can find the sets $F_i\subset L_i\cap \widehat{I}_3$ and bijective mappings $\psi_i: F_i\longrightarrow E_i$ for every $1\le i\le \ell$ such that the following hold: 

\noindent (1) if $xy\in E(H)$ with $x=\psi_i^{-1}(v)$ and $y\in L_j$ then $\deg(v, V_j)\ge (d-6\varepsilon)m,$

\noindent (2) for $F=\cup F_i$ we have $|N_H(F)\cap L_i|\le 6\varepsilon' m.$ 
\end{lemma}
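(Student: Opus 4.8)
This statement is Lemma~4.11 of~\cite{Cs2} after the notational substitution recorded in the footnote, so the plan is to run that argument in the present notation; I sketch its structure. Throughout I would use the high‑probability properties of the homomorphism $f$ established in Section~\ref{homomorfizmus} (in particular Lemma~\ref{balansz}), which are fixed before the embedding algorithm produces the sets $E_i$ and are therefore available for every choice of the $E_i$.

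First I would recast Condition~(1) as a matching requirement. For $x\in\widehat{I}_3\cap L_i$ let $P_x\in\binom{V(G_r)}{2}$ denote the pair of clusters containing the two (possibly fictive) neighbours of $x$ in $H^+$. Since the vertices of $\widehat{I}_3$ were assigned to clusters by means of the proportional matching $\M_1$ in Section~\ref{homomorfizmus} (using a fictive neighbour where needed), the pair $P_x$ is one of the pairs matched to $V_i$ in $\M_1$, that is, $P_x\in S_i$. As every genuine neighbour of $x$ lies in one of the two clusters of $P_x$, Condition~(1) for the pair $v=\psi_i(x)$ follows as soon as $v$ has at least $(d-6\varepsilon)m$ neighbours in each of the two clusters of $P_x$, i.e.\ $v$ does not have $6\varepsilon$-small degree to $P_x$ (a fictive neighbour imposes no real constraint, so requiring non‑smallness to both clusters is at worst redundant). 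So it suffices, for each $i$, to produce a matching saturating $E_i$ in the bipartite graph $B_i$ with parts $E_i$ and $\widehat{I}_3\cap L_i$ in which $v\sim x$ precisely when $v$ does not have $6\varepsilon$-small degree to $P_x$, and to do this so that, jointly over $i$, the set $F=\bigcup_iF_i$ satisfies $|N_H(F)\cap L_j|\le 6\varepsilon' m$ for every $j$.

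Next I would check the K\"onig--Hall condition for each $B_i$. By Lemma~\ref{BadVertices} no cluster contains a $6\varepsilon$-bad vertex, so every $v\in E_i$ has $6\varepsilon$-small degree to fewer than $|S_i|/2$ of the pairs in $S_i$. On the other side, because the vertices of $\widehat{I}_3$ are pairwise at distance at least $5$, Lemma~\ref{varhato} applied to their neighbours (which lie in distinct components of $H-I$) together with the rule for assigning the fictive neighbours shows that the pairs $P_x$, $x\in\widehat{I}_3$, are independent and each uniformly distributed over $\binom{V(G_r)}{2}$; a Chernoff bound and a union bound over the at most $\binom{\ell}{2}$ possible pairs then give, with high probability, $|\{x\in\widehat{I}_3:P_x=S\}|=(1+o(1))|\widehat{I}_3|/\binom{\ell}{2}$ for all $S$ simultaneously. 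Hence every $v\in E_i$ has $\deg_{B_i}(v)\ge(1-o(1))|\widehat{I}_3|/(2\ell)$, and since $|\widehat{I}_3|\ge\nu n/150$ by Claim~\ref{fgetlen2} while $|E_i|\le\varepsilon' m$ and $\varepsilon'\ll\nu$, this degree is far larger than $|E_i|$. Therefore $|N_{B_i}(E')|\ge|E'|$ for every $E'\subseteq E_i$, so $B_i$ has a matching saturating $E_i$, with a large surplus of admissible partners for each $v$.

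The main difficulty is Condition~(2), and it is where that surplus is spent. Since no two vertices of $\widehat{I}_3$ lie within distance $4$, the neighbourhoods $N_H(x)$ for $x\in F$ are pairwise disjoint, so $|N_H(F)\cap L_j|=|\{x\in F:V_j\in P_x\}|$ and $\sum_j|N_H(F)\cap L_j|=2|F|\le 2\varepsilon' m\ell$; thus the average of $|N_H(F)\cap L_j|$ over $j$ is at most $2\varepsilon' m$ and the task is to keep the maximum below $6\varepsilon' m$. I would not choose the matchings of the $B_i$ arbitrarily, but build all the $F_i$ simultaneously, processing the pairs $(i,v)$ with $v\in E_i$ one at a time and, at each step, selecting the partner $x\in\widehat{I}_3\cap L_i$ of $v$ among the admissible vertices of $B_i$ so as to avoid the clusters that are currently ``overloaded'', the large degree surplus in $B_i$ ensuring that an admissible, non‑overloading choice always remains. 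Turning this into a proof — quantifying ``overloaded'' correctly and verifying that the balancing never clashes with the admissibility constraints, which rests on the spread of the pairs $P_x$ and on the balance properties of the proportional matchings — is the bulk of the argument and proceeds exactly as in the proof of Lemma~4.11 of~\cite{Cs2}; this is the step I expect to be the main obstacle. The outcome is a choice of $F$ with $|N_H(F)\cap L_j|\le 6\varepsilon' m$ for all $j$, while Condition~(1) holds automatically because each partner was drawn from the admissible set of $B_i$; the required maps $\psi_i$ are these matchings.
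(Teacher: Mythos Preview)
The paper does not prove this lemma at all: it simply states that conditions C8 and C9 are satisfied by quoting Lemma~4.11 of~\cite{Cs2} (after the notational substitution in the footnote). Your proposal invokes exactly the same lemma and then goes further, sketching how its proof runs in the present setting; so your approach coincides with the paper's, only with more detail supplied.
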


\medskip

With this we have finished proving an important special case of Theorem~\ref{main} and Theorem~\ref{stab}:

\begin{proposition}\label{harmadik}
There exist positive numbers  $\gamma, \nu, n_0$ such that if $n > n_0$ and $\theta(H)\le 5$ for a 
graph $H$ of order $n$ that is not $\nu$-triangular extreme, and $\delta(G)\ge (2/3-\gamma)n$ for any   
graph $G$ of order $n$ then $H\subset G.$ 
\end{proposition}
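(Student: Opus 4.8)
The plan is to assemble Proposition~\ref{harmadik} out of the ingredients developed in Subsections~\ref{5.1}--\ref{5.4}, using the Modified Blow-up Lemma (Theorem~\ref{modblow-up}) as the final step. First I would fix the constants along the chain $0<\varepsilon\ll\varepsilon'\ll\delta\ll d\ll 1$, then set $\gamma=d-\varepsilon$, $\nu=\mu=\sqrt[4]{d-\varepsilon}$, $c=100\nu$, and choose $n_0$ large enough for the Regularity Lemma, for Theorem~\ref{modblow-up}, and for every large-deviation estimate that follows. Since $\delta(G)\ge(2/3-\gamma)n$, applying Lemma~\ref{hszogfaktor} and then Lemma~\ref{BadVertices} together with Corollary~\ref{bad} produces a reduced graph $G_r$ on clusters $V_1,\dots,V_\ell$ of common size $m$ with $\delta(G_r)\ge(2/3-14d)\ell$, a triangle factor $\mathcal T$ whose cluster-edges are $(2\varepsilon,d-4\varepsilon)$-super-regular, no $6\varepsilon$-bad vertex inside any non-exceptional cluster, and an exceptional set with $3\varepsilon n<|V_0|<15dn$. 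This $G_r$, together with $\mathcal T$ and the sizes $|V_i|$, is the ``almost final'' host partition required by the Modified Blow-up Lemma; note that no extremality assumption on $G$ enters.

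On the $H$-side I would first assume, as in Subsection~\ref{5.2}, that the non-$K_{1,4}$ part $H_1$ of $H$ is saturated, and then invoke Lemma~\ref{strH} to obtain an independent dominating set $I$ with $|I|\ge n/3$, with $\deg_H(x)\le 2$ on $I$ and with $H-I$ a disjoint union of paths of length at most $2$; from this I extract $I_1,I_2,\widehat I,\widehat I_1,\widehat I_2,\widehat I_3$ as there. The hypothesis that $H$ is not $\nu$-triangular extreme enters exactly here: by Claim~\ref{fgetlen2} the reservoir $\widehat I$ of vertices pairwise at distance at least $5$ satisfies $|\widehat I|\ge\nu n/40$, which is what later makes the switching and buffering feasible. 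Next I would construct the balanced homomorphism $f: V(H)\to V(G_r)$ of Subsection~\ref{homomorfizmus} in two stages: assign each component of $H-I$ (and each vertex of $I-I_2-\widehat I$) to a uniformly random triangle of $\mathcal T$ with a random permutation of its clusters; then, after attaching fictive neighbours to $I_2'\cup\widehat I^{\,s}$, route $I_2\cup I_F$ through the proportional matching $\mathcal M_1$ of $\Lambda_1$ (Lemma~\ref{bipart}) and route $\widehat I_1$ through the strong proportional matching $\mathcal M_2$ of $\Lambda_2$ (Lemma~\ref{strong-bipart}), applied to random equipartitions of the sets $\widehat I_1(i,j)$. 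Lemma~\ref{varhato} gives $\Pr(h(x)=i)=1/\ell$ pointwise, and Lemma~\ref{balansz} (Chebyshev for the bounded-dependency vertices, Chernoff for $\widehat I_1$) upgrades this, with high probability, to $f$ being balanced, i.e.\ $\big|\,|f^{-1}(V_i)|-|f^{-1}(V_j)|\,\big|\le\varepsilon^2 m$ for all $i,j$.

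Finally I would turn $f$ into the partition $V(H)=L_0\cup L_1\cup\dots\cup L_\ell$ demanded by Theorem~\ref{modblow-up}: after the bad vertices of $G$ have been moved into $V_0$, take random subsets $R_i\subset\widehat I_1\cap f^{-1}(V_i)$ of size $|f^{-1}(V_i)|-|V_i|$, set $L_0=\bigcup_i R_i$ and $L_i=f^{-1}(V_i)-L_0$, suitably adjusted, so that $|L_i|=|V_i|$ for every $i$ since $v(H)=v(G)$. Then I would check the nine conditions: C1 from $|V_0|<15dn$; C2, C3 and C6 by construction; C4 from Lemma~\ref{L_0szomszed1}; C5 by extracting $\delta m\ell/\binom{\ell}{2}$ vertices of $\widehat I_2$ from each $S_{i,j}$ into the buffer set $B$; and C8, C9 from Lemma~4.11 of~\cite{Cs2} applied with $\widehat I_3$. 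Condition C7 needs genuine work and is, together with keeping C4 afterwards, the step I expect to be the main obstacle: for each $x\in L_0$ whose image $\psi_0(x)$ has fewer than $cm$ neighbours in a cluster hosting a neighbour of $x$, I would run the switching of Subsection~\ref{5.4} --- choose a random common-neighbour cluster $V_i$ of $V_{h(y_1)},V_{h(y_2)}$ and a vertex $x'\in\widehat I_1\cap L_i$ whose neighbours lie in clusters where $\psi_0(x)$ has at least $cm$ neighbours, then swap $x$ with $x'$ --- relying on the counting of Subsection~\ref{5.4} (roughly $|\widehat I_1(j,k)|\ge\nu n/(75\ell^2)$ times the $\ell/\mu$ copies in $\Lambda_2$, which leaves $\gg dn$ candidates in total since $d\ll\nu\mu$, so the process never gets stuck), while the randomness of the choices preserves C4. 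Once C1--C9 hold, the Modified Blow-up Lemma embeds $H$ into $G'\subset G$, which is exactly the assertion of Proposition~\ref{harmadik}.
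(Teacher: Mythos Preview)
Your proposal is correct and follows essentially the same route as the paper: Proposition~\ref{harmadik} is exactly the statement that the work of Subsections~\ref{5.1}--\ref{5.4} culminates in, and you have assembled the ingredients (Lemma~\ref{hszogfaktor} and Corollary~\ref{bad} for the host side, Lemma~\ref{strH} and Claim~\ref{fgetlen2} for the $H$ side, Lemmas~\ref{bipart}--\ref{balansz} for the balanced homomorphism, and the verification of C1--C9 via Lemma~\ref{L_0szomszed1}, the switching procedure, and the quoted lemma from~\cite{Cs2}) in the same order and with the same parameter choices as the paper. In particular you correctly isolate that the non-$\nu$-triangular-extreme hypothesis is used solely through Claim~\ref{fgetlen2} to guarantee $|\widehat I|\ge\nu n/40$, and that no extremality assumption on $G$ is needed.
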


 Let us emphasize that according to the above result, if $H$ has relatively many vertices that do not belong to triangles, then
$H\subset G$ even if $\delta(G)$ is somewhat smaller than $2n/3.$ It does not even matter whether $G$ is extremal or not.
Basically, the reason of this is that if $H$ is not extremal then we can distribute a non-negligible number of vertices by the help of
the strong proportional matching and can use switching when necessary, so we can fill up $L_0.$ If $H$ were extremal, we could still embed the vast majority of it into
$G,$ but perhaps some vertices would be left uncovered in $V_0.$ 


\section{Embedding in the triangular extreme case}\label{extrem}

In order to embed a triangular extreme $H$ into $G$ we will consider two cases. In the first one we assume that $G$ is not $\eta$-extremal, while in the second case $G$ is an $\eta$-extremal graph. The second case is considerably more complicated than the first one. In particular, it has three more subcases depending on the structure of $G.$ Observe that with finishing the proof of the first case we will in fact finish proving Theorem~\ref{stab} as well, and in the second case we will have Theorem~\ref{main}.

Denote the set of vertices of $H$ belonging to a triangle by $V_{\Delta'}$, and the set of vertices belonging to a triangle containing only vertices having exactly 2 neighbors by 
$V_\Delta.$ Clearly, $V_\Delta\subset V_\Delta',$ the subgraph $H[V_\Delta]$ consists of a triangle factor, moreover, if $x \in V_\Delta,$  $y\in V(H),$ and $y$ do not belong to the triangle of $x$ then $xy\not\in E(H).$  

\begin{lemma}\label{vdelta}
If $H$ is $\nu$-triangular extreme, then $|V_\Delta|\ge n(1-7\nu).$
\end{lemma}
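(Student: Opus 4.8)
The plan is to exploit the Ore-degree bound triangle by triangle. Let $\mathcal{T}$ be a family of $t \ge (1-\nu)n/3$ vertex-disjoint triangles of $H$, which exists because $H$ is $\nu$-triangular extreme; these cover $3t$ vertices, so the set $U$ of vertices of $H$ missed by $\mathcal{T}$ satisfies $|U| = n - 3t \le \nu n$. First I would record two structural facts forced by $\theta(H)\le 5$: a triangle of $H$ cannot contain a vertex of degree $\ge 4$ (such a vertex would form, with a triangle-neighbour of degree $\ge 2$, an edge of Ore-degree $\ge 6$), and it cannot contain two vertices of degree $3$. Hence each $T\in\mathcal{T}$ is either \emph{pure} (all three vertices of degree exactly $2$) or \emph{impure}, in which case it has exactly one vertex of degree $3$ and two of degree $2$. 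Write $s$ for the number of impure triangles in $\mathcal{T}$.

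Next I would check that $|V_\Delta| \ge 3(t-s)$. The three vertices of a pure $T\in\mathcal{T}$ lie in $V_\Delta$ by definition, and distinct pure triangles are vertex-disjoint, so this already gives $3(t-s)$ vertices of $V_\Delta$. An impure $T\in\mathcal{T}$ contributes nothing: its degree-$3$ vertex lies in no all-degree-$2$ triangle, while each of its degree-$2$ vertices $b$ has both its neighbours inside $T$, so $T$ is the \emph{only} triangle of $H$ through $b$, and $T$ is not pure. Thus the lemma reduces to showing $s \le 2\nu n$.

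This last bound is the heart of the argument. For an impure $T\in\mathcal{T}$ with degree-$3$ vertex $a$, let $d$ be the unique neighbour of $a$ lying outside $T$; from $\theta(a,d)\le 5$ and $\deg_H(a)=3$ we get $\deg_H(d)\le 2$. I claim $d\in U$: otherwise $d$ lies in some $T'\in\mathcal{T}$, necessarily $T'\ne T$ since $d\notin T$, and then $d$ is adjacent to its two partners in $T'$ and also to $a\notin T'$, forcing $\deg_H(d)\ge 3$, a contradiction. Hence the assignment $T\mapsto d$ maps the impure triangles of $\mathcal{T}$ into $U$; moreover any vertex $d$ occurring in the image has $\deg_H(d)\le 2$ and is the image of at most $\deg_H(d)\le 2$ impure triangles, because the associated degree-$3$ vertices are distinct neighbours of $d$. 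Therefore $s \le 2|U| \le 2\nu n$, and
\[
|V_\Delta| \ge 3(t-s) \ge (1-\nu)n - 6\nu n = (1-7\nu)n.
\]

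The only delicate points are the disjointness bookkeeping showing that each external neighbour $d$ must be uncovered and can absorb at most two impure triangles; everything else is an immediate consequence of $\theta(H)\le 5$. Note that the statement is purely structural in $H$ and uses nothing about the host graph $G$.
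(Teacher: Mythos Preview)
Your proof is correct and follows essentially the same route as the paper: show that the external neighbour of the unique degree-$3$ vertex in an impure triangle must lie outside the triangle family and has degree at most $2$, so there are at most $2\nu n$ impure triangles, costing at most $6\nu n$ vertices beyond the $\nu n$ uncovered ones. The paper phrases this via the set $V_{\Delta'}$ of all triangle-vertices rather than a fixed disjoint family $\mathcal{T}$, but the counting is identical.
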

\begin{proof}
By the definition of triangular extremality we have $$|V_{\Delta'}|\ge n(1-\nu).$$ If $v\in V_{\Delta'}$ and $\deg_H(v)=3$ then it must be adjacent to a vertex in $V(H)-V_{\Delta'}$, moreover, a vertex in $V(H)-V_{\Delta'}$ is adjacent to at most 2 vertices in $V_{\Delta'}$, so there are at most $2\nu n$ vertices in  $V_{\Delta'}$ with degree 3, therefore 
\begin{eqnarray*}
|V_\Delta|&\ge& n(1-\nu-3\cdot 2\nu)\\
&=&n(1-7\nu).
\end{eqnarray*}
\end{proof}

\subsection{Non-extremal host graphs}\label{6.1}

In this section we assume that $G$ is not $\eta$-extremal with $\eta=(8\nu)^{1/1000}+126\nu,$ $\delta(G)\ge (2/3-\gamma)n,$ 
and $H$ is $\nu$-triangular extreme (recall, that $\nu=\sqrt[4]{\gamma}$).
Let $H'=H-V_\Delta.$ By Lemma~\ref{vdelta} we have that $v(H')\le 7\nu n.$

We give an itemized list of the embedding algorithm in this case.

\begin{itemize}

\item[] {\bf Step 1.}  Add $n-v(H')$ {\it fictive} isolated vertices to $H',$ and apply Proposition~\ref{harmadik} in order to embed this new graph into $G.$ 
Denote $\widehat{G}$ the subgraph of $G$ which is spanned by those vertices that are covered by the fictive vertices. Clearly, if $\widehat{G}$ contains 
$v(\widehat{G})/3$ vertex disjoint triangles, then we are done, so we will focus on finding these triangles in the sequel.\\

\item[]{\bf Step 2.} Let us first estimate the minimum degree of $\widehat{G}$ from below:
$$\delta(\widehat{G})\ge \delta(G)-v(H')\ge (2/3-\gamma-7\nu)n\ge (2/3-8\nu)v(\widehat{G}).$$ 
Next we show that $\widehat{G}$ is not $\eta'$-extremal for $\eta'= \eta-7\nu.$ Let $A \subset V(\widehat{G})$ with $|A|=v(\widehat{G})/3.$ Then $$e(\widehat{G}[A])\ge e(G[A])-7\nu n^2\ge (\eta/18-7\nu)n^2\ge  \eta' v(\widehat{G})^2/18.$$

\item[]{\bf Step 3.} By the above we obtained that $\delta(\widehat{G})\ge (2/3-8\nu)v(\widehat{G})$ and 
$\widehat{G}$ is $\eta'$-non-extremal, where $\eta'= (8\nu)^{1/1000}.$ Hence we may apply Theorem~\ref{Endre} in order to find the desired triangle factor in $\widehat{G}.$   

\end{itemize}

With this we finished proving the following.

\begin{lemma}\label{H-extrem}
Let $H$ be a $\nu$-triangular extreme graph on $n$ vertices with $\theta(H)\le 5,$ and let $G$ be an $\eta$-non-extremal graph on $n$ vertices with minimum degree 
$\delta(G)\ge (2/3-\gamma)n.$
If $n$ is sufficiently large then $H$ is a subgraph of $G.$
\end{lemma}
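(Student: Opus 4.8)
The plan is to split $H$ into its triangle-factor part and a tiny remainder, handle the remainder together with a bunch of placeholder vertices via the already-established non-extremal case (Proposition~\ref{harmadik}), and then fill the placeholders with a triangle factor obtained from the stability version of the P\'osa--Seymour conjecture (Theorem~\ref{Endre}).

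First I would isolate the triangle-factor part of $H$. Since $H$ is $\nu$-triangular extreme, Lemma~\ref{vdelta} produces a set $V_\Delta\subset V(H)$ with $|V_\Delta|\ge(1-7\nu)n$ such that $H[V_\Delta]$ is a triangle factor and no edge of $H$ joins $V_\Delta$ to $V(H)-V_\Delta$. Put $H'=H-V_\Delta$, so $v(H')\le 7\nu n$. Add $n-v(H')$ isolated \emph{fictive} vertices to $H'$ to obtain a graph $H''$ of order $n$ with $\theta(H'')\le 5$. The largest collection of vertex-disjoint triangles in $H''$ has size at most $v(H')/3\le 7\nu n/3$, far below $(1-\nu)n/3$, so $H''$ is \emph{not} $\nu$-triangular extreme. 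As $\delta(G)\ge(2/3-\gamma)n$, Proposition~\ref{harmadik} applies and embeds $H''$ into $G$; let $\widehat G$ be the subgraph of $G$ induced by the images of the fictive vertices, so $v(\widehat G)=|V_\Delta|\equiv 0\pmod 3$ and $v(\widehat G)\ge(1-7\nu)n$.

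Next I would check that $\widehat G$ satisfies the hypotheses of Theorem~\ref{Endre} with $k=3$. Every $v\in V(\widehat G)$ loses at most $v(H')\le 7\nu n$ neighbours, so $\delta(\widehat G)\ge(2/3-\gamma-7\nu)n\ge(2/3-8\nu)v(\widehat G)$, using $\gamma=\nu^4\ll\nu$. For non-extremality, fix any $A\subset V(\widehat G)$ with $|A|=v(\widehat G)/3$ and extend it to a set $A'\subset V(G)$ with $|A'|=\lfloor n/3\rfloor$ by adjoining at most $7\nu n$ vertices; since $G$ is $\eta$-non-extremal, $e(G[A'])>\eta\binom{\lfloor n/3\rfloor}{2}$, and because $\widehat G$ is induced, $e(\widehat G[A])=e(G[A])\ge e(G[A'])-7\nu n^2>\eta n^2/18-7\nu n^2\ge\eta' v(\widehat G)^2/18$ for $\eta'=\eta-126\nu=(8\nu)^{1/1000}$. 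Thus $\widehat G$ is $(\eta',3)$-non-extremal, and since the exponent relation of~\cite{HHJSz} gives $\gamma(\eta',3)=\eta'^{1000}=8\nu\le\gamma(\eta',3)$ with $\delta(\widehat G)\ge(2/3-8\nu)v(\widehat G)$, Theorem~\ref{Endre} yields the square of a Hamilton cycle in $\widehat G$. Its consecutive triples of vertices $\{v_1,v_2,v_3\},\{v_4,v_5,v_6\},\ldots$ form a triangle factor of $\widehat G$ (here $3\mid v(\widehat G)$). Mapping the triangle factor $H[V_\Delta]$ onto this triangle factor, and using that there are no $H$-edges between $V_\Delta$ and $V(H')$, extends the embedding of $H'$ to an embedding of all of $H$ into $G$.

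The degree count and the edge-count estimate for $\widehat G$ are routine; the one point requiring care --- and the only place where the argument could break --- is the bookkeeping of the constant chain $\gamma\ll\nu\ll\eta'\ll\eta$. After the degradation incurred in passing to $\widehat G$ (and the earlier degradation inside Proposition~\ref{harmadik}), $\widehat G$ must still clear the threshold $(2/3-\gamma(\eta',3))v(\widehat G)$ demanded by Theorem~\ref{Endre}, and this is exactly what forces the choices $\nu=\sqrt[4]{\gamma}$ and $\eta=(8\nu)^{1/1000}+126\nu$.
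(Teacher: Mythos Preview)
Your proof is correct and follows essentially the same approach as the paper: split off $V_\Delta$, pad $H'$ with isolated vertices, embed via Proposition~\ref{harmadik}, then find a triangle factor in $\widehat G$ using Theorem~\ref{Endre}. Your version is in fact slightly more careful in two places---you explicitly verify that the padded graph $H''$ is not $\nu$-triangular extreme, and you extend $A$ to a set of size $\lfloor n/3\rfloor$ before invoking the $\eta$-non-extremality of $G$ (the paper glosses over this)---but the structure and all the key ingredients are identical. The one cosmetic slip is the redundant inequality ``$\gamma(\eta',3)=\eta'^{1000}=8\nu\le\gamma(\eta',3)$''; you mean simply that $8\nu=\eta'^{1000}$ matches the threshold $\gamma(\eta',3)$ demanded by Theorem~\ref{Endre}.
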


\subsection{Extremal host graphs}\label{6.2}

Our goal in this section is to prove the following.

\begin{lemma}\label{ExtremLemma}
There exists an $n_0$ such that if $n > n_0$, $\theta(H)\le 5$ for a $\nu$-triangular extreme graph $H$ of order $n$ and $\delta(G)\ge 2n/3$ for an $\eta$-extremal graph $G$ of order $n$ 
with $\eta\le (8\nu)^{1/1000}+126\nu$ then $H$ is a subgraph of $G.$
\end{lemma}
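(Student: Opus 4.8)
The plan follows the strategy already visible in the proof of Lemma~\ref{H-extrem}, except that now, $G$ being extremal, the triangle factor must be produced by hand. The first observation is that a $\nu$-triangular extreme $H$ decomposes rigidly: since every vertex of $V_\Delta$ is adjacent only to the other two vertices of its own triangle, $H$ is the vertex-disjoint union
$$H \;=\; t\cdot K_3\ \sqcup\ H',$$
where $t=|V_\Delta|/3\ge(1-7\nu)n/3$ by Lemma~\ref{vdelta}, $v(H')=n-3t\le 7\nu n$, $\Delta(H')\le 4$, and $H'$ admits an equitable $3$-colouring by Theorem~\ref{equicol}. Hence it suffices to pick $W\subseteq V(G)$ with $|W|=v(H')$ so that $G[W]$ contains $H'$ and $G-W$, a graph on $3t$ vertices, has a triangle factor. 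Embedding $H'$ is routine --- it is tiny, of bounded degree and $3$-chromatic, hence embeds greedily colour class by colour class into any three pairwise dense sets of size a little above $v(H')/3$ that we can exhibit inside $G$. The crux is the triangle factor of $G-W$: Theorem~\ref{CH} does not apply directly, as deleting $W$ may drop the minimum degree a bit below $\tfrac{2}{3}\cdot 3t=2t$, and Theorem~\ref{Endre} is of no help since $G$, and in general also $G-W$, is extremal.

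Next I would read off the structure forced by $\eta$-extremality. Let $A$ witness it, so $|A|=\lfloor n/3\rfloor$, $e(G[A])\le\eta n^2/18$, and put $B=V(G)-A$. Counting the edges meeting $A$ against $\delta(G)\ge 2n/3$ gives $e(A,B)\ge(1-\eta/2)|A||B|$, so $G[A,B]$ is almost complete. Consequently the bad sets
$$A_0=\{a\in A:\deg(a,B)<(1-\sqrt\eta)|B|\},\qquad B_0=\{b\in B:\deg(b,A)<(1-\sqrt\eta)|A|\}$$
have size $O(\sqrt\eta\,n)$, and every $a\in A_0$ has $\Omega(\sqrt\eta\,n)$ neighbours inside $A$, as otherwise $\deg(a)<2n/3$. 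Finally, for every $v\in B$ we have $\deg_{G[B]}(v)\ge 2n/3-|A|\ge|B|/2-1$, so $G[B]$ is a graph on $\approx 2n/3$ vertices whose minimum degree is essentially half its order.

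The proof then splits according to the structure of $G[B]$. In the main case $G[B]$ is robustly dense --- say its minimum degree exceeds $|B|/2$ by $\Omega(n)$, or it has no almost-balanced sparse cut --- so it has near-perfect matchings with room to spare; I would fix a near-perfect matching $M$ of $G[B\setminus B_0]$ of size $\approx|A|$ (first reserving a set $W\subseteq B$ for $H'$ and adjusting by a bounded number of vertices for parity), then form the triangles by finding a perfect matching in the auxiliary bipartite graph with parts $M$ and $A$, where $a$ is joined to $\{b,b'\}\in M$ iff $a\in N(b)\cap N(b')$. Hall's condition is immediate for $A\setminus A_0$ by the near-completeness of $G[A,B]$, while the few vertices of $A_0$ are matched first, each using one of the $\Omega(\sqrt\eta\,n)$ matching edges it dominates. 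In the second case $G[B]$ is itself extremal, so $B$ splits as $B_1\cup B_2$ with $|B_i|\approx n/3$ and both $e(G[B_i])$ small; then $V(G)=A\cup B_1\cup B_2$ is an approximate complete tripartite graph, the extremal configuration for Theorem~\ref{CH}, and every triangle of the factor must take one vertex from each part --- realised by two successive Hall-type matchings, $A$ to $B_1$ and then the resulting pairs to $B_2$, again disposing of the bad vertices first. A third, degenerate case gathers the remaining configurations, for instance $G[B]$ close to the unbalanced complete bipartite graph $K_{|B|/2+1,\,|B|/2-1}$ (minimum degree $\tfrac{|B|}{2}-1$, but no perfect matching), or $|A|$ slightly mis-sized so that $G$ is nearer to $K_{n/3-1,\,n/3+1,\,n/3}$; these I would reduce to one of the first two cases by shifting $O(1)$ vertices between $A$ and $B$.

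The main obstacle is the second case coupled with the placement of $H'$: when $G$ is essentially complete tripartite there is no spare dense set inside a single part, so the edges of $H'$ --- and in particular any triangles or $K_{1,4}$'s of $H'$ --- must be routed across the three parts, while the three parts must at the same time be trimmed, via the choice of $W$, to sizes for which the closing tripartite matching exists. Coordinating this divisibility and parity bookkeeping with the absorption of the $O(\sqrt\eta\,n)$ bad vertices of $A_0\cup B_0$ and with the small exceptional remainders is where essentially all the work of Lemma~\ref{ExtremLemma} lies; once the sizes are arranged, closing up the triangle factor is a routine appeal to Hall's theorem together with the near-completeness estimates above.
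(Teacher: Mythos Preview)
Your high-level plan coincides with the paper's: decompose $H$ as $t\cdot K_3\sqcup H'$, exploit the extremal structure $A\cup B$ with $G[A,B]$ nearly complete, isolate a small set of exceptional vertices, and close up with Hall-type matchings. The paper also splits on the structure of $G[B]$, and your ``main case'' and ``second case'' correspond to the paper's Case~1 and Case~3 respectively.

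The substantive gap is your third, ``degenerate'' case. You describe the leftover configuration as $G[B]$ being close to an unbalanced complete bipartite graph $K_{|B|/2+1,|B|/2-1}$ and propose to fix it by shifting $O(1)$ vertices between $A$ and $B$. This mischaracterises what actually remains once your two cases are excluded. If $G[B]$ has an almost-balanced sparse cut but is \emph{not} $(\mu,2)$-extremal, then each side of the cut must span many edges, so $G[B]$ is close to the \emph{disjoint union of two cliques} $B_1,B_2$ of size $\approx n/3$ --- not to a complete bipartite graph. This is the paper's Case~2, and it is a full structural case, not a boundary perturbation: it cannot be reduced to your Case~1 or Case~3 by moving $O(1)$ vertices. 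The difficulty is precisely the one you flag at the end --- parity. One must arrange that each $|B_i|$ becomes even after absorbing the exceptional vertices, and the mechanism for this is to cover certain exceptional vertices by \emph{crossing} triangles (one vertex in each of $A,B_1,B_2$) versus \emph{non-crossing} ones, with the choice controlling the parity of $|B_2|$. The paper devotes a separate preprocessing step and a dedicated lemma (its Lemma~\ref{specialis}) to this bookkeeping; your sketch does not yet contain the idea.

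A smaller point: even in your main case, handling $B_0$ (vertices of $B$ with few neighbours in $A$) is not automatic. You discuss $A_0$ but not $B_0$; in the paper this is enabled by a preprocessing swap between $A$ and $B$ that maximises $e(A,B)$, after which one can prove (their Claim~\ref{B'parositas}) that $G[B',A]$ contains a matching saturating $B'$. Without that swap the Hall condition for the auxiliary bipartite graph can fail on $B_0$.
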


Note that with the above lemma we will finish the embedding of $H$ into an extremal $G.$ Before starting to prove it, we need to know more about the structure of $G.$ 
For that let us consider a $(\mu, 2)$-non-extremal 
graph $G_1$ on $N$ vertices ($0<\mu<1$ is a constant) such that $N$ is even and 
$$\delta(G_1)\ge N/2-\alpha N$$ where $\alpha\le\mu/10.$ 
We are going to prove the following. 

\begin{lemma}\label{Bparositas} Let $G_1$ be as above. Then either $G_1$ has a perfect matching, or it has the following structure. Its vertex set can be divided 
into the disjoint subsets $V_1$ and $V_2$ such that $V(G_1)=V_1\cup V_2,$ $|V_1|=|V_2|=N/2,$ and $e(G_1[V_1, V_2])\le 3\alpha N^2.$ 
\end{lemma}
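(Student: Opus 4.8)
The plan is to suppose that $G_1$ has no perfect matching and to extract the stated structure using Tutte's theorem. Since $N$ is even, Tutte's theorem (together with a parity argument) produces a set $S\subseteq V(G_1)$ with $o(G_1-S)\ge |S|+2$, where $o$ counts odd components. Write $s=|S|$. The first observation is that \emph{every} component $C$ of $G_1-S$ is large: a vertex $v\in C$ has all of its at least $\delta(G_1)$ neighbours inside $C\cup S$, so $|C|\ge \delta(G_1)+1-s\ge (1/2-\alpha)N+1-s$. Counting the $N$ vertices of $G_1$ then yields $N\ge s+(s+2)\max\{1,\,(1/2-\alpha)N+1-s\}$.

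The crux is to read off a dichotomy from this inequality: \emph{either} $s=0$, \emph{or} $s$ lies within an absolute additive constant of $N/2$. Indeed, if $1\le s<(1/2-\alpha)N$ then there are at least three components, each with at least $(1/2-\alpha)N+1-s\ge 2$ vertices, so $(s+2)\bigl((1/2-\alpha)N+1-s\bigr)<N$; substituting twice (first deduce $s>N/6-\alpha N$, then feed this back, using $\alpha\le\mu/10<1/6$) forces $s>(1/2-\alpha)N-C_0$ for an absolute constant $C_0$. On the other hand $o(G_1-S)\le |V(G_1-S)|=N-s$ forces $s\le N/2-1$. So either $s=0$, or $(1/2-\alpha)N-C_0<s\le N/2-1$.

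In the first case, $G_1$ itself has at least two components, each of size at least $(1/2-\alpha)N+1$; since their sizes add up to $N$, there are exactly two, say $C_1$ and $C_2$, with $|C_1|,|C_2|\in[(1/2-\alpha)N,(1/2+\alpha)N]$. Assuming without loss of generality $|C_1|\le N/2$, I would move a set $M\subseteq C_2$ with $|M|=N/2-|C_1|\le\alpha N$ across, setting $V_1=C_1\cup M$ and $V_2=C_2\setminus M$. Then $|V_1|=|V_2|=N/2$, and since $C_1$ sends no edge to $C_2$, every $V_1$--$V_2$ edge meets $M$, whence $e(G_1[V_1,V_2])\le |M|\,|V_2|\le \alpha N\cdot N/2<3\alpha N^2$ --- exactly the claimed structure.

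It remains to rule out the second case using $(\mu,2)$-non-extremality. Put $T=V(G_1)-S$, so $|T|=N-s\le (1/2+\alpha)N+C_0$, while $G_1[T]=G_1-S$ has at least $s+2\ge (1/2-\alpha)N-C_0$ components; hence the number of vertices of $T$ lying outside singleton components is at most twice the excess $|T|-(\text{number of components})$, which is $O(\alpha N)$. Therefore $e(G_1[T])=O(\alpha^2N^2)$, and choosing any $A\subseteq T$ with $|A|=\lfloor N/2\rfloor$ (possible since $|T|>\lfloor N/2\rfloor$) gives $e(G_1[A])=O(\alpha^2N^2)<\mu\binom{\lfloor N/2\rfloor}{2}$ for $N$ large, using $\alpha\le\mu/10$. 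This exhibits $G_1$ as $(\mu,2)$-extremal, contradicting the hypothesis, so $s=0$ and the previous paragraph finishes the proof. The main obstacle is the dichotomy of the second paragraph --- making the vertex count sharp enough that the intermediate range of $s$ is genuinely empty --- followed by the quantitative check in the last step that the $O(\alpha^2N^2)$ bound really falls below $\mu\binom{\lfloor N/2\rfloor}{2}$ under the relation $\alpha\le\mu/10$.
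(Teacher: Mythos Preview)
Your proof is correct and takes a genuinely different route from the paper's. The paper fixes a maximum matching $M$, picks two uncovered vertices $u,v$, and partitions the edges of $M$ into six classes $M_1,\ldots,M_6$ according to their adjacencies to $u$ and $v$; it then uses augmenting-path arguments and the $(\mu,2)$-non-extremality hypothesis to show $M_3=\emptyset$ and $m_2+m_4+m_6=O(\alpha N)$, after which $V_1\approx V(M_1\cup M_2)\cup\{u\}$ and $V_2\approx V(M_4\cup M_5)\cup\{v\}$ give the partition directly. Your approach via Tutte's theorem is more structural: the dichotomy on $|S|$ (which indeed reduces cleanly to the quadratic $s\bigl((1/2-\alpha)N-s\bigr)\le 2\alpha N$, forcing $s=0$ or $s\ge(1/2-\alpha)N-O(1)$) replaces the detailed edge-classification, and non-extremality is invoked once at the end to kill the large-$s$ branch rather than embedded in the middle of a case analysis. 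Your argument is arguably cleaner and more conceptual; the paper's is more hands-on but avoids quoting Tutte. Two minor remarks: your intermediate claim ``$s>N/6-\alpha N$'' is not what the first substitution actually yields (you get roughly $s>N/4$ from the three-component bound), though the final dichotomy is unaffected; and your final inequality $O(\alpha^2N^2)<\mu\binom{\lfloor N/2\rfloor}{2}$ implicitly uses $N$ large, which the paper's proof does not, but this is harmless in context.
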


\begin{proof}
Assume that the largest matching $M$ in $G_1$ has less than $N/2$ edges, we will heavily use this assumption on the 
maximality of $M$ throughout the proof. We are going to prove that the required partition of $V(G_1)$ exists.

Given any vertex $a\in V(M),$ we let $M(a)$ denote its neighbor in $M,$ that is, $aM(a)\in M.$
Denote $u$ and $v$ any two vertices that are not covered by $M.$
We divide the set of edges of $M$ into six disjoint, not necessarily non-empty sets, based on how they are connected to $u$ and $v.$ 

\begin{itemize}

\item Let $M_1$ denote those $w_1w_2$ edges for
which $uw_1, uw_2\in E(G_1),$ that is, $V(M_1)\subset N_{G_1}(u).$ Since $M$ is maximal, $v$ has no neighbor in $V(M_1).$ 

\item Let $M_2$ denote the set of those $w_1w_2$ edges in which $u$ has exactly one neighbor, moreover, $v$ has no neighbor in $V(M_2).$

\item Next let $M_3$ denote the set of those edges in which both $u$ and $v$ has a neighbor. By the maximality of $M$ they can have exactly one neighbor in all the edges of $M_3.$ 
In fact, the maximality of $M$ implies that 
$N_{G_1}(u)\cap V(M_3)=N_{G_1}(v)\cap V(M_3).$ 

\item Let $M_4$ denote
the set of those edges of $M$ that has exactly one vertex that is adjacent to $v$ (and has no neighbor of $u$). 

\item Let $M_5$ denote the set 
of those edges of $M$ for which $V(M_5)\subset N_{G_1}(v)$ (hence no vertex of $V(M_5)$ is adjacent to $u$ by maximality). 

\item Finally, let $M_6$ denote the set of those edges for which $V(M_6)\cap (N_{G_1}(u)\cup N_{G_1}(v))=\emptyset.$

\end{itemize}

Let us set $m_i=|M_i|$ for every $1\le i \le 6,$ hence, $N\ge 2(m_1+\ldots +m_6),$ since there are unmatched points in $V(G_1).$ 
We decompose the proof of the lemma into several simple claims.

\begin{claim}\label{0-as}
Let $S_2=N_{G_1}(u)\cap V(M_2)$ and $S_4=N_{G_1}(v)\cap V(M_4).$ Then $G_1$ has no edges between $M(S_2)$ and $M(S_4).$
\end{claim}

\begin{proof}
If there were an edge between $M(S_2)$ and $M(S_4),$ we could find an augmenting path for $M,$ contradicting to the maximality of $M.$
\end{proof}

\begin{claim}\label{1-es}
We have $m_6\le \alpha N.$ 
\end{claim}
 
\begin{proof}
Using the definition of the $M_i$ sets and the fact that there are no edges between unmatched points we get that $$N/2 -\alpha N\le \deg_{G_1}(u)\le 2(m_1+m_2)+m_3$$ and  $$N/2 -\alpha N\le \deg_{G_1}(v)\le 2(m_4+m_5)+m_3.$$
Summing up the two inequalities we get that  $$N -2\alpha N\le 2(m_1+m_2+m_3+m_4+m_5)\le N-2m_6,$$ from which the claim follows.
\end{proof}



 

\begin{claim}\label{3-as}
We have $M_3=\emptyset.$
\end{claim}

\begin{proof}
Assume that $M_3\not=\emptyset.$ First we show that  $m_3\le N/2-\mu N.$ By the maximality of $M,$ the subset $S\subset V(M_3)$ that consists of the non-neighbors of $u$ and $v$
must be an independent set, otherwise we could find an augmenting path for $M.$ Since $G_1$ is $(\mu, 2)$-non-extremal, simple calculations shows that $m_3$ cannot have more than $ N/2-\mu N$ edges.

Next let $w\in V(M_3)-N_{G_1}(u).$ By the maximality of $M$ we get that $w$ has {\it no neighbor} in the following subsets: $V(M_3)-N_{G_1}(u),$ $V(M_1)\cup V(M_5),$ $V(M_2)-N_{G_1}(u)$ and $V(M_4)-N_{G_1}(v).$ Since $w$ cannot have any unmatched neighbor, we get the following
$$\frac{N}{2}-\alpha N\le \deg_{G_1}(w)\le m_2+m_3+m_4+2m_6,$$ hence, $$\frac{N}{2}-3\alpha N\le m_2+m_3+m_4.$$
We proved that $m_3\le N/2-\mu N.$ Since $\alpha \le \mu/10,$ $w$ must have neighbors in $(V(M_2)\cap N_{G_1}(u)) \cup (V(M_4)\cap N_{G_1}(v)),$
denote $S$ this neighborhood. By the maximality of $M$ the set $M(S)\cup M(N_{G_1}(u)\cap V(M_3))$ must be an independent set. However, this set has at least $N/2-3\alpha N$ vertices, as we have just seen. This contradicts with the $(\mu, 2)$-non-extremality of $G_1.$ Hence
$M_3=\emptyset.$ 
 \end{proof}

\begin{claim}\label{4-es}
$m_2+m_4\le 2\alpha N.$
\end{claim}

\begin{proof}
Since $M_3=\emptyset,$ we get that $2m_1+m_2\ge N/2-\alpha N$ and $2m_5+m_4\ge N/2-\alpha N,$ here the first inequality follows from the degree bound of $u,$ the second follows from the degree bound of $v.$ We also have that $$N\ge 2(m_1+m_2+m_4+m_5)\ge N-2\alpha N.$$ Subtracting the first two inequalities from the last one we obtain what was desired. 
\end{proof}

Finally, we find the claimed partition of $V(G_1).$ First let $V_1=V(M_1)\cup V(M_2)\cup \{ u\}$ and $V_2=V(M_4)\cup V(M_5)\cup \{v\}.$
Observe that we can only have two unmatched points, $u$ and $v,$ otherwise for $u,v\ne w\not\in V(M)$ $$\deg_{G_1}(w)\le 2|M|-(2m_1+m_2+m_4+2m_5)= m_2+m_4+2m_6\le 4\alpha N <\delta(G_1).$$

Clearly $|V_i|> N/2-\alpha N$ ($i=1,2$).
If $|V_i|>N/2$ then move $|V_i|-N/2$ points from $V_i$ to $V_{3-i}$ ($i=1,2$). Denote the set of the moved points by $W.$
Then we distribute $V(M_6)$ among $V_1$ and $V_2$ so that the resulting sets have equal size. With this we have found the partition of $V(G_1).$ 

Next observe that if $v_1v_2\in E(G_1[V_1, V_2])$ (so it is a crossing edge), then either $v_1$ or $v_2$ belongs to $V(M_2)\cup V(M_4)\cup V(M_6)\cup W$
using the maximality of $M.$ Since the total number of vertices of this union of sets is at most $5\alpha N,$ we get that $e(G_1[V_1, V_2])\le \frac{5}{2}\alpha N^2<3\alpha N^2,$ what was desired.   
\end{proof}

\begin{proof}(of Lemma~\ref{ExtremLemma})
First we write $n$ in the form $n=3k+r,$ where $r\in \{0, 1, 2\}.$ 
Since $G$ is $\eta$-extremal, there exists a set $A\subset V(G)$ such that $|A|=k$ and $e(G[A])\le \eta n^2/18.$ Set $B=V(G)-A,$ and also set $\mu=10000\sqrt{\eta}.$ 

We consider three cases in the proof of the lemma. 
\begin{itemize}

\item [] {\bf Case 1:} $G[B]$ is $(\mu, 2)$-non-extremal, and for all $B_1\subset B,$ $|B_1|=k+\lfloor r/2\rfloor$ we have that 
$$e(G[B_1, B-B_1])\ge \mu n^2.$$

\item [] {\bf Case 2:} $G[B]$ is $(\mu, 2)$-non-extremal, and there exists a set $B_1\subset B,$ $|B_1|=k+\lfloor r/2\rfloor$ such that 
$$e(G[B_1, B-B_1])< \mu n^2.$$

\item [] {\bf Case 3:} $G[B]$ is $(\mu, 2)$-extremal. 
\end{itemize}

The proof of Lemma~\ref{ExtremLemma} is relatively simple when there are no  ``exceptional" vertices and $H$ consists of $n/3$ vertex disjoint triangles. For example, let us consider
Case 1 in such an ideal setup: $A$ is an independent set of size $k,$ $B$ has exactly $2k$ vertices, $G[A, B]=K_{k, 2k},$ $\delta(G[B])\ge k$ and $H$ has $n/3$ triangles. Then we can find a perfect
matching $M$ in $G[B]$ easily, since the minimum degree is sufficiently large in it. Next construct an auxiliary bipartite graph $F$ with vertex classes $A$ and $M.$ A vertex $u\in A$ is adjacent to $vw\in M$ in $F$ if and only if $uv, uw\in E(G).$
It is clear that $F$ has a perfect matching as $G[A, B]=K_{k, 2k}.$ Since every edge of $F$ translates to a triangle of $G,$ the existence of a perfect matching in $F$ implies the existence
of a triangle factor in $G,$ and this is what we wanted to prove.

However, $H$ may contain other structures, not only triangles, even though most of $H$ consists of vertex-disjoint triangles. Furthermore, our assumptions on the structure of $G$ are approximate. For example, in Case 1 a randomly chosen vertex of $A$ is expected to have almost all of its neighbors in $B.$
However, $A$ may contain some vertices that have perhaps only a bit 
more than $k$ neighbors in $B,$ and similarly, $B$ may also contain some vertices that have only a few neighbors in $A.$ In general, we will call a vertex {\it exceptional} whenever its neighborhood differs a lot from 
what we expect in case we choose a vertex randomly, according to the structure of $G.$ For all the three cases above we give rigorous definitions for the exceptional sets. 

We try to avoid having unnecessary complications, and therefore apply preprocessing methods that may reduce the number of exceptional vertices, and more importantly after preprocessing we can have structural
assumptions on the distribution of exceptional vertices. One of the preprocessing algorithms follows below,
it is needed in all the three cases. 

\medskip

\noindent {\bf Preprocessing \#1:}

\smallskip

If $A$ contains a vertex $u$ and $B$ contains a vertex $v$ such that $$\deg(u, A)+\deg(v, B)>\deg(u, B)+\deg(v, A),$$
then we switch the two vertices, that is, we let $A=A-u+v$ and $B=B-v+u.$ We stop when such vertices cannot be found.
Since in every step the number of edges between $A$ and $B$ increases, Preprocessing \#1 stops in a finite number of steps. 

Let us define two subsets of exceptional vertices:

$$A'=\left\{v\in A : \deg_G(v,B)< \frac{4n}{9}\right\},$$

$$B'=\left\{v\in B : \deg_G(v,A)< \frac{2n}{9}\right\}.$$

\begin{claim}\label{A'B'}
Apply Preprocessing \#1. If $A'\not=\emptyset$ then $B'=\emptyset.$ 
\end{claim}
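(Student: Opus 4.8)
The plan is to read off, from the stopping condition of Preprocessing \#1, a single comparison between the two sides of the bipartition, and then to pin down the relevant quantities on $A'$ and $B'$ using the hypothesis $\delta(G)\ge 2n/3$. For a vertex $x$ set $\phi(x)=\deg_G(x,A)-\deg_G(x,B)$, where $A$ and $B$ now denote the sets produced when Preprocessing \#1 halts. A swap of $u\in A$ with $v\in B$ is performed exactly when $\deg(u,A)+\deg(v,B)>\deg(u,B)+\deg(v,A)$, that is, when $\phi(u)>\phi(v)$. Hence once the procedure has terminated,
\[
\phi(u)\le\phi(v)\qquad\text{for all }u\in A,\ v\in B,
\]
so in particular $\max_{u\in A}\phi(u)\le\min_{v\in B}\phi(v)$.

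Next I would bound $\phi$ on the two exceptional sets. If $u\in A'$ then $\deg(u,B)<4n/9$, and since $\deg_G(u)\ge 2n/3=6n/9$ we obtain $\deg(u,A)=\deg_G(u)-\deg(u,B)>2n/9$, whence $\phi(u)=\deg(u,A)-\deg(u,B)>2n/9-4n/9=-2n/9$. Symmetrically, if $v\in B'$ then $\deg(v,A)<2n/9$, so $\deg(v,B)=\deg_G(v)-\deg(v,A)>4n/9$, and therefore $\phi(v)=\deg(v,A)-\deg(v,B)<2n/9-4n/9=-2n/9$.

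Finally I combine the three inequalities. Assume $A'\neq\emptyset$ and fix some $u_0\in A'$; then every $v\in B$ satisfies $\phi(v)\ge\max_{u\in A}\phi(u)\ge\phi(u_0)>-2n/9$, so no vertex of $B$ can lie in $B'$, since membership in $B'$ would force $\phi(v)<-2n/9$. Hence $B'=\emptyset$, which is the claim. I do not expect a genuine obstacle here; the one point requiring a little care is purely bookkeeping, namely that $A'$, $B'$ and all the degrees above must be evaluated with respect to the \emph{final} partition $A\cup B$, so that the halting inequality of Preprocessing \#1 applies to precisely these sets. Note also that the argument uses only $\delta(G)\ge 2n/3$ and neither the $\eta$-extremality of $G$ nor any property of $H$.
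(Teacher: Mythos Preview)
Your proof is correct and is exactly the ``simple calculation'' the paper alludes to but does not spell out. Introducing $\phi(x)=\deg(x,A)-\deg(x,B)$ is a clean way to encode the halting condition of Preprocessing~\#1 as $\phi(u)\le\phi(v)$ for all $u\in A$, $v\in B$, and your degree bounds for $A'$ and $B'$ (using $\delta(G)\ge 2n/3$, which is indeed the standing hypothesis in this extremal section) pin down the contradiction precisely.
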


\begin{proof}
Simple calculation shows the claim using the definition of $A'$ and $B'.$  
\end{proof}

Of course this also means that if $B'\not=\emptyset$ then $A'=\emptyset.$
We also need the following definitions of other kind of exceptional vertices:
$$A''=\left\{v\in A-A' : \deg_G(v,B)\le \frac{2n}{3}-10\eta^{1/2}n\right\},$$

$$B''=\left\{v\in B-B' : \deg_G(v,A)\le \frac{n}{3}-10\eta^{1/2}n\right\}.$$

One may say that the vertices in $A''\cup B''$ are ``less exceptional" but still need some extra attention. 

\begin{claim}\label{meretek}
We have the following upper bounds: $|A'|\le \eta n,$ $|B'|\le 3\eta n,$ $|A''|\le \eta^{1/2}n/45$ and  $|B''|\le  \eta^{1/2}n/30.$
\end{claim}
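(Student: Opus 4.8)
The plan is to deduce all four estimates from two inputs only: the extremality of $G$ gives $\sum_{v\in A}\deg_G(v,A)=2e(G[A])\le\eta n^2/9$, and, since $A$ and $B$ partition $V(G)$, every vertex satisfies $\deg_G(v)=\deg_G(v,A)+\deg_G(v,B)\ge 2n/3$. Neither Preprocessing~\#1 nor Claim~\ref{A'B'} is needed for the present claim; these are purely counting arguments. Write $n=3k+r$ with $r\in\{0,1,2\}$, so $|A|=k$ and $|B|=2k+r$. The bounds for $A'$ and $A''$ follow at once from the edge count inside $A$: if $v\in A'$ then $\deg_G(v,B)<4n/9$, hence $\deg_G(v,A)>2n/9$, so $|A'|\cdot(2n/9)<\sum_{v\in A}\deg_G(v,A)\le\eta n^2/9$ and therefore $|A'|<\eta n/2\le\eta n$. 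Likewise, if $v\in A''$ then $\deg_G(v,A)\ge 10\eta^{1/2}n$, so $|A''|\cdot 10\eta^{1/2}n\le\eta n^2/9$ and $|A''|\le\eta^{1/2}n/90\le\eta^{1/2}n/45$ (and if $2n/3-10\eta^{1/2}n<0$ then $A''=\emptyset$, so there is nothing to prove).

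For $B'$ and $B''$ I would double count $e(A,B)$. From the $A$-side, $e(A,B)=\sum_{u\in A}\bigl(\deg_G(u)-\deg_G(u,A)\bigr)\ge k\cdot\frac{2n}{3}-2e(G[A])\ge\frac{2kn}{3}-\frac{\eta n^2}{9}$. From the $B$-side, each vertex of $B'$ contributes fewer than $2n/9$ edges to $A$, each vertex of $B''$ at most $\frac{n}{3}-10\eta^{1/2}n$, and each remaining vertex of $B$ at most $|A|=k$; writing $a=|B'|$ and $b=|B''|$ (disjoint subsets of $B$), this gives $e(A,B)\le a\bigl(\frac{2n}{9}-k\bigr)+b\bigl(\frac{n}{3}-10\eta^{1/2}n-k\bigr)+(2k+r)k$. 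Using $n=3k+r$ one has $\frac{2n}{9}-k=\frac{2r-3k}{9}<0$, $\frac{n}{3}-10\eta^{1/2}n-k=\frac{r}{3}-10\eta^{1/2}n<0$ for $n$ large, and $(2k+r)k-\frac{2kn}{3}=\frac{kr}{3}$. Comparing the two bounds for $e(A,B)$ and discarding the non-positive term $a\bigl(\frac{2n}{9}-k\bigr)$ yields $b\bigl(10\eta^{1/2}n-\frac{r}{3}\bigr)\le\frac{\eta n^2}{9}+\frac{kr}{3}$, so $b\le\eta^{1/2}n/30$ for $n$ large (the numerator is $\frac{\eta n^2}{9}+O(n)$, the denominator $(10-o(1))\eta^{1/2}n$). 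Discarding instead the non-positive $b$-term yields $a\cdot\frac{3k-2r}{9}\le\frac{\eta n^2}{9}+\frac{kr}{3}$, and since $3k-2r=n-3r$ this gives $a\le\eta n+O(1)\le 3\eta n$ for $n$ large.

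The whole argument is routine calculation; the two points that need attention are (i) verifying that $\frac{2n}{9}-k$ and $\frac{n}{3}-10\eta^{1/2}n-k$ are negative for $n$ large, which is what makes it legitimate to drop one of the $a,b$ terms (enlarging $B'$ or $B''$ only weakens the upper bound on $e(A,B)$), and (ii) absorbing the lower-order $O(1)$ terms coming from $r$ and from $\frac{kr}{3}$ into the slack between $\eta n/2$ and $\eta n$, between $\eta^{1/2}n/90$ and $\eta^{1/2}n/45$, and between $\eta n+O(1)$ and $3\eta n$ — all legitimate because $\eta$ is a fixed positive constant and $n$ is taken large. There is no genuine conceptual obstacle here.
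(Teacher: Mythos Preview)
Your proof is correct and follows essentially the same strategy as the paper: both arguments rest on the lower bound $e(G[A,B])\ge |A|\cdot\tfrac{2n}{3}-2e(G[A])\ge(1-\eta)\tfrac{2n^2}{9}$ coming from extremality and the minimum degree, compared against upper bounds on $e(G[A,B])$ obtained from the defining thresholds of $A',A'',B',B''$. For $B'$ and $B''$ your computation is exactly the paper's; for $A'$ and $A''$ you shortcut slightly by bounding $\sum_{v\in A}\deg_G(v,A)$ directly rather than passing through $e(G[A,B])$, which even yields the sharper constants $\eta n/2$ and $\eta^{1/2}n/90$, but this is the same counting in different clothing.
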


\begin{proof}
Using that $G$ is $\eta$-extremal, the sum of the degrees inside $A$ is at most $\eta n^2/9,$ implying that $e(G[A, B])\ge (1-\eta) \frac{2n^2}{9}.$
\begin{enumerate}
\item $e(G[A, B])\le \left(\frac{n}{3}-|A'|\right)\frac{2n}{3}+|A'|\cdot\frac{4n}{9}=\frac{2n^2}{9}-\frac{2n}{9}|A'|,$ 
implying that $|A'|\le \eta n.$

\item $e(G[A, B])\le \left(\frac{2n}{3}+2-|B'|\right)\frac{n}{3}+|B'|\cdot\frac{2n}{9}=\frac{2n^2}{9}+\frac{2n}{3}-\frac{n}{9}|B'|,$ therefore $|B'|\le3\eta n.$

\item $e(G[A, B])\le \left(\frac{n}{3}-|A''|\right)\frac{2n}{3}+|A''|\left(\frac{2n}{3}-10\eta^{1/2}n\right)=\frac{2n^2}{9}-10\eta^{1/2}n|A''|,$ therefore $\eta^{1/2}n|A''|\le\frac{2\eta n^2}{90}$ implying that $|A''|\le\eta^{1/2}n/45.$

\item $e(G[A, B])\le \left(\frac{2n}{3}+2-|B''|\right)\frac{n}{3}+|B''|\left(\frac{n}{3}-10\eta^{1/2}n\right)=\frac{2n^2}{9}+\frac{2n}{3}-10\eta^{1/2}n|B''|,$
therefore $\eta^{1/2}n|B''|\le\frac{3\eta n^2}{90}$, implying that $|B''|\le  \eta^{1/2}n/30.$
\end{enumerate}

\end{proof}

The following simple claim plays a key role in the proof of Lemma~\ref{ExtremLemma}.

\begin{claim}\label{B'parositas}
Assume that we are after applying Preprocessing \#1. If $B'\not=\emptyset$ then the bipartite subgraph $G[B', A]$ has a matching that covers every vertex of $B'.$ 
\end{claim}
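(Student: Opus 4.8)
The plan is to verify the K\"onig--Hall marriage condition for the bipartite graph $G[B',A]$, with the whole argument resting on the single inequality that characterizes a partition on which Preprocessing \#1 has terminated. So I would argue by contradiction: suppose $G[B',A]$ has no matching covering $B'$; then there is a set $S\subseteq B'$ with $|T|<|S|$, where $T:=N_G(S)\cap A$. Write $s=|S|$, so $|T|\le s-1$. Since $|B'|\le 3\eta n\ll k=|A|$ by Claim~\ref{meretek}, we get $|T|\le s-1\le |B'|-1<k$, so $T':=A\setminus T$ is nonempty. I will only use two features of $S$: every $v\in S$ has all of its $A$-neighbours inside $T$, so $\deg_G(v,A)\le s-1$; and no $u\in T'$ has a neighbour in $S$, so $\deg_G(u,B)\le|B|-s$.

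Next I would introduce $\phi(x):=\deg_G(x,A)-\deg_G(x,B)$ for $x\in V(G)$. The termination of Preprocessing \#1 says precisely that $\deg(u,A)+\deg(v,B)\le\deg(u,B)+\deg(v,A)$, i.e.\ $\phi(u)\le\phi(v)$, for every $u\in A$ and $v\in B$; hence, putting $c:=\max_{u\in A}\phi(u)$, we get $\phi(u)\le c$ for all $u\in A$ and $\phi(v)\ge c$ for all $v\in B$ simultaneously. Now I evaluate $\phi$ on a vertex of the Hall-violator and on a vertex of $A$ untouched by it. For any $v\in S$: from $\delta(G)\ge 2n/3$ and $\deg(v,A)\le s-1$ we get $\deg(v,B)\ge 2n/3-(s-1)$, while $\phi(v)\ge c$ gives $\deg(v,B)\le\deg(v,A)-c\le s-1-c$; combining, $c\le 2s-2-2n/3$. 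For any $u\in T'$: from $\delta(G)\ge 2n/3$ and $\deg(u,B)\le|B|-s$ we get $\deg(u,A)\ge 2n/3-|B|+s$, and writing $n=3k+r$ with $r\in\{0,1,2\}$ and $|B|=2k+r$ this reads $\deg(u,A)\ge s-r/3$, whence $\phi(u)\ge(s-r/3)-(|B|-s)=2s-2k-4r/3$ and therefore $c\ge 2s-2k-4r/3$. Comparing the two bounds on $c$ and using $2n/3=2k+2r/3$ leaves $2s-2k-4r/3\le 2s-2-2k-2r/3$, that is $r\ge 3$, contradicting $r\in\{0,1,2\}$; so the required matching exists.

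I expect the only genuine idea to be the observation that a terminal partition of Preprocessing \#1 produces a single separating constant $c$ with $\phi|_A\le c\le\phi|_B$, which is exactly the leverage needed: evaluating $\phi$ downstairs on a vertex of a hypothetical Hall-violator (where the $A$-degree is small) and upstairs on a vertex of $A$ left alone by that violator (where the $B$-degree is capped, forcing the $A$-degree up) squeezes $c$ into an empty interval. The few routine points to watch are: $T'\neq\emptyset$, which is exactly where $|B'|\le 3\eta n$ (Claim~\ref{meretek}) is used; the degenerate subcase $T=\emptyset$, which is covered by the same computation anyway (and directly exhibits a vertex of $B'$ of degree $\le|B|-1<2n/3$); and the $r/3$ correction terms arising because $n$ need not be a multiple of $3$. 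No tool beyond $\delta(G)\ge 2n/3$, the stopping condition of Preprocessing \#1, and the size estimate of Claim~\ref{meretek} is needed.
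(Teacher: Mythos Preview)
Your proof is correct and follows essentially the same approach as the paper: assume a Hall violator $S\subseteq B'$, pick $v\in S$ and $u\in A\setminus N(S)$, and derive a contradiction with the termination condition of Preprocessing~\#1. Your $\phi$/$c$ formulation is just a repackaging of the paper's switching inequality $\deg(u,A)+\deg(v,B)\le\deg(u,B)+\deg(v,A)$; where the paper simply asserts that swapping $u$ and $v$ would increase $e(G[A,B])$, you carry out the arithmetic explicitly and track the remainder $r=n-3k$ more carefully, but the content is the same.
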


\begin{proof} 
By the K\"onig-Hall theorem it is sufficient to show that for every $S\subset B'$ we have $$|N(S)\cap A|\ge |S|.$$
Let us assume that there exists an $S\subset B'$ for which the claimed inequality does not hold. Denote $s$ the cardinality of $S.$ Let $v\in S$ and $u\in A-N(S).$ Then $\deg(v, A)\le s-1,$ and 
$\deg(u, B)\le 2n/3-s.$ Let $A=A-u+v$ and $B=B-v+u.$ Clearly, this switching will increase the number of edges between $A$ and $B,$ contradicting to the Preprocessing algorithm \#1.
\end{proof}

\subsubsection{Case 1:}

Let us call a triangle $uvw$ {\it balanced} if exactly one of its vertices belongs to $A.$ Recall that $H'$ denotes the ''non-triangle'' part of $H.$ For embedding $H$ into $G$ we apply an algorithm that has several steps. 
As this algorithm
proceeds, more and more vertices will get covered in $A$ and in $B.$ It is useful to introduce a notation for the vacant subset of $A$ and $B$ after Step $j$ ($1\le j\le 4$): these will be denoted by $A(j)$ and $B(j).$
The embedding algorithm is as follows.

\medskip

\noindent {\bf Embedding Algorithm for Case 1:}

\smallskip

\begin{enumerate}

\item [Step 0:] Apply Preprocessing \#1.

\item [Step 1:] If $B'\not=\emptyset$: Cover the vertices of $B'$ by vertex-disjoint balanced triangles so that every such triangle consists of a vertex of $B',$ a vertex of $B-B',$ and a vertex of $A.$ If $A'\not=\emptyset$: Cover the vertices of $A'$ by vertex-disjoint balanced triangles. 

\item [Step 2:] Cover the vertices of $A(1)''$ and $B(1)''$ by vertex-disjoint balanced triangles, so that these triangles contain exactly one exceptional vertex (that is, the other two vertices of these triangles belong to $A(1)\cup B(1)$).

\item [Step 3:] Embed $H'$ so that its independent set $I(H')$ provided by Lemma~\ref{strH} is embedded into $A(2)$ and $H-I(H')$ is embedded into $B(2).$ 

\item [Step 4:] If necessary embed (less than $\lceil v(H')/3 \rceil$) vertex-disjoint triangles into $B(3)$ so as to get that at the end of this step $2|A(4)|=|B(4)|.$ 

\item [Step 5:] Find a triangle-factor in $G[A(4)\cup B(4)].$

\end{enumerate}


\begin{lemma}\label{Case1}
If the conditions of Lemma~\ref{ExtremLemma} and Case 1 are satisfied, then the Embedding Algorithm for Case 1 finds a copy of $H$ in $G.$ 
\end{lemma}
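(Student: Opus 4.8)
The plan is to verify that the six-step Embedding Algorithm for Case 1 can actually be carried out, step by step, using the structural facts established so far (Lemma~\ref{vdelta}, Claims~\ref{A'B'}--\ref{B'parositas}) together with the Corr\'adi--Hajnal theorem and simple K\"onig--Hall matching arguments. Throughout I would keep track of the sizes $|A(j)|$ and $|B(j)|$ of the vacant parts of $A$ and $B$, and of the minimum degrees $\delta(G[A(j)])$, $\delta(G[B(j)])$, $\delta(G[A(j),B(j)])$; since at every step only $O(\eta^{1/2}n)$ or $O(\nu n)$ vertices are removed (by Claim~\ref{meretek} and Lemma~\ref{vdelta}), all these quantities stay within $O(\sqrt\eta\, n)$ of their ``ideal'' values $k$, $2k$, and of the degrees $n/3$ resp.\ $2n/3$ minus a negligible error. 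In particular, after Step~0 the sets $A',B',A'',B''$ are as small as in Claim~\ref{meretek}, and by Claim~\ref{A'B'} at most one of $A',B'$ is nonempty.

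For Step~1, suppose first $B'\neq\emptyset$ (so $A'=\emptyset$). By Claim~\ref{B'parositas} there is a matching in $G[B',A]$ covering $B'$; for each matched pair $v\in B'$, $u\in A$ I would greedily extend it to a balanced triangle by picking a common neighbor of $u$ and $v$ inside $B-B'$: such a vertex exists because $\deg(u,B)\geq 2n/3-O(\sqrt\eta n)$ (as $u\notin A''$, or $u\in A''$ but then we still have $\deg(u,B)\geq 4n/9$), $\deg(v,B)\geq n/2-O(\sqrt\eta n)$ since $\delta(G)\geq 2n/3$ and $\deg(v,A)\le 2n/9+O(\sqrt\eta n)$ is small, and $|B'|$ is tiny, so the common neighborhood in $B-B'$ is linear and we never run out of vertices. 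The symmetric case $A'\neq\emptyset$ is handled the same way: cover each $u\in A'$ by a balanced triangle $u v w$ with $v,w\in B$, using $\deg(u,B)\ge 4n/9$ to find the edge $vw$ inside $N(u)\cap B$ (which is possible since that set induces many edges — here one uses that $G[B]$ is $(\mu,2)$-non-extremal, so no linear-sized subset of $B$ is almost independent). Step~2 is identical in spirit: $|A''|, |B''|=O(\sqrt\eta\, n)$, and each such vertex still has degree $\ge 4n/9$ into $B$ (resp.\ $\ge 2n/9$ into $A$), so we can greedily attach two further vertices from $A(1)\cup B(1)$ to form disjoint balanced triangles, each using exactly one exceptional vertex.

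For Step~3, after Steps~1--2 the remaining sets $A(2),B(2)$ contain no exceptional vertices: every $u\in A(2)$ has $\deg(u,B(2))\ge 2n/3-O(\sqrt\eta n)$ and every $v\in B(2)$ has $\deg(v,A(2))\ge n/3-O(\sqrt\eta n)$ and $\deg(v,B(2))\ge n/3-O(\sqrt\eta n)$; since $v(H')\le 7\nu n$ by Lemma~\ref{vdelta}, embedding $H'$ with its independent dominating set $I(H')$ (from Lemma~\ref{strH}) into $A(2)$ and the paths of $H'-I(H')$ into $B(2)$ is a bounded-degree embedding into a pair of sets with huge minimum degree, which can be done greedily (or one may invoke Proposition~\ref{harmadik}-style reasoning on this tiny subgraph). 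Step~4 just adds at most $\lceil v(H')/3\rceil$ further disjoint triangles inside $B(3)$ — possible since $\delta(G[B(3)])$ is still $\ge (1/2-o(1))|B(3)|$, so $G[B(3)]$ has a triangle factor by Corr\'adi--Hajnal, and we take as many triangles as needed — in order to reach the exact balance $2|A(4)|=|B(4)|$. Finally, for Step~5 we have $|A(4)|,|B(4)|$ in the ratio $1:2$, $A(4)\subseteq A$ induces $\le\eta n^2/18$ edges hence is almost independent, $\delta(G[B(4)])\ge |B(4)|/2 - O(\sqrt\eta n)$, and every vertex of $A(4)$ sends $\ge 2|A(4)|-o(n)$, i.e.\ essentially all, edges into $B(4)$; so a perfect matching $M$ in $G[B(4)]$ exists (minimum degree $>|B(4)|/2-O(\sqrt\eta n)$, and $G[B(4)]$ is $(\mu,2)$-non-extremal, so Lemma~\ref{Bparositas} gives the matching since the exceptional partition alternative is excluded by the Case~1 hypothesis $e(G[B_1,B-B_1])\ge\mu n^2$ for all balanced $B_1$), and then the auxiliary bipartite graph between $A(4)$ and $M$ — join $u\in A(4)$ to $vw\in M$ iff $uv,uw\in E(G)$ — has minimum degree from the $A(4)$ side close to $|M|$, so it satisfies K\"onig--Hall and has a perfect matching, which yields the desired triangle factor on $A(4)\cup B(4)$.

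The main obstacle is Step~5: one must make sure that after the earlier steps the remaining graph $G[A(4)\cup B(4)]$ still satisfies the hypotheses of Lemma~\ref{Bparositas} (so that $G[B(4)]$ has a perfect matching rather than the near-bipartite structure), and that the auxiliary graph between $A(4)$ and the matching $M$ has no Hall violator. Both hinge on the fact that no linear-sized ``bad'' set has survived: the Case~1 assumption that every balanced bipartition of $B$ has $\ge\mu n^2$ crossing edges is exactly what rules out the alternative conclusion of Lemma~\ref{Bparositas}, and the bound $\mu=10000\sqrt\eta$ gives enough slack against the $O(\sqrt\eta\,n)$ errors accumulated in Steps~1--4. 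Verifying these inequalities quantitatively — in particular that the $(\mu,2)$-non-extremality and the crossing-edge lower bound are preserved after deleting the $O(\sqrt\eta\,n)$ vertices used so far — is the one place where the constants must be tracked with care; everything else is routine greedy extension and Corr\'adi--Hajnal / K\"onig--Hall.
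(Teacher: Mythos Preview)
Your overall strategy matches the paper's proof closely: greedy coverage of exceptional vertices in Steps~1--2, embedding $H'$ in Step~3, balancing in Step~4, and finishing via a perfect matching in $G[B(4)]$ plus an auxiliary bipartite K\"onig--Hall argument in Step~5. Step~5 in particular is exactly what the paper does, and you correctly identify that the Case~1 hypothesis (large crossing-edge count for every balanced bipartition of $B$) is what rules out the alternative conclusion of Lemma~\ref{Bparositas}.

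There is, however, a genuine error in your Step~4. You write that $\delta(G[B(3)])\ge (1/2-o(1))|B(3)|$, ``so $G[B(3)]$ has a triangle factor by Corr\'adi--Hajnal''. This is wrong: Corr\'adi--Hajnal (Theorem~\ref{CH}) requires minimum degree at least $\tfrac{2}{3}|B(3)|$, not $\tfrac{1}{2}|B(3)|$; a graph with minimum degree about half its order need not contain a single triangle (think $K_{k,k}$). The paper instead uses the $(\mu,2)$-non-extremality of $G[B]$ directly: the neighbourhood in $B(3)$ of any vertex has size about $|B(3)|/2$ and therefore spans many edges, so one can greedily peel off the required $O(\nu n)$ vertex-disjoint triangles one at a time. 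You already invoked this non-extremality in Step~1 for covering $A'$; you should invoke it again here rather than Corr\'adi--Hajnal.

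A secondary gap in the same step: to reach the exact balance $2|A(4)|=|B(4)|$ by removing only triangles from $B(3)$, you need the excess $|B(3)|-2|A(3)|$ to be divisible by $3$. The paper checks this explicitly (setting $2h_0=h_1+s$ and computing $3h_0=3p+r+s$); you should too, since it is not automatic.
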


\begin{proof}
We begin with Step 1. Assume that $B'$ is non-empty.
By Claim~\ref{B'parositas} we have a matching $M_1$ in the bipartite subgraph $G[A, B']$ that covers every vertex of $B'.$ Let $b\in B'$ and $a\in A$ such that $ab\in M_1.$ Since $b\in B'$ we have 
$\deg(b, B)\ge 4n/9.$ Similarly, $\deg(a, B)\ge 4n/9,$ since we must have $A'=\emptyset$ in this case. Therefore $|N(a)\cap N(b)\cap B|\ge 2n/9.$ By Claim~\ref{meretek} $|B'|\le 3\eta n,$ hence, for every
$ab\in M_1$ we can find a $v\in N(a)\cap N(b)\cap (B-B')$ such that the $abv$ triangles we obtain this way are vertex-disjoint. It is clear that these triangles are balanced, too.  

Let's assume now that $A'$ is non-empty. Since no vertex can have more than $n/3$ neighbors in $A,$ we have that $\deg(a, B)> n/3$ for every $a\in A'.$ Recall that $G[B]$ is $(\mu, 2)$-non-extremal, hence, the neighborhood $N(a)\cap B$ contains at least $\mu n^2/18$ edges. Pick one such edge with uncovered endpoints, it gives a triangle with $a.$ Repeat the above procedure until $A'$ is exhausted. Since $|A'|\le \eta n$
and $\eta \ll \mu,$ it is clear that we can find vertex-disjoint balanced triangles for $A'.$ Note that in every subset $S\subset B(1)$ with $|S|=k$ we still have at least $\mu n^2/18-k\cdot 10\eta n>\mu n^2/18-4\eta n^2$ edges 
in $G[S]$ with uncovered endpoints.

We continue with Step 2. For the vertices of $A(1)''$ we can use the method of Step 1 above, without any change.
 Next we consider the vertices of $B(1)''.$ Let $b\in B(1)''.$ We have that $\deg(b, A)\ge 2n/9$ and $\deg(b, B)\ge n/3.$ Pick a vacant point $a\in N(b, A(1))-A(1)'',$ there are more than $2n/9-\sqrt{\eta}n\gg |B(1)''|$
such vertices. Since $a\in A(1)-A(1)'',$ we have $|N(a)\cap N(b)\cap B(1)|\ge k-10 \sqrt{\eta}n \gg |B(1)''|,$ hence, for every $b\in B(1)''$ we can find an $a\in A(1)-A(1)''$ and a $v\in N(a)\cap N(b)\cap B(1)$ such that
the $abv$ triangles are vertex-disjoint and balanced. 

 Since $|A(1)''|+|B(1)''|\le  \sqrt{\eta}n/10$ there will be more than $\frac{\mu n^2}{18}-k\cdot 10\eta n-k\cdot \sqrt{\eta}n>(\mu/18-\sqrt{\eta}) n^2$ edges with uncovered endpoints in $G[S]$ 
for every subset $S\subset B$ with $|S|=k.$

Let us consider Step 3. 
Apply Lemma~\ref{strH} to $H'.$ Denote the resulting independent set by $I(H').$ By Lemma~\ref{strH} we know that the components of $H'-I(H')$ are paths having length at most 2. Let us add $25\sqrt{\eta} n$ {\it fictive points} to $B(2),$ and connect them with every vertex of $B(2).$ It is easy to see that the minimum degree in the graph we obtain this way is large enough to find a Hamilton cycle. We delete the fictive vertices.
Then the Hamilton cycle is divided into shorter paths having average length $1/(25\sqrt{\eta}).$ 

Since $v(H')\le 7\nu n$ and $\nu \ll \sqrt{\eta},$ we can apply a greedy procedure to map the vertices in $V(H')-I(H')$ using the collection of paths we obtained above. Next we have to map the vertices 
of $I(H').$ Since every
vertex of $B(2)$ has more than $n/3-2\sqrt{\eta}n$ neighbors in $A(2),$ we can again use a greedy algorithm. For example, let $x\in I(H')$ so that $y_1, \ldots, y_t$ are the neighbors of $x$ ($1\le t\le 4$). 
Suppose that 
$y_i$ was mapped onto $v_i\in B(2)$ for every $1\le i\le t.$ Since only non-exceptional vertices are left, the $v_i$s has at least $n/3-4\sqrt{\eta}n$ common neighbors  in $A(2)$, out of these at most $7\nu n$ are covered (by some other vertex of $I(H')$). Hence, we can always find
a vacant vertex $u\in A(2)$ for every $x$ so that adjacent vertices of $H'$ are mapped onto adjacent vertices of $G.$ With this we are done with Step 3.

For Step 4 we introduce some notation. Let $h_0=|I(H')|$ and $h_1=v(H')-h_0.$ Recall that $n=3k+r.$ Then we have $v(H')=3p +r$ for some $p\in \mathbb{N}.$ By Lemma~\ref{strH} $3h_0\ge h_0+h_1,$ 
hence, $2h_0=h_1+s$ for some $0\le s\le 7\nu n$ natural number. Set $k'=|A(2)|,$ then $|B(2)|=2k'+r.$ When embedding $H'$ we covered $h_1$ vertices of $B(2)$ and $h_0$ vertices of $A(2).$ 
The $A(3), B(3)$ sets are
{\it balanced}, if $2|A(3)|=|B(3)|.$ We claim that 
$B(3)$ contains $s+r$ vertices more than what it should be for being balanced: $$|B(3)|-2|A(3)|=2k'+r-h_1-2(k'-h_0)=r-h_1+2h_0=r+s.$$ 

We show that $r+s$ is divisible by three: $h_1=3p+r-h_0,$ hence, $h_1+s=3p+r-h_0+s,$ and using that $2h_0=h_1+s,$ we get that $3h_0=3p+r+s,$ implying the claimed divisibility.
Now our task is to embed $(r+s)/3$ triangles into $G[B(3)].$ This can be done using the $(\mu, 2)$-non-extremality of $G[B].$ Since we have covered less than $\mu n/20$ vertices of $B,$
$G[B(3)]$ is $(\mu/2 , 2)$-non-extremal. Hence, the neighborhood of any $b\in B(3)$  contains many edges, so we can find the $(r+s)/3$ vertex-disjoint triangles in $B(3)$ that are needed for the
balance. At the end we will have that $2|A(4)|=|B(4)|.$ Note that $G[B(4)]$ is $(\mu/3, 2)$-non-extremal, since $r+s\le 10 \nu n\ll \sqrt{\eta}n.$  

Finally, we finish the proof with Step 5.
By Lemma~\ref{Bparositas} and the fact that $G[B(4)]$ is $(\mu/3, 2)$-non-extremal and $\delta(G[B(4)])\ge (1/2-8\sqrt{\eta})|B(4)|,$ we can find a perfect matching $M$ in $G[B(4)].$

In order to extend $M$ into a triangle factor we consider the following auxiliary bipartite graph: its vertex classes are the vertices of $A(4)$ and the edges of $M,$ and we have an edge between $a\in A(4)$ and $b_1b_2\in M$ if and only
if $ab_1, ab_2\in E(G).$ Using the K\"onig-Hall theorem, we can find a perfect matching in this graph, as there are no exceptional vertices left: every edge of $M$ is adjacent to almost all of 
$A(4),$ and every vertex of $A(4)$ is adjacent to almost every edge of $M.$ 
Observing that a perfect matching in the auxiliary graph gives a triangle factor in $G[A(4) \cup B(4)]$ finishes the proof.
\end{proof}

\subsubsection{Case 2:}

In order to show that $H\subset G$ in Case 2 we will give the details of an embedding algorithm, which in many steps is similar to Embedding Algorithm 1, but there will be differences as well.
For example, we need Preprocessing \#1 discussed earlier, but we also need another version. 

Recall that in Case 2 the set $B$ can be divided into two subsets, $B_1$ and $B_2$ such that the number of edges between these two sets is less than $\mu n^2.$
Hence, in this case $G[B]$ is close to the union of two cliques having sizes $|B_1|=\lfloor |B|/2 \rfloor$ and $|B_2|=\lceil |B|/2 \rceil.$ 

We need the following:

\medskip

\noindent {\bf Preprocessing \#2:}

\smallskip

If $B_1$ has a vertex $v_1$ and $B_2$ has a vertex $v_2$ such that $$\deg(v_1, B_2)+\deg(v_2, B_1)>\deg(v_1, B_1)+\deg(v_2, B_2),$$ then we switch $v_1$ and $v_2,$ that is, we
let $B_1=B_1-v_1+v_2$ and $B_2=B_2-v_2+v_1.$ We stop when two such vertices cannot be found. Since every switching step decreases the number of edges between $B_1$ and $B_2,$
the algorithm stops in a finite number of steps. 

Let us define two subsets:

$$B_1'=\left\{v\in B_1 : \deg(v,B_1)< \frac{k}{3}\right\},$$

$$B_2'=\left\{v\in B_2 : \deg(v,B_2)< \frac{k}{3}\right\}.$$

\begin{claim}\label{Preproc2}
Apply Preprocessing \#1 and Preprocessing \#2. If $B_1'\not=\emptyset$ then $B_2'=\emptyset.$ Moreover, $B'\cap B_1'=B'\cap B_2'=\emptyset.$ 
\end{claim}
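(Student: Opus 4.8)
The plan is to read off both assertions by pure degree counting, using only the termination condition of Preprocessing \#2, the hypothesis $\delta(G)\ge 2n/3$ of Lemma~\ref{ExtremLemma}, and the bookkeeping $|A|=k$, $|B|=2k+r$, $|B_1|=\lfloor|B|/2\rfloor$, $|B_2|=\lceil|B|/2\rceil=\lceil n/3\rceil$, where $n=3k+r$ with $r\in\{0,1,2\}$. It is convenient to put $\phi(v)=\deg(v,B_2)-\deg(v,B_1)$ for $v\in B$.

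First I would show that $B_1'$ and $B_2'$ cannot both be non-empty. Since Preprocessing \#2 has stopped, no switchable pair remains, so $\deg(v_1,B_2)+\deg(v_2,B_1)\le\deg(v_1,B_1)+\deg(v_2,B_2)$, that is $\phi(v_1)\le\phi(v_2)$, for all $v_1\in B_1$ and $v_2\in B_2$; hence $\sup_{B_1}\phi\le\inf_{B_2}\phi$. (Taking the edge $v_1v_2$ into account in the switching rule would only weaken this to $\sup_{B_1}\phi\le\inf_{B_2}\phi+2$, which is harmless for large $n$.) Suppose now that $v_1\in B_1'$ and $v_2\in B_2'$. From $\deg(v_1,A)\le k$, $\deg(v_1,B_1)<k/3$ and $\deg_G(v_1)\ge 2n/3$ we get $\deg(v_1,B_2)>2n/3-k-k/3\ge 2k/3$, so $\phi(v_1)>2k/3-k/3=k/3$; symmetrically $\deg(v_2,B_1)>2k/3$ and $\deg(v_2,B_2)<k/3$ give $\phi(v_2)<-k/3$. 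Then $k/3<\phi(v_1)\le\phi(v_2)<-k/3$, which is absurd once $n$ is large. Hence $B_1'=\emptyset$ or $B_2'=\emptyset$.

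For the ``moreover'' part the preprocessing is not even needed, only the minimum degree. Suppose $v\in B'\cap B_1'$, so $\deg(v,A)<2n/9$ and $\deg(v,B_1)<k/3$. Since $k\le n/3$, the integer $\deg(v,A)+\deg(v,B_1)$ is strictly less than $2n/9+n/9=n/3$, hence at most $k-1$ if $3\mid n$ and at most $k$ otherwise; combining this with $\deg_G(v)\ge\lceil 2n/3\rceil$ yields $\deg(v,B_2)=\deg_G(v)-\deg(v,A)-\deg(v,B_1)>\lceil n/3\rceil=|B_2|$ when $n\equiv 0$ or $n\equiv 2\pmod 3$, contradicting $\deg(v,B_2)\le|B_2|$. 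The case $n\equiv 1\pmod 3$ is the one delicate point of the whole argument: there the crude bound only gives $\deg(v,B_2)\ge|B_2|$, and to recover the strict inequality one must additionally use that $2n/9\notin\mathbb Z$ and that the fractional part of $2k/3$ never exceeds $2/3$, which forces $\deg(v,A)+\deg(v,B_1)\le k-1$ and hence $\deg(v,B_2)>|B_2|$ again. This establishes $B'\cap B_1'=\emptyset$; the analogous computation with $B_1$ and $B_2$ interchanged — in fact easier, since $|B_1|\le|B_2|$ — gives $B'\cap B_2'=\emptyset$. Apart from this residue check everything reduces to how Preprocessing \#2 terminates together with $\delta(G)\ge 2n/3$, so the case $n\equiv 1\pmod 3$ is the step I expect to require the most care.
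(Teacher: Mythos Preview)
Your proof is correct and is exactly the kind of ``simple calculation'' the paper alludes to (the paper's own proof consists of the single sentence that the claim follows by simple calculation from the definitions of $B'$, $B_1'$ and $B_2'$). Your first part uses the termination of Preprocessing~\#2 precisely as intended, and your second part is the right degree count; the residue analysis for $n\equiv 1\pmod 3$ is stated somewhat tersely, but checking the three sub-cases $k\equiv 0,1,2\pmod 3$ confirms that $\lfloor 2n/9\rfloor+\lfloor k/3\rfloor\le k-1$ (with the convention that $\lfloor k/3\rfloor$ is replaced by $k/3-1$ when $3\mid k$), so the contradiction $\deg(v,B_2)>|B_2|$ indeed goes through.
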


\begin{proof}
Simple calculation shows the claim using the definitions of $B',$ $B_1'$ and $B_2'.$  
\end{proof}

We also need the following definitions:
$$B_1''=\left\{v\in B_1-B_1' : \deg(v, B_1)\le k-10\mu^{1/2}k\right\},$$

$$B_2''=\left\{v\in B_2-B_2' : \deg(v, B_2)\le k-10\mu^{1/2}k\right\}.$$

Recall the definition of $A', A'', B'$ and $B''.$ The following upper bounds hold.

\begin{claim}\label{meretek-2}
We have that $|A'|\le \eta n,$ $|B'|\le 3\eta n,$ $|A''|\le \eta^{1/2}n/45,$ $|B''| \eta^{1/2}n/30$, $|B_1'|, |B_2'|\le  5\mu n,$ and  $|B_1''|, |B_2''|\le  \mu^{1/2}n/2.$
\end{claim}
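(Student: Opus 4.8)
The plan is to treat the two halves of the claim separately. The bounds $|A'|\le\eta n$, $|B'|\le 3\eta n$, $|A''|\le\eta^{1/2}n/45$ and $|B''|\le\eta^{1/2}n/30$ are word for word those of Claim~\ref{meretek}, and I would argue that the proof given there carries over unchanged: all four sets are defined purely in terms of $\deg_G(v,A)$ and $\deg_G(v,B)$, while Preprocessing~\#2 relocates vertices only within $B$ and hence leaves $A$ and $B$ unchanged as sets. Thus the edge-counting estimate $e(G[A,B])\ge(1-\eta)\tfrac{2n^2}{9}$ coming from the $\eta$-extremality of $G$, and the four consequences drawn from it in Claim~\ref{meretek}, still apply verbatim after both preprocessing steps.

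For the four new bounds I would exploit the defining feature of Case~2: after Preprocessing~\#2, which can only decrease the cross-cut, we still have a partition $B=B_1\cup B_2$ with $|B_1|=k+\lfloor r/2\rfloor$, $|B_2|=k+\lceil r/2\rceil$ and $e(G[B_1,B_2])<\mu n^2$. The other ingredient is that every $v\in B$ satisfies $\deg_G(v,B)\ge 2n/3-|A|\ge k$, since $\delta(G)\ge 2n/3$ and $|A|=k=\lfloor n/3\rfloor$. Now if $v\in B_1'$ then $\deg(v,B_1)<k/3$, so $\deg(v,B_2)=\deg_G(v,B)-\deg(v,B_1)\ge k-k/3=2k/3$; distinct vertices of $B_1'$ contribute distinct edges to $G[B_1,B_2]$, hence $|B_1'|\cdot 2k/3\le e(G[B_1,B_2])<\mu n^2$, which together with $n^2/k\le 3n+O(1)$ gives $|B_1'|\le 5\mu n$, and the bound for $B_2'$ follows by swapping the roles of $B_1$ and $B_2$. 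Likewise, if $v\in B_1''$ then $\deg(v,B_1)\le k-10\mu^{1/2}k$, hence $\deg(v,B_2)\ge 10\mu^{1/2}k$, and the same counting yields $|B_1''|\cdot 10\mu^{1/2}k<\mu n^2$, i.e.\ $|B_1''|\le\mu^{1/2}n/2$, and symmetrically for $B_2''$.

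I do not expect a genuine obstacle here; as with the companion Claim~\ref{Preproc2}, this is a ``simple calculation''. The only points that need a moment's care are the floor/ceiling bookkeeping around $k=\lfloor n/3\rfloor$ (so that the constants $5$ and $1/2$ come out with room to spare once $n$ is large), the observation that the Case~2 cross-cut bound survives Preprocessing~\#2 since every switch strictly decreases $e(G[B_1,B_2])$, and the trivial remark that if $\mu$ happens to be large relative to these constants the bounds hold automatically from $|B_i'|,|B_i''|\le|B_i|<n$.
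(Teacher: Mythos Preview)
Your proposal is correct and follows essentially the same approach as the paper: the first four bounds are deferred to Claim~\ref{meretek}, and for $B_i'$ and $B_i''$ you (like the paper) combine the Case~2 cross-cut bound $e(G[B_1,B_2])<\mu n^2$ with the minimum-degree condition to force each exceptional vertex to send many edges across to the other $B_j$. Your additional remarks---that Preprocessing~\#2 leaves $A,B$ intact and only decreases the cross-cut---are correct and make the argument slightly more careful than the paper's terse version.
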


\begin{proof}
The proofs of the first four inequalities go in the same way as in the proof of Claim~\ref{meretek}.
For the other inequalities we use that $e(G[B_1, B_2]) \le \mu n^2.$
\begin{enumerate}
\item $e(G[B_1, B_2])\ge |B_1'|\left(\frac{n}{3}-\frac{k}{3}\right)\ge |B_1'|\frac{2n}{9},$ hence $ |B_1'|\le  5\mu n.$
Essentially the same computation shows that  $|B_2'|\le  5\mu n.$
 
\item $e(G[B_1, B_2])\ge |B_1''|10\sqrt{\mu}k> |B_1''|4\sqrt{\mu}n,$ implying that $|B_1''|\le  \mu^{1/2}n/2.$ 
This computation  can be repeated in order to obtain that  $|B_2''|\le  \mu^{1/2}n/2.$
\end{enumerate}
\end{proof}

\medskip

The following observation is an easy consequence of the minimum degree of $G,$ using that $|A|=k, |B_1|=k+\lfloor r/2\rfloor$ and $|B_2|=k+\lceil r/2 \rceil.$ 

\begin{observation}\label{atmeno}
Every $v\in B_i$ has a neighbor in $B_{3-i},$ where $i\in \{1, 2\}.$
\end{observation}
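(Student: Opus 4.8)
The plan is to argue by contradiction, treating $v\in B_1$ and $v\in B_2$ symmetrically and simply counting the vertices available to host the neighbourhood of $v$. Recall from the proof of Lemma~\ref{ExtremLemma} that $n=3k+r$ with $r\in\{0,1,2\}$, so every vertex of $G$ has degree at least $2n/3=2k+2r/3$; it therefore suffices to check that $A\cup B_i$ is too small to contain all of $N_G(v)$ once $v$ itself is removed.

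Concretely, first I would fix $v\in B_1$ and suppose for contradiction that $N_G(v)\cap B_2=\emptyset$. Then $N_G(v)\subseteq(A\cup B_1)\setminus\{v\}$, a set of size $k+(k+\lfloor r/2\rfloor)-1=2k+\lfloor r/2\rfloor-1$. Combining this with $\deg_G(v)\ge 2k+2r/3$ gives $2r/3+1\le\lfloor r/2\rfloor$, which fails for every $r\in\{0,1,2\}$ (the left-hand side is at least $1$, the right-hand side is at most $1$, and the only candidate for equality, at $r=2$, reads $7/3\le 1$). Hence $v$ has a neighbour in $B_2$. For $v\in B_2$ the identical computation, with $\lceil r/2\rceil$ in place of $\lfloor r/2\rfloor$, yields $2r/3+1\le\lceil r/2\rceil$, which again fails for all $r\in\{0,1,2\}$, so $v$ has a neighbour in $B_1$.

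There is no real obstacle here; the only thing to be mildly careful about is to use the exact class sizes $|B_1|=k+\lfloor r/2\rfloor$ and $|B_2|=k+\lceil r/2\rceil$ rather than a rounded estimate, since the argument carries only a constant amount of slack, and to verify the three residues $r=0,1,2$ by hand. If desired, one could record the slightly stronger fact that $\deg_G(v,B_{3-i})$ is bounded below by a positive linear function of $n$, but the statement of the observation needs only the single guaranteed neighbour.
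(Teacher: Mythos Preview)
Your proof is correct and is exactly the argument the paper intends: the observation is recorded there merely as ``an easy consequence of the minimum degree of $G$, using that $|A|=k$, $|B_1|=k+\lfloor r/2\rfloor$ and $|B_2|=k+\lceil r/2\rceil$,'' and you have simply written out the counting explicitly. One small correction to your closing aside: when $r=0$ the inequality $\deg_G(v)\ge 2k$ against $|A\cup B_i\setminus\{v\}|=2k-1$ yields only a single guaranteed neighbour in $B_{3-i}$, so there is no linear-in-$n$ lower bound on $\deg_G(v,B_{3-i})$ available here.
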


\medskip

As in the previous case, the embedding algorithm we use consists of several steps, during which more and more vertices of $G$ will be covered. Our notation will reflect this: for every $j\ge 1$ the vacant subset
of $A$ {\it after finishing} Step $j$ will be denoted by $A(j).$ The sets $B_1(j), B_2(j)$ are defined analogously. 

\medskip

\noindent{\bf Embedding Algorithm for Case \#2} 

\begin{enumerate}

\item [Step 0:] Apply Preprocessing \#1 and Preprocessing \#2. Determine the sets of exceptional vertices. 

\item [Step 1:] Assuming that $B'$ is non-empty find the matching $M_{B'}$ in the bipartite subgraph $G[A, B']$ that covers every vertex of $B'$ using Claim~\ref{B'parositas}.  

\item [Step 2:] Cover the vertices of $B'(1), B_1'(1)$ and $B_2'(1)$ by vertex-disjoint balanced triangles such that $|B_2(2)|$ is even when finishing this step. Use the algorithm described in Lemma~\ref{specialis}. 

\item [Step 3:] Cover the vertices of $A'(2), A''(2)$ and $B''(2), B_1''(2), B_2''(2)$ by vertex-disjoint balanced triangles.

\item [Step 4:]  Embed $H'$ so that its independent set $I(H')$ provided by Lemma~\ref{strH} is embedded into $A(3)$ and $H'-I(H')$ is embedded into $G[B_1(3)].$  If necessary embed 
(at most $\lceil v(H')/3 \rceil$) vertex-disjoint triangles into $B_1(3)$ so as to get that $2|A(4)|=|B_1(4)|+|B_2(4)|.$

\item [Step 5:] Find a triangle-factor in $G[A(4)\cup B(4)].$

\end{enumerate}

Call a balanced triangle $uvw$ {\it crossing,} if $v\in B_1, w\in B_2$ and $u\in A,$ otherwise it is called {\it non-crossing.} So a non-crossing triangle contains one vertex from $A$ and two vertices that either 
both belong to $B_1$ or to $B_2.$ Before we present the algorithm required in Step 2, we need a claim.

\begin{claim}\label{B'paritas}
Assume that $b=|B'|>0.$ Let $q$ be a natural number with $0\le q\le b.$ Then we can cover the vertices of $B'$ by vertex-disjoint balanced triangles such that exactly $q$ of these are crossing, and $b-q$ are non-crossing. 
\end{claim}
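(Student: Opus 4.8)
The plan is to build the $b:=|B'|$ triangles greedily, one for each vertex of $B'$, so that each triangle consists of exactly one vertex of $B'$, one vertex of $A$, and one further vertex of $B$; the first $q$ triangles produced will be crossing and the remaining $b-q$ non-crossing. The only real point is to guarantee, simultaneously for \emph{every} $b\in B'$, enough room to place the third vertex on whichever side of the partition $B=B_1\cup B_2$ we wish, so that every triangle can be made crossing or non-crossing at will.

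I would first reserve for each $b\in B'$ a partner in $A$ that is dense to $B$: a matching $M$ saturating $B'$ with $M\subseteq G[B',A\setminus(A'\cup A'')]$, writing $a_b:=M(b)$. Such an $M$ exists. Since $B'\neq\emptyset$ we have $A'=\emptyset$ by Claim~\ref{A'B'}; if moreover $A''=\emptyset$, then Claim~\ref{B'parositas} already supplies a matching into $A\setminus A''=A$. Suppose $A''\neq\emptyset$ and take any $u\in A''$; then $\deg(u,B)\le 2n/3-10\eta^{1/2}n$, so $\deg(u,A)=\deg(u)-\deg(u,B)\ge 10\eta^{1/2}n$ (using $\delta(G)\ge 2n/3$). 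Since Preprocessing \#1 has terminated, $\deg(u,A)-\deg(u,B)\le\deg(v,A)-\deg(v,B)$ for every $v\in B$; taking $v\in B'$ and combining with $\deg(v,B)=\deg(v)-\deg(v,A)\ge 2n/3-\deg(v,A)$ yields $2\deg(v,A)\ge 20\eta^{1/2}n$, i.e.\ $\deg(v,A)\ge 10\eta^{1/2}n$. Hence by Claim~\ref{meretek} every $b\in B'$ has at least $\deg(b,A)-|A''|\ge 10\eta^{1/2}n-\eta^{1/2}n/45>3\eta n\ge|B'|$ neighbours in $A\setminus A''$, so $G[B',A\setminus A'']$ satisfies Hall's condition and the matching exists (indeed it can be found greedily).

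Next I would record the degree estimates behind the greedy step. Fix $b\in B'$, let $i_0\in\{1,2\}$ be the index with $b\in B_{i_0}$, and put $a:=a_b$. Since $b\in B'$, $\deg(b,B)=\deg(b)-\deg(b,A)>2n/3-2n/9=4n/9$, so $\deg(b,B_i)>4n/9-|B_{3-i}|>n/9-1$ for $i=1,2$ (as $|B_1|,|B_2|\le n/3+1$). Since $a\notin A'\cup A''$, $\deg(a,B)>2n/3-10\eta^{1/2}n$, so likewise $\deg(a,B_i)>|B_i|-10\eta^{1/2}n-2$ for $i=1,2$. Adding, for \emph{both} $i\in\{1,2\}$,
\[
\bigl|N(a)\cap N(b)\cap B_i\bigr|\ \ge\ \deg(a,B_i)+\deg(b,B_i)-|B_i|\ >\ \tfrac n9-10\eta^{1/2}n-3\ >\ \tfrac n{10},
\]
provided $\eta$ is sufficiently small.

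Finally I would run the greedy construction: order $B'$ as $b^{(1)},\dots,b^{(b)}$ and, with $b^{(1)},\dots,b^{(t-1)}$ already placed in triangles (so fewer than $3|B'|\le 9\eta n$ vertices are used up), treat $b:=b^{(t)}$ by choosing a third vertex $v$ in the part of $B$ \emph{opposite} to $B_{i_0}$ if $t\le q$, and in $B_{i_0}$ if $t>q$, so that $\{a_b,b,v\}$ is crossing in the first case and non-crossing in the second. In either case the relevant set $N(a_b)\cap N(b)\cap B_i$ has more than $n/10$ vertices, while the vertices we must avoid — those used by the previous $t-1$ triangles together with the at most $|B'|$ vertices of $B'$ — number fewer than $9\eta n+3\eta n<n/10$; hence an unused $v\in B\setminus B'$ of the required type exists, and we add $\{a_b,b,v\}$. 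After $b$ steps $B'$ is covered by $b$ vertex-disjoint balanced triangles, exactly the first $q$ of which are crossing. The main obstacle is the existence of the dense-partner matching $M$ of the second paragraph; the one genuinely new ingredient there is the dichotomy ``$A''=\emptyset$, or every vertex of $B'$ has at least $10\eta^{1/2}n$ neighbours in $A$'', which drops out of the termination of Preprocessing \#1, while the rest of the argument is routine, with comfortable slack in all the inequalities.
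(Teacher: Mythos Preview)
Your proof is correct and follows the same overall greedy scheme as the paper (for each $v\in B'$, secure an $A$-partner, then pick the third vertex in the desired half of $B$), but you organise the choice of the $A$-partner differently. The paper uses the plain matching $M_{B'}$ from Claim~\ref{B'parositas} and then, for each $v\in B'$, splits into two cases: if $\deg(v,A)\ge k/100$ it discards $M_{B'}(v)$ and re-picks an $A$-neighbour with many neighbours in the relevant $N_v$, while if $\deg(v,A)<k/100$ it keeps $M_{B'}(v)$ and uses that $|N_v|\ge 99k/100$ is so large that even an $A$-vertex with only $\deg\ge 4n/9$ into $B$ has enough common neighbours there. You instead strengthen the matching upfront so that every partner lies in $A\setminus(A'\cup A'')$, which lets you run a single uniform estimate $|N(a_b)\cap N(b)\cap B_i|>n/10$ for both $i$ without any case analysis.

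The price you pay is the extra dichotomy ``either $A''=\emptyset$, or every $v\in B'$ has $\deg(v,A)\ge 10\eta^{1/2}n$'', which you extract neatly from the termination condition of Preprocessing~\#1; this observation is not in the paper's proof and is the one genuinely new ingredient. Your route is a bit cleaner and more uniform; the paper's is more ad hoc but avoids the need for that additional lemma. Both arguments rely on the same ambient facts ($A'=\emptyset$ via Claim~\ref{A'B'}, the size bounds from Claim~\ref{meretek}, and implicitly $B'\cap(B_1'\cup B_2')=\emptyset$, though your direct bound $\deg(b,B_i)>n/9-1$ sidesteps even that).
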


\begin{proof}
By Claim~\ref{B'parositas} we have a matching $M_{B'}$ in $G[A, B']$ that covers $B'.$ Let $S\subset B'$ be an arbitrary subset with $|S|=q,$ we are going to find crossing triangles for the vertices of $S,$ and non-crossing ones for $B-S.$ 

Let $v\in S$ be arbitrary. By the definition of $B'$ we know that $v$ has at least $k/3$ neighbors in the opposite $B_i$ set (in $B_2(1),$ if $v\in B_1(1),$ else in $B_1(1)$), denote this neighborhood by $N_v$. 
If $\deg(v, A)\ge k/100$ then $v$ has more than $k/200$ such neighbors $u_v\in A$ that each has at least $k/4$ neighbors in $N_v.$ Hence, in this case the crossing triangle will be $vwu_v,$ where $w\in N_v$
is adjacent to $u_v.$ If $\deg(v, A)< k/100$ then $|N_v|\ge 99k/100.$ Since $A'=\emptyset$ (by Claim~\ref{A'B'}) we get that $M_{B'}(v)$ has at least $k/4$ neighbors in $N_v,$ so we can pick a vacant vertex $w$ among them. The crossing triangle in this case is $vwM_{B'}(v).$  

For $v\in B-S$ we can essentially repeat the above argument. The only difference is that this time we look for a $w$ not in the opposite $B_i$ set, but in the set of $v,$ we leave the details for the reader. 
\end{proof}

\begin{lemma}\label{specialis}
It is possible to cover the vertices of $B'(1), B_1'(1)$ and $B_2'(1)$ by vertex-disjoint balanced triangles such that $|B_2(2)|$ is even when finishing Step 2.~of Embedding Algorithm for Case \#2. 
\end{lemma}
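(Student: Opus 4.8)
The plan is to reduce the statement to controlling a single parity bit. Every balanced triangle uses exactly two vertices of $B=B_1\cup B_2$, and it removes an \emph{odd} number of vertices from $B_2$ precisely when it is crossing: a crossing triangle deletes one vertex from each of $B_1,B_2$, while a non-crossing one deletes two vertices of $B_1$ or two of $B_2$. Hence, writing $t$ for the number of crossing triangles built during Step~2, we have $|B_2(2)|\equiv|B_2|+t\pmod{2}$. Since Step~1 only fixes the matching $M_{B'}$ and covers no vertex, $B'(1)=B'$, $B_1'(1)=B_1'$, $B_2'(1)=B_2'$, so it suffices to cover $B'\cup B_1'\cup B_2'$ by pairwise vertex-disjoint balanced triangles in which the number of crossing ones is $\equiv|B_2|\pmod{2}$.

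First I would dispose of the volume issues. By Claim~\ref{meretek-2} we have $|B'|\le3\eta n$ and $|B_1'|,|B_2'|\le5\mu n$, and by Claim~\ref{Preproc2} at most one of $B_1',B_2'$ is non-empty while $B'$ is disjoint from both. Since every set to be covered has size $O(\sqrt{\eta}\,n)$ and $\delta(G)\ge 2n/3$, each triangle can be chosen greedily, avoiding the (small) exceptional sets and all previously used vertices. For $v\in B_1'$, say, the bounds $\deg_G(v)\ge 2n/3$, $\deg(v,B_1)<k/3$ and $|B_2|=k+O(1)$ force $\deg(v,A)\ge 2n/9-O(1)$, so $v$ shares an $A$-neighbour with any admissible partner $w$ — a $B_1$-neighbour of $v$ when one exists, giving a non-crossing triangle, or a $B_2$-neighbour, guaranteed by Observation~\ref{atmeno}, giving a crossing one; the vertices of $B_2'$ are symmetric, and for $v\in B'$ one invokes Claim~\ref{B'parositas} and, more precisely, Claim~\ref{B'paritas}.

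The parity is then handled by Claim~\ref{B'paritas}, which for every $q$ with $0\le q\le|B'|$ covers $B'$ by vertex-disjoint balanced triangles exactly $q$ of which are crossing. Thus if $B'\ne\emptyset$ I would first cover $B_1'\cup B_2'$ by any legal family, record the number $c_0$ of crossing ones, and then apply Claim~\ref{B'paritas} with $q\in\{0,1,\dots,|B'|\}$ chosen so that $c_0+q\equiv|B_2|\pmod{2}$ — possible since $|B'|\ge 1$ — which makes $|B_2(2)|$ even. If $B'=\emptyset$, cover $B_1'\cup B_2'$ (if non-empty) as above; should the number of crossing triangles have the wrong parity, flip it either by re-routing one of those triangles between its crossing and non-crossing forms (when the chosen exceptional vertex admits both) or, as a universal fallback, by inserting one additional crossing balanced triangle on non-exceptional vertices: a non-exceptional $v\in B_2$ has $\ge n/3-O(\sqrt{\eta}n)$ neighbours in $A$ and, by Observation~\ref{atmeno}, a neighbour $w\in B_1$, which can also be taken non-exceptional, so $v$ and $w$ have a common $A$-neighbour $u$ and the triangle $vwu$ exists; this extra triangle is absorbed by the triangle part of $H$.

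The one place that needs genuine care, rather than being dismissed by the smallness of the exceptional sets, is precisely the case $B'=\emptyset$: when Claim~\ref{B'paritas} gives no leverage one must verify, using the degree conditions surviving Preprocessing~\#1 and~\#2 together with the sparsity bound $e(G[B_1,B_2])<\mu n^2$, that at least one crossing balanced triangle with non-exceptional endpoints can always be placed disjointly from everything already built. Everything else reduces to routine bookkeeping with the size estimates of Claim~\ref{meretek-2} and greedy triangle-picking.
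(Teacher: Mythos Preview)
Your parity reformulation is exactly the right way to organise this, and the paper's proof is in substance the same: cover $B_1'\cup B_2'$ by crossing triangles, tune the crossing/non-crossing split on $B'$ via Claim~\ref{B'paritas} when $B'\ne\emptyset$, and when $B'=\emptyset$ insert one extra crossing triangle if the parity demands it. The paper merely unrolls this into an explicit case analysis on the parities of $|B_2|$ and of $|B_1'|$ (or $|B_2'|$), whereas you compress it to a single congruence $|B_2(2)|\equiv|B_2|+t\pmod 2$.

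There is one small but real gap, and you have correctly located it. In the $B'=\emptyset$ fallback you assert that a crossing triangle on non-exceptional vertices ``can always be placed disjointly from everything already built'', invoking Observation~\ref{atmeno}. But Observation~\ref{atmeno} guarantees only a \emph{single} $B_{3-i}$-neighbour for each vertex of $B_i$; after you have greedily covered $B_1'$ using up to $5\mu n$ partners in $B_2$, it is not automatic that your chosen non-exceptional $v\in B_2$ still has any \emph{uncovered} $B_1$-neighbour, nor that such a neighbour is non-exceptional as you claim. (The second point turns out to be unnecessary once $B'=\emptyset$, since then every $w\in B$ has $\deg(w,A)\ge 2n/9$, enough for a common $A$-neighbour with $v$.) The sparsity bound $e(G[B_1,B_2])<\mu n^2$ you cite points the wrong way here---you need a usable edge, not few edges. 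The paper sidesteps this by reversing the order: it first fixes a non-exceptional $v\in B_1\setminus(B_1'\cup B_1'')$, takes its $B_2$-neighbour $w$ via Observation~\ref{atmeno} while nothing is yet covered, builds the extra crossing triangle $uvw$, and only then covers $B_1'$ (whose vertices have $\ge 2k/3$ choices in $B_2$ and can avoid $w$). With that single change of order---or, equivalently, reserving one $B_1$--$B_2$ edge before the greedy covering---your argument is complete.
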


\begin{proof}
We have two main cases, depending on whether $|B_2(1)|$ is odd or even.  
Assume first that $|B_2(1)|$ is odd and $B_2'(1)=\emptyset.$ We consider several sub-cases.


\begin{itemize}

\item[(i)] $|B_1'(1)|$ is odd: We will find crossing triangles for $B_1'(1).$ Since every vertex of $B_1'(1)$ has at least $2k/3$ neighbors in $B_2,$ we can easily find distinct neighbors for each of them. By Claim~\ref{Preproc2} each $v\in B_1'(1)$ has more than 
$k/2$ neighbors in $A(1)-V(M_{B'}).$ The vast majority of these neighbors have almost full degree to $B_2(1),$ hence, we can find the claimed vertex disjoint balanced triangles that cover $B_1'(1).$ 

Since $|B_2(1)|$ was odd
and we used up an odd number of vertices from it, at this point $B_2(1)$ has an even number of vacant vertices. Then we cover $B'(1)$ so that we use an even number of vacant vertices from $B_2(1)$ by Claim~\ref{B'paritas}, just find non-crossing triangles for every vertex of $B'.$

\item [(ii)] $|B_1'(1)|$ is even and $B'(1)\not=\emptyset$: First, as above we find crossing triangles for $B_1'(1).$ Then apply Claim~\ref{B'paritas} with $q=1,$ find $1$ crossing triangle for covering a vertex of $B'(1),$
and non-crossing ones for other vertices of $B'(1).$ Clearly, we use up an odd number of vertices from $B_2(1)$ this way.

\item [(iii)] $|B_1'(1)|$ is even and $B'(1)=\emptyset$: Let $v\in B_1(1)-B_1'(1)-B_1''(1)$ be an arbitrary vertex. By Observation~\ref{atmeno} $v$ must has at least one neighbor 
$w \in B_2=B_2(1).$ Moreover,
$|N(v)\cap A(1)\cap N(w)|>k/4$ since $v$ (being non-exceptional) has almost full degree into $A,$ and $\deg(w, A)\ge k/3$ since $B'=\emptyset.$ Let $u\in N(v)\cap A(1)\cap N(w)$ be an arbitrary vacant vertex. 
The $uvw$ triangle is a non-crossing one. Next we find vertex-disjoint crossing triangles for $B_1'(1),$ as is described above. Observe, that with this we have used up an odd number of vertices of $B_2(1),$ 
hence, after embedding it the vacant part of $B_2$ will have even cardinality. 
\end{itemize}

If $|B_2(1)|$ is odd and $B_2'(1)\not=\emptyset,$ then only minor modifications are needed. The details are as follows.

\begin{itemize}

\item[(i)] $|B_2'(1)|$ is odd: Repeat the method of the first case above, with $B_2'(1)$ instead of $B_1'(1).$

\item [(ii)] $|B_2'(1)|$ is even and $B'(1)\not=\emptyset$: Again, repeat the method of the second case above, with $B_2'(1)$ instead of $B_1'(1).$

\item [(iii)] $|B_2'(1)|$ is even and $B'(1)=\emptyset$: By Observation~\ref{atmeno} every vertex $v\in B_2$ has at least one neighbor in $B_1.$ Pick a vertex 
$v\in B_2(1)-B_2'(1)-B_2''(1),$ then find a $w\in B_1(1)$ and $u\in A(1)$ such that $uvw$ is a crossing triangle. Next find vertex-disjoint crossing triangles for $B_2'(1).$ 
 
\end{itemize}

Our second main case is when $|B_2(1)|$ is even. We don't want to give the details of the whole argument, as it is very similar to the case when $|B_2(1)|$ is odd, we only give an outline. First, regardless of whether 
$B_1'(1)$ or $B_2'(1)$ is non-empty, we find vertex-disjoint crossing triangles for them. Doing so may change the parity of the vacant subset of $B_2$ from even to odd. When $B'(1)\not=\emptyset,$ then
using Claim~\ref{B'paritas} we can easily find zero or one crossing triangle, and non-crossing ones for the rest of $B'(1),$ in order to get an even $B_2(2)$ at the end of Step 2. And when $B'(1)=\emptyset,$ we use
the minimum degree condition of $G$ (see step (iii) above) in order to find a crossing triangle -- one can see that this is necessary only when $|B_1'(1)|$ or $|B_2'(1)|$ is odd.    
\end{proof}

Next we prove the correctness of the Embedding Algorithm for Case \#2.

\begin{lemma}\label{Case2}
If the conditions of Lemma~\ref{ExtremLemma} and Case 2 are satisfied, then the Embedding Algorithm for Case 2 finds a copy of $H$ in $G.$ 
\end{lemma}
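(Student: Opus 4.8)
The plan is to check that, under the hypotheses of Case~2, each of the six steps of the Embedding Algorithm can be carried out; the proof runs parallel to that of Lemma~\ref{Case1}, the genuinely new feature being that $G[B]$ now behaves like the disjoint union of two near-cliques $G[B_1]$ and $G[B_2]$ rather than a single dense graph. I would begin by recording what Step~0 buys us: after Preprocessing~\#1 and~\#2, Claim~\ref{meretek-2} bounds all the exceptional sets $A',B',A'',B'',B_1',B_2',B_1'',B_2''$ by $O(\sqrt{\mu}\,n)$, so that once they are removed every surviving vertex of $A$ misses only $O(\sqrt{\eta}\,n)$ vertices of $B$, every surviving vertex of $B_i$ misses only $O(\sqrt{\mu}\,n)$ vertices of $B_i$ and $O(\sqrt{\eta}\,n)$ vertices of $A$, and --- since the whole algorithm outside Step~5 consumes only $O(\sqrt{\mu}\,n+\nu n)$ vertices --- the graphs $G[B_1],G[B_2]$ stay $(\mu',2)$-non-extremal with minimum degree $\ge(1-O(\sqrt{\mu}))|B_i|$ at every moment.

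For Steps~1--3 I would argue as follows. Step~1 is Claim~\ref{B'parositas}. Step~2 is precisely Lemma~\ref{specialis}: using the matching $M_{B'}$, Claim~\ref{B'paritas} and Observation~\ref{atmeno} one covers $B'(1)\cup B_1'(1)\cup B_2'(1)$ by vertex-disjoint balanced triangles while forcing $|B_2(2)|$ to be even. In Step~3 the exceptional sets that remain are tiny, so I would greedily build one balanced triangle per exceptional vertex: for $v\in A$ an edge of $G[B_i]$ lying inside $N(v)$ together with $v$; for $v\in B_i$ a neighbour $u\in A$ and a common neighbour $w\in B_i$ --- always taking the two $B$-endpoints inside the same clique, so the parity of $|B_2|$ is unchanged; near-completeness of the cliques (still valid by the paragraph above) makes every such choice possible. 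Each balanced triangle produced in Steps~1--3 is identified with one of the vertex-disjoint triangles of $H[V_\Delta]$.

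Step~4 then repeats the bookkeeping of Step~4 of Case~1. Apply Lemma~\ref{strH} to $H'$ to obtain $I(H')$, so that $H'-I(H')$ is a union of paths of length $\le2$ on $\le7\nu n$ vertices (Lemma~\ref{vdelta}). Since $G[B_1(3)]$ is a near-clique, adding $O(\sqrt{\eta}\,n)$ fictive vertices makes it Hamiltonian; cutting a Hamilton cycle into short paths and deleting the fictive vertices yields a path system into which the components of $H'-I(H')$ are embedded greedily, after which $I(H')$ is embedded into $A(3)$ greedily as well, using that the (at most four) already-placed neighbours of each $x\in I(H')$ have $\ge n/3-O(\sqrt{\eta}\,n)$ common neighbours in $A(3)$ and at most $7\nu n$ of these are occupied. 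Setting $h_0=|I(H')|$, $h_1=v(H')-h_0$ and $2h_0=h_1+s$, the identity $|B_1(3)|+|B_2(3)|-2|A(3)|=r+s$ holds verbatim, $r+s\equiv0\pmod3$ as in Case~1, and placing the remaining $(r+s)/3$ triangles of $H$ inside the near-clique $G[B_1(3)]$ (possible because $r+s\le10\nu n$) gives $2|A(4)|=|B_1(4)|+|B_2(4)|$; since $|B_2|$ has not changed parity since Step~2, both $|B_1(4)|$ and $|B_2(4)|$ are even. Finally, Step~5: a perfect matching of $G[B(4)]$ is built directly --- not via Lemma~\ref{Bparositas}, which no longer applies because $G[B(4)]$ is itself two-clique-like --- by taking perfect matchings inside the even-order near-complete graphs $G[B_1(4)]$ and $G[B_2(4)]$; then, exactly as in Lemma~\ref{Case1}, the auxiliary bipartite graph between $A(4)$ and the edges of this matching is an almost complete balanced bipartite graph (no exceptional vertices survive), so K\"onig--Hall yields a perfect matching, i.e.\ a balanced triangle factor of $G[A(4)\cup B(4)]$. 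Together with the copy of $H'$ and the triangles placed in Steps~1--4, this gives $H\subset G$.

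The step I expect to be the main obstacle is the parity-and-divisibility coordination threaded through Steps~2--4. Because crossing edges between $B_1$ and $B_2$ are too few to rely on, the triangle factor of Step~5 must essentially be assembled from non-crossing balanced triangles, which forces $|B_1(4)|,|B_2(4)|$ to be even and $2|A(4)|=|B_1(4)|+|B_2(4)|$; Lemma~\ref{specialis} quarantines the delicate part of this accounting into Step~2, and the remaining task is to verify that Steps~3 and~4 can always be executed while respecting these parity constraints and without eroding the near-completeness and non-extremality of the two cliques --- this is where almost all of the case analysis in the proof of Lemma~\ref{Case2} lives.
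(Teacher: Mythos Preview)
Your proposal is correct and follows essentially the same approach as the paper: Lemma~\ref{specialis} for Step~2, non-crossing balanced triangles in Step~3 to preserve the parity of $|B_2|$, the Case~1 bookkeeping for Step~4 carried out inside the near-clique $G[B_1(3)]$, and then a triangle factor built from perfect matchings inside each near-complete even-order $G[B_i(4)]$. The only cosmetic difference is in Step~5, where the paper first splits $A(4)$ into $A_1,A_2$ with $2|A_i|=|B_i(4)|$ and runs the Case~1 auxiliary-bipartite-graph argument on each piece, whereas you take the union of the two matchings and apply K\"onig--Hall once over all of $A(4)$; since every surviving vertex of $A(4)$ has nearly full degree to both $B_1(4)$ and $B_2(4)$, the two organisations are equivalent.
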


\begin{proof}
Using Lemma~\ref{specialis} we can cover every vertex of $B'(1), B_1'(1)$ and $B_2'(1)$ by vertex-disjoint triangles, moreover, $B_2(2)$ will have even cardinality. 

In Step 3 we cover the rest of the exceptional vertices.  
For $A'(2), A''(2)$ and $B''(2)$ one can use the method discussed in Lemma~\ref{Case1} for Case 1. However, we have to be more careful: only non-crossing triangles can be used since we need that $|B_2(3)|$ is even. This is easily doable for the vertices of $A'(2), A''(2),$ since they have more than $k/4$ neighbors in $B_1(2)$ and in $B_2(2)$ as well. For covering $B''(2)$ observe that any $v\in B''(2)$ has more than $k/4$ neighbors in
its $B_i$ set, as $B_1'(2)\cap B''(2)=B_2'(2)\cap B''(2)=\emptyset.$ Hence, it is easy to find non-crossing vertex-disjoint triangles for them. 

For $B''_1(2)$ and $B_2''(2)$ we do the following. Say that $v\in B_1''(2),$ then 
both $\deg(v, B_1(2))$ and $\deg(v, A(2))\ge k/4 \gg |B_1''(2)|$ by Claim~\ref{meretek-2}. So we can pick a vertex $u\in (A(2)-A'(2)-A''(2))\cap N(v)$ that is adjacent to almost every vertex of $N(v)\cap B_1(2),$ hence, we have
many possibilities for a triangle that has two vertices in $B_1(2)$ and one vertex in $A(2).$ The same method can be applied for $B_2''(2),$ too.  

In Step 4 we can apply Lemma~\ref{Case1}, since the minimum degree in $G[B_1(3)]$ is larger than $k-10\sqrt{\mu}k.$ In Lemma~\ref{Case1} we also proved that after embedding $H'$ and $(r+s)/3$ vertex-disjoint triangles into $B$ the vacant part of $B$ has exactly twice as many vertices as the vacant part of $A.$ It is easy to see that this statement still holds, that is, $2|A(4)|=|B_1(4)|+|B_2(4)|.$ This implies
that $|B_1(4)|+|B_2(4)|$ is even. Since we managed to achieve that $|B_2(4)|$ is even, $|B_1(4)|$ must be even as well.

In order to show that one can find the desired triangle factor in $G[A(4)\cup B(4)]$ we do the following. Divide $A(4)$ into two disjoint subsets, $A_1$ and $A_2$ such that $2|A_1|=|B_1(4)|$ and $2|A_2|=|B_2(4)|.$ 
Then apply the method given in Lemma~\ref{Case1} for finding a triangle factor in $G[A_1 \cup B_1(4)]$ and $G[A_2\cup B_2(4)]$ separately. This is doable since every vertex of $A(4)$ has almost full degree into
$B_1(4)\cup B_2(4)$ and every vertex of $B_1(4)\cup B_2(4)$ has almost full degree into $A(4)$ as the exceptional vertices have all been covered. This finishes the proof of the lemma.   
\end{proof}

\subsubsection{Case 3:}

In this last case $V(G)$ can be partitioned into three disjoint sets, $A, B$ and $C$ such that $|A|=k,$ $|B|=k+\lfloor r/2 \rfloor,$ $|C|= k+\lceil r/2 \rceil$ (so $|A|\le |B|\le |C|$) and $e(G[A]), e(G[C]), e(G[B])\le \mu n^2/18.$ For proving that 
$H\subset G$ in Case 3 we use very similar methods to the previous ones. The main idea is again to first cover the exceptional vertices, however, for embedding the vast majority of $H$ we will use the Blow-up Lemma.
This makes the presentation of this case short. 

Let us introduce a slightly different kind of preprocessing.

\medskip

\noindent {\bf Preprocessing \#3:}

\smallskip

{\it Ordinary switching:} Let $X, Y$ denote two different partition sets of $V(G)$ (for example $X=A$ and $Y=C$). If there exists $u\in X$ and $v\in Y$ such that $$d(u, X)+d(v, Y)> d(u, Y)+d(v, X),$$ then we switch
$u$ and $v,$  that is, we let $X=X-u+v$ and $Y=Y-v+u.$ 

{\it Circular switching:} Let $X_1, X_2, X_3$ denote the partition sets of $V(G)$ in some order. For us there will be two different orders: in both of them $X_1=A$ and either $X_2=B$ and $X_3=C$ or $X_2=C$ and
$X_3=B.$ If there exists $u\in X_1,$ $v\in X_2$ and $w\in X_3$ such that $$\deg(u, X_1)+\deg(v, X_2)+\deg(w, X_3)> \deg(u, X_2)+\deg(v, X_3)+\deg(w, X_1),$$ then we do a circular switching, that is, we
let $X_1=X_1-u+w,$ $X_2=X_2-v+u$ and $X_3=X_3-w+v.$ 

We stop when such vertices cannot be found. Since every switching step increases the number of edges between the three partition sets, 
the algorithm stops in a finite number of steps. 

Let us define three exceptional sets:

$$A'=\left\{v\in A : \ {\rm either} \  \deg(v, B)< \frac{k}{3} \ {\rm or} \ \deg(v, C)< \frac{k}{3} \right\},$$

$$B'=\left\{v\in B : \ {\rm either} \  \deg(v, A)< \frac{k}{3} \ {\rm or} \ \deg(v, C)< \frac{k}{3} \right\},$$

$$C'=\left\{v\in C : \ {\rm either} \  \deg(v, A)< \frac{k}{3} \ {\rm or} \ \deg(v, B)< \frac{k}{3} \right\}.$$

\begin{claim}
Let $X, Y, Z$ denote the partition sets $A, B, C$ of $V(G)$ in some order. If $u\in X'$ then $u$ has at least $2k/3$ neighbors either in $Y$ or in $Z.$ 
\end{claim}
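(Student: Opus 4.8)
The plan is to derive the claim directly from the minimum degree hypothesis together with the (almost equal) sizes of the three parts, which is what the phrase ``simple calculation'' preceding the statement anticipates. Fix $u\in X'$, and let $Y$ and $Z$ denote the two partition classes other than $X$. The first step is to record that $\deg(u)\ge\delta(G)\ge 2n/3\ge 2k$, where the last inequality uses only $n=3k+r\ge 3k$.

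The second step is to bound how many neighbours $u$ can have inside its own class. Since $|A|=k$, $|B|=k+\lfloor r/2\rfloor$ and $|C|=k+\lceil r/2\rceil$ with $r\le 2$, every partition class has at most $k+1$ vertices; as $u$ lies in $X$, this gives $\deg(u,X)\le |X|-1\le k$. Combining the two steps, $\deg(u,Y)+\deg(u,Z)=\deg(u)-\deg(u,X)\ge 2k-k=k$.

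The final step uses the definition of $X'$: it guarantees that at least one of $\deg(u,Y)$, $\deg(u,Z)$ is smaller than $k/3$, and since that definition is symmetric in $Y$ and $Z$ we may assume without loss of generality that it is $\deg(u,Y)<k/3$. Then $\deg(u,Z)\ge k-\deg(u,Y)>k-k/3=2k/3$, which is exactly the asserted conclusion, with a little room to spare. There is essentially no obstacle in this argument; the only points needing (trivial) verification are the bound $|X|\le k+1$ on the size of the class containing $u$, which is immediate from the prescribed part sizes and $r\le 2$, and the remark that the ``or'' in the definition of $X'$ lets us treat both of its clauses by the single case $\deg(u,Y)<k/3$.
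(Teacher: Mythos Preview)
Your proof is correct and follows exactly the approach the paper indicates: the paper's own proof consists of the single sentence ``Follows easily from the minimum degree bound for $G$,'' and you have simply written out this easy calculation in full. The key ingredients---$\deg(u)\ge 2n/3\ge 2k$, the bound $\deg(u,X)\le |X|-1\le k$, and the defining inequality $\deg(u,Y)<k/3$ (or symmetrically in $Z$)---are precisely what the paper has in mind.
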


\begin{proof} Follows easily from the minimum degree bound for $G.$
\end{proof}

\begin{claim}\label{Preproc3}
After applying Preprocessing \#3 there could remain at most two exceptional sets out of the above three. Let $X, Y$ denote any two different partition sets of $V(G)$ such that $X$ has non-empty exceptional subset $X'.$  
If there exist a vertex $u\in X'$ such that $\deg(u, Y)<k/3,$ then every vertex of $Y$ has more than $2k/3$ neighbors in $X.$ 
\end{claim}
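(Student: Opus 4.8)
The plan is to read off from the termination of Preprocessing \#3 two families of inequalities and combine them with the bare minimum degree bound $\delta(G)\ge 2n/3$. Write $n=3k+r$ as above, and for two of the three partition sets $X,Y$ let $Z$ denote the third. When Preprocessing \#3 stops, no ordinary switch between any pair $X,Y$ is available, which gives
\[
\deg(u,X)-\deg(u,Y)\le \deg(v,X)-\deg(v,Y)\qquad\text{for all }u\in X,\ v\in Y,
\]
and no circular switch is available, which for each of the two cyclic orderings $(X_1,X_2,X_3)=(A,B,C)$ and $(A,C,B)$ gives
\[
\bigl(\deg(u,X_1)-\deg(u,X_2)\bigr)+\bigl(\deg(v,X_2)-\deg(v,X_3)\bigr)+\bigl(\deg(w,X_3)-\deg(w,X_1)\bigr)\le 0
\]
for all $u\in X_1$, $v\in X_2$, $w\in X_3$. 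I will also use the following elementary bookkeeping: since every vertex has degree at least $2n/3\ge 2k$ and each partition set has size at most $k+1$, any vertex $x\in X$ with $\deg(x,Y)<k/3$ automatically satisfies $\deg(x,X)\ge 2n/3-k/3-|Z|\ge 2k/3-1$, and likewise $\deg(x,Z)\ge 2k/3-1$; similarly $\deg(x,X)+\deg(x,Y)\ge 2n/3-|Z|\ge k-1$ for every $x$. (These bounds are clean for $r=0$ and only shift by a bounded additive constant for $r\in\{1,2\}$; integrality of degrees recovers the strict inequalities below.)

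First I would prove the ``moreover'' assertion. Let $u\in X'$ with $\deg(u,Y)<k/3$. By the bookkeeping $\deg(u,X)-\deg(u,Y)>k/3-1$, hence by the ordinary stopping inequality every $v\in Y$ satisfies $\deg(v,X)-\deg(v,Y)>k/3-1$. Adding this to $\deg(v,X)+\deg(v,Y)\ge k-1$ yields $2\deg(v,X)>4k/3-2$, so $\deg(v,X)>2k/3$ after the (harmless) integrality adjustment. This is exactly the second statement of the claim; note that it also shows $\deg(v,X)>k/3$ for every $v\in Y$, so $Y'$ can contain no vertex that is ``exceptional towards $X$''.

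Next I would prove that at least one of $A',B',C'$ is empty. Suppose not. Pick $u\in A'$; by definition $\deg(u,B)<k/3$ or $\deg(u,C)<k/3$, and after relabelling $B$ and $C$ (this is precisely why Preprocessing \#3 uses both cyclic orderings) we may assume $\deg(u,B)<k/3$. Applying the ``moreover'' statement to the pair $(A,B)$, every vertex of $B$ has more than $2k/3>k/3$ neighbours in $A$; consequently every $v\in B'$ must satisfy $\deg(v,C)<k/3$, and we fix such a $v$. Applying the ``moreover'' statement to $(B,C)$, every vertex of $C$ has more than $2k/3>k/3$ neighbours in $B$, so every $w\in C'$ satisfies $\deg(w,A)<k/3$; fix such a $w$. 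Now by the bookkeeping $\deg(u,A),\deg(v,B),\deg(w,C)\ge 2k/3-1$ while $\deg(u,B),\deg(v,C),\deg(w,A)<k/3$, so
\[
\bigl(\deg(u,A)-\deg(u,B)\bigr)+\bigl(\deg(v,B)-\deg(v,C)\bigr)+\bigl(\deg(w,C)-\deg(w,A)\bigr)>k-3>0
\]
for $n$ large, contradicting the circular stopping inequality for the ordering $(A,B,C)$. In the relabelled case $\deg(u,C)<k/3$ one runs the identical argument through $C$ then $B$ and contradicts the stopping inequality for the ordering $(A,C,B)$.

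The main obstacle is one of bookkeeping discipline rather than a new idea: one must ensure that the three partition sizes $k$, $k+\lfloor r/2\rfloor$, $k+\lceil r/2\rceil$, the hypothesis ``$<k/3$'', and the conclusion ``$>2k/3$'' combine with only bounded additive slack, so that the final sum stays strictly positive; and one must check that the ``without loss of generality'' step really reduces to one of the two cyclic orderings that Preprocessing \#3 controls, since an ordering outside these two carries no stopping inequality.
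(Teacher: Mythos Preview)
Your proof is correct and follows essentially the same approach as the paper's two-sentence sketch: the paper says the second statement is ``essentially equivalent to Claim~\ref{A'B'}'' (this is exactly your ordinary-switching argument) and that if all three exceptional sets were nonempty one could still perform a switch, which is precisely what you make explicit by chaining the second statement to force $\deg(u,B),\deg(v,C),\deg(w,A)<k/3$ and then exhibiting a circular switch. One minor remark on the bookkeeping: since degrees are integers you actually have $\delta(G)\ge\lceil 2n/3\rceil$, and a direct check of the three cases $r\in\{0,1,2\}$ gives $\lceil 2n/3\rceil-|Z|\ge k$ (not merely $k-1$), so $\deg(u,X)-\deg(u,Y)>k/3$ and $\deg(v,X)>2k/3$ follow cleanly without any ``integrality adjustment''.
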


\begin{proof}
An easy observation implies the first statement of the claim: if $A', B'$ and $C'$ are all non-empty sets then one could do either an ordinary or a circular switching step. The second statement is essentially equivalent to Claim~\ref{A'B'}.
\end{proof}

\begin{claim}\label{Excparosito}
Let $X, Y, Z$ denote the partition sets $A, B, C$ of $V(G)$ in some order. Assume that $X'$ is non-empty, and let $X_Y\subset X'$ denote the set of those vertices that have less than $k/3$ neighbors in $Y,$ 
$X_Z\subset X'$ is defined analogously. Then there exists a matching $M_Y$ in the bipartite subgraph $G[X_Y, Y]$ that covers every vertex of $X_Y,$ and similarly, there exists a matching $M_Z$ in the bipartite subgraph $G[X_Z, Z]$ that covers every vertex of $X_Z.$ If $X', Y'$ are non-empty then exists a matching $M$ in the bipartite subgraph $G[X'\cup Y', Z]$ that covers every vertex of $X'\cup Y'.$
\end{claim}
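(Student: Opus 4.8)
The plan is to verify the K\"onig--Hall condition for each of the three bipartite graphs in turn, in every case deriving a contradiction from the fact that Preprocessing \#3 has terminated whenever the Hall condition fails. First I record the size estimates I will need. Since $G$ is $\eta$-extremal and we are in Case 3, the three parts $A,B,C$ each span at most $\mu n^2/18$ edges, and combined with $\delta(G)\ge 2n/3$ this forces each of $e(G[A,B]),e(G[A,C]),e(G[B,C])$ to be within $O(\mu n^2)$ of the complete bipartite value; consequently, a vertex of $A'$ that is deficient towards $B$ misses more than $2k/3$ edges of $G[A,B]$, so counting missing edges gives $|A'|,|B'|,|C'|=O(\mu n)=o(k)$. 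Moreover a vertex of $X'$ cannot be deficient towards \emph{both} of the other two parts (its degree would drop below $2n/3$), so $X'=X_Y\sqcup X_Z$; and every vertex of $X'\setminus X_Y$ has more than $2k/3$ neighbours in $Y$, while every vertex of $X'\setminus X_Z$ has more than $2k/3$ neighbours in $Z$, by the minimum degree bound (cf.\ the earlier claim on $X'$).

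For $M_Y\subseteq G[X_Y,Y]$ (the case of $M_Z$ being entirely symmetric), suppose Hall fails: there is a nonempty $S\subseteq X_Y$ with $|T|<|S|=:s$, where $T=N_G(S)\cap Y$. Since $|Y|\ge k\gg |X_Y|>|T|$ there is a vertex $w\in Y\setminus T$, and it has no neighbour in $S$, so $\deg_G(w,X)\le|X|-s$; pick also $v\in S$, so that $N_G(v)\cap Y\subseteq T$ gives $\deg_G(v,Y)\le s-1$. Using $\deg_G(v),\deg_G(w)\ge 2n/3$ and $\deg_G(\cdot,Z)\le|Z|$, together with $|X|+|Z|=n-|Y|$, a short computation yields
$$\bigl(\deg_G(v,X)+\deg_G(w,Y)\bigr)-\bigl(\deg_G(v,Y)+\deg_G(w,X)\bigr)\ \ge\ 2|Y|-\tfrac{2n}{3}+2\ >\ 0,$$
since $|Y|\ge k\ge (n-2)/3$. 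Hence an ordinary switch of $v$ and $w$ between the parts $X$ and $Y$ would strictly increase the number of edges running between the partition classes, contradicting the termination of Preprocessing \#3. Therefore $M_Y$ exists.

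For the matching $M$ covering $X'\cup Y'$ inside $G[X'\cup Y',Z]$, assume $X',Y'\neq\emptyset$; by Claim~\ref{Preproc3} this forces $Z'=\emptyset$, i.e.\ every vertex of $Z$ has at least $k/3$ neighbours in each of $X$ and $Y$. Write $X'\cup Y'=(X_Y\cup Y_X)\cup(X_Z\cup Y_Z)$. Each vertex of $X_Y\cup Y_X$ has more than $2k/3$ neighbours in $Z$ and $|X'\cup Y'|=O(\mu n)\ll|Z|$, so $X_Y\cup Y_X$ can be matched into $Z$ greedily, one vertex at a time, leaving available a set $Z_0\subseteq Z$ with $|Z_0|\ge|Z|-O(\mu n)$. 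It then remains to match the \emph{deficient} set $X_Z\cup Y_Z$ into $Z_0$, and once more I check Hall and argue by switching. If $S\subseteq X_Z\cup Y_Z$ violates Hall with deficient image $T\subseteq Z_0$, then exactly the computation of the previous paragraph — now with an \emph{ordinary} switch between $X$ and $Z$, using $v\in S\cap X_Z$, or between $Y$ and $Z$, using $v\in S\cap Y_Z$ — produces the contradiction provided one of $S\cap X_Z$, $S\cap Y_Z$ is small; in that range the leftover term is $2|Z|-2n/3+2$ minus the size of the \emph{other} deficient piece, which is positive. The remaining \emph{balanced} case, where $S$ meets both $X_Z$ and $Y_Z$ in sizeable sets, is the main obstacle: here one performs a \emph{circular} switch on a triple $v\in S\cap X_Z$, $u\in S\cap Y_Z$, $w\in Z_0\setminus T$, exploiting $Z'=\emptyset$ together with $\deg_G(v,Z),\deg_G(u,Z)\le|T|$ and $\deg_G(w,X)\le|X|-|S\cap X_Z|$, $\deg_G(w,Y)\le|Y|-|S\cap Y_Z|$. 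The delicate point there is to choose $u$ and $v$ so that the two degrees $\deg_G(v,Y)$ and $\deg_G(u,X)$ appearing on the losing side of the circular inequality are not too large; this is where the sparsity of $G[X]$ and $G[Y]$ (hence the bound $|X_Z|,|Y_Z|=O(\mu n)$ obtained from $\sum_{v}\deg_G(v,X)=2e(G[X])$) must be brought in, and it is the step I expect to require the most care.
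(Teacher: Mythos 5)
Your treatment of the two ``one--sided'' matchings $M_Y$ and $M_Z$ is correct and is exactly the route the paper intends: check Hall's condition and, when it fails for some $S$, exhibit a pair $v\in S$, $w\in Y\setminus N(S)$ for which an ordinary switch between $X$ and $Y$ would strictly increase the number of crossing edges, contradicting the termination of Preprocessing \#3; your estimate $2|Y|-2n/3+2>0$ is the correct adaptation of the argument of Claim~\ref{B'parositas}, and your decomposition $X'=X_Y\sqcup X_Z$ with the $2k/3$-degree facts is the paper's unnumbered claim preceding Claim~\ref{Preproc3}. (The paper itself dispatches the whole statement in one sentence, by reference to Claim~\ref{B'parositas} and Claim~\ref{Preproc3}.)

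The genuine gap is the last assertion, the single matching $M$ covering $X'\cup Y'$ into $Z$: after the greedy step for $X_Y\cup Y_X$, your switching argument only rules out Hall violators $S$ lying essentially inside one of $X_Z$, $Y_Z$, and for the mixed case you offer a circular-switch plan whose key estimate you explicitly defer. That estimate cannot be obtained along the lines you sketch. Indeed, if $X_Z$ and $Y_Z$ are both non-empty, then stability under the ordinary $X$--$Y$ switch already forces, for every $v\in X_Z$ and $u\in Y_Z$, $\deg(v,Y)+\deg(u,X)\ge 2n/3-k/3\ge 5k/3$, so both $\deg(v,Y)$ and $\deg(u,X)$ are at least roughly $2k/3$; these are precisely the terms on the right-hand side of either circular-switch inequality, and the sparsity of $G[X]$, $G[Y]$ does not cap them, since a handful of exceptional vertices may be joined to almost all of the other part while $e(G[X])\le\mu n^2/18$ still holds. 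Worse, the mixed case is a real obstruction to any switching proof of the statement as written: with $n=3k$, fix $q\ge2$, take $S_A\subset A$, $S_B\subset B$ of size $q$ and $T\subset C$ of size $2q-1$; join each vertex of $S_A$ to all of $B$, to all of $T$ and to $k-2q+1$ vertices of $A\setminus S_A$ (symmetrically for $S_B$); join $T$ to all of $A\cup B$; join every vertex of $A\setminus S_A$ to every vertex of $B\setminus S_B$; join $C\setminus T$ completely to $(A\setminus S_A)\cup(B\setminus S_B)$ and make $G[C\setminus T]$ $2q$-regular. This graph has $\delta(G)=2n/3$, all three parts span $O(qk)$ edges, one checks that no ordinary and no circular switch is available (the tight case, $v\in S_A$ against $w\in C\setminus T$, gives equality when $q=2$), and here $A'=A_C=S_A$, $B'=B_C=S_B$, $C'=\emptyset$, while $N(A'\cup B')\cap C=T$ has only $2q-1<|A'\cup B'|$ vertices, so no matching of $A'\cup B'$ into $C$ exists even though $M_Y$ and $M_Z$ do. So the difficulty you flagged is not a technicality you can push through with more care: the joint-matching sentence needs either extra hypotheses or a different covering device (e.g.\ triangles using two vertices of $X$ or of $Y$ for such pairs), and the paper's one-line justification does not address this either.
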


\begin{proof}
The claim is easily seen to follow from Claim~\ref{B'parositas} and from Claim~\ref{Preproc3}.
\end{proof}

We also need the following definitions of exceptional sets:

$$A''=\left\{v\in A-A' : \ {\rm either} \ \deg(v, B)\le k-10\mu^{1/2}k \ {\rm or} \  \deg(v, C)\le k-10\mu^{1/2}k\right\},$$

$$B''=\left\{v\in B-B' : \ {\rm either} \ \deg(v, A)\le k-10\mu^{1/2}k \ {\rm or} \  \deg(v, C)\le k-10\mu^{1/2}k\right\},$$

$$C''=\left\{v\in C-C' : \ {\rm either} \ \deg(v, A)\le k-10\mu^{1/2}k \ {\rm or} \  \deg(v, B)\le k-10\mu^{1/2}k\right\}.$$

The following upper bounds hold.

\begin{claim}\label{meretek-3}
We have $|A'|, |B'|, |C'|\le \mu n/2,$ and $|A''|, |B''|, |C''| \le \mu^{1/2}n/30.$
\end{claim}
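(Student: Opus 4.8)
The plan is to prove Claim~\ref{meretek-3} by a short double counting of the edges running between the three parts $A$, $B$, $C$, in exactly the spirit of the proof of Claim~\ref{meretek}. First observe that Preprocessing~\#3 only moves vertices between parts and every switching step it performs strictly increases the number of edges joining the three parts; hence it preserves $|A|=k$, $|B|=k+\lfloor r/2\rfloor$, $|C|=k+\lceil r/2\rceil$, and it can only \emph{decrease} $e(G[A])$, $e(G[B])$, $e(G[C])$. So after Preprocessing~\#3 we still have $e(G[A]),e(G[B]),e(G[C])\le \mu n^2/18$, and all the estimates below apply.

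I would first bound $|A'|$; the arguments for $|B'|$ and $|C'|$ are identical, summing degrees over $B$, resp.\ $C$. Summing $\deg_G(v)$ over $v\in A$ and using $\delta(G)\ge 2n/3$ gives
$$e(G[A,B])+e(G[A,C]) \;=\; \sum_{v\in A}\deg_G(v) - 2e(G[A]) \;\ge\; \frac{2kn}{3} - \frac{\mu n^2}{9}.$$
On the other hand, write $A'=A'_B\cup A'_C$, where $A'_B$ consists of those $v\in A'$ with $\deg(v,B)<k/3$ and $A'_C$ of those with $\deg(v,C)<k/3$ (each vertex of $A'$ lies in at least one of these). Since $|B|,|C|\ge k$, every vertex of $A'_B$ contributes at least $2k/3$ less than the trivial maximum to $e(G[A,B])$, and similarly for $A'_C$, hence
$$e(G[A,B])+e(G[A,C]) \;\le\; |A||B|+|A||C| - \tfrac{2k}{3}\bigl(|A'_B|+|A'_C|\bigr) \;\le\; |A|(|B|+|C|) - \tfrac{2k}{3}\,|A'|.$$
Combining the two displays and inserting $|A|(|B|+|C|)=k(2k+r)$ and $\tfrac{2kn}{3}=2k^2+\tfrac{2kr}{3}$ (recall $n=3k+r$ with $r\le 2$), the quadratic-in-$n$ terms cancel up to the $\mu n^2/9$ slack and a term of order $n$ coming from $r$, yielding $\tfrac{2k}{3}|A'|\le \tfrac{2n}{9}+\tfrac{\mu n^2}{9}$. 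Dividing by $2k/3$ and using $k=(n-r)/3$ gives $|A'|\le \mu n/2$ for $n$ large.

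For $|A''|$ the same edge count is merely refined: $A'$ and $A''\subseteq A-A'$ are disjoint, and by definition a vertex of $A''$ is deficient towards $B$ or towards $C$ by at least $10\mu^{1/2}k$ relative to the trivial maximum. Charging each exceptional vertex once to the part in which it is deficient, the upper bound above improves to
$$e(G[A,B])+e(G[A,C]) \;\le\; |A|(|B|+|C|) - \tfrac{2k}{3}\,|A'| - 10\mu^{1/2}k\,|A''|,$$
and comparing with the lower bound $e(G[A,B])+e(G[A,C])\ge \tfrac{2kn}{3}-\tfrac{\mu n^2}{9}$ and dropping the nonnegative $\tfrac{2k}{3}|A'|$ term gives $10\mu^{1/2}k\,|A''|\le \tfrac{2n}{9}+\tfrac{\mu n^2}{9}$, hence $|A''|\le \mu^{1/2}n/30$ for $n$ large; the arguments for $|B''|$, $|C''|$ are the same. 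The computation is entirely routine and there is no real obstacle; the only points requiring a little care are the bookkeeping of the deficiencies — that each exceptional vertex is charged exactly once and that the contributions of $A'$ and $A''$ are genuinely disjoint — and checking that the error terms arising from $r\neq 0$ and from $|B|,|C|$ exceeding $k$ by a bounded amount are of lower order than $\mu n$, so the stated constants hold with room to spare.
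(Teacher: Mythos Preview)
Your argument is correct and uses the same two ingredients as the paper --- the minimum degree condition and the sparsity bound $e(G[X])\le \mu n^2/18$ --- but the paper's computation is more direct. Instead of sandwiching $e(G[A,B])+e(G[A,C])$ between an upper and a lower bound as you do (mirroring Claim~\ref{meretek}), the paper observes that any $v\in X'$ with $\deg(v,Y)<k/3$ must, by $\delta(G)\ge 2n/3$ and $|Z|\le k+1$, satisfy $\deg(v,X)\ge 2n/3-k/3-(k+1)\ge 2n/9$; summing over $X'$ gives $2e(G[X])\ge |X'|\cdot 2n/9$, hence $|X'|\le \mu n/2$. The $X''$ bound is analogous with $10\mu^{1/2}k$ in place of $2n/9$. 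Your route is equivalent in content but involves one extra layer of bookkeeping; the paper's version avoids splitting $A'$ into $A'_B\cup A'_C$ and the attendant disjointness check.

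One small caveat: your assertion that Preprocessing~\#3 ``can only decrease $e(G[A]),e(G[B]),e(G[C])$'' individually is not quite justified --- each switching step increases the total number of crossing edges, hence decreases the \emph{sum} $e(G[A])+e(G[B])+e(G[C])$, but a circular switch could in principle raise one of the three while lowering the others more. The paper simply asserts the bound after preprocessing without comment, so this is a shared loose end; in any case replacing $\mu n^2/18$ by the sum bound $\mu n^2/6$ would only cost a harmless constant factor.
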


\begin{proof}
Let $X$ denote any of the $A, B, C$ sets and define $X'$ and $X''$ analogously. Recall that $e(G[X])\le \mu n^2/18$ in Case~3.

Since $2e(G[X])\ge |X'|\left(\frac{2n}{3}-\frac{k}{3}-(k+1)\right)\ge |X'|\frac{2n}{9}$ we get that $|X'| \le \mu n/2.$
Similarly, 
$2e(G[X])\ge |X''|\cdot 10\sqrt{\mu}k$ implying that $|X''|\le \sqrt{\mu}n/30.$
\end{proof}

As before, when we refer to sets $A(i), B(i)$ or $C(i)$ below, we refer to the vacant subsets of $A, B$ and $C,$ respectively, right after Step $i.$

\medskip

\noindent{\bf Embedding Algorithm for Case \#3} 

\begin{enumerate}

\item [Step 0:] Apply Preprocessing \#3. Determine the sets of exceptional vertices. 

\item [Step 1:] Cover the vertices of $A', B'$ and $C'$ by vertex-disjoint crossing triangles, using the matching of Claim~\ref{Excparosito}. 

\item [Step 2:] Cover the vertices of $A'', B''$ and $C''$ by vertex-disjoint crossing triangles.  

\item [Step 3:] Use Theorem~\ref{equicol} to find an {\it equitable coloring} of $H'.$ Denote the color classes of $H'$ by $K_1, K_2$ and $K_3,$ such that $|K_1|\le |K_2|\le |K_3|.$

\item [Step 4:] Use the Blow-up Lemma (Theorem~\ref{blow-up}) in order to embed $H'$ and the missing number of vertex-disjoint triangles.

\end{enumerate}

\begin{lemma}\label{Case3}
If the conditions of Lemma~\ref{ExtremLemma} and Case 3 are satisfied, then the Embedding Algorithm for Case 3 finds a copy of $H$ in $G.$ 
\end{lemma}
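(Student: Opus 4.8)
The plan is to follow the five steps of the Embedding Algorithm for Case~3, reusing as much of the machinery of Cases~1 and~2 as possible. The guiding structural fact is that, after Preprocessing~\#3, the sets $A$, $B$, $C$ are — apart from $O(\sqrt{\mu}\,n)$ exceptional vertices — the three colour classes of an almost complete tripartite graph; once those exceptional vertices have been absorbed into vertex-disjoint crossing triangles, the remaining vacant parts form a triple of super-regular pairs, which is the ideal setting for the Blow-up Lemma together with an equitable $3$-colouring of what is left of $H$.

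First I would carry out Steps~0--2. Apply Preprocessing~\#3; by Claim~\ref{Preproc3} at most two of $A'$, $B'$, $C'$ survive, and by Claim~\ref{meretek-3} we have $|A'|,|B'|,|C'|\le\mu n/2$ and $|A''|,|B''|,|C''|\le\sqrt{\mu}\,n/30$. Using the matchings supplied by Claim~\ref{Excparosito} and extending each matching edge greedily to a crossing triangle (the relevant common neighbourhoods have size $>k/4$, exactly as in Lemmas~\ref{Case1} and~\ref{Case2}), cover all of $A'\cup B'\cup C'$ by vertex-disjoint crossing triangles, then do the same for $A''\cup B''\cup C''$. Since in total fewer than $\sqrt{\mu}\,n$ triangles are used and every one of them is crossing (one vertex in each part), the vacant parts $A(2),B(2),C(2)$ still have sizes that pairwise differ by at most $1$, and each has size $k-O(\sqrt{\mu}\,n)$. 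Because every vertex remaining in $A(2)$ avoided $A'\cup A''$ and hence originally had more than $k-10\sqrt{\mu}\,k$ neighbours in each of $B$ and $C$, and we removed only $O(\sqrt{\mu}\,n)$ further vertices, every vacant vertex now has degree at least $(1-O(\sqrt{\mu}))$ times the size of each of the other two vacant parts. Consequently each of the pairs $(A(2),B(2))$, $(A(2),C(2))$, $(B(2),C(2))$ is $(\varepsilon_0,1-O(\sqrt{\mu}))$-super-regular with $\varepsilon_0=O(\mu^{1/4})$, which is small enough for the Blow-up Lemma applied with $R=K_3$ and $\Delta=4$, once $\mu$ (equivalently $\nu$) is chosen sufficiently small.

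Next I would handle Steps~3--4. Let $t$ be the number of crossing triangles produced in Steps~0--2; reserve inside $H$ exactly $t$ of the vertex-disjoint triangle components of $H[V_\Delta]$, which is possible since $|V_\Delta|/3\ge(1-7\nu)n/3\gg\sqrt{\mu}\,n\ge t$ by Lemma~\ref{vdelta}. Map these $t$ reserved triangles of $H$ onto the $t$ crossing triangles already placed in $G$, and let $H^{*}$ be $H$ with these $t$ components deleted, so $v(H^{*})=n-3t=|A(2)|+|B(2)|+|C(2)|$, $\theta(H^{*})\le 5$ and $\Delta(H^{*})\le 4$. By Theorem~\ref{equicol}, $H^{*}$ has an equitable $3$-colouring with classes $K_1,K_2,K_3$ (the deleted triangles and the triangles left in $V_\Delta$ of course receive one vertex per class, so this is just the colouring used in Steps~3--4 of the algorithm). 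Both $\{|K_1|,|K_2|,|K_3|\}$ and $\{|A(2)|,|B(2)|,|C(2)|\}$ consist of three integers summing to $v(H^{*})$ and pairwise differing by at most $1$, hence they are equal as multisets; index the classes so that $|K_i|$ equals the size of the $i$-th vacant part. Putting $K_i$ into the $i$-th vacant part embeds $H^{*}$ into the complete tripartite graph on $A(2)\cup B(2)\cup C(2)$ (every edge of $H^{*}$ is bichromatic), and since $\Delta(H^{*})\le 4$ the Blow-up Lemma (Theorem~\ref{blow-up}) upgrades this to an embedding of $H^{*}$ into $G[A(2)\cup B(2)\cup C(2)]$, the within-part edges of $G$ being simply ignored. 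Together with the $t$ reserved triangles this gives $H\subset G$.

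The step I expect to be the main obstacle is Steps~0--2: one must check that, after Preprocessing~\#3, all of $A'\cup B'\cup C'$ and then $A''\cup B''\cup C''$ can simultaneously be packed into vertex-disjoint crossing triangles, that this keeps the three vacant parts balanced to within $1$ in size, and that no vacant vertex loses enough neighbours to destroy super-regularity — essentially repeating, in the three-partition setting and guided by Claims~\ref{Preproc3}, \ref{Excparosito} and~\ref{meretek-3}, the casework behind Lemmas~\ref{Case1} and~\ref{Case2} (for instance choosing the matchings of Claim~\ref{Excparosito} to avoid the already small exceptional sets on the opposite side). By contrast, Steps~3--4 are essentially immediate given the equitable $3$-colouring and the Blow-up Lemma, which is why this case is short.
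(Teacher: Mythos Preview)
Your proposal is correct and follows essentially the same route as the paper: cover the exceptional sets $A'\cup B'\cup C'$ and $A''\cup B''\cup C''$ by vertex-disjoint crossing triangles via Claims~\ref{Preproc3}, \ref{Excparosito}, \ref{meretek-3} (reusing the arguments of Lemmas~\ref{Case1} and~\ref{Case2}), observe that the three vacant parts are then pairwise $(\mu^{1/4},1-O(\sqrt{\mu}))$-super-regular, and finish with an equitable $3$-colouring plus the Blow-up Lemma. The only cosmetic difference is that you equitably colour $H^{*}=H\setminus(\text{$t$ reserved triangles})$ directly and match class sizes to part sizes as multisets, whereas the paper equitably colours $H'$ and records that $|A(3)|-|K_1|=|B(3)|-|K_2|=|C(3)|-|K_3|$ (which holds because $v(H')\equiv n\pmod 3$), then fills the common difference with triangles inside the Blow-up Lemma; the two formulations are interchangeable.
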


\begin{proof}
For proving the lemma it is sufficient to use the ideas developed for the previous two cases. By Claim~\ref{Excparosito} we have a matching that covers $A' \cup  B'\cup C'.$ Using this matching we can find a
set of vertex-disjoint crossing triangles as in Claim~\ref{B'paritas}.
Covering of the vertices of $A'' \cup B''\cup C''$ is also done similarly to previous cases. 

Before we apply the Blow-up Lemma first assign the color class $K_1$ to $A(3),$ $K_2$ to $B(3)$ and $K_3$ to $C(3).$ Observe that $|A(3)|-|K_1|=|B(3)|-|K_2|=|C(3)|-|K_3|.$ 
There are no exceptional vertices left in Step 4, and $$|A'\cup B'\cup C'\cup A''\cup B'' \cup C''|\le \sqrt{\mu}n,$$ hence, the bipartite subgraphs $G[A(3), B(3)],$ $G[A(3), C(3)]$ and $G[B(3), C(3)]$ 
are each $(\varepsilon, \delta)$-super-regular pairs with $\varepsilon \le \sqrt[4]{\mu}$ and $\delta\ge n/3-30\sqrt{\mu}n.$ Hence, embedding the rest of $H$ can be done by a routine application of the Blow-up Lemma.
\end{proof}

Putting together Lemma~\ref{Case1}, Lemma~\ref{Case2} and Lemma~\ref{Case3} we obtain what was desired: $H$ is a subgraph of $G$ when both graphs are extremal.
\end{proof}

\end{document}